\newcommand{\R}{\mathbb{R}}
\newcommand{\C}{\mathbb{C}}
\newcommand{\Z}{\mathbb{Z}}
\newcommand{\N}{\mathbb{N}}
\newcommand{\Id}{\mathds{1}}
\newcommand{\ii}{\mathrm{i}}
\newcommand{\csi}{\xi}
\newcommand{\cH}{{\cal H}}
\newtheorem{lemma}{Lemma}[section]
\newtheorem{theorem}[lemma]{Theorem}
\newtheorem{proposition}[lemma]{Proposition}
\newcommand{\tn}{{\mathtt n}}
\newcommand{\tm}{{\mathtt m}}
\newcommand{\cV}{\mathcal{V}}
\newcommand{\dd}{\textnormal{d}}
\newcommand{\tc}{{\mathtt c}}
\newcommand{\HtHx}{H^{\frac 1 2 + \delta}_t \cH^{\frac 3 2 + \delta}_x}
\newcommand{\HtHeta}{H^{\frac 1 2 + \delta}_t \cH^{\frac 3 2 + \delta}_\eta}
\definecolor{darkgr}{rgb}{0.0, 0.62, 0.42}
\newcommand{\dbar}{d\hspace*{-0.08em}\bar{}\hspace*{0.1em}}
\newcommand{\omgen}{\omega}
\newcommand{\egen}{\mathrm{e}}
\numberwithin{equation}{section}
\theoremstyle{definition}
\newtheorem{definition}[lemma]{Definition}
\newtheorem{remark}[lemma]{Remark}
\title{\textbf{Time periodic solutions of completely resonant \ Klein-Gordon equations on $\mathbb{S}^3$}}
\begin{document}

 \author{Massimiliano Berti, Beatrice Langella, Diego Silimbani\footnote{
International School for Advanced Studies (SISSA), Via Bonomea 265, 34136, Trieste, Italy. 
 \textit{Emails: } \texttt{berti@sissa.it},  \texttt{beatrice.langella@sissa.it},  \texttt{dsilimba@sissa.it}
 }}

\date{}

\maketitle

\begin{abstract}
	We prove existence and multiplicity of Cantor families of small amplitude time periodic solutions of completely resonant Klein-Gordon equations on the sphere $\mathbb{S}^3$ with quadratic, cubic and quintic nonlinearity, regarded as toy models in General Relativity. The solutions are obtained by a variational Lyapunov-Schmidt decomposition, which reduces the problem to the search of mountain pass critical points of a restricted Euler-Lagrange action functional. 
	Compactness properties of its gradient are obtained by Strichartz-type estimates for the 
	solutions of the linear 
	Klein-Gordon equation on $\mathbb{S}^3$.
\end{abstract}

\tableofcontents
\section{Introduction}

Motivated by the stability problem of the anti-de Sitter space-time (AdS), the goal of this paper is to prove existence and multiplicity  of Cantor families of time-periodic solutions 
of nonlinear Klein-Gordon equations of the form
\begin{equation}\label{original.eq}
\begin{gathered}
\left(-\partial_{tt} + \Delta_{\mathbb{S}^{3}}-\mathds{1}\right)\phi= \begin{cases}
|\phi|^{p-1} \phi & \quad  \text{if } p \  \text{is  odd} \, ,\, \  p\geq 3\,,\\
\phi^p &\quad  \text{if } p \  \text{is  even}\,,
\end{cases}
\end{gathered}
\end{equation}
where $\phi: \R \times \mathbb{S}^3 \rightarrow \C$ and $\Delta_{\mathbb{S}^3}$ is the Laplace-Beltrami operator on the $3$-dimensional sphere 
$\mathbb{S}^3$. 
For $p=3$, time periodic solutions of \eqref{original.eq} have been 
very recently constructed  by Chatzikaleas and Smulevici in \cite{cha.smu}. 
A mathematical point of interest of 
 Theorems 
\ref{teo.spherical} and \ref{teo.hopf} below 
is that, jointly with \cite{cha.smu, Cha_Smu_2},  they are the only existence results 
of time periodic solutions for {\it completely resonant} Hamiltonian PDEs on 
a  manifold of dimension higher than one.
Their proof is based 
on a novel combination of {\it variational} methods and {\it Strichartz}-type estimates
 for free solutions of the Klein-Gordon equation on $\mathbb{S}^3$, 
that we find of theoretical interest in itself and
nowhere else available in literature. 

\smallskip

Let us first shortly outline the physical framework connecting \eqref{original.eq} with the stability problem of AdS space-time. AdS is the maximally symmetric solution to the vacuum Einstein equations $\textrm{Ric}(g) = -\Lambda g$ with negative cosmological constant $\Lambda$. Unlike de Sitter or Minkowski space-times, its stability properties are
nowadays 
still poorly  understood. 
In particular, the stability of AdS depends on the conformal  boundary conditions. 
While, for instance, it is expected that under dissipative boundary conditions 
AdS is stable, see \cite{Holzegel-Luk-Smulevici-Warnick}, 
it has been conjectured, by Dafermos and Holzegel in \cite{Dafermos-Holzegel} and by Anderson in  \cite{Anderson2006}, 
that AdS is unstable
under fully reflective boundary conditions.
The latter instability conjecture is supported by the numerical investigations 
of Bizon-Rostworowski   \cite{Bizon-Rostwo} for  
the spherically symmetric Einstein-massless-scalar field equations,
suggesting that AdS is unstable, against the formation of black holes under arbitrarily small perturbations. Notwithstanding, 
the work \cite{Bizon-Rostwo} also suggests the existence of small initial data leading to stable solutions, confirmed later by 
Maliborski-Rostworowski  \cite{Mali-Rostwo} 
who constructed formal  
time periodic solutions, 
supported by numerical evidences. The same existence conjecture of time periodic solutions --called geons-- has also been extended to the vacuum Einstein equations in \cite{Dias-Horowitz-Santos, Dias-Horowitz-Marolf-Santos}.

\smallskip

The nonlinear wave equation \eqref{original.eq} with $p=3$ has been introduced in \cite{Bizon-Rostwo, Mali-Rostwo, Bizon-Craps-altri} as a toy model of spherically symmetric Einstein-massless-scalar field equations close to the AdS solution.
In \cite{ch0} Chatzikaleas constructed formal power series expansions of small amplitude time periodic solutions of \eqref{original.eq} in the spherically symmetric case, which reduces to the 1d wave equation with singular nonlinearity
\begin{equation}\label{vista.in.1d}
-\partial_{tt} u + \partial_{xx} u  = \frac{u^3}{\sin^2(x)}\,, \quad u(t, 0) = u(t, \pi) = 0\,, \quad  x \in (0, \pi)\,.
\end{equation}
The absence of secular terms in the power series expansions is obtained using the method of Maliborski and Rostworowski \cite{Mali-Rostwo}, developed for the Einstein-Klein-Gordon equation. However, the presence of \emph{small divisors} prevents the convergence of such power series. This difficulty looks analogous to the convergence problem of ``Linstedt series"  of quasi-periodic solutions in Celestial Mechanics, devised since Poincar\'{e} \cite{Poincare}, and successfully overcome during the last century by the celebrated KAM theory. 
The first rigorous existence result of time periodic solutions of \eqref{original.eq} for $p=3$ with {strongly Diophantine} frequencies $\omega$  is given in the very recent paper \cite{cha.smu}. Such work constructs solutions of the following form:
\begin{itemize}
 	\item \textbf{spherically symmetric} functions, namely $\phi(t,x) = u(t, \cos(x))$, $x \in (0, \pi)$, see Definition \ref{def:spherical} below;
 	\item \textbf{plane waves in Hopf coordinates}, namely $\phi(t, \eta, \csi_1, \csi_2) = u(t, \eta) e^{\ii \mu_1 \csi_1} e^{\ii \mu_2 \csi_2}$, see Definition \ref{def:hopf.ansatz}, up to restricting to values of the momenta $\mu_1 = \mu_2 \in \{1, \dots, 5\}$, or $\mu_1 = \mu_2$ large enough.
\end{itemize}
The results in \cite{cha.smu}
rely on an abstract theorem by Bambusi and Paleari \cite{Bambusi-Paleari2001}, which uses a Lyapunov-Schmidt approach and whose main assumption is the existence of a non degenerate zero of the ``resonant system". 

\smallskip
The goal of this paper is to prove  existence and multiplicity of periodic solutions of \eqref{original.eq}, for more general values of the nonlinearity degree $p$ and of the momenta $\mu_1, \mu_2$. More precisely, for
\begin{itemize}
\item $p=2$ and $p=5$,  we find spherically symmetric solutions, see Theorem \ref{teo.spherical};
\item  $p=3$, we find plane waves in Hopf coordinates for \emph{any} value of the momenta  $\mu_1, \mu_2 \in \Z$, see Theorem \ref{teo.hopf}.
\end{itemize}
These generalizations require new methods, since the verification of the existence of a nondegenerate zero of the associated resonant system seems unapproachable, if ever true. In this work we combine variational methods of mountain pass type, inspired by the works of Berti and Bolle \cite{Berti-BolleCMP, Berti-BolleNA, Berti-BolleCantor} for 1-d semilinear wave equations, 
with Strichartz-type estimates for the linear Klein-Gordon equation on $\mathbb{S}^3$. 

We now present rigorously our results.

\subsection{Main Results}\label{subsec.main.results}

 Small amplitude time periodic solutions of \eqref{original.eq}  bifurcate from suitable solutions of the linear Klein-Gordon equation
\begin{equation}\label{lin.eq}
-\partial_{tt} v + \Delta_{\mathbb{S}^{3}} v - v = 0\,.
\end{equation}
Since the eigenvalues of $ - \Delta_{\mathbb{S}^{3}}  + \mathds{1}  $ are $ \{ j^2, j \in \N_* \} $, where $\N_* := \{1, 2, 3\dots\}$,
{\it all} the solutions of \eqref{lin.eq} are $2\pi$-periodic in time, i.e. have frequency $\omega = 1$. For this reason \eqref{original.eq} is called a \emph{completely resonant} equation
and
a major difficulty of the problem 
is to determine from which
 free solutions $ v $ of  \eqref{lin.eq} 
periodic solutions of the nonlinear Klein-Gordon equation 
\eqref{original.eq} branch off.

 We look for  time periodic solutions of \eqref{original.eq} with
 strongly Diophantine frequency $\omega \sim 1$ belonging to the Cantor set 
\begin{equation}\label{Omega.gamma}
 \Omega_\gamma := \left\{ \omega \in \left[\frac 1 2, 2\right] \ : \ |\omega \ell - j | \geq \frac{\gamma}{\ell} \quad \forall \ell \in \N_*,\ j \in \N\,, \quad \ell \neq j \right\}\,.
 \end{equation}
For $\gamma \in (0, \gamma_0)$ and $\gamma_0$ small enough, the set $\Omega_\gamma$ is uncountable, with zero measure, and accumulates to $\omega = 1$, as proved in \cite{Bambusi-Paleari2001}.
\label{sobsf}

We look for time periodic solutions of \eqref{original.eq} taking values into Sobolev spaces 
$H^{s}(\mathbb{S}^3, d \sigma)$  of scalar functions $\phi:\mathbb{S}^3 \rightarrow \C$ with $s > \frac 3 2$, where
\begin{equation}\label{Hs.std}
H^s(\mathbb{S}^3, d \sigma) := \Big\{ \phi \in L^2(\mathbb{S}^3, d \sigma)\ :\ (-\Delta_{\mathbb{S}^{3}} + \Id)^{\frac s 2} \phi \in  L^2(\mathbb{S}^3, d \sigma)\Big\}\,, \quad s \in \mathbb{R}\,,
\end{equation}
 and $d\sigma$ denotes the standard Lebesgue measure on the sphere. 
 Each $H^s(\mathbb{S}^3, d \sigma)$ is an Hilbert space endowed with the complex 
 scalar product $\langle \phi_1,\phi_2 \rangle_{H^s(\mathbb{S}^3,d \sigma)}
 :=
 \langle (-\Delta_{\mathbb{S}^3}+\mathds{1})^s\phi_1,\phi_2 
 \rangle_{L^{2}(\mathbb{S}^3,d \sigma)}$.
 
For any $s > \frac 3 2$  the spaces $H^s(\mathbb{S}^3, d \sigma)$ 
continuously embed into 
$ L^\infty (\mathbb{S}^3) $  and form an algebra with respect to the product of functions. 
 We also remind the continuous embedding of the Sobolev spaces
\begin{equation}\label{sob.sigh}
H^s(\mathbb{S}^3, d\sigma) \hookrightarrow L^{p}(\mathbb{S}^3, d\sigma)\,, \quad p \leq p^*(s,3):=\frac{6}{3-2s}\,,
\end{equation}
with compact embedding for $p < p^*(s,3)$. In particular $H^s(\mathbb{S}^3, d\sigma) \hookrightarrow L^{6}(\mathbb{S}^3, d\sigma)$ for any $s \geq 1$, with compact embedding if $s > 1$, and $H^s(\mathbb{S}^3, d\sigma) \hookrightarrow L^{4}(\mathbb{S}^3, d\sigma)$ for any $s \geq \frac 3 4$, with 
compact embedding if $s > \frac 3 4$.

 Our first existence result concerns spherically symmetric solutions, according to the following
 
 \begin{definition}[Spherically symmetric functions]\label{def:spherical}
 	Consider on $\mathbb{S}^3$ coordinates
 	\begin{equation}\label{sph.coord}
 	\scalebox{0.9}{$(0, \pi) \times (0, \pi) \times (0, 2\pi) \ni (x, \theta, \varphi) \mapsto  (\cos(x), \sin(x) \cos(\theta), \sin(x) \sin(\theta) \cos(\varphi), \sin(x) \sin(\theta) \sin(\varphi))\,.$}
 	\end{equation}
 	We say that $\phi: \mathbb{S}^3 \rightarrow \C$ is \emph{spherically symmetric} if 
 	\begin{equation}
 	\phi(x,\theta, \varphi) = u(x) \otimes 1_{\theta, \varphi}\,, \quad \forall ( x, \theta, \varphi) \in (0, \pi) \times (0, \pi) \times (0, 2\pi)\,, \quad u : (0, \pi) \rightarrow \C\,,
 	\end{equation}
 	where $1_{\theta, \varphi}$ is the function identically equal to $1$ for any $(\theta, \varphi)$. By \eqref{sph.coord}, $u$ has to be of the form $u(x) = U(\cos(x))$, for some $U:(-1, 1) \rightarrow \C$. We say that $\phi: \R \times \mathbb{S}^3 \rightarrow \C$ is \emph{spherically symmetric} if $\phi(t, \cdot)$ is spherically symmetric for any $t \in \R$.
 \end{definition}
	Our first result is the following:
\begin{theorem}[Spherically symmetric solutions]\label{teo.spherical}
	Let $p=2$ or $p=5$. Fix $\gamma \in (0, \gamma_0)$ and $\mathtt{d}\in (0, \frac{1}{4})$. For any $n \in \N$, $r> \frac 1 2$, and $s > \frac 3 2$, there exist $\varepsilon_0 := \varepsilon_0(n, r, s, \mathtt{d}, \gamma)>0$ and $C := C(n, r, s, \mathtt{d})>0$ such that for any $\varepsilon$ belonging to
	\begin{equation}\label{omega.ep}
	\mathcal{E} := \big\{\varepsilon \in (0, \varepsilon_0) \ :\ \omega_\varepsilon \in \Omega_\gamma \big\}\,, \quad \omega^2_\varepsilon := 1 + \varsigma\varepsilon\,, \quad \varsigma := \begin{cases}
	-1 & \text{if} \quad p= 2\,,\\
	1& \text{if} \quad p = 5 \, ,
	\end{cases} 
	\end{equation}
	with $\Omega_\gamma$ defined in \eqref{Omega.gamma}, there exist $n$ different real valued, non zero, $T_\varepsilon$-periodic solutions
	$$
	\{\phi^{(1)}_{\varepsilon}(t,x), \dots, \phi^{(n)}_{\varepsilon}(t,x)\}
	$$
	of equation \eqref{original.eq} with frequency $\omega_\varepsilon  := \frac{2\pi}{T_\varepsilon}$,
	even in time and with spherical symmetry. They are of the following form:
	\begin{enumerate}
	\item[(i)] There exist $n$
	different $2\pi$-periodic, even in time, non zero solutions  $\{v^{(1)}_{\varepsilon}(t,x), \dots, v^{(n)}_{\varepsilon}(t,x)\}$ of the linear equation \eqref{lin.eq},
	spherically symmetric, with size
	\begin{equation}\label{nero}
	C^{-1}\varepsilon^{\frac 1 q} \leq \| v^{(k)}_{\varepsilon}\|_{H^{r}\left([0, 2\pi], H^{s}(\mathbb{S}^3, d \sigma)\right)} \leq C {\varepsilon}^{\frac{1}{q} -\mathtt{d}}\,, \quad q := \begin{cases}
	2 &\text{if} \quad  p=2\,,\\
	4 &\text{if} \quad  p=5\,,
	\end{cases}
	\end{equation}
	such that, as $\varepsilon \rightarrow 0$,
	\begin{equation}\label{bianco}
	\| \phi^{(k)}_{\varepsilon} - v^{(k)}_{\varepsilon}(\omega_\varepsilon \cdot, \cdot )\|_{H^{r}\left([0, T_{\varepsilon}], H^{s}(\mathbb{S}^3, d \sigma)\right)} = o \left(\| v^{(k)}_{\varepsilon}(\omega_\varepsilon \cdot, \cdot)\|_{H^{r}\left([0, T_\varepsilon], H^{s}(\mathbb{S}^3, d \sigma)\right)}\right)\,.
	\end{equation}
	\item[(ii)] Each $\phi^{(k)}_{\varepsilon}$ 
	has minimal period $ T_{k, \varepsilon} := \frac{T_\varepsilon}{m_k}$
	 where $\{m_k\}_{k=1}^n$ is an increasing sequence of positive integers. Correspondingly, the functions $\{v^{(1)}_{\varepsilon}, \dots, v^{(n)}_{ \varepsilon}\}$ have minimal periods $T_k := \frac{2\pi}{m_k}$.
	\end{enumerate}
\end{theorem}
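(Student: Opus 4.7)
The first step is to exploit spherical symmetry: for $\phi(t,x) = u(t,x)/\sin(x)$, $x \in (0,\pi)$, a direct computation shows that $(-\partial_{tt} + \Delta_{\mathbb{S}^{3}} - \mathds{1})\phi = (-u_{tt} + u_{xx})/\sin(x)$ with Dirichlet boundary conditions $u(t,0) = u(t,\pi) = 0$. Thus \eqref{original.eq} becomes a one-dimensional wave equation with singular nonlinearity of the type \eqref{vista.in.1d}, namely
\begin{equation*}
-u_{tt} + u_{xx} = \sin(x)^{1-p}\, g(u), \qquad g(u) = u^p \ \text{or}\ |u|^{p-1}u.
\end{equation*}
I would then rescale time $\tau = \omega_\varepsilon t$ to fix the period to $2\pi$, and the amplitude $u = \varepsilon^{1/q}\,\tilde u$, with the exponent $q$ calibrated so that the leading effect of the detuning $\omega_\varepsilon^2 - 1 = \varsigma\varepsilon$ balances the rescaled nonlinearity after restriction to the resonant modes (the different values of $q$ for $p=2$ and $p=5$ reflect whether the cubic form survives that projection). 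The resulting equation is the Euler-Lagrange equation of an action functional $S_\varepsilon$ on a Sobolev space of $2\pi$-periodic functions even in $\tau$.

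\textbf{Lyapunov-Schmidt decomposition.} The kernel $V$ of the unperturbed linear operator $-\partial_{\tau\tau} + \partial_{xx}$ acting on $2\pi$-periodic, even-in-time, Dirichlet functions is the infinite-dimensional space spanned by $\{\cos(j\tau)\sin(jx) : j \in \N_*\}$, reflecting the complete resonance. Decomposing $\tilde u = v + w$ with $v \in V$ and $w \in W = V^\perp$, I would first solve the range $(W)$-equation via a contraction argument: for $\omega \in \Omega_\gamma$, the strong Diophantine condition gives uniform bounds on the small divisors $(\omega^2 \ell^2 - j^2)^{-1}$ for $\ell \neq j$, hence an invertibility estimate for the restriction of $-\omega^2 \partial_{\tau\tau} + \partial_{xx}$ to $W$ in suitable Sobolev norms. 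This produces a unique solution $w = w(v,\varepsilon)$ of strictly higher order in $\varepsilon$ than $v$. Plugging back into $S_\varepsilon$ yields a reduced functional $\Phi_\varepsilon : V \to \R$ whose leading quadratic part on $V$ is proportional to $\varsigma\varepsilon \int (\partial_\tau v)^2$, balancing the leading nonlinear contribution from $\int g(v)$.

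\textbf{Mountain pass critical points via Strichartz compactness.} The reduced functional $\Phi_\varepsilon$ has a mountain pass geometry on $V$, following the variational strategy developed by Berti and Bolle \cite{Berti-BolleCMP,Berti-BolleNA,Berti-BolleCantor} for one-dimensional semilinear wave equations. The central obstacle is the infinite-dimensionality of $V$, which makes the Palais-Smale compactness highly nontrivial. This is precisely where the Strichartz-type estimates for the free Klein-Gordon flow on $\mathbb{S}^3$ advertised in the abstract become crucial: they yield the embedding of the relevant Sobolev subspace of $V$ into appropriate $L^{p+1}_{t,x}$ spaces with compactness, so that the gradient of the nonlinear part of $\Phi_\varepsilon$ is a compact perturbation of the leading linear term and Palais-Smale sequences converge up to extraction. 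I expect this compactness step --- especially for $p=5$, which lies closest to a critical Sobolev scaling --- to be the main technical obstacle, together with the quantitative control of the mountain pass level needed to obtain the sharp lower bound in \eqref{nero}.

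\textbf{Multiplicity and conclusion.} To produce $n$ geometrically distinct solutions with prescribed increasing minimal periods, I would apply the mountain pass construction on each of the invariant subspaces $V_{m_k} \subset V$ of functions with period $2\pi/m_k$, for an increasing sequence of positive integers $\{m_k\}_{k=1}^n$. Each $V_{m_k}$ yields a mountain pass critical point $v^{(k)}_\varepsilon$, whose minimal period is identified as exactly $2\pi/m_k$ by comparing mountain pass levels across the distinct subspaces. Undoing the rescaling and the Lyapunov-Schmidt reduction furnishes the solutions $\phi^{(k)}_\varepsilon$ of \eqref{original.eq} with frequency $\omega_\varepsilon$; the size estimates \eqref{nero} follow from the calibration of the mountain pass level, while the asymptotics \eqref{bianco} on $\phi^{(k)}_\varepsilon - v^{(k)}_\varepsilon(\omega_\varepsilon \cdot, \cdot)$ reflect the higher-order smallness of $w(v,\varepsilon)$ established in the Lyapunov-Schmidt step. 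Finally, the restriction $\varepsilon \in \mathcal{E}$ is exactly what ensures $\omega_\varepsilon \in \Omega_\gamma$, underlying the solvability of the $W$-equation.
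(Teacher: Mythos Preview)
Your overall architecture is right --- Lyapunov--Schmidt reduction, a variational mountain pass on the kernel, Strichartz-type estimates as the key analytic input, and multiplicity via period-subspaces --- but you are glossing over the central technical difficulty that the paper isolates explicitly in Section~\ref{sec.ideas}. The issue is a \emph{regularity gap}: to solve the range equation $\mathcal{L}_\omega w = \Pi_W(v+w)^p$ by contraction you need $v$ in an algebra space, which for $v \in V$ means $\|v\|_{\cV^{2+}_{t,z}} \ll 1$, whereas the mountain pass for the reduced functional $\Phi_\varepsilon$ naturally lives on $\cV^1_{t,z}$ (the norm associated to its quadratic part). Your proposal defines $w = w(v,\varepsilon)$ first and then does mountain pass on all of $V$, but $w(v)$ is simply not defined for generic $v$ in a $\cV^1_{t,z}$-ball, so the reduced functional $\Phi_\varepsilon$ is not well posed on the space where you want to run the variational argument. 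This is not a cosmetic issue: the singular factor $\sin(x)^{1-p}$ in your 1D reduction is precisely the manifestation of the fact that the nonlinearity does not see the kernel through a subcritical Sobolev embedding.

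The paper's remedy, which your outline is missing, is a further splitting $V = V_{\leq N} \oplus V_{>N}$. One first solves the high-frequency kernel equation for $v_2 \in V_{>N}$ by contraction in $\cV^{2+2\delta}_{t,z}$ --- this is where the Strichartz estimates of Propositions~\ref{lem.Strichartz.1} and~\ref{lem.strich.Lom} actually enter --- then the range equation for $w$, and only \emph{then} runs the mountain pass on the \emph{finite-dimensional} space $V_{\leq N}$, where the regularity mismatch evaporates. So the Strichartz estimates are not used to obtain Palais--Smale compactness on an infinite-dimensional $V$ as you suggest, but rather to close the $v_2$-contraction and later to bootstrap the regularity of the critical points (Section~\ref{sec.classical}). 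Two secondary points: for $p=2$ the degeneracy $\Pi_V(v^2) = 0$ (Lemma~\ref{lem.vsquare}) forces a genuinely different leading-order functional $\breve{\mathcal G}_4$ involving $\mathcal{L}_1^{-1}$, see \eqref{big.G2}, not merely a different scaling exponent; and the minimal period of each solution is only localized to a finite window $\{\tfrac{2\pi}{\tn_k-1},\dots,\tfrac{2\pi}{\tn_{k-1}}\}$ by comparing mountain pass levels (Proposition~\ref{lem.ci.vuole.un.beta}), not pinned to exactly $2\pi/m_k$.
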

 We point out that Theorem \ref{teo.spherical} holds also in the case $p=3$. This is actually the result in \cite{cha.smu}.
 In this case the $v^{(j)}_{\varepsilon}$ are close to the ``one mode" functions
\begin{equation}\label{one.mode}
{\varepsilon}^{\frac 1 2} \underline{v}^{(j)}\,, \quad \underline{v}^{(j)} := \kappa_j \cos((j+1)t) e_j(x)\,, 
 \quad e_j(x) :=  \frac{\sin((j+1) x)}{\sin(x)}\,,
\end{equation}
for suitable $\kappa_j \in \R \setminus\{0\}$.
Note that $e_j$ are the spherically symmetric functions $e_j(x) = U_j(\cos(x))$, where $U_j: \R \rightarrow \R$ are Chebychev polynomials of second kind.
The functions $\underline{v}^{(j)}$ in \eqref{one.mode} are actually solutions of the 
``resonant system"
\begin{equation}\label{0.bif.3}
	(-\Delta_{\mathbb{S}^3} + \mathds{1}) v - \Pi_{V} (v^3) = 0\,,
\end{equation}
where $\Pi_V$ is the $L^2$-projector on the infinite dimensional linear space $ V $ formed by the solutions of \eqref{lin.eq} (see the definitions \eqref{def.V}, \eqref{def.Pi} below). On the other hand, for $p=5$ the functions $v^{(j)}_{ \varepsilon}$ in \eqref{nero} are not close to ``one modes" as in \eqref{one.mode}. 
Actually, the $v_\varepsilon^{(j)}$ are close to functions of the form $\varepsilon^{\frac{1}{4}} \underline{v}^{(j)}$,
where $\underline{v}^{(j)}$ are non zero solutions of the equation
\begin{equation}\label{0.bif.5}
	(-\Delta_{\mathbb{S}^3} + \mathds{1}) v - \Pi_V(v^5) = 0\,,
\end{equation}
which \emph{does not} possess one mode solutions.
We actually prove the existence of non trivial solutions of \eqref{0.bif.5}, 
exploiting that it is the Euler-Lagrange equation of the action functional
\begin{equation}\label{action.intro}
\frac{1}{2}\|v\|^2_{\cV^1_{t,z}} - \frac{1}{6}\int_{\mathbb{T} \times \mathbb{S}^3} v^{6}(t, z) \, dt d \sigma(z)\,, \quad \|v\|_{\cV^1_{t,z}} := \|v\|_{L^\infty(\mathbb{T}_t, H^1(\mathbb{S}^3, d \sigma))}\,, \quad \mathbb{T}:= \mathbb{R}/(2\pi \Z) \, , 
\end{equation}
which, thanks to the time-space Strichartz-type estimates proved 
in Section \ref{sec.Stri}, admits mountain pass critical points of class $C^\infty$. Strichartz estimates are required to imply compactness properties of the action functional, which are not a consequence of Sobolev embeddings  \eqref{sob.sigh} on $\mathbb{S}^3$, see Remark \ref{rmk.mountain.pass}.

The case $p=2$ is degenerate, since $\Pi_V(v^2) = 0$ (see Lemma \ref{lem.vsquare}), and the $v^{(j)}_\varepsilon$ are close to functions of the form $\varepsilon^{\frac 1 2} \underline{v}^{(j)}$, where $\underline{v}^{(j)}$ are non zero solutions of the equation
\begin{equation}\label{0.bif.2}
	(-\Delta_{\mathbb{S}^3} + \mathds{1}) v +\Pi_V \left( v \mathcal{L}_1^{-1} (v^2)\right) = 0\,, \quad \mathcal{L}_1 := -\partial_{tt} + \Delta_{\mathbb{S}^3} - \mathds{1}\,.
\end{equation}
It turns out that equation \eqref{0.bif.2} admits mountain pass critical points as well. Further comments are postponed after Theorem \ref{teo.hopf}.

 In the case $p=3$ we have new existence results of periodic Hopf plane waves solutions of \eqref{original.eq} for any value of the momenta $(\mu_1, \mu_2)$, which we now define:
\begin{definition}[Hopf plane waves]\label{def:hopf.ansatz}
	Consider on $\mathbb{S}^3$ Hopf coordinates
	\begin{equation}
	\left(0, \frac{\pi}{2}\right)
	\times \mathbb{T} \times \mathbb{T} \ni (\eta, \xi_1, \xi_2) \mapsto (\sin(\eta) \cos(\xi_1), \sin(\eta) \sin(\xi_1), \cos(\eta) \cos(\xi_2), \cos(\eta) \sin(\xi_2))\, . 
	\end{equation}
Given $(\mu_1, \mu_2) \in \Z^2$, we say that $\phi: \mathbb{S}^3 \rightarrow \C$ is a \emph{Hopf plane wave with momentum $(\mu_1, \mu_2)$} if 
	\begin{equation}\label{hopf.waves}
	\phi( \eta, \xi_1, \xi_2) = u( \eta) e^{\ii \mu_1 \xi_1} e^{\ii \mu_2 \xi_2}\,, \quad \forall (\eta, \csi_1, \csi_2) \in \left(0, \frac{\pi}{2}\right)
	\times \mathbb{T} \times \mathbb{T}\,, \quad u: \left(0, \frac{\pi}{2}\right) \rightarrow \R\,.
	\end{equation}
	We say that $\phi :  \R \times  \mathbb{S}^3 \rightarrow \C$ is a \emph{Hopf plane wave with momentum $(\mu_1, \mu_2)$} if $\phi (t, \cdot)$ is a Hopf plane wave with momentum $(\mu_1, \mu_2)$  for any $ t \in \R$.
\end{definition}
The following result extends \cite{cha.smu}, which holds for $\mu_1 = \mu_2$ either equal to $\{1, 2, 3, 4, 5\}$  or large enough:
\begin{theorem}[Hopf plane waves]\label{teo.hopf}
	Let $p=3$. Fix $\gamma \in (0, \gamma_0)$ and $\mathtt{d} \in (0, \frac{1}{2})$. For any $n \in \N$, $r > \frac 1 2$, $s > \frac 3 2$,  and any $(\mu_1, \mu_2) \in \Z^2$, there exist $\varepsilon_0 := \varepsilon_0(n, r, s, \mathtt{d}, \gamma, \mu_1, \mu_2)>0$ and $C := C(n, r, s, \mathtt{d}, \mu_1, \mu_2)>0$ such that for any $ \varepsilon$ belonging to the set
	\begin{equation}\label{omega.3}
	\mathcal{E}:= \{\varepsilon \in (0, \varepsilon_0) \ :\ \omega_\varepsilon \in \Omega_\gamma\}\,, \quad \omega^2_\varepsilon := 1 + \varepsilon\,,
	\end{equation}
	there exist $n$ different non zero, $T_\varepsilon$-periodic Hopf wave solutions (see \eqref{hopf.waves}),
	$$
		\{\phi^{(1)}_{\varepsilon}, \dots, \phi^{(n)}_{\varepsilon}\}:= \{\phi^{(1)}_{\varepsilon, \mu_1, \mu_2}, \dots, \phi^{(n)}_{\varepsilon, \mu_1, \mu_2}\}
	$$
	of equation \eqref{original.eq}, with frequency $\omega_\varepsilon := \frac{2\pi}{T_\varepsilon}$,  even in time. They are of the following form:
	\begin{enumerate}
	\item[(i)] There exist $n$ different $2\pi$-periodic non zero Hopf wave solutions $\{v^{(1)}_{ \varepsilon, \mu_1, \mu_2}, \dots, v^{(n)}_{\varepsilon, \mu_1, \mu_2}\}$ of the linear equation \eqref{lin.eq}, with size
	\begin{equation}\label{nero.hopf}
	C^{-1} \varepsilon^{\frac 1 2} 	\leq \| v^{(k)}_{\varepsilon, \mu_1, \mu_2}\|_{H^{r}\left([0, 2\pi], H^{s}(\mathbb{S}^3, d \sigma)\right)} \leq C \varepsilon^{\frac 1 2 - \mathtt{d}}\,,
	\end{equation}
	such that $\phi^{(k)}_{\varepsilon, \mu_1, \mu_2} = v^{(j)}_{\varepsilon, \mu_1, \mu_2} + o(v^{(k)}_{\varepsilon, \mu_1, \mu_2})$ as in \eqref{bianco}.
	\item[(ii)]	Each $\phi^{(k)}_{\varepsilon, \mu_1, \mu_2}$ has minimal period $T_{k, \varepsilon} := \frac{T_\varepsilon}{m_k}$, where $\{m_k\}_{k=1}^n$ is an increasing sequence of positive integers, and each $v^{(k)}_{\varepsilon, \mu_1, \mu_2}$ has minimal period $T_{k} := \frac{2\pi}{m_k}$.
	\end{enumerate} 
\end{theorem}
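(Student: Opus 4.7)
The plan is to follow the same Lyapunov--Schmidt plus variational scheme used for the $p=5$ spherically symmetric case, but carried out in the invariant subspace of Hopf plane waves with fixed momentum $(\mu_1,\mu_2)$. The ansatz $\phi(t,\eta,\xi_1,\xi_2)=u(t,\eta)e^{\ii\mu_1\xi_1}e^{\ii\mu_2\xi_2}$ is preserved by \eqref{original.eq} because $|\phi|^2\phi=|u|^2u\,e^{\ii\mu_1\xi_1}e^{\ii\mu_2\xi_2}$, and in Hopf coordinates the Laplacian reduces to a $1$-dimensional Sturm--Liouville operator $\mathcal L_{\mu_1,\mu_2}$ on $(0,\pi/2)$ with an explicit effective potential determined by $(\mu_1,\mu_2)$. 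So the first step is to rewrite \eqref{original.eq} as the $1+1$ dimensional equation $(-\omega^2\partial_{tt}+\mathcal L_{\mu_1,\mu_2}-\Id)u=|u|^2u$ for a $2\pi$-periodic $u(t,\eta)$, after the time rescaling $t\mapsto \omega_\varepsilon t$, and to carry out the amplitude rescaling $u=\varepsilon^{1/2}\tilde u$ so that the equation takes the perturbative form $L_1\tilde u=\varepsilon\bigl(\tilde u+\tilde u\,|\tilde u|^2\bigr)$ with $L_1:=-\partial_{tt}+\mathcal L_{\mu_1,\mu_2}-\Id$.

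Next I would perform the Lyapunov--Schmidt splitting $\tilde u=v+w$, with $v\in V_{\mu_1,\mu_2}:=\ker L_1$ (the Hopf-plane-wave part of the linear solution space) and $w$ in the $L^2$-orthogonal complement $W_{\mu_1,\mu_2}$. The range equation $L_1 w=\varepsilon\,\Pi_W f(v+w)$ is solved by a contraction argument in a suitable $H^r_tH^s_\eta$-space: because of the strongly Diophantine condition $\omega\in\Omega_\gamma$, $L_1^{-1}\Pi_W$ loses only a factor $\gamma^{-1}\ell$ on each time mode $\ell$, which is absorbed by an $H^{1/2+\delta}_t$ loss in the standard way, giving a smooth branch $w=w(\varepsilon,v)=O(\varepsilon)$ as a function of $v$. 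Substituting back yields the bifurcation equation on $V_{\mu_1,\mu_2}$, which to leading order reads as \eqref{0.bif.3}, namely the Euler--Lagrange equation of the action functional
\begin{equation*}
\Phi(v)\;=\;\tfrac12\,\langle(-\Delta_{\mathbb S^3}+\Id)v,v\rangle_{L^2(\mathbb T\times\mathbb S^3)}\;-\;\tfrac14\int_{\mathbb T\times\mathbb S^3}|v|^4\,dt\,d\sigma,
\end{equation*}
restricted to the Hopf-invariant subspace $V_{\mu_1,\mu_2}$, perturbed by $O(\varepsilon)$ terms coming from $w(\varepsilon,v)$ and the $\varepsilon\,v$ linear correction.

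The core of the argument is then a mountain pass theorem for $\Phi$ (or its $\varepsilon$-perturbation) on $V_{\mu_1,\mu_2}$. The mountain pass geometry is standard for a cubic defocusing nonlinearity with the indefinite quadratic form $\tfrac12\langle(-\Delta_{\mathbb S^3}+\Id)v,v\rangle$ (one splits $V_{\mu_1,\mu_2}$ into the finite-dimensional negative eigenspace of $L_1$ when $\omega^2=1+\varepsilon$ with $\varepsilon>0$, and the positive one, and checks linking). The Palais--Smale condition is \emph{not} given for free by the Sobolev embeddings on $\mathbb S^3$ at the critical energy norm relevant here, but \emph{is} provided by the space-time Strichartz-type estimates proved in Section \ref{sec.Stri}: they bound $\|v\|_{L^4_{t,z}}$ and control the nonlinear term by $\|v\|_{\cV^1_{t,z}}$, from which one deduces compactness of $\nabla\Phi$. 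Elliptic regularity in $(t,\eta)$ then upgrades any weak critical point to $C^\infty$.

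Finally, multiplicity and minimal periods are obtained by applying the mountain pass procedure on the subspaces $V_{\mu_1,\mu_2}^{(m)}\subset V_{\mu_1,\mu_2}$ of functions that are $\tfrac{2\pi}{m}$-periodic in time, for an increasing sequence $m_1<m_2<\cdots<m_n$. Each gives a non-trivial critical point $\underline v^{(k)}$ with prescribed minimal period $\tfrac{2\pi}{m_k}$ (minimality is enforced by excluding critical values from lower-period subspaces through the strict monotonicity of the mountain pass level in $m$). The $n$ resulting solutions $\phi^{(k)}_\varepsilon=v^{(k)}_\varepsilon+w(\varepsilon,v^{(k)}_\varepsilon)$, with $v^{(k)}_\varepsilon\simeq\varepsilon^{1/2}\underline v^{(k)}$, satisfy \eqref{nero.hopf} and \eqref{bianco} after undoing the time rescaling. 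The main obstacle I expect is exactly the Palais--Smale step: one has to show that the Strichartz estimates on $\mathbb S^3$ survive the Hopf reduction \emph{uniformly} in the momenta $(\mu_1,\mu_2)$ well enough to close the compactness argument for every $(\mu_1,\mu_2)\in\Z^2$, and that the $\varepsilon$-dependent perturbation of the leading resonant functional does not destroy either the mountain pass geometry or the compactness, uniformly for $\omega_\varepsilon\in\Omega_\gamma$.
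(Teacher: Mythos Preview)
Your overall strategy matches the paper's, but you miss its central technical device and misidentify the compactness mechanism for the Hopf case.

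The two-way split $\tilde u = v+w$ runs directly into the regularity gap the paper isolates in Section~\ref{sec.ideas}: solving the range equation by contraction requires $(v+w)^3$ to lie in an algebra space $H^{1/2+}_t\cH^{3/2+}_\eta$, which for kernel elements forces $v\in\cV^{2+}_{t,\eta}$, whereas the mountain-pass functional $\tfrac{\varepsilon}{2}\|v\|_{\cV^1}^2-\mathcal G_4(v)+\dots$ naturally lives on $\cV^1_{t,\eta}$. You never explain how $w(v)$ is to be defined for $v$ that is only in $\cV^1$. The paper's resolution is a \emph{three-way} split $u=v_1+v_2+w$ with $v_1\in V_{\le N}$ finite-dimensional: one solves first for $v_2\in\cV^{2+2\delta}$ and then for $w\in H^{1/2+\delta}_t\cH^{3/2+\delta}_\eta$ by contraction (Propositions~\ref{prop.v2.p3} and~\ref{prop.w.p3}), using that on the finite-dimensional $V_1$ all norms are equivalent with $N$-dependent constants that are tracked explicitly, and then applies the abstract mountain-pass Theorem~\ref{mountain.bolla} on $V_1$, where compactness is trivial. (Incidentally, $\mathcal L_\omega^{-1}\Pi_W$ is \emph{bounded} for $\omega\in\Omega_\gamma$, by Lemma~\ref{lemma.sono.pochissimi}; it does not lose a factor $\ell$.)

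Second, the Strichartz-type estimates of Section~\ref{sec.Stri} are proved only for spherically symmetric functions, via the explicit product rule~\eqref{productrule} for $e_n(x)=\sin((n{+}1)x)/\sin x$; no such formula is available for the Jacobi eigenfunctions $e_j^{(\mu_1,\mu_2)}$ with general momenta, as the paper notes. For $p=3$ Hopf the paper uses instead the Sobolev embedding $H^{3/4}(\mathbb S^3)\hookrightarrow L^4(\mathbb S^3)$ (Lemma~\ref{lem.lp.hs.easy}), which already yields $\mathcal G_4(v)\lesssim\|v\|^4_{\cV^{3/4}}$, so Strichartz plays no role here. Finally, the quadratic part $\tfrac{\varepsilon}{2}\|v\|_{\cV^1}^2$ is positive definite, not indefinite; there is no linking argument. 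The mountain-pass geometry comes from $\mathcal G_4$ being positive and homogeneous of degree $4$, with $m(\mathcal G_4)$ bounded above and below uniformly in $N$ (Lemma~\ref{lem.grad.G.p3}) and the remainder $\alpha(\mathcal R_4)$ small (Lemma~\ref{lem.R.grad.p3}); multiplicity with distinct minimal periods then follows as in Section~\ref{sec.tante} by restricting to the finite-dimensional subspaces $V_{\le N,\tn}$.
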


It is proved in \cite{cha.smu} that for any $\mu_1, \mu_2$ there exist one mode Hopf plane wave solutions of the resonant system \eqref{0.bif.3}, of the form
\begin{equation}\label{one.mode.hopf}
{\varepsilon}^{\frac 1 2} \underline{v}^{(j)}\,, \quad \underline{v}^{(j)} := \kappa_j \cos(\omega^{(\mu_1, \mu_2)}_j t) \egen^{(\mu_1, \mu_2)}_j(\eta) e^{\ii \mu_1 \csi_1} e^{\ii \mu_2 \csi_2}\,, \quad \kappa_j \in \R \setminus\{0\}\,,
\end{equation}
where $\egen^{(\mu_1, \mu_2)}_j$ are eigenfunctions of $-\Delta_{\mathbb{S}^3} + \mathds{1}$ with eigenvalue $\omega^{(\mu_1, \mu_2)}_j := 2 j + |\mu_1| + |\mu_2| + 1$. However, the proof that they are non degenerate is obtained only for $\mu_1 = \mu_2$ and either $\mu_1 \in \{0, \dots, 5\}$ or $\mu_1$ large enough, with explicit computations performed with Mathematica code.  
The proof of the more general Theorem \ref{teo.hopf} is obtained exploiting variational methods.

We make the following comments, common to both Theorems \ref{teo.spherical} and \ref{teo.hopf}:
\begin{enumerate}
	\item (Regularity and multiplicity) If $r > \frac{5}{2}$ and $s > \frac{7}{2}$ the solutions $\{\phi^{(j)}_{ \varepsilon}\}$ of \eqref{original.eq} proved in Theorems \ref{teo.spherical} and \ref{teo.hopf} are \emph{classical}. Actually, the smoother we require the solutions to be in time and in space (i.e., the larger $r, s$ are), the smaller $\varepsilon_0(r, s, n)$ has to be. Analogously, the larger is the number of solutions $n$, the smaller $\varepsilon_0(r, s, n)$ has to be.
	\item (Minimal periods) The solutions $\phi^{(k)}_{\varepsilon}$ of \eqref{original.eq} whose existence is stated in Theorems \ref{teo.spherical}, \ref{teo.hopf} are \emph{geometrically distinct}, having  different minimal time periods $\frac{T_{\varepsilon}}{m_k}$, as stated in Items (ii).
	\item (Increasing norms) The  functions $v^{(k)}_\varepsilon$ turn out to have increasing norms in $k = 1, \dots, n$, although, for simplicity, we have stated estimates \eqref{nero} and \eqref{nero.hopf} uniformly in $k$. 
	\item (Critical exponent) The restrictions on the exponents $p \leq 5$ and $p \neq 4$ are {\it not} technical.
	In the critical case $p=5$, the functional
	\begin{equation}\label{I.p}
	\mathcal{G}_{p+1}(v):= \frac{1}{p+1}\int_{\mathbb{T} \times \mathbb{S}^3} v^{p+1}(t, z) \, dt d \sigma(z)\,
	\end{equation}
	associated to the nonlinearity is finite for any $v$ in the space  $L^\infty(\mathbb{T}_t, H^1(\mathbb{S}^3, d \sigma))$ (which appears in \eqref{action.intro}) by Sobolev embedding $H^1(\mathbb{S}^3, d \sigma) \hookrightarrow L^6(\mathbb{S}^3, d \sigma)$. However, it follows to have \emph{compact} gradient by the Strichartz estimates in Proposition \ref{lem.Strichartz.1} (see Remark \ref{rmk.mountain.pass}). For the supercritical exponents $p \geq 7$, the functional $\mathcal{G}_{p+1}(v)$ is not expected to be well defined for any 
	$v $ in $ L^\infty(\mathbb{T}_t, H^1(\mathbb{S}^3, d \sigma))$. If $p=4$, then $\mathcal{G}_5(v) \equiv 0$, as well as for all even values of $p$.
		Then the leading term in the action functional of the corresponding resonant system turns out to have degree $8$, which is supercritical. 
\end{enumerate}
  As already mentioned, Theorems \ref{teo.spherical} and \ref{teo.hopf} are inspired by the variational approach of \cite{Berti-BolleCMP, Berti-BolleNA, Berti-BolleCantor, Berti_book}, developed for $1$-d semilinear completely resonant wave equations
  $-\partial_{tt} u + \partial_{xx} u = u^{p} + \dots $ with Dirichlet boundary conditions. Major difficulties with respect to these works arise because of the $3$-dimensional manifold 
  $\mathbb{S}^3$. 
  This becomes evident for instance in the search of spherically symmetric solutions of \eqref{original.eq}, that reduces to solve the wave equation
  \begin{equation}\label{eq.singularity}
  -\partial_{tt} u + \partial_{xx} u = \frac{u^p}{\sin^{p-1}(x)}\,, \quad x \in (0, \pi)\,, \quad
  u(t, 0) = u(t, \pi) = 0\,,
  \end{equation}
  which has a singular nonlinearity at $x=0, \pi$.
  Before explaining the main difficulties and  ideas of our proof, we present a few related results.
  
\smallskip
  
\noindent \textbf{Related literature.}
The first existence results of $2\pi$-periodic solutions for completely resonant wave equations $\partial_{tt} u - \partial_{xx} u = |u|^{p-2} u$, $p>2$, have been proved by Rabinowitz starting with \cite{Rabi2}, via global variational methods. These techniques, as well as those in \cite{Brezis-Coron-Nirenberg, Brezis-Nirenberg},  enable to find periodic orbits with rational frequency, the reason being that other periods give rise to a small denominator problem.

Independently of these global results, the local bifurcation theory of periodic and quasi-periodic solutions was initiated for \emph{non resonant} 1-d Klein-Gordon equations by Wayne \cite{Wayne1990}, Kuksin \cite{Kuksin2000}, Craig and Wayne \cite{Craig-Wayne1993}, Poschel \cite{Poschel1996}, Chierchia and You \cite{Chierchia-You2000}, with KAM methods. For semilinear Klein-Gordon equations on $\mathbb{T}^d$ with convolution potentials, the first result is due to Bourgain in \cite{Bourgain2005}, later extended by \cite{Berti-Bolle2012, Berti-Bolle_book} for multiplicative potentials. Bifurcation for periodic and quasi-periodic solutions of non resonant Klein-Gordon equations 
was obtained in \cite{Berti-Procesi2011, Berti-Corsi-Procesi} for Lie Groups and homogeneous manifolds, in \cite{Berti-Bolle-Procesi} for Zoll manifolds, and in \cite{Grebert-Paturel} for the sphere $\mathbb{S}^d$. These results do not cover the completely resonant case \eqref{original.eq}, where all the linear frequencies of oscillations are integers. 

The first existence results of Cantor families of small amplitude 
time periodic solutions of 1-d \emph{completely resonant} wave equations $-\partial_{tt} u + \partial_{xx} u = u^p$, $p=3$, was proved in \cite{Lid-Shulman} under periodic boundary conditions and in \cite{Bambusi-Paleari2001} for Dirichlet boundary conditions, for frequencies belonging to the zero measure set \eqref{Omega.gamma}. 
The latter result was then generalized 
in \cite{Berti-BolleCMP, Berti-BolleNA} to arbitrary exponents $p$, using variational methods. Existence of periodic solutions for a set of frequencies $\omega \sim 1$ of density one was proved in \cite{Berti-BolleCantor, Berti-BolleNodea} via  Nash-Moser implicit function 
techniques, and in \cite{Gentile-Mastropietro-Procesi} via trees resummation arguments. Existence of time quasi-periodic solutions with two frequencies of completely resonant nonlinear wave equations on the circle were obtained in \cite{Procesi2005} and \cite{Berti-Procesi2005}. 

For completely resonant wave equations, or even more general Hamiltonian PDEs in dimension higher than one, not much is known about time periodic solutions besides the aforementioned paper \cite{cha.smu} and the present work.

\subsection{Ideas of proof}\label{sec.ideas}

In order to 
look for bifurcation of small amplitude 
time periodic solutions of \eqref{original.eq} 
with frequency $ \omega \sim 1 $ 
a 
natural approach is to implement a Lyapunov-Schmidt decomposition
in the spirit of 
\cite{Berti-BolleCMP, Berti-BolleNA,Berti-BolleCantor} for
$ 1d$ semilinear wave equations.
Major difficulties 
arise due to the higher dimension of the space domain, here the sphere $\mathbb{S}^3$, as we now explain. 
After a time rescaling, 
we look for $ 2 \pi $-periodic in time real solutions $u (t, z) $
of  $ - \omega^2 \partial_{tt} u + \Delta_{\mathbb{S}^{3}} u -u = u^p $.
By splitting  
$$ 
u  = v + w \, , \quad v := \Pi_V u \, , \quad w := \Pi_W u \, , \quad \Pi_W := \mathds{1} - \Pi_V \, , 
$$ 
where $V$ is the kernel of the operator $-\partial_{tt} + \Delta_{\mathbb{S}^3} - \mathds{1}$
(namely the space of solutions of the free Klein-Gordon equation \eqref{lin.eq})
and   $\Pi_V$ the corresponding orthogonal projector, 
it amounts to the 
system
\begin{gather}
\tag{Bif. eq} \label{Bif.eq}
(\omega^2 - 1) (-\Delta_{\mathbb{S}^3} + \mathds{1}) v = \Pi_V ((v +w)^p)\,,\\
\tag{Range eq} \label{Range.eq}
\left(-\omega^{2} \partial_{tt} + \Delta_{\mathbb{S}^3} - \mathds{1}  \right) w = \Pi_{W} ((v + w)^p)
\, . 
\end{gather}
For any $\omega \in \Omega_\gamma $ 
the operator $ \mathcal{L}_\omega:= -\omega^{2} \partial_{tt} + \Delta_{\mathbb{S}^3} - \mathds{1}   $
is invertible on the range $ W := V^\bot$ and, for any fixed $v \in V$ small enough (in some suitable norm),   
 one may solve first 
the range equation, obtaining $w=w(v) = o( v)$, by a contraction argument. 
Here, in order to control the nonlinearity $ (v + w)^p $, 
it is natural to
close the contraction in Sobolev spaces   
 $$ 
 H^r_t H^s_z := H^r( \mathbb{T}_t , H^s(\mathbb{S}^3, d \sigma)) \, , \quad 
 r > \frac 1 2 \, , \quad s > \frac 3 2 \, , 
 $$
 which are an algebra with respect to the product of functions, and where $\left.\mathcal{L}_\omega^{-1}\right|_W$ is bounded. This requires to take
 $ v $ small enough in 
$ H^r_t H^s_z $ 
as well, 
  which  amounts, for functions in the kernel $ V $,  
	 to require that 
	\begin{equation}\label{vdove}
	 \| v \|_{\cV^{r+s}_{t,z}} \ll 1 \, , \qquad 
	 \cV^{r+s}_{t,z}:=L^\infty(\mathbb{T}_t, H^{r+s}(\mathbb{S}^3, d \sigma))\cap V \, , \quad r + s > 2 
	 \, . 
	\end{equation}
 On the other hand, one needs then to solve the bifurcation equation \eqref{Bif.eq} with $w = w(v)$. 
 As observed in \cite{Berti-BolleCMP, Berti-BolleNA},  this
  turns out to be the Euler-Lagrange equation of the reduced action functional
 \begin{equation}\label{big.Pharma}
	 \Phi(v) := \frac{(\omega^2 -1)}{2} \| v\|_{\cV^{1}_{t,z}}^2 - \frac{1}{p+1} \int_{\mathbb{T} \times \mathbb{S}^3} (v + w(v))^{p+1} \,\dd t \dd \sigma\,.
 \end{equation}
A serious problem which arises is thus the following:  
\begin{itemize}
	\item[\ ]{\bf Problem:} The natural space to find mountain pass 
	critical points for the functional $\Phi$ in \eqref{big.Pharma}  is 
	(a small ball in) the space $\cV^1_{t,z}$ (modeled with an $ H^1 $-norm), 
	associated to its quadratic part. This is clearly in contradiction with solving 
	 the  \eqref{Range.eq} on the much smaller domain $ \{  \| v \|_{\cV^{2+}_{t,z}} \ll 1 \} $ \footnote{Here, if $a \in \R$, by $a+$ we mean a number greater than $a$.}
	 in \eqref{vdove}.
	 How to fill this regularity gap? 
\end{itemize}

We remark that the previous difficulty \emph{does not} disappear 
restricting to search solutions which depend on only one space variable, as spherically symmetric functions or Hopf waves. This is evident for instance in the spherically symmetric case, where the reduced equation \eqref{eq.singularity}
has a {\it singular} nonlinearity. If $p=3$, this issue is overcome (cfr. \cite{cha.smu}) noting that the functional $\Phi$  in \eqref{big.Pharma} possesses non degenerate critical points of the explicit form $v= \varepsilon^{\frac 1 2} (\underline{v} + \dots)\,,$
where $\underline{v}$ is a one mode function as in \eqref{one.mode}, which belong to 
 $ \{  \| v \|_{\cV^{r+s}_{t,z}} \ll 1 \} $ for any $ r+s > 2 $.

\smallskip

We now describe our strategy. For simplicity, we focus on the case $p=5$
and we restrict on spherically symmetric functions. The seminal idea is to note that, neglecting $w(v)$,
the functional $ \Phi (v) $ in \eqref{big.Pharma} is a 
perturbation of  the  ``resonant system" functional 
 \begin{equation}\label{big.Pharma0}
\Phi_0(v) :=  \frac{\varepsilon}{2} \| v \|_{\cV^{1}_{t,z}}^2 
  - \mathcal{G}_{p+1}(v)\,, \quad \mathcal{G}_{p+1}(v):= \frac{1}{p+1} \int_{\mathbb{T} \times \mathbb{S}^3} v^{p+1} \,\dd t \dd \sigma\,, \quad \varepsilon:= \omega^2 -1>0\,.
 \end{equation}
 The Strichartz estimate \eqref{item1st1} implies that $\mathcal{G}_6$ is well defined on $\cV^1_{t,z}$ and its gradient $\nabla_{\cV^{1}_{t,z}} \mathcal{G}_6$ is a bounded map from $\cV^{s}_{t,z}$ to $ \cV^1_{t,z}$ for any $s > \frac 5 6$, thus compact on $\cV^1_{t,z}$.
Thus $\Phi_0$ possesses a mountain pass critical point $v \in \cV^1_{t,z}$ (see \cite{Ambro-Rabi}), which by homogeneity has the form $v = \varepsilon^{\frac 1 4} \underline{v}$, where 
$\underline{v}$ solves the rescaled equation $v  = (-\Delta_{\mathbb{S}^3} + \Id)^{-1} \Pi_V v^5$. Such $\underline{v}$ is not a one mode function, but  it is $C^\infty$ by the following bootstrap argument.
By the Strichartz estimate \eqref{item1st1}, one has
$$
\| \underline{v}\|_{\cV^{\frac 7 6 - \delta}_{t,z}} = \| \Pi_V {\underline{v}}^5
\|_{\cV^{-(\frac 5 6 + \delta)}_{t,z}} 	 = \sup_{h \in \cV^{\frac 5 6 + \delta}_{t,z}\,,\ \|h\|_{\cV^{\frac 5 6 + \delta}_{t,z}} \leq 1} \Big|\int_{\mathbb{T} \times \mathbb{S}^3} \underline{v}^5 h d t d \sigma \Big| \leq C_\delta  \|\underline{v}\|^{5}_{\cV^{\frac 5 6 + \delta}_{t,z}}\,.
$$
Then, to increase further the regularity of $\underline{v}$, we observe that the Strichartz estimate \eqref{strich.2delta} implies
$$
\| \underline{v}\|_{\cV^{2+\delta'}_{t,z}} = \| \Pi_{V} \underline{v}^5\|_{\cV^{\delta'}_{t,z}} = \sup_{h \in \cV^{\infty}_{t,z}\,,\ \|h\|_{\cV^{- \delta'}_{t,z}} \leq 1} \Big|\int_{\mathbb{T} \times \mathbb{S}^3} \underline{v}^5 h d t d \sigma \Big|  \leq C_\delta \|\underline{v}\|_{\cV^{1+\delta'}_{t,z}}^5\,.
$$
Iterating this procedure with increasing values of $\delta'$, one deduces that 
$\underline{v} $ is in $ C^\infty$.

In order to adapt the previous arguments to deal with the whole functional $\Phi$
in \eqref{big.Pharma}, we
split the bifurcation equation into low and high frequencies.
For any  $N \in \N$ (to be determined later large enough) 
 the bifurcation equation is equivalent to the system 
\begin{gather}
\label{low.p}
\varepsilon (-\Delta_{\mathbb{S}^3} + \mathds{1}) v_1 = \Pi_{V_{\leq N}} ((v_1 + v_2 +w)^p)\,,\\
\label{high.p}
\varepsilon (-\Delta_{\mathbb{S}^3} + \mathds{1}) v_2 = \Pi_{V_{> N}} ((v_1 + v_2 +w)^p)\,,
\end{gather}
where 
$$
v(t,z) = \sum_{j \in \N} v_j \cos(\omega_j t) e_j(z)\,, \quad v = v_1 + v_2\, , \quad v_1 := \Pi_{V_{\leq N}} v \, ,  \quad v_2 := \Pi_{> N}v\,, 
$$ 
$\omega_j := j +1$ are the frequencies associated to the eigenfunctions $e_j$ defined in \eqref{one.mode}, and  $\Pi_{\leq N}$, resp. $\Pi_{> N}$,  is the projector on the  time-space  Fourier frequencies smaller than $ N $, resp. $ > N $.  

 Then we solve both  the high frequency 
 bifurcation equation \eqref{high.p} and the range equation \eqref{Range.eq} 
   arguing by contraction:
 \begin{itemize}
 \item In Section \ref{sec.v2}, we solve first the high frequency bifurcation equation \eqref{high.p} for $v_2$ in a small ball of $\cV^{2+}_{t,z}$, for any  $\|v_1\|_{\cV^1_{t,z}} \leq R \varepsilon^{\frac 1 4}$ and $\|w\|_{H_t^{\frac 1 2 +} H^{\frac 3 2 + }_z} \lesssim \varepsilon^{\frac 5 4} N^{5 +}$. Here we use the Strichartz-type estimates \eqref{item1st1}-\eqref{strich.2delta}. 
  \item In Section \ref{sec.w}, we solve 
  the range equation \eqref{Range.eq} for $w$ in a small ball of $H^{\frac 1 2 +}_t H^{\frac 3 2 + }_z $. We exploit algebra properties
   since $v_2 \in \cV^{2+}_{t,z} \subseteq H^{\frac 1 2 +}_t H^{\frac 3 2 + }_z $
  and $v_1$ belongs to a finite dimensional space.
  \item In Section \ref{sec.bif} we solve the finite dimensional bifurcation equation \eqref{low.p}, which has a variational structure, applying  mountain pass arguments. 
  Finally in Section \ref{sec.tante} we prove multiplicity of critical points, distinguished by their minimal period, and in Section \ref{sec.classical} we prove their higher regularity.
\end{itemize} 
 In all these points, we use smallness conditions of the form $N^a \varepsilon^b \ll 1$, for $a, b >0$.
 
 In the case $p=3$ we follow an analogous variational procedure. Remark that in this case the ``resonant system" functional \eqref{big.Pharma0} possesses one mode Hopf plane wave solutions for any value of the momenta $(\mu_1, \mu_2)\in \Z \times \Z$, but in general their non-degeneracy is not known, except for the particular values considered in \cite{cha.smu}. This is because for $\mu_1 \neq \mu_2$ an explicit formula for the product between the eigenfunctions $\{e^{(\mu_1, \mu_2)}_j\}_j$ in \eqref{one.mode.hopf} is not available.
 Then we split our equation \eqref{original.eq} into the range equation \eqref{Range.eq} and the high and low bifurcation equations \eqref{high.p}, \eqref{low.p}. 
 We solve the low frequency bifurcation equation \eqref{low.p} using duality arguments, H\"older inequality and the Sobolev embedding \eqref{sob.sigh}, without Strichartz-type estimates.

In the degenerate case $p=2$, one has $\Pi_V(v^2)=0$ and the leading nonlinear term in the bifurcation equations \eqref{low.p}-\eqref{high.p} turns out to be the cubic term $\Pi_V \left(v \mathcal{L}_\omega^{-1} v^2\right)$. The Strichartz-type estimates \eqref{in.treno}-\eqref{v.che.vuoi} are used to solve the high frequency equation \eqref{high.p}, avoiding to prove if $\left.\mathcal{L}_\omega^{-1} \right|_{W}$ is bounded on $L^q(\mathbb{T}_t, L^q(\mathbb{S}^3, d \sigma))$ spaces.
\\[1mm]
\emph{Notation:} We denote by $\N := \{0, 1, 2, 3, \dots\}$ the set of integer numbers and  $\N_* := \{1, 2, 3, \dots\}$. Given $a \in \R$, we  denote $\langle a \rangle := \max\{1, |a|\}$. Given $a, b$ real valued functions, $a \lesssim b$ means that there exists $C>0$ such that $a \leq C b$. If $C$ depends on parameters $\alpha_1, \dots, \alpha_r$, we write $a \lesssim_{\alpha_1, \dots, \alpha_r} b$. If $a \lesssim b$ and $b \lesssim a$, we write $a \asymp b$.

\smallskip

\noindent{\bf Acknowledgments.}
Research supported by PRIN 2020 (2020XB3EFL001) 
``Hamiltonian and dispersive PDEs''.

\section{Functional Setting}

We describe the Laplace-Beltrami operator in spherical and Hopf coordinates,
we describe its spectrum and eigenfunctions
and we define Sobolev space of  spherically symmetric functions and  
and Hopf plane waves. 


\subsection{Functions with spherical symmetry}


According to Definition \ref{def:spherical}, in spherical coordinates the metric tensor is represented with respect to the basis of the tangent space $\big\{ \frac{\partial}{\partial x}, \frac{\partial}{\partial \theta},  \frac{\partial}{\partial \varphi} \big\} $ as
$$
g(x,\theta,\varphi)=\begin{pmatrix}
1 & 0 & 0\\
0 & \sin^2 (x) &0 \\
0 & 0 & \sin^2 (x) \sin^2 (\theta)
\end{pmatrix}\,.
$$
Hence the volume form is 
$d\sigma=\sin^2(x)\sin(\theta)dxd\theta d\varphi$, 
and the Laplace-Beltrami operator reads
\begin{equation}\label{LBsphe}
\Delta_{\mathbb{S}^3}=\partial_x^2+2\frac{\cos (x)}{\sin (x) }\partial_x+\frac{1}{\sin^2 (x)}\partial_{\theta}^{2}+\frac{\cos (\theta )}{\sin^2 (x) \sin (\theta)}\partial_\theta+\frac{1}{\sin^2(x)\sin^2 (\theta)}\partial_{\varphi}^2\,.
\end{equation}
For convenience, 
we introduce the  normalized measures
$$
	\dbar\sigma:=\frac{1}{2\pi^2}d\sigma\,, \quad \dbar x:= \frac{2}{\pi}dx\,, \quad \dbar\theta:=\frac{1}{2}d\theta\,, \quad \dbar\varphi:=\frac{1}{2\pi}d\varphi\,,
$$
chosen in such a way that the measure of the sphere $ \mathbb{S}^3 $ is $ 1$.
We denote $L^p(\mathbb{S}^3, \dbar \sigma) := L^p(\mathbb{S}^3)$.

The Laplace-Beltrami operator \eqref{LBsphe} leaves invariant the subspace of spherically symmetric functions (cfr. Definition \ref{def:spherical}), acting as
$$
\Delta_{\mathbb{S}^3} (u \otimes 1_{\theta, \varphi}) = \left(\Delta_{\mathbb{S}^{3}}^{ss} u\right) \otimes 1_{\theta, \varphi}\,, \quad \Delta_{\mathbb{S}^{3}}^{ss}:=\partial_x^2+2\frac{\cos (x)}{\sin (x)}\partial_x\,.
$$
As a consequence, the subspaces of spherically symmetric functions in $H^s(\mathbb{S}^3, \dbar \sigma)$ coincide with
\begin{equation}\label{Hsx.storti}
\mathcal{H}^s_x:=\left\lbrace u \in L^{2}([0,\pi],\sin^2(x)\dbar x)\, :\, (-\Delta_{\mathbb{S}^{3}}^{ss}+\mathds{1})^{\frac{s}{2}}u \in L^{2}([0,\pi],\sin^2(x)\dbar x)\right\rbrace\,,
\end{equation}
equipped with inner product  $\langle u_1,u_2 \rangle_{\mathcal{H}^s_x}:=\langle (-\Delta_{\mathbb{S}^{3}}^{ss}+\mathds{1})^{s}u_1,u_2\rangle_{L^2([0,\pi],\sin^2(x)\dbar x)}$, in the sense that
$u  \in \cH^s_x$ if and only if $u \otimes 1_{\theta, \varphi} \in H^s(\mathbb{S}^3, \dbar \sigma)$, with 
\begin{equation}\label{Hs.cHs}
\|u\|_{\cH^s_x} = \| u \otimes 1_{\theta, \varphi} \|_{H^s(\mathbb{S}^3, \dbar \sigma)} \quad \forall u \in \cH^s_x\,.
\end{equation}
We now exhibit a  basis of eigenfunctions and eigenvalues for  the operator $\Delta^{ss}_{\mathbb{S}^{3}}$, see \cite{cha.smu}:
\begin{lemma}[Spectral decomposition of $\Delta^{ss}_{\mathbb{S}^3}$]\label{teo.eigencouples}
	The set of functions $\{e_n\}_{n \in \N} $ defined by
	\begin{equation}\label{def.ej}
	e_n(x) := \frac{\sin((n+1)x)}{\sin(x)} \quad \forall n \in \N \, , 
	\end{equation} 
	is an orthonormal basis for $\mathcal{H}_x^{0}$ of 
	eigenfunctions of  $-\Delta^{ss}_{\mathbb{S}^{3}} + \Id$, with eigenvalues
	\begin{equation}\label{omega.n}
	\omega_n^{2}\,,
	\quad \omega_n := n+1 \, , \quad \forall n \in \N \,.
	\end{equation}
\end{lemma}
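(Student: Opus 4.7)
The plan is to reduce the spectral problem for the singular operator $-\Delta_{\mathbb{S}^3}^{ss} + \Id$ on $\mathcal{H}_x^0$ to the standard Dirichlet Laplacian on $[0,\pi]$ via the change of unknown $u \mapsto f := u \sin(x)$. The engine of the argument is the identity
\begin{equation*}
(-\Delta_{\mathbb{S}^3}^{ss} + \Id) u \;=\; -\frac{(u \sin(x))''}{\sin(x)} \;=\; -\frac{f''}{\sin(x)}\,,
\end{equation*}
which I would verify first by a direct calculation using the explicit formula \eqref{LBsphe}: when one expands $\partial_x^2(f/\sin x)$ via the quotient rule, the singular first-order term $2\cos(x)/\sin(x)\,\partial_x u$ in $\Delta_{\mathbb{S}^3}^{ss}$ cancels the cross contributions, and the $+\Id$ absorbs the remaining $f/\sin(x) = u$ term, producing the right-hand side above.

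I would then observe that the map $U: u \mapsto u\sin(x)$ is an isometric isomorphism from $\mathcal{H}_x^0 = L^2([0,\pi], \sin^2(x)\,\dbar x)$ onto $L^2([0,\pi], \dbar x)$, since the $\sin^2(x)$ weight is absorbed exactly in the transformation. Under this conjugation, $-\Delta_{\mathbb{S}^3}^{ss} + \Id$ becomes $-\partial_x^2$ equipped with Dirichlet boundary conditions $f(0) = f(\pi) = 0$, which are forced by the requirement that $u = f/\sin(x)$ remain regular at the coordinate singularities $x = 0, \pi$ of the spherical parametrization \eqref{sph.coord}. The eigenbasis of the Dirichlet Laplacian on $[0,\pi]$ is the classical sine basis $\{\sin((n+1)x)\}_{n \in \N}$ with eigenvalues $(n+1)^2 = \omega_n^2$; it is orthonormal in $L^2([0,\pi], \dbar x)$ by the normalization $\int_0^\pi \sin^2((n+1)x)\,\dbar x = \frac{2}{\pi}\cdot\frac{\pi}{2} = 1$, and complete by standard Fourier theory on the interval. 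Pulling back through $U^{-1}$ yields precisely $e_n(x) = \sin((n+1)x)/\sin(x)$ with eigenvalue $\omega_n^2$, and transfers both orthonormality and completeness to $\mathcal{H}_x^0$.

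The main, though ultimately mild, obstacle is to align the self-adjoint realizations so that the spectral definition of $\mathcal{H}_x^s$ in \eqref{Hsx.storti} coincides with the one obtained by conjugating the standard Dirichlet $-\partial_x^2$ by $U$. Concretely, one must check that smooth spherically symmetric functions on $\mathbb{S}^3$ correspond under $U$ to smooth functions on $[0,\pi]$ vanishing at the endpoints, so that the essentially self-adjoint closure of $-\Delta_{\mathbb{S}^3}^{ss}+\Id$ on this natural core matches $U^{-1}(-\partial_x^2)U$ on $H^2 \cap H^1_0([0,\pi])$. Once this identification is in place, the three assertions of the lemma, namely that the $\{e_n\}$ are eigenfunctions with eigenvalues $\omega_n^2$ and form an orthonormal basis of $\mathcal{H}_x^0$, all follow at once from the classical Dirichlet spectral theorem on the interval.
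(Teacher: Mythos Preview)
Your argument is correct: the conjugation $U: u \mapsto u\sin(x)$ intertwines $-\Delta_{\mathbb{S}^3}^{ss}+\Id$ with the Dirichlet Laplacian on $[0,\pi]$, and the computation $(-\Delta_{\mathbb{S}^3}^{ss}+\Id)u = -\frac{(u\sin x)''}{\sin x}$ checks out line by line. The paper does not actually supply a proof of this lemma; it simply states the result and defers to \cite{cha.smu}. Your write-up therefore goes beyond what the paper does, and the method you chose (unitary equivalence to $-\partial_x^2$ with Dirichlet conditions) is the standard and most transparent route. The only part you flag as an ``obstacle'' --- matching the self-adjoint realizations --- is indeed routine here and not something the paper addresses either, since for the purposes of the paper only the orthonormal-basis statement in $\mathcal{H}_x^0$ and the eigenvalue identification are used downstream.
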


As a consequence, the Sobolev spaces $\cH^s_x$ in \eqref{Hsx.storti} are spectrally characterized as
	\begin{equation}\label{Hs.x.Fourier}
	\mathcal{H}_x^{s}=\Big\lbrace u(x)=\sum_{j\in \N}{u}_je_j(x)\, :\, \| u\|^2_{\cH^s_x} := \sum_{j\in \N} |{u}_j|^2\omega_j^{2s} <\infty \Big\rbrace 
	\,,
	\end{equation}
	where ${u}_j := \langle u, e_j \rangle_{\cH^0_x}$ are the Fourier coefficients with respect to the basis $\{e_j\}$, with scalar product $\langle u, v\rangle_{\mathcal{H}^s_x}=\sum_{j\in \N} {u}_j \overline{{v}_j} \omega_j^{2s}$.
	
The eigenfunctions $\{e_n\}_{n \in \N}$ 
satisfy the following product rule: for any integer $ n\geq m $,
	\begin{equation}\label{productrule}
	e_n(x)e_m(x)=\sum\limits_{k=0}^{m}e_{n-m+2k}(x)\,.
	\end{equation}
We will use property \eqref{productrule} to prove the Strichartz type Propositions \ref{lem.Strichartz.1} and \ref{lem.strich.Lom}. It can also be used to prove that the spaces $\cH^s_x$ with $s > \frac 3 2$ enjoy algebra property.

\subsection{Hopf symmetry}

 According to Definition \ref{def:hopf.ansatz}, in Hopf coordinates the metric tensor is represented with respect to the basis of the tangent space
  $\big\{ \frac{\partial}{\partial \eta}, \frac{\partial}{\partial \xi_1}, \frac{\partial}{\partial \xi_2}\big\}$ as 
$$
g(\eta,\xi_1,\xi_2)=\begin{pmatrix}
1 & 0 & 0 \\
0 & \sin^2 (\eta) & 0\\
0 & 0 & \cos^2 (\eta)
\end{pmatrix} \, . 
$$
Hence the volume form is $d\sigma=\frac{1}{2}\sin(2\eta)d\eta d\xi_1 d\xi_2$ and 
the Laplace-Beltrami operator reads
\begin{equation}\label{laplace}
\Delta_{\mathbb{S}^3}=\partial_{\eta}^2+2\frac{\cos(2\eta)}{\sin(2\eta)}\partial_{\eta}+\frac{1}{\sin^2 (\eta)}\partial_{\xi_1}^2+\frac{1}{\cos^2 (\eta)}\partial_{\xi_2}^2\,.
\end{equation}
We introduce the normalized  measure  
\begin{equation}\label{norm.meas}
\dbar \sigma := \frac{1}{2\pi^2} d \sigma\,, \quad \quad \dbar \eta := d \eta\,, \quad \dbar \xi_1 := \frac{1}{2\pi} d \csi_1\,, \quad \dbar \xi_2 := \frac{1}{2\pi} d \csi_2\, , 
\end{equation}
so  that the measure of the sphere $ \mathbb{S}^3 $ is $ 1$.

Representing a function $\phi$ in Hopf coordinates (see Definition \ref{def:hopf.ansatz}) and expanding in Fourier series with respect to the variables $\xi_1,\, \xi_2$, we have
\begin{equation}\label{envelop1}
\phi(\eta,\xi_1,\xi_2)=\sum\limits_{\mu_1,\mu_2\in \mathbb{Z}}
\hat{\phi}_{\mu_1,\mu_2}(\eta) e^{\ii\mu_1\xi_1}e^{\ii\mu_2\xi_2} \,.
\end{equation}
In these coordinates, the Laplace-Beltrami operator  \eqref{laplace} reads
\begin{equation}\label{laplace.hopf}
\Delta_{\mathbb{S}^3}\phi(\eta,\xi_1,\xi_2)=\sum\limits_{\mu_1,\mu_2\in \mathbb{Z}}e^{\ii\mu_1\xi_1}e^{\ii\mu_2\xi_2} \Delta_{\mu_1, \mu_2} \hat{\phi}_{\mu_1, \mu_2}(\eta)
\end{equation}
with
\begin{equation}\label{Lmu.op}
\Delta_{\mu_1,\mu_2} :=\partial_\eta^2+2\frac{\cos(2\eta)}{\sin(2\eta)}\partial_{\eta}-\frac{\mu_1^2}{\sin^2(\eta)}-\frac{\mu_2^2}{\cos^2(\eta)}\,.
\end{equation}
As a consequence, the space of Hopf plane waves is left invariant by $\Delta_{\mathbb{S}^3}$ and recalling \eqref{laplace}, \eqref{norm.meas}, 
 the subspaces of Hopf plane waves in $H^s(\mathbb{S}^3, \dbar \sigma)$ 
  coincide with
\begin{equation}\label{SobolHopf}
	\cH_{\eta}^{s}:=\left\lbrace u(\eta) \in L^2\left( ( 0,\tfrac{\pi}{2}),\sin(2\eta)d\eta \right)\, :\, (-\Delta_{\mu_1,\mu_2}  + \Id)^{\frac{s}{2}}u \in L^2\left(( 0,\tfrac{\pi}{2}),\sin(2\eta)d\eta \right)  \right\rbrace 
\end{equation}
for any $ s \in \R $,
equipped with inner product $\langle u_1, u_2 \rangle_{\cH^s_{\eta}}=\langle  \left(-\Delta_{\mu_1,\mu_2} + \Id \right)^s u_1,u_2\rangle_{L^2\left(( 0,\tfrac{\pi}{2}), \sin(2\eta)d\eta\right)}$, in the sense that 
 $u(\eta) \in \cH^s_{\eta} $ if and only if  
 $ u(\eta)e^{\ii\mu_1\xi_1}e^{\ii\mu_2\xi_2} \in H^{s}(\mathbb{S}^3,\dbar\sigma)$ 
 (cfr.  
\eqref{Hs.std}), with
	\begin{equation}\label{eta.sfera}
	\|u(\cdot)\|_{\cH^s_{\eta}}=\|u(\cdot)e^{\ii\mu_1\xi_1}e^{\ii\mu_2\xi_2}\|_{H^s(\mathbb{S}^3,\dbar \sigma)}\,.
	\end{equation} 
We now exhibit a basis of eigenfunctions and eigenvalues of $-\Delta_{\mu_1, \mu_2}$, see  \cite[Section 3.2]{cha.smu}. 

\begin{lemma}[Spectral decomposition of $\Delta_{\mu_1, \mu_2}$] \label{lemma.Lmu.diag}
	There exists an $L^2$-orthonormal basis of eigenfunctions $\{e^{(\mu_1, \mu_2)}_j\}_{j \in \N}$ of  $-\Delta_{\mu_1, \mu_2}  + \Id $, with eigenvalues $ ( \omega_j^{(\mu_1,\mu_2)})^2 $ where
	$$
	\omega_j^{(\mu_1,\mu_2)}:=2j+1+|\mu_1|+|\mu_2|\,, \quad j \in \N\,.
	$$
	The $e_j^{(\mu_1,\mu_2)}$ are the real functions 
	\begin{equation}\label{ejmu}
	e_j^{(\mu_1,\mu_2)}(\eta)=N_j^{(|\mu_1|,|\mu_2|)}\left(1-\cos(2\eta)\right)^{\frac{|\mu_1|}{2}}\left(1+\cos(2\eta)\right)^{\frac{|\mu_2|}{2}} P_j^{(|\mu_1|,|\mu_2|)}(\cos(2\eta))
	\end{equation}
	where $\{P_j^{(|\mu_1|,|\mu_2|)}\}_{j \in \N}$ are the Jacobi polynomials
	 and $N_j^{(|\mu_1|,|\mu_2|)}$ are suitable normalization constants.
\end{lemma}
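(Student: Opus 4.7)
The plan is to reduce the eigenvalue problem for $-\Delta_{\mu_1,\mu_2}+\Id$ on $L^2((0,\tfrac{\pi}{2}),\sin(2\eta)\,d\eta)$ to the classical Jacobi equation by a change of variable and a gauge transformation that peels off the singular boundary behavior at $\eta=0$ and $\eta=\tfrac{\pi}{2}$.

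First I would set $y:=\cos(2\eta)\in(-1,1)$ and use $1-y=2\sin^2\eta$, $1+y=2\cos^2\eta$, $\sin(2\eta)=\sqrt{1-y^2}$, together with $\partial_\eta=-2\sqrt{1-y^2}\,\partial_y$. A direct computation rewrites
\[
\Delta_{\mu_1,\mu_2}=4(1-y^2)\partial_y^2-8y\,\partial_y-\frac{2\mu_1^2}{1-y}-\frac{2\mu_2^2}{1+y},
\]
and the measure becomes $\sin(2\eta)\,d\eta=\tfrac{1}{2}\,dy$, so $L^2((0,\tfrac{\pi}{2}),\sin(2\eta)\,d\eta)$ is unitarily equivalent to $L^2((-1,1),dy)$.

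Next I would use the Frobenius ansatz
\[
u(\eta)=\left(\frac{1-y}{2}\right)^{\!\!|\mu_1|/2}\!\!\left(\frac{1+y}{2}\right)^{\!\!|\mu_2|/2}Q(y),
\]
motivated by the fact that near $\eta=0$ (resp.\ $\eta=\tfrac{\pi}{2}$) any $L^2$-solution of the eigenvalue equation must behave like $\eta^{|\mu_1|}$ (resp.\ $(\tfrac{\pi}{2}-\eta)^{|\mu_2|}$) in order to cancel the Coulomb-like singularities $\mu_1^2/\sin^2\eta$ and $\mu_2^2/\cos^2\eta$. Substituting into $(-\Delta_{\mu_1,\mu_2}+\Id)u=\lambda^2 u$ and cancelling the gauge factor, a straightforward but slightly tedious calculation yields
\[
(1-y^2)Q''(y)+\bigl(|\mu_2|-|\mu_1|-(|\mu_1|+|\mu_2|+2)y\bigr)Q'(y)+\kappa\, Q(y)=0,
\]
with $\kappa=\tfrac{1}{4}(\lambda^2-1)-\tfrac{1}{2}(|\mu_1|+|\mu_2|)-\tfrac{1}{4}(|\mu_1|+|\mu_2|)^2$. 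This is exactly the Jacobi differential equation with parameters $(|\mu_1|,|\mu_2|)$, whose only polynomial (hence $L^2$ near $y=\pm1$) solutions are the Jacobi polynomials $P_j^{(|\mu_1|,|\mu_2|)}$, corresponding to $\kappa=j(j+|\mu_1|+|\mu_2|+1)$. Solving for $\lambda^2$ gives $\lambda^2=(2j+1+|\mu_1|+|\mu_2|)^2=(\omega_j^{(\mu_1,\mu_2)})^2$, as claimed, and the eigenfunctions are exactly those in \eqref{ejmu}.

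Finally, orthonormality and completeness: the Jacobi polynomials $\{P_j^{(|\mu_1|,|\mu_2|)}\}_{j\in\N}$ are a classical orthogonal basis of $L^2((-1,1),(1-y)^{|\mu_1|}(1+y)^{|\mu_2|}\,dy)$, and pulling back through the change of variables and the gauge factor above identifies this exactly with the measure $\sin(2\eta)\,d\eta$ on $(0,\tfrac{\pi}{2})$. Choosing $N_j^{(|\mu_1|,|\mu_2|)}$ to be the reciprocal of the standard Jacobi $L^2$-norm makes $\{e_j^{(\mu_1,\mu_2)}\}_{j\in\N}$ orthonormal, and the basis property transfers from the Jacobi setting. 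The only mildly delicate step is checking that there are no additional $L^2$-eigenfunctions outside this family; this follows from the fact that any solution of the ODE which is not of the Frobenius form above behaves like $\eta^{-|\mu_1|}$ or $(\tfrac{\pi}{2}-\eta)^{-|\mu_2|}$ at the endpoints and is therefore excluded by the $L^2(\sin(2\eta)\,d\eta)$ condition, i.e.\ the Sturm-Liouville problem is in the limit-point case at both endpoints whenever $|\mu_1|,|\mu_2|\ge 1$ and is fixed by the natural boundary behavior in the remaining cases. This endpoint analysis is the main (though standard) technical point.
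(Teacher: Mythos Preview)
Your argument is correct and is the standard reduction of the eigenvalue problem for $-\Delta_{\mu_1,\mu_2}+\Id$ to the Jacobi differential equation via the substitution $y=\cos(2\eta)$ and the Frobenius factorization; the computations of the transformed operator, of $\kappa$, and of the eigenvalues $(2j+1+|\mu_1|+|\mu_2|)^2$ all check out, and the orthogonality/completeness transfer from the Jacobi weight $(1-y)^{|\mu_1|}(1+y)^{|\mu_2|}\,dy$ is exactly right.

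There is nothing to compare against here: the paper does not supply its own proof of this lemma but simply refers to \cite[Section 3.2]{cha.smu}. Your self-contained argument is precisely the classical one and almost certainly coincides with what is in that reference. The endpoint analysis you flag (limit-point/limit-circle dichotomy for $|\mu_i|\ge 1$ versus $|\mu_i|=0$) is indeed the only subtle point, and you have identified it correctly; in the present context one can also bypass it by noting that the $e_j^{(\mu_1,\mu_2)}(\eta)e^{i\mu_1\xi_1}e^{i\mu_2\xi_2}$ are restrictions of spherical harmonics on $\mathbb{S}^3$, whose completeness is known a priori.
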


By Lemma \ref{lemma.Lmu.diag}, the Sobolev spaces $\cH^s_{\eta}$ in \eqref{SobolHopf} are spectrally characterized as 
\begin{equation}\label{Hs.eta}
	\cH^s_\eta := \Big\lbrace u(\eta) := \sum_{j \in \N} {u}_j e_j^{(\mu_1, \mu_2)}(\eta)\ :\ \|u\|_{\cH^s_\eta}^2 := \sum_{j\in \N} |{u}_j|^2 \big(\omega_j^{(\mu_1, \mu_2)}\big)^{2s} < \infty\Big \rbrace 
\end{equation}
where ${u}_j := \langle u, e^{(\mu_1, \mu_2)}_j \rangle_{\cH^0_\eta}$ are the Fourier coefficients with respect to the basis $\{e^{(\mu_1, \mu_2)}_j\}_{j \in \N}$ equipped with scalar product
$ \langle u, v \rangle_{\cH^s_\eta} = \sum_{j\in \N} {u}_j \overline{{v}_j} (\omega_j^{(\mu_1, \mu_2)})^{2s} $. 

\subsection{Sobolev spaces in time-space}

Since equation \eqref{original.eq} is time reversible, 
we  look for functions which are even in time. For this reason, we consider the Sobolev spaces of time periodic even real functions
\begin{equation}\label{Hps.storti}
H^r_t \cH^s_z := \left\lbrace u(t, z) = \sum_{\ell, j \in \N} u_{\ell, j} \cos(\ell t) \egen_j(z)\ :\ \|u\|^2_{{H}^r_t\cH^s_z}:= \sum_{\ell \in \N} \langle \ell \rangle^{2 r} \sum_{j \in \N} \omega_j^{2s} |u_{\ell, j}|^2 < \infty\right\rbrace\,,
\end{equation}
taking values in
\begin{equation}\label{Hs.zeta}
\cH^s_z := \begin{cases}
\cH^s_x \text{ defined in \eqref{Hs.x.Fourier}} & \text{for spherically symmetric functions} \, ,  \\ \cH^s_\eta \text{ defined in \eqref{Hs.eta}} & \text{for Hopf plane waves}\, . 
\end{cases}
\end{equation}
In \eqref{Hps.storti} the 
 $\{\egen_j\}_{j \in \N}$ are respectively the eigenfunctions of $-\Delta_{\mathbb{S}^3}^{ss} - \Id$ and $-\Delta_{\mu_1, \mu_2} + \Id $, namely
\begin{equation}
\egen_j := \begin{cases}
e_j \text{ as in } \eqref{def.ej} &  \text{ for spherically symmetric functions} \, ,  \\
e_j^{(\mu_1, \mu_2)} \text{ as in Lemma \ref{lemma.Lmu.diag}} &\ \text{for Hopf plane waves}\,,
\end{cases}
\end{equation}
corresponding to the  eigenvalues $\{\omega_j^2 \}_{j \in \N}$ where 
\begin{equation}\label{omgen}
\omgen_j := \begin{cases}
j+1 & \ \text{for spherically symmetric functions} \, , \\ 
2 j + 1 + |\mu_1| + |\mu_2| &\ \text{for Hopf plane waves}\,,
\end{cases}
\end{equation}
and $u_{\ell, j}$ are the time-space 
Fourier coefficients of $ u $,  
\begin{equation}\label{fou.coef}
	u_{\ell, j} :=
	\begin{cases}
	\int_{\mathbb{T}} \int_0^\pi u(t, x) \cos(\ell t) e_j(x)\sin^2(x) d x \dbar t & \text{for spherically symmetric functions} \, , \\
	\int_{\mathbb{T}} \int_0^{\frac{\pi}{2}} u(t, \eta) \cos(\ell t) e^{(\mu_1, \mu_2)}_j(\eta)\sin (2\eta) d \eta \dbar t & \text{for Hopf plane waves}\,,
	\end{cases}
\end{equation}
where 
\begin{equation}\label{def.dbart}
\dbar t:=\frac{1}{\pi}dt\,.
\end{equation}
For any $r >\frac 1 2$ and for any $s \in \R$ the space $H^{r}_t \cH^s_z$ is embedded continuously into 
$L^\infty(\mathbb{T}_t, \cH^s_z)$, with
\begin{equation}\label{cor.12delta}
	\| u\|_{L^\infty (\mathbb{T}_t, \cH^s_z)} \leq C_{r} \|u\|_{H^{r}_t \cH^s_z}\,
\end{equation}
for some $C_r >0$. Moreover, since the spaces $\cH^s_z$ for $s > \frac 3 2 $ are an algebra, 
also the spaces $ H^{r}_t \cH^s_z  $ are an algebra 
for any $r > \frac 1 2 $ and $s > \frac 3 2$:  there exists a constant $C_{s,r}>0$ such that 
\begin{equation}\label{algebra.prop}
\|u_1\, u_2\|_{{H}^{r}_t \mathcal{H}^{s}_z}\leq C_{s,r} \|u_1\|_{{H}^{r}_t \mathcal{H}^{s}_z}\|u_2\|_{{H}^{r}_t \mathcal{H}^{s}_z}\,.
\end{equation}

\begin{lemma}\label{lem.lp.hs.easy}
There exists $C>0$ such that for any $ u^{(1)}, u^{(2)}, u^{(3)}, u^{(4)} \in L^{\infty}(\mathbb{T}_t,\cH^{\frac{3}{4}}_\eta)$
		\begin{equation}\label{int4estimate.0}
		\Big| \int_{\mathbb{T}}\int_{0}^{\frac{\pi}{2}} u^{(1)}u^{(2)}u^{(3)}u^{(4)} \sin(2\eta) d \eta \dbar t \Big| \leq C \prod_{l=1}^4 
		\|u^{(l)}\|_{L^{\infty}(\mathbb{T}_t, \cH^{\frac 3 4}_\eta)} \, . 
		\end{equation}
For any $ u^{(1)}, u^{(2)}, u^{(3)} \in L^{\infty}(\mathbb{T}_t ,\cH^1_\eta)$, and any $ u^{(4)} \in L^{\infty}(\mathbb{T}_t, \cH^0_\eta)$,
		\begin{equation}\label{int4estimate}
		\Big| \int_{\mathbb{T}}\int_{0}^{\frac \pi 2} u^{(1)} u^{(2)} u^{(3)} u^{(4)} \sin(2\eta) d \eta \dbar t \Big| \leq C \Big(\prod_{l=1}^3 \|u^{(l)}\|_{L^{\infty}(\mathbb{T}_t, \cH_\eta^1)} \Big) \|u^{(4)}\|_{L^{\infty}(\mathbb{T}_t, \cH^0_\eta)} \,.
		\end{equation}
\end{lemma}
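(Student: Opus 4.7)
The plan is to lift each $u^{(l)}(t,\cdot)$, originally defined on the one-dimensional slice $\bigl(0,\tfrac{\pi}{2}\bigr)$, to a full Hopf plane wave on $\mathbb{S}^3$, and then to reduce both estimates to a direct application of H\"older's inequality on $\mathbb{S}^3$ together with the Sobolev embeddings \eqref{sob.sigh}. Concretely, for each fixed $t\in\mathbb{T}$ I would set
\[
\tilde u^{(l)}(t,\eta,\xi_1,\xi_2):=u^{(l)}(t,\eta)\,e^{\ii\mu_1\xi_1}e^{\ii\mu_2\xi_2}\,,\qquad l=1,\dots,4\,,
\]
so that $|\tilde u^{(l)}|=|u^{(l)}|$ is independent of $(\xi_1,\xi_2)$, and \eqref{eta.sfera} gives $\|\tilde u^{(l)}(t,\cdot)\|_{H^s(\mathbb{S}^3,\dbar\sigma)}=\|u^{(l)}(t,\cdot)\|_{\cH^s_\eta}$. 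Since the normalized measure on $\mathbb{S}^3$ in Hopf coordinates factors as $\dbar\sigma=\sin(2\eta)\,d\eta\,\dbar\xi_1\,\dbar\xi_2$ with $\int_{\mathbb{T}}\dbar\xi_j=1$, the one-dimensional integral appearing on the left-hand side of \eqref{int4estimate.0} and \eqref{int4estimate} equals, for each $t$, the three-dimensional integral of $\prod_l |\tilde u^{(l)}|$ on $\mathbb{S}^3$ against $\dbar\sigma$.

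For \eqref{int4estimate.0}, after this lifting I would apply H\"older's inequality on $\mathbb{S}^3$ with four factors in $L^4(\mathbb{S}^3,\dbar\sigma)$ and the Sobolev embedding $H^{3/4}(\mathbb{S}^3,\dbar\sigma)\hookrightarrow L^4(\mathbb{S}^3,\dbar\sigma)$ recalled right after \eqref{sob.sigh}, which is the borderline case $p=\tfrac{6}{3-2s}=4$ for $s=\tfrac{3}{4}$. This gives, for a.e.\ $t\in\mathbb{T}$,
\[
\int_{0}^{\pi/2}\prod_{l=1}^{4}|u^{(l)}(t,\eta)|\,\sin(2\eta)\,d\eta=\int_{\mathbb{S}^3}\prod_{l=1}^{4}|\tilde u^{(l)}(t,\cdot)|\,\dbar\sigma\leq C\prod_{l=1}^{4}\|u^{(l)}(t,\cdot)\|_{\cH^{3/4}_\eta}\,.
\]
Bounding each factor by its $L^\infty_t$-norm and integrating over $\mathbb{T}$, which has unit $\dbar t$-measure, yields \eqref{int4estimate.0}. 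The estimate \eqref{int4estimate} is proved in exactly the same way, applying H\"older with the conjugate exponents $(6,6,6,2)$ (so that $\tfrac16+\tfrac16+\tfrac16+\tfrac12=1$) together with the Sobolev embedding $H^{1}(\mathbb{S}^3,\dbar\sigma)\hookrightarrow L^6(\mathbb{S}^3,\dbar\sigma)$, again borderline, for the first three factors, while no embedding is needed for the fourth factor since $\cH^0_\eta$ is the weighted $L^2$ space.

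I do not expect any serious obstacle in carrying out this plan: the only real point is to avoid trying to prove a direct one-dimensional Sobolev-type inequality for the weighted space on $\bigl(0,\tfrac{\pi}{2}\bigr)$ with measure $\sin(2\eta)\,d\eta$, and instead to exploit the isometric identification of $\cH^s_\eta$ with the subspace of Hopf plane waves in $H^s(\mathbb{S}^3,\dbar\sigma)$, so that the $3$-dimensional Sobolev embeddings on the sphere can be used. This is also consistent with the regularity indices $\tfrac{3}{4}$ and $1$ appearing in the statement, which are exactly those predicted by the geometry of $\mathbb{S}^3$.
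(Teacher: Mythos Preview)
Your proposal is correct and follows essentially the same approach as the paper: lift to Hopf plane waves on $\mathbb{S}^3$, identify the weighted $\eta$-integral with the $\mathbb{S}^3$-integral, then apply H\"older with exponents $(4,4,4,4)$ (resp.\ $(6,6,6,2)$) together with the Sobolev embeddings $H^{3/4}\hookrightarrow L^4$ (resp.\ $H^1\hookrightarrow L^6$) and \eqref{eta.sfera}. The only cosmetic differences are that the paper alternates the signs of the phases so that the product of the lifts equals the product of the $u^{(l)}$ directly (instead of passing through absolute values), and that $\int_{\mathbb{T}}\dbar t=2$ rather than $1$, which is harmless for the constant.
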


\begin{proof}
	Defining $U^{(1)} := u^{(1)} e^{\ii \mu_1 \csi_1 + \ii \mu_2 \csi_2}$, $U^{(2)} := u^{(2)} e^{-\ii \mu_1 \csi_1 - \ii \mu_2 \csi_2}$, $U^{(3)} := u^{(3)} e^{\ii \mu_1 \csi_1 + \ii \mu_2 \csi_2}$, $U^{(4)} := u^{(4)} e^{-\ii \mu_1 \csi_1 - \ii \mu_2 \csi_2}$, 
	and recalling the definition of $\dbar \sigma$, $\dbar \csi_1$, $\dbar \csi_2$  in \eqref{norm.meas}, one has
	$$
	\begin{aligned}
	\int_{\mathbb{T}} \int_{0}^{\frac \pi 2} \prod_{l=1}^4 u^{(l)}(t, \eta) \sin(2\eta) d \eta \dbar t 
= \int_{\mathbb{T}} \int_{\mathbb{S}^3} \prod_{l=1}^4 U^{(l)}(t, z) \dbar \sigma(z) \dbar t\,.
	\end{aligned}
	$$ 
	Then 
	applying at any time $t$ 
	 the generalized H\"older inequality 
	 with $p_1=p_2=p_3=p_4 = 4$ for functions on $\mathbb{S}^3$, and the Sobolev embedding $H^{\frac{3}{4}}(\mathbb{S}^3, \dbar \sigma) \hookrightarrow L^4(\mathbb{S}^3, \dbar \sigma)$ we get 
	$$
	\begin{aligned}
	\Bigg| 	\int_{\mathbb{T}} \int_{0}^{\frac \pi 2} \prod_{l=1}^4 u^{(l)}(t, \eta) \sin(2\eta) d \eta \dbar t\Bigg| & \leq \int_{\mathbb{T}} \prod_{l=1}^4 \left\| U^{(l)}(t, \cdot) \right\|_{L^4(\mathbb{S}^3, \dbar \sigma)} \dbar t 
	\lesssim \int_{\mathbb{T}} \prod_{l=1}^4\Big\| U^{(l)}(t, \cdot) \Big\|_{H^{\frac 3 4 }(\mathbb{S}^3, \dbar \sigma)} \dbar t\,.
	\end{aligned}
	$$
	Then \eqref{int4estimate.0} follows because $\|U^{(l)}(t, \cdot)\|_{H^{\frac 3 4}(\mathbb{S}^3)} = \|u^{(l)}(t)\|_{\cH^{\frac 3 4}_{\eta}}$ by \eqref{eta.sfera}.
The bound	
\eqref{int4estimate} follows similarly applying the generalized H\"older inequality with $p_1=p_2=p_3=6$, and $p_4=2$ for functions on $\mathbb{S}^3$, using the embedding $H^{1}(\mathbb{S}^3, \dbar \sigma) \hookrightarrow L^6(\mathbb{S}^3, \dbar \sigma)$ and \eqref{eta.sfera}.
\end{proof}

\section{Variational Lyapunov-Schmidt decomposition}

We look for time periodic solutions of \eqref{original.eq} with time frequency $\omega$ close to $1$, via a Lyapunov-Schmidt decomposition. 
More specifically we look for a $\frac{2\pi}{\omega}$-time periodic real valued spherically symmetric solution $u(t,x)$ of \eqref{original.eq} which solves
\begin{equation}\label{main.sph.sym}
\begin{gathered}
-\omega^2 \partial_{tt} u(t, x) + (\Delta^{ss}_{\mathbb{S}^3} - \Id) u(t, x) = u^p(t, x)\,, \\
(t,x) \in \mathbb{T} \times (0, \pi)\,, \quad \partial_x u(t, 0) = \partial_x u(t, 2\pi) = 0 \,.
\end{gathered}
\end{equation}
We consider the cases $p=2,5$ only, because the case $p=3$ is covered in \cite{cha.smu}.

If $p=3$ we look for a $\frac{2\pi}{\omega}$-time periodic Hopf plane wave solution $\phi(t,\eta,\xi_1,\xi_2)=u(t,\eta)e^{i\mu_1\xi_1}e^{i\mu_2\xi_2}$ of \eqref{original.eq}, with $u(t,\eta)$ real. The function $u(t,\eta)$ solves
\begin{equation}\label{main.hopf.1}
\begin{gathered}
-\omega^2 \partial_{tt} u(t, \eta) + (\Delta_{\mu_1, \mu_2} -\Id) u(t, \eta) = u^3(t, \eta)\,,\\
(t, \eta) \in \mathbb{T} \times (0, \frac\pi 2)\,, \quad \partial_{\eta} u(t, 0) = \partial_{\eta} u(t, \frac \pi 2) = 0\,,
\end{gathered}
\end{equation}
with $\Delta_{\mu_1, \mu_2}$ defined in \eqref{Lmu.op}.
Both the equations in \eqref{main.sph.sym} and \eqref{main.hopf.1} are of the form
\begin{equation}\label{main.common}
\mathcal{L}_\omega u = u^p\,, \quad \mathcal{L}_\omega := -\omega^2\partial_{tt} - A\,,
\end{equation}
where $A$ denotes the unbounded, self-adjoint, positive operator
\begin{equation}\label{def.A}
A := \begin{cases}
-\Delta^{ss}_{\mathbb{S}^3} + \Id & \text{for spherically symmetric functions}\\
-\Delta_{\mu_1, \mu_2} + \Id & \text{for Hopf waves}\,.
\end{cases}
\end{equation}
Equation \eqref{main.common} admits a variational formulation. It is the formal Euler Lagrange equation of the action functional 
\begin{equation}\label{az.funct}
\Psi(u) := \frac{1}{2} \int_{\mathbb{T}} \langle \mathcal{L}_{\omega} u(t) , u(t) \rangle_{\cH^0_z} \dbar t - \mathcal{G}_{p+1}(u)\,,
\end{equation}
	with
\begin{equation}\label{big.G}
\mathcal{G}_{p+1}(u) :=
\begin{cases}
\begin{aligned}
\frac{1}{p+1} \int_{\mathbb{T}} \int_0^\pi u^{p+1}(t, x) \sin^2(x) \dbar x \dbar t & \quad \text{for spherically symmetric functions} \\
\frac{1}{p+1} \int_{\mathbb{T}} \int_0^{\frac \pi 2} u^{p+1}(t, \eta) \sin(2\eta) d\eta \dbar t & \quad \text{for Hopf plane waves}\,.
\end{aligned}
\end{cases}
\end{equation}
We shall exploit the variational structure of \eqref{main.common} in Section \ref{sec.bif}, after a suitable finite dimensional reduction. We perform a Lyapunov-Schmidt  decomposition of equation \eqref{main.common}. 
We define
\begin{gather}
	\label{def.V}
	\begin{aligned}
	V :=	\ker (-\partial_{tt} - A) &=\Big\lbrace u(t,z)=\sum\limits_{j,\ell\in \N} {u}_{\ell, j}\cos (\ell t)\egen_j(z)\ :\ {u}_{\ell, j}=0,\, \forall \ell\neq \omgen_j \Big\rbrace\\
	&= \Big\lbrace v(t,z)=\sum\limits_{j \in \N} {v}_{j}\cos (\omega_j t)\egen_j(z) \Big\rbrace\,,
	\end{aligned}
	\\
	\label{def.W}
		W:=
		\textrm{Rg}(-\partial_{tt} -A)=\Big\lbrace u(t,z)=\sum\limits_{j,\ell\in \N}{u}_{\ell, j}\cos (\ell t)\egen_j(z) \ : \,{u}_{\ell, j}=0,\, \forall \ell= \omgen_j \Big\rbrace\,.
\end{gather}
Note that $W=V^{\perp}$ in any ${H}^{r}_t\cH^s_z$.

We decompose the space $V$ into low and high frequencies: given $N \in \N$,
 we define
	\begin{gather}
	\label{def.V1}
	V_1 :=V_{\leq N}:=\Big\lbrace v(t,z)=\sum\limits_{0\leq \omega_j\leq N} {v}_j\cos(\omgen_j t) \egen_j(z)\,\Big\rbrace\,,\\
	\label{def.V2}
	V_2:=V_{>N}:=\Big\lbrace v(t,z)=\sum\limits_{\omega_j>N}{v}_j\cos (\omgen_j t)\egen_j(z) \Big\rbrace\,.
	\end{gather}
	We denote by $\Pi_V$, $\Pi_W$, $\Pi_{V_{\leq N}} \equiv\Pi_{V_1}$, $\Pi_{V>N} \equiv \Pi_{V_2}$, the orthogonal projectors on $V$, $W$, $V_{1}$ and $V_{2}$ respectively, so that any $u$ can be decomposed as
	\begin{equation}\label{def.Pi}
	\begin{gathered}
	u = v + w\,, \quad v := \Pi_V u = \sum_{j\in \N} u_{\omega_j, j} \cos(\omega_j t) \egen_j \in V\,, \quad w := \Pi_W u \in W\,,\\
	u = v_1 + v_2 + w\,, \quad v_1 := \Pi_{V_1} u \in V_1\,, \quad v_2:= \Pi_{V_2} u \in V_2\,, \quad w := \Pi_W u \in W\,.
	\end{gathered}
	\end{equation}
	We then observe that a function $u$ satisfies \eqref{main.common} if and only if it is a solution of the system
\begin{gather}
\label{v1.eq}
(\omega^2-1) A v_1 - 	\Pi_{V_{1}} (v_1 + v_2 +w)^p = 0 \,,\\
\label{v2.eq}
(\omega^2-1) A v_2 - \Pi_{V_{2}} (v_1 + v_2 +w)^p = 0 \,,\\
\label{w.eq}
\mathcal{L}_{\omega} w -  \Pi_W (v_1 + v_2 +w)^p = 0\,.
\end{gather}
We shall solve the equation \eqref{v2.eq} for $v_2$ by a contraction argument in Section \ref{sec.v2}. Then in Section \ref{sec.w} we shall solve the range equation \eqref{w.eq}, arguing again by a contraction argument and using the following lemma. 

\begin{lemma}\label{lemma.sono.pochissimi}
	Assume $\omega \in \Omega_\gamma$ with $\Omega_\gamma$ defined  in \eqref{Omega.gamma}. Then the linear operator $\mathcal{L}_\omega$ defined  in \eqref{main.common} is invertible on $W$, with
	\begin{equation}
	\left\| \mathcal{L}_\omega^{-1} \right\|_{\mathcal{B}(W \cap H^{r}_t \cH^s_{z}; W \cap H^{r}_{t} \cH^s_{z})} \leq \frac 2 \gamma \quad  \forall r, s \in \R\,.
	\end{equation}
	Furthermore, if $\omega = 1$, one has $\|\mathcal{L}_1^{-1}\|_{\mathcal{B}(W \cap H^r_t \cH^s_z; W \cap H^r_t \cH^s_z)} \leq 1$.
\end{lemma}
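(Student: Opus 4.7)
The operator $\mathcal{L}_\omega = -\omega^{2}\partial_{tt} - A$ is diagonal with respect to the orthogonal basis $\{\cos(\ell t)\egen_j(z)\}_{\ell, j \in \N}$ of $H^{r}_t \cH^s_z$: since $A\egen_j = \omgen_j^{2}\egen_j$ with $\omgen_j \in \N_*$ (by \eqref{omgen}), one has
\begin{equation*}
\mathcal{L}_\omega\bigl(\cos(\ell t)\egen_j(z)\bigr) = \bigl(\omega^{2}\ell^{2} - \omgen_j^{2}\bigr)\cos(\ell t)\egen_j(z)\,.
\end{equation*}
By the definition \eqref{def.W} of $W$, a function $u \in W$ has Fourier coefficient $u_{\ell,j}=0$ whenever $\ell = \omgen_j$, so the eigenvalues of $\mathcal{L}_\omega|_W$ are exactly the nonzero numbers $\lambda_{\ell,j} := \omega^{2}\ell^{2} - \omgen_j^{2}$ with $\ell \neq \omgen_j$. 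My plan is to obtain a uniform lower bound $|\lambda_{\ell,j}| \geq \gamma/2$ and then observe that, since the norm $\|\cdot\|_{H^{r}_t\cH^s_z}$ in \eqref{Hps.storti} is a weighted $\ell^{2}$-norm on the Fourier coefficients $u_{\ell,j}$ and $\mathcal{L}_\omega^{-1}$ acts as the multiplier $(u_{\ell,j}) \mapsto (\lambda_{\ell,j}^{-1} u_{\ell,j})$, the operator norm bound on each mode transfers immediately to $\|\mathcal{L}_\omega^{-1}\|_{\mathcal{B}(W \cap H^{r}_t\cH^s_z)}$.

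To bound $|\lambda_{\ell,j}|$ from below I distinguish two cases. If $\ell = 0$ then $|\lambda_{0,j}| = \omgen_j^{2} \geq 1 \geq \gamma/2$, since $\omgen_j \in \N_*$ and $\gamma < \gamma_0 < 2$ (taking $\gamma_0$ small as in \cite{Bambusi-Paleari2001}). If $\ell \geq 1$, I factor
\begin{equation*}
|\lambda_{\ell,j}| = |\omega\ell - \omgen_j|\,|\omega\ell + \omgen_j|\,.
\end{equation*}
Applying the Diophantine condition in the definition \eqref{Omega.gamma} of $\Omega_\gamma$ with the integer $\omgen_j \neq \ell$ gives $|\omega\ell - \omgen_j| \geq \gamma/\ell$, while the trivial bound $|\omega\ell + \omgen_j| \geq \omega\ell \geq \ell/2$ (using $\omega \in [\tfrac{1}{2},2]$) yields $|\lambda_{\ell,j}| \geq \gamma/2$. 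Combining both cases, $|\lambda_{\ell,j}| \geq \gamma/2$ on $W$, hence the multiplier norm of $\mathcal{L}_\omega^{-1}$ is bounded by $2/\gamma$ on every weighted Sobolev space $H^{r}_t\cH^s_z$, for any $r,s \in \R$.

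For the case $\omega = 1$, the same factorization gives $|\ell^{2} - \omgen_j^{2}| = |\ell - \omgen_j|\,(\ell + \omgen_j) \geq 1$ for every $\ell \in \N$, $\omgen_j \in \N_*$ with $\ell \neq \omgen_j$: indeed $|\ell - \omgen_j| \geq 1$ because both are integers and distinct, while $\ell + \omgen_j \geq 1$. This yields $\|\mathcal{L}_1^{-1}\|_{\mathcal{B}(W \cap H^{r}_t\cH^s_z)} \leq 1$.

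The argument is essentially a one-line spectral computation once the Diophantine condition is recognized as the lower bound for $|\omega \ell - \omgen_j|$; there is no substantive obstacle, the only point requiring a little care is to verify that the condition $\ell \neq j$ in \eqref{Omega.gamma} matches exactly the condition $\ell \neq \omgen_j$ defining $W$, and to handle the degenerate mode $\ell = 0$ separately since the Diophantine condition is vacuous there.
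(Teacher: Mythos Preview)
Your proof is correct and follows essentially the same approach as the paper: diagonalize $\mathcal{L}_\omega$ in the Fourier basis, factor $\omega^{2}\ell^{2}-\omgen_j^{2}=(\omega\ell-\omgen_j)(\omega\ell+\omgen_j)$, and combine the Diophantine lower bound $|\omega\ell-\omgen_j|\geq \gamma/\ell$ with $|\omega\ell+\omgen_j|\geq \omega\ell\geq \ell/2$. You are slightly more explicit than the paper in separating out the $\ell=0$ case and in checking that the index condition $\ell\neq j$ in \eqref{Omega.gamma} applies with $j=\omgen_j\in\N$, but the argument is the same.
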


\begin{proof}
Let $w(t,z) = \sum_{\ell, j\,, \ell \neq \omgen_j } w_{\ell, j} \cos(\ell t) \egen_j(z)$. Then
\begin{equation}\label{L.om.fourier}
	\mathcal{L}_\omega^{-1} w(t, z) = \sum_{\ell \neq \omgen_j} \frac{w_{\ell, j}}{\omega^2 \ell^2 - \omgen_j^2} \cos(\ell t) \egen_j(z) \,.
\end{equation}
Then it is sufficient to observe that, if $\omega \in \Omega_\gamma$, then
\begin{equation}\label{approx.bded}
\forall \ell \neq \omgen_j \quad 
|\omega^2 \ell^2 - \omgen_j^2| \geq \frac{\gamma}{2}\,,
\end{equation}
because for any $\ell$ and $j$ such that $\ell \neq \omega_j$ and $\ell \neq 0$, one has
	${| (\omega \ell + \omega_j)(\omega \ell - \omega_j) | \geq |\omega \ell | \frac{\gamma}{|\ell|} \geq \frac{\gamma}{2}}$.
Finally, if $\omega = 1$, the estimate immediately follows observing that $|\ell^2 - \omega_j^2| \geq 1$, for any  $  \ell \neq \omega_j$.
\end{proof}

\section{Properties of functions in $V$}\label{sec.Stri}

In this section we prove some properties of functions in the kernel $V$ which will be used to solve the system \eqref{v1.eq}-\eqref{w.eq}.
Given $s \in \R$, we shall denote $\mathcal{V}^s_{t,z} := V \cap H^0_t \cH^s_z$, equipped with norm
\begin{equation}\label{hs.ker}
	\| v\|^2_{\cV^{s}_{t,z}} := \sum_{j \in \N} |v_j|^2 \omgen_j^{2s} \,.
\end{equation}
Furthermore we denote $\cV^{\infty}_{t,z} := \bigcap_{s \geq 0} \cV^{s}_{t,z}$ and 
 $\| \cdot \|_{L^p(\mathbb{T}_t, E)} := \left(\int_{\mathbb{T}} \| \cdot\|_E^p \dbar t\right)^{\frac 1 p}$.

\begin{lemma}\label{lemma.come.vuoi}
	Let $r,r',s,s' \in \mathbb{R}$ such that $r+s=r'+s'$.
	Then for any $  v \in V$
	\begin{align}\label{distrib.come.vuoi}
	& \|v\|_{{H}^r_t\cH^s_z}=\|v\|_{{H}^{r'}_t\mathcal{H}^{s'}_z}=\|v\|_{L^{2}(\mathbb{T}_t,\mathcal{H}^{r+s}_z)}\,, \\
\label{hs.linf}
&	\|v\|^2_{{H}^0_t\mathcal{H}^s_z}=\|v(0,\cdot)\|^2_{\mathcal{H}^s_z}=\|v\|^2_{L^{\infty} (\mathbb{T}_t,\mathcal{H}^s_z)} = \| v\|^2_{\cV^{s}_{t,z}}\,.
	\end{align}

	\begin{proof}
		In order to prove \eqref{distrib.come.vuoi} it is sufficient to observe that
		$$
		\|v\|^2_{{H}^r_t\mathcal{H}^s_z}=\big\|\sum_{j \in \N} {v}_{j}\cos(\omgen_jt)\egen_j(z)\big\|_{{H}^r_t\mathcal{H}^s_z}^2=\sum_{j \in \N} | {v}_{j}|^2\omgen_j^{2r}\omgen_j^{2s}=\sum_{j \in \N} |{v}_{j}|^2\omgen_j^{2(r+s)}\,.
		$$
		The identities 
		\eqref{hs.linf} follow because, for any $ t \in \mathbb{T} $, 
		$$
		\|v(t,\cdot)\|^2_{\mathcal{H}^s_z}=\big\|\sum_{j \in \N}{v}_{j}\cos (\omgen_jt)\egen_j(z)\big\|^2_{\mathcal{H}^s_z}=\sum_{j \in \N}|{v}_{j}|^2|\cos(\omgen_jt)|^2\omgen_j^{2s}\,,
		$$
		with $\| \cdot\|_{\cH^s_z}$ defined according to \eqref{Hs.zeta}
		and since  $|\cos(\cdot)|\leq 1$.
	\end{proof}
\end{lemma}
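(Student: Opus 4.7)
The plan is to exploit the fundamental property of elements of $V$: by the definition \eqref{def.V}, every $v\in V$ has the spectral representation $v(t,z)=\sum_{j\in\mathbb{N}} v_j\cos(\omgen_j t)\egen_j(z)$, in which the temporal Fourier index $\ell$ is rigidly pinned to the spatial eigenvalue label through $\ell=\omgen_j$. Consequently, in any norm of the form $H^r_t \cH^s_z$ the weights $\langle\ell\rangle^{2r}$ and $\omgen_j^{2s}$ merge on the diagonal into a single weight $\omgen_j^{2(r+s)}$, which depends only on the sum $r+s$. This is the whole content of the first chain of identities; everything else is bookkeeping about the normalization of the measure $\dbar t=\tfrac{1}{\pi}dt$ and about the fact that the eigenvalues $\omgen_j$ (either $j+1$ or $2j+1+|\mu_1|+|\mu_2|$) satisfy $\omgen_j\geq 1$, so $\langle\omgen_j\rangle=\omgen_j$.

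Concretely, for the first chain \eqref{distrib.come.vuoi} I would substitute the expansion into the definition \eqref{Hps.storti}, use that $u_{\ell,j}=v_j$ if $\ell=\omgen_j$ and $0$ otherwise, and obtain
\[
\|v\|_{H^r_t \cH^s_z}^2=\sum_{j\in\mathbb{N}}\omgen_j^{2r}\,\omgen_j^{2s}|v_j|^2=\sum_{j\in\mathbb{N}}\omgen_j^{2(r+s)}|v_j|^2,
\]
which manifestly equals $\|v\|_{H^{r'}_t \cH^{s'}_z}^2$ whenever $r+s=r'+s'$. The identity with $\|v\|_{L^2(\mathbb{T}_t,\cH^{r+s}_z)}$ then follows from Fubini and from the orthonormality of $\{\cos(\omgen_j t)\}_j$ with respect to $\dbar t$, namely $\int_\mathbb{T}\cos^2(\omgen_j t)\,\dbar t=1$ (which uses crucially that $\omgen_j\geq 1$).

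For the second chain \eqref{hs.linf}, the identity $\|v\|_{H^0_t\cH^s_z}^2=\|v\|_{\cV^s_{t,z}}^2$ is just the definition \eqref{hs.ker} specialized to $r=0$. Evaluating the expansion at $t=0$ gives $v(0,z)=\sum_j v_j\egen_j(z)$, whose $\cH^s_z$-norm-squared is $\sum_j|v_j|^2\omgen_j^{2s}$, matching the above. For the $L^\infty$ identity I would estimate
\[
\|v(t,\cdot)\|_{\cH^s_z}^2=\sum_{j\in\mathbb{N}}|v_j|^2|\cos(\omgen_jt)|^2\omgen_j^{2s}\leq \sum_{j\in\mathbb{N}}|v_j|^2\omgen_j^{2s}
\]
using $|\cos|\leq 1$, with equality achieved at $t=0$ where every cosine equals $1$; this gives both the upper bound for the supremum and its attainment.

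I do not expect any serious obstacle: the lemma is essentially a repackaging of the spectral definition of $V$, with the only mild subtlety being the check that $\omgen_j\geq 1$ is enough to replace the Japanese bracket $\langle\omgen_j\rangle$ by $\omgen_j$ and to avoid the degenerate $\ell=0$ normalization of the cosine basis under $\dbar t$. Once that is noted, the two lines of computation above complete the argument.
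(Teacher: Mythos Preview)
Your proposal is correct and follows essentially the same approach as the paper: expand $v$ in its kernel basis, collapse $\langle\ell\rangle^{2r}\omgen_j^{2s}$ to $\omgen_j^{2(r+s)}$ on the diagonal $\ell=\omgen_j$, and for \eqref{hs.linf} use $|\cos|\leq 1$ with equality at $t=0$. Your added remarks on the normalization (that $\omgen_j\geq 1$ so $\langle\omgen_j\rangle=\omgen_j$ and the $\ell=0$ cosine anomaly is avoided) are valid clarifications the paper leaves implicit.
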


By \eqref{hs.linf} and algebra property of the spaces $\cH^s_z$, 
for any $  v^{(1)}, v^{(2)} \in \cV^{s}_{t,z}$ and $s > \frac 3 2$
\begin{equation}\label{algebra.cheap}
\|v^{(1)} v^{(2)}\|_{\cV^s_{t,z}} \lesssim_{s} \|v^{(1)}\|_{\cV^s_{t,z}} \|v^{(2)}\|_{\cV^s_{t,z}}\,.
\end{equation}
For any $ s < s' $ the following smoothing properties
hold (cfr.  \eqref{def.V1}, \eqref{def.V2}): for any $ v \in V $
\begin{equation}\label{lem.s.sprime}
\| \Pi_{V_1} v\|_{\cV^{s'}_{t,z}}\leq N^{s'-s}\|v\|_{\cV^{s}_{t,z}}\,, \quad \|\Pi_{V_2} v\|_{\cV^{s}_{t,z}}\leq N^{-	(s'-s)}\|v\|_{\cV^{s'}_{t,z}}\,.
\end{equation}
Since  $A \egen_j(z) = \omega_j^2 \egen_j(z)$ for any $j$ (see Lemmas \ref{teo.eigencouples} and \ref{lemma.Lmu.diag}) 
and recalling \eqref{hs.ker}  it results
\begin{equation}\label{Delta.smooth}
\|A^{-1} v\|_{\cV^{s-2}_{t,z}} \leq \|v\|_{\cV^{s}_{t,z}}\, , \quad \forall v \in \cV^{s}_{t,z}\, .
\end{equation}
We will also use that by Lemma \ref{lemma.come.vuoi} and the Sobolev embedding \eqref{sob.sigh}, for any $v \in \cV^{s}_{t,x}$
one has
\begin{equation}\label{sob.sigh.v}
	\|v\|_{L^p_{t,x}} \lesssim_p \|v\|_{\cH^s_{t,x}}\,, \quad p\leq \frac{6}{3-2s}\,,
\end{equation}
with $L^p_{t,x} := L^p(\mathbb{T}_t, L^p((0, \pi), \sin^2(x) \dbar x)$.
\begin{lemma}\label{pesce.rosso}
	For any  $u \in L^2(\mathbb{T}_t,\cH_z^{s})$ it results 
	$\left \| \Pi_{V} u \right\|_{\cV^{s}_{t,z}} \leq \|u\|_{H^0_t \cH^{s}_z}\,.$
	The same holds if $\Pi_V$ is replaced by $\Pi_{V_{1}}$ or $\Pi_{V_2}$.
\end{lemma}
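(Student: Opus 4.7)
The plan is to reduce the statement to Parseval's identity: $\Pi_V$ should be viewed as the orthogonal projection in $L^2(\mathbb{T}_t,\cH^s_z)$ onto the closed subspace $\cV^s_{t,z}$, which can only decrease the norm. The identification of $\cV^s_{t,z}$ with the image $\{\sum_j v_j\cos(\omega_j t)\egen_j\,:\,(v_j)\in\ell^2_s\}$ via \eqref{hs.ker} is isometric into $H^0_t\cH^s_z$, by the very definition \eqref{Hps.storti} evaluated at $r=0$, and this is the key observation that makes everything work.

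Concretely, I would first expand an arbitrary $u\in L^2(\mathbb{T}_t,\cH^s_z)$ in time-space Fourier series using the $\cH^0_z$-orthonormal basis $\{\egen_j\}_{j\in\N}$ (from Lemma \ref{teo.eigencouples} or Lemma \ref{lemma.Lmu.diag}) together with the trigonometric basis $\{\cos(\ell t),\sin(\ell t)\}_{\ell\in\N}$ of $L^2(\mathbb{T}_t,\dbar t)$, writing
\[
u(t,z)=\sum_{j\in\N}\egen_j(z)\Bigl(\sum_{\ell\in\N}a_{\ell,j}\cos(\ell t)+b_{\ell,j}\sin(\ell t)\Bigr),
\]
with the normalization \eqref{def.dbart} chosen so that the trigonometric family is orthonormal. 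Parseval in both variables then gives $\|u\|^2_{H^0_t\cH^s_z}=\sum_{\ell,j\in\N}\omega_j^{2s}\bigl(|a_{\ell,j}|^2+|b_{\ell,j}|^2\bigr)$, where we use that $H^0_t=L^2_t$ so the even/odd components are both accounted for.

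Next, by the definition \eqref{def.V} of $V$, the projector $\Pi_V$ keeps precisely the cosine modes with $\ell=\omega_j$, i.e.\ $\Pi_V u=\sum_{j\in\N}a_{\omega_j,j}\cos(\omega_j t)\egen_j(z)$. Invoking \eqref{hs.ker}, this yields $\|\Pi_V u\|^2_{\cV^s_{t,z}}=\sum_{j\in\N}\omega_j^{2s}|a_{\omega_j,j}|^2$, which is visibly a sub-sum of the Parseval expression above, proving the required inequality. The cases of $\Pi_{V_1}$ and $\Pi_{V_2}$ are identical: by \eqref{def.V1}--\eqref{def.V2} we simply restrict the $j$-sum further to $\omega_j\le N$ or $\omega_j>N$, obtaining an even smaller sub-sum.

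I do not foresee a real obstacle — this is essentially a one-line Parseval argument. The only mild care needed is bookkeeping: making sure the normalizations in $\dbar t$ and in the $\{\egen_j\}$ basis line up so that the identity $\|u\|^2_{H^0_t\cH^s_z}=\sum_{\ell,j}\omega_j^{2s}(|a_{\ell,j}|^2+|b_{\ell,j}|^2)$ really holds with constant $1$; this is built into the definitions in Section 2. If one preferred, one could avoid the explicit Fourier expansion altogether by observing that $\cV^s_{t,z}\hookrightarrow H^0_t\cH^s_z$ isometrically and that $\Pi_V$ is selfadjoint and idempotent on $H^0_t\cH^s_z$, from which the contraction property is immediate.
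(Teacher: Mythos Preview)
Your proposal is correct and follows essentially the same approach as the paper. The paper's one-line proof invokes Lemma~\ref{lemma.come.vuoi} (specifically \eqref{hs.linf}) to identify $\|\Pi_V u\|_{\cV^s_{t,z}} = \|\Pi_V u\|_{H^0_t\cH^s_z}$ and then uses that $\Pi_V$ is an orthogonal projection on $H^0_t\cH^s_z$; this is exactly the alternative you sketch in your last paragraph, while your explicit Parseval computation just unpacks the content of that lemma (note that in the paper's setting all functions are even in time, so the sine modes you introduce are harmless but unnecessary).
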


\begin{proof}
	By Lemma \ref{lemma.come.vuoi} we have
	$\left \| \Pi_{V} u \right\|_{\cV^{s}_{t,z}}^2 = \left \| \Pi_{V} u \right\|_{H^0_t \cH_z^{s}}^2\leq \|u\|^2_{H^0_t \cH_z^{s}}$.
\end{proof}

\begin{lemma}\label{algebraKer}
	Let $s>\frac{3}{2}$, $r>\frac{1}{2}$ and $q \in \N$, then there exists a positive constant $C = C(s, r, q)$ such that for any $j< q$ and any $v^{(1)},\dots,v^{(j)} \in \cV^{s}_{t,z}$, $u^{(j+1)},\dots u^{(q)} \in H^r_t \cH^s_z$,
	\begin{equation}\label{eq.algebraker}
	\left\|\Pi_{V} \left({v^{(1)} \cdots v^{(j)} u^{(j+1)}\cdots u^{(q)}}\right)\right\|_{\cV^{s}_{t,z}}\leq C \|v^{(1)}\|_{\cV^s_{t,z}}\cdots \|v^{(j)}\|_{\cV^s_{t,z}}\|u^{(j+1)}\|_{H^r_t \cH^s_z}\cdots \|u^{(q)}\|_{H^r_t \cH^s_z}\,.
	\end{equation}
	The same statement holds if $\Pi_V$ is replaced by $\Pi_{V_1}$ or $\Pi_{V_2}$.
\end{lemma}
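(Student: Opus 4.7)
The plan is to combine the orthogonal projection bound from Lemma \ref{pesce.rosso} with the algebra property \eqref{algebra.prop} of $\cH^s_z$ for $s>\frac{3}{2}$, and the Sobolev embedding \eqref{cor.12delta} in the time variable.

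\textbf{Step 1: Reduction via Lemma \ref{pesce.rosso}.} Since $\Pi_V$, $\Pi_{V_1}$, $\Pi_{V_2}$ are all covered by Lemma \ref{pesce.rosso}, it is enough to bound the $H^0_t\cH^s_z$-norm of the product:
$$
\|\Pi_{V}(v^{(1)}\cdots v^{(j)}u^{(j+1)}\cdots u^{(q)})\|_{\cV^s_{t,z}}\leq \|v^{(1)}\cdots v^{(j)}u^{(j+1)}\cdots u^{(q)}\|_{H^0_t\cH^s_z}\,.
$$

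\textbf{Step 2: Algebra property at fixed time.} For each fixed $t\in\mathbb{T}$, iterating the algebra property of $\cH^s_z$ (which holds since $s>\frac{3}{2}$) yields
$$
\|(v^{(1)}\cdots v^{(j)}u^{(j+1)}\cdots u^{(q)})(t,\cdot)\|_{\cH^s_z}\lesssim_{s,q} \prod_{l=1}^{j}\|v^{(l)}(t,\cdot)\|_{\cH^s_z}\prod_{l=j+1}^{q}\|u^{(l)}(t,\cdot)\|_{\cH^s_z}\,.
$$

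\textbf{Step 3: Handling the two types of factors.} For the $v^{(l)}\in \cV^s_{t,z}$, identity \eqref{hs.linf} in Lemma \ref{lemma.come.vuoi} gives
$\|v^{(l)}(t,\cdot)\|_{\cH^s_z}\leq \|v^{(l)}\|_{L^\infty(\mathbb{T}_t,\cH^s_z)}=\|v^{(l)}\|_{\cV^s_{t,z}}$, uniformly in $t$. For the $u^{(l)}\in H^r_t\cH^s_z$ with $r>\frac{1}{2}$, the embedding \eqref{cor.12delta} yields
$\|u^{(l)}(t,\cdot)\|_{\cH^s_z}\leq \|u^{(l)}\|_{L^\infty(\mathbb{T}_t,\cH^s_z)}\leq C_r\|u^{(l)}\|_{H^r_t\cH^s_z}$, again uniformly in $t$.

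\textbf{Step 4: Integration in time.} Combining the last two steps, the right-hand side is a constant independent of $t$; squaring and integrating against the normalized probability measure $\dbar t$ on $\mathbb{T}$ (which has total mass $1$) gives
$$
\|v^{(1)}\cdots u^{(q)}\|_{H^0_t\cH^s_z}^2=\int_{\mathbb{T}}\|(v^{(1)}\cdots u^{(q)})(t,\cdot)\|_{\cH^s_z}^2\,\dbar t\lesssim_{s,r,q} \prod_{l=1}^j \|v^{(l)}\|_{\cV^s_{t,z}}^2\prod_{l=j+1}^q \|u^{(l)}\|_{H^r_t\cH^s_z}^2\,,
$$
which, combined with Step 1, proves \eqref{eq.algebraker}. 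The argument for $\Pi_{V_1}$ and $\Pi_{V_2}$ is identical, since Lemma \ref{pesce.rosso} covers these projectors as well. There is no real obstacle here: the proof is essentially a bookkeeping exercise, with the key inputs (algebra property of $\cH^s_z$, Sobolev embedding $H^r_t\hookrightarrow L^\infty_t$, identity $\|v\|_{L^\infty_t\cH^s_z}=\|v\|_{\cV^s_{t,z}}$ for $v\in V$) already established in the preceding subsections.
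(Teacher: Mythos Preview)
Your proof is correct and follows essentially the same approach as the paper: reduce via Lemma \ref{pesce.rosso} to the $H^0_t\cH^s_z$-norm, use the algebra property of $\cH^s_z$ pointwise in time, control the $v^{(l)}$ via \eqref{hs.linf} and the $u^{(l)}$ via \eqref{cor.12delta}. One inconsequential slip: the measure $\dbar t=\frac{1}{\pi}\,dt$ has total mass $2$, not $1$, but this only changes the constant.
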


\begin{proof}
	Estimate \eqref{eq.algebraker} follows from Lemma \ref{pesce.rosso}, since 
	$\| u\|_{H^0_t \cH^s_z} \lesssim \| u\|_{L^\infty_t \cH^s_z}$, \eqref{algebra.prop} and   \eqref{cor.12delta}.
\end{proof}

 \begin{lemma}\label{lem.vsquare}
	For any $q \in \N$ odd and any $v^{(1)}, \dots, v^{(q)} \in V$ one has
	\begin{equation}\label{q.odd}
	\int_{\mathbb{T}} \int_{0}^\pi v^{(1)} (t,x) \cdots v^{(q)} (t,x)  \sin^2(x) \dbar x \dbar t = 0\,.
	\end{equation}
	In particular,  if $n $ is even then 
	$\Pi_V (v^{(1)} \cdots v^{(n)}) = 0$  for any $v^{(1)}, \dots, v^{(n)} \in V$.  
\end{lemma}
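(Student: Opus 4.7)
The natural approach is to exploit a discrete symmetry of the spherical eigenfunctions $e_j(x)=\sin((j+1)x)/\sin(x)$ and of the frequencies $\omega_j=j+1$. Specifically, I would consider the transformation
$$
\sigma : \mathbb{T}\times (0,\pi)\longrightarrow \mathbb{T}\times(0,\pi)\,,\qquad (t,x)\longmapsto (t+\pi,\pi-x)\,.
$$
This is a measure-preserving involution, since $\sin^2(\pi-x)=\sin^2(x)$ and the shift $t\mapsto t+\pi$ preserves Lebesgue measure on $\mathbb{T}$.

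The key computation is to track how a single basis element $\cos((j+1)t)\,e_j(x)$ transforms under $\sigma$. From
$$
\cos((j+1)(t+\pi))=(-1)^{j+1}\cos((j+1)t)\,,\qquad
\sin((j+1)(\pi-x))=(-1)^{j}\sin((j+1)x)\,,
$$
and $\sin(\pi-x)=\sin(x)$, I obtain
$$
\cos\bigl((j+1)(t+\pi)\bigr)\,e_j(\pi-x)=(-1)^{j+1}(-1)^{j}\cos((j+1)t)\,e_j(x)=-\cos((j+1)t)\,e_j(x)\,.
$$
Since every $v\in V$ is a linear combination of such modes, this gives the uniform identity $v\circ\sigma=-v$ for all $v\in V$. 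Therefore, for any $v^{(1)},\dots,v^{(q)}\in V$,
$$
\bigl(v^{(1)}\cdots v^{(q)}\bigr)\circ\sigma=(-1)^{q}\,v^{(1)}\cdots v^{(q)}\,.
$$

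Changing variables via $\sigma$ in the integral \eqref{q.odd}, and using that $\sigma$ preserves the measure $\sin^2(x)\,\dbar x\,\dbar t$, yields
$$
I:=\int_{\mathbb{T}}\!\int_{0}^{\pi} v^{(1)}\cdots v^{(q)}\,\sin^{2}(x)\,\dbar x\,\dbar t=(-1)^{q} I\,,
$$
so $I=0$ whenever $q$ is odd, proving \eqref{q.odd}. For the ``In particular'' clause, if $n$ is even and $w\in V$ is arbitrary, then $v^{(1)}\cdots v^{(n)} w$ is a product of $n+1$ (odd) factors in $V$, so the first part gives $\langle v^{(1)}\cdots v^{(n)},w\rangle_{\cH^0_z,\,L^2(\mathbb{T})}=0$; since $w$ is arbitrary in $V$, this forces $\Pi_V(v^{(1)}\cdots v^{(n)})=0$.

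There is no real obstacle here: the entire argument rests on pinning down the parity signs in the reflection identity $e_j(\pi-x)=(-1)^{j}e_j(x)$ and the time-shift identity $\cos((j+1)(t+\pi))=(-1)^{j+1}\cos((j+1)t)$, which combine so that the $j$-dependence cancels out and each mode picks up a universal factor of $-1$. The only delicate point worth double-checking is that the parities match so as to give the same sign for every $j$; this is what makes the symmetry argument work uniformly across $V$.
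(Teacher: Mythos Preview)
Your proof is correct and follows essentially the same approach as the paper: both use the involution $(t,x)\mapsto(t+\pi,\pi-x)$, verify that each $v\in V$ satisfies $v(t+\pi,\pi-x)=-v(t,x)$, and conclude $I=(-1)^q I$. Your write-up is in fact slightly more detailed, making explicit the parity computation on the basis modes and spelling out the ``In particular'' clause via pairing with an arbitrary $w\in V$, which the paper leaves implicit.
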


\begin{proof}
	Performing in the integral in \eqref{q.odd} the change of variables $(t,x) \mapsto (t', x')$, with $t := t' + \pi$, $x := \pi - x'$, one has 
	$
	v^{(l)}(t + \pi, \pi - x) = -v^{(l)}(t,x) $ and, 
	thus, since $q$ is odd,
	$$
	\begin{aligned}
	\mathcal{I} &:= \int_{\mathbb{T}} \int_0^\pi v^{(1)}(t,x)  \cdots v^{(q)}(t,x) \sin^2(x) \,\dbar x \dbar t\\
	&= \int_{\mathbb{T}} \int_0^\pi v^{(1)}(t'+ \pi, \pi -x')  \cdots v^{(q)}(t'+\pi, \pi-x') \sin^2(\pi-x') \,\dbar x' \dbar t'\\
	& = \int_{\mathbb{T}} \int_0^\pi (-1)^q v^{(1)}(t', x')  \cdots v^{(q)}(t', x') \sin^2(x') \,\dbar x' \dbar t' = -\mathcal{I}\,, 
	\end{aligned}
	$$
	namely $\mathcal{I} = 0$.
\end{proof}

\subsection{Strichartz-type estimates for $p=5$}

The aim of this section is to prove a set of Strichartz-type estimates for solutions of \eqref{lin.eq} in the case of spherical symmetry.
We shall use the following duality property: for any $s \in \R$,
\begin{equation}\label{duality}
\| v\|_{\cV^s_{t,x}} = \sup_{h \in \cV_{t,x}^\infty \atop \|h\|_{\cV^{-s}_{t,x}} \leq 1} \int_{\mathbb{T}} \int_{0}^\pi  v(t,x) h(t,x) \sin^2(x) \dbar x \dbar t\,.
\end{equation}
\begin{proposition}
{\bf (Generalized Strichartz-type estimates)}\label{lem.Strichartz.1}
	The following estimates hold:
\\[1mm]
1.  For any $\delta >0$ there exists a constant $C_\delta>0$ such that, for any $ v^{(1)}, \dots, v^{(6)} \in \cV^{\frac 5 6 + \delta}_{t,x}$,
		\begin{equation}\label{item1st1}
		\left| \int_{\mathbb{T}} \int_{0}^\pi v^{(1)}(t,x) \cdots v^{(6)}(t,x) \sin^2(x) \dbar x \dbar t\right| \leq C_\delta \prod_{n= 1}^6 \|v^{(n)}\|_{\cV^{\frac{5}{6} + \delta}_{t,x}}\,;
		\end{equation}
2. For any $\delta>0$ there exists a  constant $C_\delta > 0 $ such that, for any $ v^{(1)}, \dots, v^{(5)} \in \cV^{1 + \delta}_{t,x}$ and $v^{(6)} \in \cV^{1}_{t,x}$, 
		\begin{equation}\label{strich.2delta}
		\left| \int_{\mathbb{T}} \int_{0}^\pi {v^{(1)}(t,x) \cdots v^{(6)}(t,x)} \sin^2(x) \dbar x \dbar t\right| \leq C_\delta \Big(\prod_{n = 1}^5 \|v^{(n)}\|_{\cV^{1 +\delta}_{t,x}}\Big) \|v^{(6)}\|_{\cV^{-\delta}_{t,x}}\,.
		\end{equation}
\end{proposition}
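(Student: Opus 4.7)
My strategy is to reduce both inequalities, via H\"older and duality, to a single-function $L^6$ Strichartz-type bound for spherically symmetric free waves on $\mathbb{S}^3$:
\begin{equation*}
\|v\|_{L^6(\mathbb{T}\times\mathbb{S}^3,\dbar\sigma\dbar t)}\le C_\delta\,\|v\|_{\cV^{5/6+\delta}_{t,x}}, \qquad v\in V,\ \delta>0, \qquad (\star)
\end{equation*}
which matches the critical $H^{5/6}\to L^6$ scaling of the Strichartz estimate for the free wave equation on $\mathbb{R}^{1+3}$, with the $\varepsilon$-loss typical on compact manifolds. Granting $(\star)$, estimate (1) follows at once by generalized H\"older with six factors in $L^6(\mathbb{T}\times\mathbb{S}^3)$.

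\textbf{Towards $(\star)$.} The starting point is the one-dimensional structure of spherically symmetric free waves: using $e_j(x)\sin(x)=\sin((j+1)x)$ and the product-to-sum formula, for $v=\sum_j v_j\cos((j+1)t)e_j(x)\in V$ the auxiliary function $Z(t,x):=v(t,x)\sin(x)$ satisfies the D'Alembert-type identity
\begin{equation*}
Z(t,x)=\tfrac 12\bigl[G(x+t)+G(x-t)\bigr], \qquad G(s):=\sum_j v_j\sin((j+1)s),
\end{equation*}
with $\|G\|_{H^s(\mathbb{T})}\asymp \|v\|_{\cV^s_{t,x}}$. A change of variables $u=x+t$, $\tilde v=x-t$ and the $1$D Sobolev embedding $H^{1/3+\delta}(\mathbb{T})\hookrightarrow L^6(\mathbb{T})$ yield an analogous $L^6$ bound for $Z$ at regularity $1/3+\delta$. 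Recovering the sharp exponent $5/6+\delta$ in $(\star)$ (and transferring the estimate from $Z$ to $v$ through the singular weight $\sin^{-4}(x)$ appearing in $\int v^6\sin^2 x=\int Z^6\sin^{-4} x$) requires a genuine time-averaging Strichartz gain; I would obtain this by combining the Sogge-type eigenfunction bound $\|e_j\|_{L^6(\mathbb{S}^3)}\lesssim (j+1)^{1/2}$ for zonal harmonics on $\mathbb{S}^3$ with an almost-orthogonality argument on the time Fourier frequencies $(j+1)$, in the spirit of the Burq--G\'erard--Tzvetkov Strichartz estimates on compact manifolds.

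\textbf{Derivation of (2) and main obstacle.} Since $v^{(6)}\in V$, the orthogonality $W=V^\perp$ in $L^2(\mathbb{T}\times\mathbb{S}^3)$ gives
\begin{equation*}
\int_{\mathbb{T}}\int_0^\pi v^{(1)}\cdots v^{(6)}\sin^2(x)\,\dbar x\dbar t=\bigl\langle \Pi_V(v^{(1)}\cdots v^{(5)}),\,v^{(6)}\bigr\rangle_{\cV^0_{t,x}},
\end{equation*}
and by Cauchy--Schwarz in the duality $\cV^\delta_{t,x}\times\cV^{-\delta}_{t,x}$ estimate (2) reduces to
$\|\Pi_V(v^{(1)}\cdots v^{(5)})\|_{\cV^\delta_{t,x}}\le C_\delta\prod_{i=1}^5\|v^{(i)}\|_{\cV^{1+\delta}_{t,x}}$. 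Via Lemma~\ref{pesce.rosso} this in turn follows from an analogous bound for $\|v^{(1)}\cdots v^{(5)}\|_{L^2_t\cH^\delta_x}$, which I would treat by combining H\"older's inequality in space (five factors at exponent $10$), the Sobolev embedding \eqref{sob.sigh}, and a Kato--Ponce fractional Leibniz step distributing the small derivative $\delta$ on a single factor. The main obstacle is precisely closing the regularity gap in both $(\star)$ and in the five-fold product estimate: the naive strategies (trivial eigenfunction summation, the pure $1$D travelling wave representation, or the standard Sobolev-algebra argument which requires $s>\tfrac 32$) all fall short of the critical exponent by roughly a half-derivative, so the full time-averaging gain of Strichartz on $\mathbb{S}^3$ must be exploited--this is precisely what makes these estimates dispersive in nature rather than mere consequences of Sobolev embedding.
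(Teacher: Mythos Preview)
Your proposal has a genuine gap: you explicitly identify ``the main obstacle'' (closing the regularity gap in $(\star)$ and in the five-fold product bound) but do not resolve it. Appealing to Burq--G\'erard--Tzvetkov machinery or Sogge bounds is not a proof; and your $1$D travelling-wave representation via $Z=v\sin x$ genuinely hits the singular weight $\sin^{-4}x$ in $\int v^6\sin^2 x=\int Z^6\sin^{-4}x$, which does not disappear by almost-orthogonality in time alone. The Kato--Ponce route for item~2 is also not carried out, and it is unclear how to place the product $v^{(1)}\cdots v^{(5)}$ in $L^2_t\cH^\delta_x$ at the stated regularity without already knowing a Strichartz-type gain.

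The paper's proof is far more elementary and avoids all dispersive machinery. It works directly at the level of Fourier coefficients, exploiting the explicit eigenfunction structure. The key ingredient you are missing is the product rule $e_ne_m=\sum_{k=0}^{m}e_{n-m+2k}$ for the Chebyshev-type eigenfunctions, which yields the sharp space-integral bound
\[
0\le \int_0^\pi e_{j_1}\cdots e_{j_6}\sin^2(x)\,\dbar x \le \omega_{j_{\min_1}}\omega_{j_{\min_2}}\omega_{j_{\min_3}}.
\]
Combined with the time-resonance constraint $\sum_k\sigma_k\omega_{j_k}=0$ (which fixes the largest index $j_6$ as a function of the other five), the sixfold sum reduces to a fivefold sum with weight $\omega_{j_1}\omega_{j_2}\omega_{j_3}$. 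After redistributing $\omega_{j_1}\omega_{j_2}\omega_{j_3}\le \omega_{j_1}^{1/3}\cdots\omega_{j_4}^{1/3}\omega_{j_5}^{5/6}\omega_{j_6}^{5/6}$ (using the ordering $j_1\le\cdots\le j_6$), one Cauchy--Schwarz in $j_5$ and four $\ell^1$--$\ell^2$ Cauchy--Schwarz in $j_1,\ldots,j_4$ give $(\star)$ directly. Item~2 is proved the same way, not by your duality-plus-projection idea: the resonance constraint gives $\omega_{j_6}\le 5\omega_{j_5}$, hence $\omega_{j_6}\lesssim\omega_{j_6}^{-\delta}\omega_{j_5}^{1+\delta}$, which is exactly the mechanism that transfers all positive regularity off $v^{(6)}$. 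No fractional Leibniz, no compact-manifold Strichartz theory, no Sogge bounds are needed.
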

\begin{remark}\label{rmk.mountain.pass}
	From \eqref{item1st1} the functional $\mathcal{G}_{6}(v) := \tfrac 1 6 \int_{\mathbb{T}} \int_{0}^\pi v^{6} \sin^2(x) \dbar x \dbar t$ is well defined on $\cV^1_{t,x}$
	with compact gradient.
\end{remark}

\begin{remark}
	By \eqref{item1st1} with $v^{(1)} = \cdots = v^{(6)} =: v$, using \eqref{hs.linf} and \eqref{Hs.cHs}, any solution $v$ of the Cauchy problem
$
	\partial_{tt} v + (-\Delta_{\mathbb{S}^3}  + \Id) v = 0 $, 
	 $ \partial_t v(0, \cdot) = 0 $, 
	 $  v(0, \cdot) = v_0 \in \cH^{\frac 5 6 + \delta}_{x} $, 
satisfies the Strichartz estimate
	$
		\| v\|_{L^6(\mathbb{T}_t\times\mathbb{S}^3, \dbar t\dbar \sigma)} \lesssim_{\delta}\|v\|_{\cV^{\frac 5 6 + \delta}_{t,x}}= \|v_0\|_{\cH^{\frac 5 6 + \delta}_x} $. 
\end{remark}

Proposition \ref{lem.Strichartz.1} enables us to deduce the following:
\begin{lemma}\label{lem-int.pv2}
	For any $\delta >0$ there exists $C_\delta>0$ such that for any $  v^{(1)},v^{(2)},v^{(3)},v^{(4)},v^{(5)} \in \cV^{1 + 2\delta}_{t,x}$
	\begin{equation}
	\left\|\Pi_{V_2}\left({v^{(1)}v^{(2)}v^{(3)}v^{(4)}v^{(5)}}\right)\right\|_{\cV^{2\delta}_{t,x}}\leq C_\delta \prod\limits_{n=1}^{5}\|v^{(n)}\|_{\cV^{1+2\delta}_{t,x}}\,.
	\end{equation}
\end{lemma}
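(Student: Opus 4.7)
The plan is to reduce the bound on $\|\Pi_{V_2}(v^{(1)}\cdots v^{(5)})\|_{\cV^{2\delta}_{t,x}}$ to a direct application of the Strichartz estimate \eqref{strich.2delta} via duality and self-adjointness of $\Pi_{V_2}$.

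First, by the duality formula \eqref{duality} applied at regularity $s = 2\delta$, I would write
\begin{equation*}
\|\Pi_{V_2}(v^{(1)}\cdots v^{(5)})\|_{\cV^{2\delta}_{t,x}} = \sup_{h \in \cV^\infty_{t,x},\ \|h\|_{\cV^{-2\delta}_{t,x}}\leq 1} \int_{\mathbb{T}}\int_0^\pi \Pi_{V_2}\!\left(v^{(1)}\cdots v^{(5)}\right) h(t,x)\,\sin^2(x)\,\dbar x\,\dbar t\,.
\end{equation*}
Since $h \in \cV^\infty_{t,x}\subset V$, I can transfer the projector using that $\Pi_{V_2}$ is orthogonal and self-adjoint with respect to the $L^2$ pairing (equivalently, observing $V_2\subset V$ and $W=V^\perp$), obtaining the equivalent expression $\int\int v^{(1)}\cdots v^{(5)}\cdot \Pi_{V_2} h\,\sin^2(x)\,\dbar x\,\dbar t$.

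Next, I would apply the Strichartz estimate \eqref{strich.2delta} with $\delta$ replaced by $2\delta$, treating $\Pi_{V_2} h$ in the role of $v^{(6)}$. This yields
\begin{equation*}
\left|\int\int v^{(1)}\cdots v^{(5)}\cdot \Pi_{V_2} h\,\sin^2(x)\,\dbar x\,\dbar t\right| \leq C_\delta \prod_{n=1}^5 \|v^{(n)}\|_{\cV^{1+2\delta}_{t,x}} \cdot \|\Pi_{V_2} h\|_{\cV^{-2\delta}_{t,x}}\,.
\end{equation*}
Finally, from the spectral definition \eqref{hs.ker} the projector $\Pi_{V_2}$ acts by truncating the Fourier expansion to frequencies $\omega_j > N$, hence $\|\Pi_{V_2}h\|_{\cV^{-2\delta}_{t,x}} \leq \|h\|_{\cV^{-2\delta}_{t,x}} \leq 1$. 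Taking the supremum over $h$ gives the desired bound.

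I do not anticipate a real obstacle here: the only point that deserves care is verifying self-adjointness of $\Pi_{V_2}$ with respect to the pairing used in \eqref{duality}. This is immediate because the basis $\{\cos(\omega_j t)\egen_j(x)\}$ is orthogonal in $\cH^0_{t,x}$, so $\Pi_{V_2}$ is the orthogonal projection onto the closed subspace $V_2$ and removing it in one factor and placing it on the other leaves the pairing unchanged (the residual term lives in $W$ which pairs trivially against elements of $V$). The whole argument is a short duality wrapper around \eqref{strich.2delta}.
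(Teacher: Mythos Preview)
Your proposal is correct and follows essentially the same approach as the paper: duality \eqref{duality} followed by the Strichartz estimate \eqref{strich.2delta} (with $\delta$ replaced by $2\delta$). The only cosmetic difference is that the paper restricts the supremum directly to $h\in V_2$, whereas you keep $h\in\cV^\infty_{t,x}$ and transfer $\Pi_{V_2}$ onto $h$ by self-adjointness; these are equivalent.
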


\begin{proof}
	By \eqref{duality} we have:
	\begin{align*}
	&\left\|\Pi_{V_2}\big({v^{(1)}v^{(2)}v^{(3)}v^{(4)}v^{(5)}}\big) \right\|_{\cV^{2\delta}_{t,x}}=\sup\limits_{h \in V_2, \|h\|_{\cV^{-2\delta}_{t,x}}\leq 1}\int_\mathbb{T} \int_{0}^{\pi}\Pi_{V_2}\big({v^{(1)}v^{(2)}v^{(3)}v^{(4)}v^{(5)}}\big)h\sin^2(x) \dbar x \dbar t\\
	&\lesssim_\delta \sup\limits_{h \in V_2, \|h\|_{\cV^{-2\delta}_{t,x}}\leq 1} \prod\limits_{n=1}^{5}\|v^{(n)}\|_{\cV^{1+2\delta}_{t,x}}\|h\|_{\cV^{-2\delta}_{t,x}} \lesssim_\delta \prod\limits_{n=1}^{5}\|v^{(n)}\|_{\cV^{1+2\delta}_{t,x}}
	\end{align*}
	where in the second passage we have used Item 2 of Proposition \ref{lem.Strichartz.1}.
\end{proof}

The rest of this section is devoted to the proof of Proposition \ref{lem.Strichartz.1}.
We use the following definition. 
\begin{definition}\label{def.jmin}
	Given $j_1,j_2,j_3,j_4,j_5,j_6 \in \mathbb{N}$, we define $j_{\min_1}, \dots, j_{\min_6} \in \N$ by the property that $\{j_1, \dots, j_6\} = \{j_{\min_1}, \dots, j_{\min_6}\}$, and
	\begin{equation}
	\min\{j_1, \dots, j_6\}=: j_{\min} := j_{\min_1} \leq j_{\min_2} \leq j_{\min_3} \leq \cdots \leq j_{\min_6}:= \max\{j_1, \dots, j_6\} \,.
	\end{equation}
\end{definition}
Furthermore we  denote $\delta (a = b) := \delta_{a,b} $ for any
$ a, b \in \N $, the Kronecker delta.

The following lemma is a direct computation, recalling \eqref{def.dbart}:

\begin{lemma}[Integral in time]\label{lem.int.t}
	Given $\vec{\omega}:=(\omega_{j_1},\omega_{j_2},\omega_{j_3},\omega_{j_4},\omega_{j_5},\omega_{j_6}) \in \mathbb{N}^6_{*}$
	and $\vec{\sigma}\in \lbrace \pm 1\rbrace^{6}$,
	\begin{equation}\label{cosines}
	\int_{\mathbb{T}} \prod_{k=1}^6 \cos(\omega_{j_k}t)\dbar t= 2^{-5}\sum\limits_{\vec{\sigma} \in \{\pm 1\}^6 }\delta ( \vec{\sigma} \cdot \vec{\omega} = 0 )\,.
	\end{equation}
\end{lemma}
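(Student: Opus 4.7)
The statement is a direct computation, so the plan is essentially to expand the cosines into complex exponentials and read off which tuples $\vec{\sigma}\in\{\pm1\}^6$ contribute nonzero.

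First I would use Euler's formula to write
$$
\cos(\omega_{j_k}t)=\tfrac{1}{2}\bigl(e^{\ii\omega_{j_k}t}+e^{-\ii\omega_{j_k}t}\bigr),
$$
so that
$$
\prod_{k=1}^6\cos(\omega_{j_k}t)=2^{-6}\sum_{\vec\sigma\in\{\pm 1\}^6}e^{\ii(\vec\sigma\cdot\vec\omega)t}.
$$
Next I would integrate term by term against $\dbar t=\frac{1}{\pi}dt$ over $\mathbb{T}=\mathbb{R}/(2\pi\mathbb{Z})$. Using
$$
\int_{\mathbb{T}}e^{\ii N t}\,\dbar t=\frac{1}{\pi}\int_0^{2\pi}e^{\ii N t}\,dt=2\,\delta(N=0),\qquad N\in\mathbb{Z},
$$
only the tuples $\vec\sigma$ for which $\vec\sigma\cdot\vec\omega=0$ contribute, each with weight $2$. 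Combining the prefactor $2^{-6}$ with the factor $2$ from the normalization of $\dbar t$ yields exactly $2^{-5}$, which is the claimed identity.

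There is no real obstacle here: the only subtle point is keeping track of the normalization $\dbar t=\pi^{-1}dt$, which is what produces the $2^{-5}$ (rather than $2^{-6}$) on the right-hand side. The same scheme obviously extends to products of any even number of cosines; the exponent $6$ is used only because this is the form in which the lemma is applied to the quintic nonlinearity in the Strichartz estimates of Proposition \ref{lem.Strichartz.1}.
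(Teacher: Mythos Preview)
Your proof is correct and matches the paper's approach exactly: the paper simply states that the lemma ``is a direct computation, recalling \eqref{def.dbart}'' without giving further details, and your expansion via Euler's formula together with the observation that the normalization $\dbar t=\pi^{-1}dt$ turns $2^{-6}$ into $2^{-5}$ is precisely that computation written out.
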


The next lemma exploits  properties of the eigenfunctions $\{e_{n}\}_{n \in \N}$ of $-\Delta_{\mathbb{S}^3}^{ss} + \Id$, defined in \eqref{def.ej}. 

\begin{lemma}[Integral in space]\label{lem.le.piu.piccole}
	For any $j_1, j_2, j_3, j_4, j_5, j_6 \in \N,$ the integral
	\begin{equation}\label{omeghine.int}
	{\cal I}_{j_1, \dots, j_6}:= \int_{0}^{\pi} e_{j_1}(x) \cdots e_{j_6}(x) \sin^2(x) \dbar x
	\end{equation}
	satisfies
	\begin{equation}\label{space_sharp_c}
	0\leq {\cal I}_{j_1, \dots, j_6} \leq \omega_{j_{\min}}\omega_{j_{\min_2}}\omega_{j_{\min_3}}\,.
	\end{equation}
\end{lemma}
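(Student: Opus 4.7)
My plan is to exploit the product rule \eqref{productrule} to reduce the computation of $\mathcal{I}_{j_1,\dots,j_6}$ to a purely combinatorial count over three indices. Assume without loss of generality $j_1\leq j_2\leq \cdots \leq j_6$, so that $j_{\min_l}=j_l$. The crucial idea is to pair the smallest index with the largest, the second smallest with the second largest, and the third smallest with the third largest, and apply \eqref{productrule} to each of the three pairs $(j_1,j_6)$, $(j_2,j_5)$, $(j_3,j_4)$. This produces
\begin{equation*}
e_{j_1}e_{j_6}=\sum_{k_1=0}^{j_1}e_{A(k_1)}\,,\quad e_{j_2}e_{j_5}=\sum_{k_2=0}^{j_2}e_{B(k_2)}\,,\quad e_{j_3}e_{j_4}=\sum_{k_3=0}^{j_3}e_{C(k_3)}\,,
\end{equation*}
where $A(k_1):=j_6-j_1+2k_1$, $B(k_2):=j_5-j_2+2k_2$, $C(k_3):=j_4-j_3+2k_3$. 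By multilinearity, this turns \eqref{omeghine.int} into the triple sum
\begin{equation*}
\mathcal{I}_{j_1,\dots,j_6}=\sum_{k_1=0}^{j_1}\sum_{k_2=0}^{j_2}\sum_{k_3=0}^{j_3}\int_0^\pi e_{A(k_1)}\,e_{B(k_2)}\,e_{C(k_3)}\,\sin^2(x)\,\dbar x\,.
\end{equation*}

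The second -- and really the only nontrivial -- step is to show that each scalar integral in this sum lies in $\{0,1\}$. For fixed $(k_2,k_3)$, I would apply \eqref{productrule} once more to the pair $e_{B(k_2)}e_{C(k_3)}$ (swapping the two factors if needed so that the larger index plays the role of $n$): the resulting sum of $\min(B(k_2),C(k_3))+1$ basis functions $e_n$ has pairwise distinct indices, forming an arithmetic progression of step $2$. Orthonormality of $\{e_n\}_{n\in\N}$ in $\cH^0_x$ (Lemma \ref{teo.eigencouples}) then gives that the integral against $e_{A(k_1)}$ equals the number of indices in this progression that coincide with $A(k_1)$, which is either $0$ or $1$. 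In particular, each term is non-negative, giving $\mathcal{I}_{j_1,\dots,j_6}\geq 0$.

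For the upper bound, I would simply count: the triple sum has exactly $(j_1+1)(j_2+1)(j_3+1)=\omega_{j_{\min}}\omega_{j_{\min_2}}\omega_{j_{\min_3}}$ terms, each bounded by $1$. I do not anticipate any serious obstacle; once the correct pairing is chosen, both the non-negativity and the bound \eqref{space_sharp_c} follow essentially by inspection. The delicate point -- on which the sharpness of the bound crucially depends -- is precisely the matching of smallest with largest: any other pairing would enlarge the total number of triples by a factor that could be as big as $\omega_{j_{\min_6}}/\omega_{j_{\min_3}}$, producing a useless estimate.
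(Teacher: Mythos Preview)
Your proof is correct and follows essentially the same strategy as the paper: expand via the product rule \eqref{productrule} so that the three outer sums are indexed by the three smallest $j$'s, then use orthonormality to see that each remaining scalar is a $0$--$1$ quantity, and count. The paper organises the expansion slightly differently (it pairs $(j_1,j_4)$ and $(j_3,j_5)$, then multiplies in $j_2$ and $j_6$ separately, obtaining four nested sums and a Kronecker delta), but the counting is identical. Your smallest--with--largest pairing is arguably the cleanest, though your closing remark overstates its necessity: any bijection between $\{j_1,j_2,j_3\}$ and $\{j_4,j_5,j_6\}$ yields sums of length $\omega_{j_1},\omega_{j_2},\omega_{j_3}$ and hence the same bound.
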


\begin{proof}
	With no loss of generality, we suppose that $j_1 \leq j_2 \leq  \cdots \leq j_6$. 
	By the  product rule \eqref{productrule}, 
	\begin{equation*}
	e_{j_1} e_{j_4} = \sum_{k= 0}^{j_1} e_{j_4 - j_1  + 2 k}\,, \quad 	e_{j_3} e_{j_5} = \sum_{l = 0}^{j_3} e_{j_5 - j_3  + 2 l}\,,
	\end{equation*}
	and
	\begin{equation*}
	\begin{gathered}
	e_{j_1} e_{j_4} e_{j_2} = \sum_{k= 0}^{j_1} \sum_{h= 0}^{\min\{j_2, j_4 - j_1  + 2 k\}} e_{|j_2 - (j_4 - j_1  + 2 k)| + 2 h}\,, \ \
		e_{j_3} e_{j_5} e_{j_6} = \sum_{l= 0}^{j_3} \sum_{m= 0}^{\min\{j_6, j_5 - j_3  + 2 l\}} e_{|j_6 - (j_5 - j_3  + 2 l)| + 2 m}\,.
	\end{gathered}
	\end{equation*}
	Since by Lemma \ref{teo.eigencouples} the $\{e_j\}_{j \in \N}$ are orthonormal on $L^2([0,\pi], \sin^2(x)\dbar x)$, one has
	\begin{align}
	\label{prodotto}
	& {\cal I}_{j_1, \dots, j_6} =  \int_{0}^{\pi} e_{j_1}(x) \cdots e_{j_6}(x) \sin^2(x)\,\dbar x\\
	\nonumber
	&= 
	\sum_{k= 0}^{j_1} \sum_{h= 0}^{\min\{j_2, j_4 - j_1  + 2 k\}}  \sum_{l= 0}^{j_3} \sum_{m= 0}^{\min\{j_6, j_5 - j_3  + 2 l\}} \hspace{-9pt}\delta (|j_2 - (j_4 - j_1  + 2 k)| + 2 h = |j_6 - (j_5 - j_3  + 2 l)| + 2 m)\,.
	\end{align}
	Now for each fixed value of $j_1, \dots, j_6, k, h$ and $l$, there is at most one value of $m$ such that ${\delta (|j_2 - (j_4 - j_1  + 2 k)| + 2 h = |j_6 - (j_5 - j_3  + 2 l)| + 2 m) \neq 0.}$ Moreover the sum over $k$ runs over $j_1 + 1 = \omega_{j_1}$ elements, the sum over $h$ runs over $\leq j_2 + 1 = \omega_{j_2}$ elements, and the sum over $l$ runs over $\leq j_3 + 1 = \omega_{j_3}$ elements. 
	This proves that the integral 
	\eqref{prodotto} satisfies  ${\cal I}_{j_1, \dots, j_6} \leq \omega_{j_1} \omega_{j_2}\omega_{j_3}$.
	Also  the lower bound 
	${\cal I}_{j_1, \dots, j_6} \geq 0$ directly follows because \eqref{prodotto}
	is the sum of non-negative integers.
\end{proof}

We now prove  Proposition \ref{lem.Strichartz.1}. 

\begin{proof}[Proof of Proposition \ref{lem.Strichartz.1}, Item 1]
	We  show that
	for any $ v \in V $ and $  N \in \N$, the function   
	$ v_N := \Pi_{V_{\leq N}} v $, satisfies 
	\begin{equation}\label{frequenze.finite}
	\int_{\mathbb{T}} \int_{0}^\pi v_N^6 (t,x)\sin^2(x) 
	\dbar x \dbar t \lesssim_{ \delta} \|v_N\|^{6}_{\cV^{\frac 5 6 + \delta}_{t,x}}\,. 
	\end{equation}
	Then \eqref{item1st1}  follows since, using generalized H\"older inequality with $p_1 = \cdots = p_6 = \frac 1 6$, Fatou's Lemma
	and the fact that $v^{(n)}_N \to v^{(n)} $ in $ \cV^{\frac 5 6 + \delta}_{t,x} $ and, up to subsequence, $v^{(n)}_N(t,x) \to v^{(n)} (t,x) $ a.e, 
	\begin{equation}\label{intcomple}
	\begin{aligned}
	\left|\int_{\mathbb{T}} \int_{0}^\pi \prod_{n=1}^6 v^{(n)}(t,x) \sin^2(x)  \dbar x \dbar t\right| & \leq \prod_{n=1}^{6} \big\| v^{(n)}\big\|_{L^6_{t,x}} \leq \liminf_{N \rightarrow \infty }\prod_{n=1}^{6} \big\| v^{(n)}_N\big\|_{L^6_{t,x}}\\
	&\stackrel{\eqref{frequenze.finite}}{\lesssim_{ \delta}} \lim_{N \rightarrow \infty }\prod_{n=1}^{6} \|v^{(n)}_N\|_{\cV^{\frac 5 6 + \delta}_{t,x}} \lesssim_\delta \prod_{n=1}^{6} \|v^{(n)}\|_{\cV^{\frac 5 6 + \delta}_{t,x}}\,. 
	\end{aligned}
	\end{equation}
	We now prove \eqref{frequenze.finite}. 
	By Lemma \ref{lem.int.t} and recalling 
	\eqref{omeghine.int}, one has
	\begin{align}
	\mathcal{I}&:= \int_{\mathbb{T}} \int_{0}^{\pi} {v_N^6(t,x)} \sin^2(x) \dbar x \dbar t
	= \int_{\mathbb{T}} \int_{0}^{\pi} \sum_{j_1, \dots, j_6 \in \N \atop j_k  \leq N \ \forall k} \prod_{k=1}^6 v_{ j_k} \cos(\omega_{j_k}t) e_{j_k}(x) \sin^2(x) \,\dbar x \dbar t \notag 
	\\
	\label{symmetric}
	&= 2^{-5} \sum_{\sigma_1, \dots, \sigma_6 \in \{\pm 1\}}  \sum_{j_1, \dots, j_6 \in \N \atop j_k  \leq N \ \forall k} \delta\left(\sigma_1\omega_{j_1}  + \dots +  \sigma_6\omega_{j_6} = 0\right) v_{ j_1} \cdots v_{j_6} \mathcal{I}_{j_1, \dots, j_6}\\
	&\stackrel{\eqref{space_sharp_c}}{\lesssim} \sum_{\sigma_1, \dots, \sigma_6 \in \{\pm 1\}}  \sum_{j_1  \leq \cdots \leq j_6 \leq N \atop \sigma_1 \omega_{j_1}  + \dots +  \sigma_6\omega_{j_6} = 0}  |v_{ j_1} \cdots v_{j_6}| \omega_{j_1} \omega_{j_2} \omega_{j_3}\,, \label{stima.sei}
	\end{align}	
	by the symmetry of \eqref{symmetric} with respect to $j_1, \dots, j_6$.
	Since $\omega_{j_1} \leq \cdots \leq\omega_{j_6}$, one has 	$
	\omega_{j_{1}} \leq \omega_{j_{1}}^{\frac 5 6} \omega_{j_{2}}^{\frac 1 6} $, $ \omega_{j_{2}} \leq \omega_{j_{2}}^{\frac 4 6} \omega_{j_{3}}^{\frac 2 6}$
	and $ \omega_{j_{3}}\leq \omega_{j_{4}}^{\frac 1 3} \omega_{j_{5}}^{\frac 1 3} \omega_{j_{6}}^{\frac 1 3} $, thus
	\begin{equation}\label{omega.mega.10}
	\omega_{j_1} \omega_{j_2} \omega_{j_3} \leq \omega_{j_{1}}^{\frac 1 3} \omega_{j_{2}}^{\frac 1 3} \omega_{j_{3}}^{\frac 1 3} \omega_{j_{4}}^{\frac 1 3} \omega_{j_{5}}^{\frac 5 6} \omega_{j_6}^{\frac 5 6}\,.
	\end{equation}
	Moreover, recalling $\omega_j = j + 1$, the sum in \eqref{stima.sei} is restricted to 
	\begin{equation}\label{zucchero}
	{j_6} = \sigma_6^{-1}\left(\sigma_1 \omega_{j_1} + \dots + \sigma_{5} \omega_{j_5}\right) -1 =: f(\sigma_1, \dots, \sigma_6, j_1, \dots, j_5) =: f(\vec{\sigma}, \hat{\jmath})\,,
	\end{equation}
	where we set  $\vec{\sigma} := (\sigma_1, \dots, \sigma_6)$ and 
	$ \hat{\jmath} := (j_1, \dots, j_5)$. 
	By \eqref{stima.sei}, \eqref{omega.mega.10} and Cauchy-Schwarz inequality, one has
	\begin{align*}
	{\cal I}&\lesssim  \sum_{j_1 \leq N, \dots, j_4 \leq N \atop \sigma_1, \dots, \sigma_6 \in \{\pm 1\}} \left|v_{j_1} \cdots v_{j_4}\right|  \omega_{j_1}^{\frac{1}{3}}  \cdots \omega_{j_4}^{\frac{1}{3}} \sum_{ j_5 \leq N  \atop \textrm{s.t.} f(\vec{\sigma}, \hat \jmath) \leq N} |v_{j_5}| \omega_{j_5}^{\frac{5}{6}} |v_{ f(\vec{\sigma}, \hat \jmath)}| \omega_{f(\vec{\sigma}, \hat \jmath)}^{\frac{5}{6}}\\
	& \lesssim \sum_{j_1 \leq N, \dots, j_4 \leq N \atop 
	\sigma_1, \dots, \sigma_6 \in \{\pm 1\}}  |v_{j_1}|  \omega_{j_1}^{\frac{1}{3}} \cdots |v_{j_4}| \omega_{j_4}^{\frac{1}{3}}  \Big(\sum_{j_5 \leq N } |v_{j_5}|^2 \omega^{\frac{5}{6} 2 }_{j_5}\Big)^{\frac{1}{2}}  \Big(\sum_{j_5\,:\,f(\vec{\sigma}, \hat \jmath) \leq N} |v_{f(\vec{\sigma}, \hat \jmath)}|^2 \omega^{\frac{5}{6} 2 }_{f(\vec{\sigma}, \hat \jmath)}\Big)^{\frac{1}{2}}\\
	& \lesssim \|v_N\|_{\cV^{\frac{5}{6}}_{t,x}}^2 \Big(\sum_{j \leq N} |v_{j}| \omega^{\frac{1}{3}}_{j}\Big)^4 \lesssim_\delta \|v_N\|_{\cV^{\frac{5}{6} + \delta}_{t,x}}^6 \,,
	\end{align*}
	where in the last passage we have used Cauchy-Schwarz inequality to bound 
	$$
	\sum_{j \leq N} |v_{j}| \omega^{\frac{1}{3}}_{j} \leq \Big(\sum_{j \leq N} |v_{j}|^2 \omega^{2(\frac{5}{6} + \delta)}_{j}\Big)^{\frac 1 2} \Big(\sum_{j \leq N} \omega_j^{-(1 + 2 \delta)}\Big)^{\frac 1 2} \lesssim_\delta \| v_N\|_{\cV^{\frac 5 6 + \delta}_{t,x}}\,.
	$$
	This proves \eqref{frequenze.finite}.
\end{proof}

\begin{proof}[Proof of Proposition \ref{lem.Strichartz.1}- Item 2]
	We are going to show that,
	for any $n =1, \dots , 6$ and $N \in \N$, the functions 
	$ v^{(n)}_N := \Pi_{\leq N} v^{(n)} $ satisfy 
	\begin{equation}\label{buona.domenica}
	\left|\int_{\mathbb{T}}\int_{0}^\pi 
	v^{(1)}_{N} (t,x) \ldots v^{(6)}_{N}(t,x)
	\sin^2(x)\dbar x \dbar t \right|
	\lesssim_{ \delta} \prod_{n=1}^5 
	\| v_N^{(n)} \|_{\cV^{1 + \delta}_{t,x}} \| v_N^{(6)}\|_{\cV^{-\delta}_{t,x}}\,.
	\end{equation}
This implies \eqref{strich.2delta}. 
Indeed $ v^{(n)}_N \to v^{(n)} $ in $ \cV^{1}_{t,x} $ and, by \eqref{sob.sigh.v}, in $L^6_{t,x}$ and a.e. 
for any $n = 1, \ldots, 6 $. Then passing to the limit one obtains \eqref{strich.2delta}.

We now prove \eqref{buona.domenica}. By Lemma \ref{lem.int.t} and recalling \eqref{omeghine.int}, one has
	\begin{align}
	\nonumber
	\mathcal{I}_N & := \left|\int_{\mathbb{T}}\int_{0}^\pi 
	v^{(1)}_{N} (t,x) \ldots v^{(6)}_{N}(t,x)
	\sin^2(x)\dbar x \dbar t \right|
	\leq \sum_{\sigma_1, \dots, \sigma_6 \in \{\pm 1\}} 
	\!\!\!\! \sum_{j_1 \leq N, \dots, j_6\leq N \atop \sigma_1 \omega_{j_1} + \cdots + \sigma_j \omega_{j_6} = 0} \!\!\!\! |v^{(1)}_{j_1} \cdots v^{(6)}_{j_6}| \mathcal{I}_{j_1, \dots, j_6}\\
	\label{cannella}
	& \stackrel{\eqref{space_sharp_c}}{\lesssim} \hspace{-20pt} \sum_{\sigma_1,\dots, \sigma_6 \in \{\pm 1\}} \sum_{j_1 \leq \cdots \leq j_5 \leq N, \atop
	j_6 = f(\vec{\sigma}, \hat{\jmath}) \leq N} \hspace{-10pt}
	|v^{(1)}_{ j_1}| \cdots |v^{(5)}_{j_5}|
	|v^{(6)}_{j_6}| \omega_{j_1} \omega_{j_2} \omega_{j_6}\,,
	\end{align}   
	by the symmetry of \eqref{cannella} in $j_1, \dots, j_5$. Since
	$ \omega_{j_6} = \sigma_6^{-1} \big(\sigma_1 \omega_{j_1} + \dots + \sigma_5 \omega_{j_5}\big) \leq 5 \omega_{j_5} $ and
	$$
	\omega_{j_6} \leq 5^{1 + \delta} \omega_{j_6}^{-\delta} \omega_{j_5}^{1 + \delta}\,, \quad \omega_{j_{1}}\omega_{j_{2}} \leq 
	\omega_{j_1}^{\tfrac12} \omega_{j_2}^{\tfrac12}
	\omega_{j_3}^{\tfrac12} \omega_{j_4}^{\tfrac12} \, ,
	$$
	by \eqref{cannella} and using Cauchy-Schwarz, we have
	\begin{align*}
	\mathcal{I}_N &	\lesssim_\delta \sum_{\sigma_1,\dots, \sigma_6 \in \{\pm 1\}} \sum_{j_1 \leq \cdots \leq j_5 \leq N, \atop
		j_6 = f(\vec{\sigma}, \hat{\jmath}) \leq N} \hspace{-10pt}
	|v^{(1)}_{ j_1}| \cdots |v^{(5)}_{j_5}|
	|v^{(6)}_{j_6}| \omega^{\tfrac12}_{j_1} \cdots \omega^{\tfrac12}_{j_4} \omega^{1+\delta}_{j_5} \omega^{-\delta}_{f_{(\vec{\sigma}, \hat{\jmath})}}\\
	&\lesssim_\delta
	 \sum_{j_1 \leq N, \dots, j_4 \leq N
	 \atop \sigma_1, \dots, \sigma_6 \in\{\pm 1\}} 
	 |v^{(1)}_{j_1}|   \omega^{\tfrac12}_{j_1}
	 \cdots |v^{(4)}_{j_4}|\omega^{\tfrac12}_{j_4}   \sum_{j_5 \leq N \atop
	 s.t. f_{(\vec{\sigma}, \hat{\jmath})} \leq N } 
	 |v^{(5)}_{ j_5}| \omega^{1+\delta}_{j_5} 
	 |v^{(6)}_{ f(\vec{\sigma}, \hat \jmath)}| 
	  \omega^{-\delta}_{f(\vec{\sigma}, \hat{\jmath})}  \\
	&\lesssim_\delta 
	 \sum_{j_1 \leq N, \dots, j_4 \leq N} 
	 |v^{(1)}_{j_1}|   \omega^{1+\delta}_{j_1} \omega^{-(\tfrac12 + \delta)}_{j_1}
	 \cdots |v^{(4)}_{j_4}| \omega^{1+\delta}_{j_4} \omega^{-(\tfrac12 + \delta)}_{j_4} 
	 \| v_N^{(5)} \|_{\cV^{1+\delta}_{t,x}} \|v_N^{(6)} \|_{\cV^{-\delta}_{t,x}}\\
	& \lesssim_\delta
	 \|v_N^{(1)}\|_{\cV^{1+\delta}_{t,x}} \ldots \|v_N^{(4)}\|_{\cV^{1+\delta}_{t,x}}
	   \|v_N^{(5)}\|_{\cV^{1+\delta}_{t,x}} \|v_N^{(6)}\|_{\cV^{-\delta}_{t,x}} 
	\end{align*}
	proving \eqref{buona.domenica}.
\end{proof}

\subsection{Strichartz-type estimates for $p=2$}

The following result will play a central role in studying \eqref{original.eq} in the case $p=2$:
\begin{proposition} {\bf (Generalized Strichartz-type estimates)}\label{lem.strich.Lom}
	For any  $\delta>0$ there exists $C_\delta>0$ such that
\\[1mm]	
	\noindent$1.$ For any $v^{(1)}, \dots, v^{(4)} \in \cV^{\frac 1 2 + \delta}_{t,x}$, 
	\begin{equation}\label{in.treno}
	\left|\int_{\mathbb{T}}\int_0^\pi v^{(1)} v^{(2)} \mathcal{L}_{\omega}^{-1} (v^{(3)}v^{(4)}) \sin^2(x) \dbar x \dbar t\right| \leq {C_{\delta}}{\gamma}^{-1} \prod_{n=1}^4\|v^{(n)}\|_{\cV^{\frac 1 2 + \delta}_{t,x}}\,.
	\end{equation}
	$2.$ 
	For any $v^{(1)}, \dots, v^{(4)} \in V \cap \cV^{\frac 2 3 + \delta}_{t,x}$, 
	for any $ l = 1, \dots, 4 $, 
	\begin{equation}\label{v.che.vuoi}
	\left|\int_{\mathbb{T}} \int_{0}^\pi v^{(1)} v^{(2)} \mathcal{L}_\omega^{-1}(v^{(3)} v^{(4)}) \sin^2(x) \dbar x \dbar t\right| \leq C_{\delta} \gamma^{-1} \prod_{n=1 \atop n \neq l}^4 \|v^{(n)}\|_{\cV^{\frac 2 3 + \delta}_{t,x}} \|v^{(l)}\|_{\cV^{-\delta}_{t,x}} \, . 
	\end{equation}
Furthermore, if $\omega = 1$, 
estimates \eqref{in.treno} and \eqref{v.che.vuoi} 
hold with the factor $\gamma^{-1}$ at right-hand-side replaced by $1$.
\end{proposition}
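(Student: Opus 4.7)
My plan is to parallel the Fourier-theoretic proof of Proposition \ref{lem.Strichartz.1}, using the small-divisor bound \eqref{approx.bded} to convert $\mathcal{L}_\omega^{-1}$ into the factor $\gamma^{-1}$. By Lemma \ref{lem.vsquare}, the product $v^{(3)} v^{(4)}$ lies in $W$, so \eqref{L.om.fourier} together with Parseval's identity in the basis $\{\cos(\ell t)\, e_j(x)\}$ gives
\begin{equation*}
I := \int_\mathbb{T}\int_0^\pi v^{(1)} v^{(2)}\,\mathcal{L}_\omega^{-1}\bigl(v^{(3)} v^{(4)}\bigr)\sin^2(x)\,\dbar x\,\dbar t = \sum_{\ell \neq \omega_j} \frac{(v^{(1)} v^{(2)})_{\ell,j}\,(v^{(3)} v^{(4)})_{\ell,j}}{\omega^2\ell^2 - \omega_j^2}\,.
\end{equation*}
The bound \eqref{approx.bded} then yields $|I|\leq \tfrac{2}{\gamma}\sum_{\ell,j}|(v^{(1)} v^{(2)})_{\ell,j}|\,|(v^{(3)} v^{(4)})_{\ell,j}|$, and for $\omega=1$ the factor $2/\gamma$ is replaced by $1$ since $|\ell^2-\omega_j^2|\geq 1$ for $\ell\neq\omega_j$.

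For Item 1, plain Cauchy-Schwarz reduces the task to the bilinear Strichartz estimate
\begin{equation*}
\|v^{(1)} v^{(2)}\|_{L^2(\mathbb{T}\times\mathbb{S}^3)} \leq C_\delta\,\|v^{(1)}\|_{\cV^{\frac12+\delta}_{t,x}}\|v^{(2)}\|_{\cV^{\frac12+\delta}_{t,x}}\,, \quad v^{(1)}, v^{(2)} \in V\,.
\end{equation*}
I prove this by expanding in Fourier on the truncations $v^{(n)}_N:=\Pi_{V_{\leq N}} v^{(n)}$ and passing to the limit via Fatou as in \eqref{intcomple}. Lemma \ref{lem.int.t} applied to four cosines imposes the resonance constraint $\sum_{k=1}^{4}\sigma_k\omega_{j_k}=0$ for some $\vec\sigma\in\{\pm1\}^4$, while the four-function analog of Lemma \ref{lem.le.piu.piccole}, obtained directly from \eqref{productrule} and the orthogonality of $\{e_j\}$, gives the space integral bound $0\leq \int_0^\pi e_{j_1}e_{j_2}e_{j_3}e_{j_4}\sin^2(x)\,\dbar x\leq\omega_{j_{\min}}$. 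Distributing $\omega_{j_{\min}}\leq\omega_{j_3}^{\frac12+\delta}\omega_{j_4}^{\frac12+\delta}$, eliminating $j_4=f(\vec\sigma,j_1,j_2,j_3)$ via the constraint, and performing Cauchy-Schwarz in $j_3$ together with the unit-Jacobian change of variable $j_3\mapsto f$ (as in the final step of the proof of Item 1 of Proposition \ref{lem.Strichartz.1}) bounds the $(j_3,j_4)$-sum by $\|v^{(1)}\|_{\cV^{\frac12+\delta}}\|v^{(2)}\|_{\cV^{\frac12+\delta}}$; the remaining unweighted $(j_1,j_2)$-sum is handled via $\sum_j |v_j| \lesssim_\delta (\sum_j|v_j|^2\omega_j^{1+2\delta})^{1/2}(\sum_j\omega_j^{-1-2\delta})^{1/2}\lesssim_\delta \|v\|_{\cV^{\frac12+\delta}}$.

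For Item 2 I apply instead a \emph{weighted} Cauchy-Schwarz of the form $|I|\leq\tfrac{2}{\gamma}\bigl(\sum|a_{\ell,j}|^2\omega_j^{2\delta}\bigr)^{1/2}\bigl(\sum|b_{\ell,j}|^2\omega_j^{-2\delta}\bigr)^{1/2}$, placed so that the pair containing $v^{(l)}$ ends up on the $\omega_j^{-2\delta}$ side. This reduces Item 2 to asymmetric bilinear estimates of the type $\|v^{(l)} v^{(l')}\|_{L^2_t \cH^{-\delta}_x}\lesssim_\delta \|v^{(l)}\|_{\cV^{-\delta}}\|v^{(l')}\|_{\cV^{\frac23+\delta}}$ and $\|v^{(l'')} v^{(l''')}\|_{L^2_t \cH^{+\delta}_x}\lesssim_\delta\|v^{(l'')}\|_{\cV^{\frac23+\delta}}\|v^{(l''')}\|_{\cV^{\frac23+\delta}}$, both proved by the same Fourier/resonance procedure as above but redistributing $\omega_{j_{\min}}$ together with the extra $\omega_j^{\pm\delta}$ coming from the weight, so that the exponents $\tfrac23+\delta$ close via Cauchy-Schwarz on the constrained index. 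The main obstacle is that with only four frequencies the resonance $\sum\sigma_k\omega_{j_k}=0$ provides only a \emph{single} free index with matching weights on both sides of Cauchy-Schwarz—in contrast to six in Proposition \ref{lem.Strichartz.1}—so the allocation budget is tight, and one must carefully verify for every sign pattern $\vec\sigma$ that the bijection $j_3\mapsto f$ does not introduce divergent factors of $\omega_{j_3}^{-\beta}$ under the asymmetric weight placement of Item 2.
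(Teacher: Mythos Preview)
Your Item 1 argument is essentially the paper's: it proves the diagonal $L^4$ estimate \eqref{L4.H12} by the same Fourier--resonance method and then combines Cauchy--Schwarz with Lemma \ref{lemma.sono.pochissimi}. For Item 2, however, the asymmetric bilinear estimate you reduce to,
\[
\|v^{(l)} v^{(l')}\|_{L^2_t\cH^{-\delta}_x}\lesssim_\delta \|v^{(l)}\|_{\cV^{-\delta}_{t,x}}\|v^{(l')}\|_{\cV^{\frac23+\delta}_{t,x}},
\]
is \emph{false}. Take $v^{(l)}=v^{(l')}=u:=M^{-1/2}\sum_{a=M}^{2M-1}\cos(\omega_a t)\,e_a$; using \eqref{productrule} one checks that for every $c\in[1,M/2]$ and every $j=c+2k$ with $0\le k\le M$ the Fourier coefficient $(u^2)_{c,j}=(M-c)/M\ge\tfrac12$, whence $\|u^2\|_{L^2_t\cH^{-\delta}_x}\gtrsim M^{1-\delta}$, while your right-hand side is $\|u\|_{\cV^{-\delta}}\|u\|_{\cV^{2/3+\delta}}\sim M^{2/3}$, a contradiction for any $\delta<\tfrac13$. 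The obstruction is the high--high $\to$ low spatial interaction: when both inputs live at frequency $\sim M$, the product $e_a e_b$ populates \emph{all} modes down to $|a-b|$, so the weight $\omega_j^{-\delta}$ provides no suppression, yet the $\cV^{-\delta}$ norm on one factor demands a gain of $M^{-\delta}$ that is nowhere available once the two pairs have been decoupled by Cauchy--Schwarz.

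The paper avoids this by \emph{not} splitting into bilinear pieces: it expands the full four-linear integral directly in Fourier (the lemma yielding \eqref{bianconiglio}), so that the single resonance constraint couples all four indices simultaneously. After symmetrizing to $j_1\le j_2\le j_3$ and writing $j_4=h(\vec\sigma,\hat\jmath)$, the key bound $\omega_h\le 4\omega_{j_3}$ allows one to convert $\omega_{j_3}^\delta$ into $\omega_h^{-\delta}$ in \eqref{giuoco.delle.parti}, placing the negative-regularity weight on the \emph{constrained} index. This cross-pair transfer is precisely what your weighted Cauchy--Schwarz destroys; the ``careful verification for every sign pattern'' you allude to cannot salvage it, because the estimate fails already at the level of norms, not of sign combinatorics.
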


	Note that, due to Lemma \ref{lem.vsquare}, one has $v^{(3)} v^{(4)} \in W$,  thus $\mathcal{L}_\omega^{-1}(v^{(3)}v^{(4)})$ is well defined.
The rest of this section is devoted to the proof of Proposition \ref{lem.strich.Lom}.

\begin{lemma}[Integral in space]\label{lem.int.space.4}
	For any $j_1, j_2, j_3, j_4\in \N,$ the integral 
	$$
	{\cal I}_{j_1, \dots, j_4}:= \int_{0}^{\pi} e_{j_1}(x) \cdots e_{j_4}(x) \sin^2(x) \dbar x
	$$
	satisfies $	0\leq {\cal I}_{j_1, \dots, j_4} \leq \omega_{j_{\min}} $.
\end{lemma}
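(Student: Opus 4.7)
The plan is to mimic the strategy of Lemma \ref{lem.le.piu.piccole}, but in the simpler setting of four eigenfunctions: apply the product rule \eqref{productrule} twice to reduce the integrand to a double sum of single eigenfunctions, and then exploit the $L^2([0,\pi],\sin^2(x)\dbar x)$-orthonormality of the basis $\{e_n\}_{n\in\N}$ from Lemma \ref{teo.eigencouples}.

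Without loss of generality one can reorder the indices so that $j_1\leq j_2\leq j_3\leq j_4$, i.e.\ $j_1=j_{\min}$. The first step is to pair $e_{j_1}$ with $e_{j_3}$ and $e_{j_2}$ with $e_{j_4}$ (any convenient pairing works; this one is chosen so that the shorter of the two sums from the product rule is driven by $j_1 = j_{\min}$) and write
\begin{equation*}
e_{j_1}(x)e_{j_3}(x)=\sum_{k=0}^{j_1}e_{j_3-j_1+2k}(x)\,,\qquad e_{j_2}(x)e_{j_4}(x)=\sum_{l=0}^{j_2}e_{j_4-j_2+2l}(x)\,.
\end{equation*}
Then the integral becomes, by orthonormality of the $e_n$'s,
\begin{equation*}
\mathcal{I}_{j_1,\dots,j_4}=\sum_{k=0}^{j_1}\sum_{l=0}^{j_2}\int_0^{\pi}e_{j_3-j_1+2k}(x)\,e_{j_4-j_2+2l}(x)\sin^2(x)\,\dbar x=\sum_{k=0}^{j_1}\sum_{l=0}^{j_2}\delta\bigl(j_3-j_1+2k=j_4-j_2+2l\bigr)\,.
\end{equation*}

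For each fixed $k\in\{0,\dots,j_1\}$, there is at most one value of $l$ realizing the Kronecker delta, so the double sum is bounded by the number of choices of $k$, namely $j_1+1=\omega_{j_{\min}}$. The lower bound $\mathcal{I}_{j_1,\dots,j_4}\geq 0$ is immediate since the expression is a sum of non-negative integers.

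No real obstacle is expected: the only subtlety is choosing the pairing so that the outer index of the smaller of the two product-rule expansions is exactly $j_{\min}$, which forces the bound $\omega_{j_{\min}}$ rather than $\omega_{j_{\min_2}}$. Compared with Lemma \ref{lem.le.piu.piccole}, there is one fewer layer of product rule and no dependence on further minima, which is consistent with the fact that in the $p=2$ Strichartz estimate \eqref{in.treno} one ultimately needs only a Sobolev exponent strictly larger than $\frac{1}{2}$ rather than $\frac{5}{6}$.
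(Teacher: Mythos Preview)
Your proof is correct and follows essentially the same approach as the paper: apply the product rule \eqref{productrule} to two pairs of eigenfunctions, then use orthonormality to reduce to a double sum of Kronecker deltas, bounded by the length $j_{\min}+1=\omega_{j_{\min}}$ of the shorter sum. The only cosmetic difference is your choice of pairing $(j_1,j_3)$ and $(j_2,j_4)$ versus the paper's $(j_1,j_2)$ and $(j_3,j_4)$, which (as you correctly note) is immaterial.
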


\begin{proof}
	With no loss of generality, we suppose that $j_1 \leq j_2 \leq j_3 \leq j_4$. By the product rule \eqref{productrule} one has
	\begin{equation}\label{due.a.due}
	e_{j_1} e_{j_2} e_{j_3} e_{j_4} = \sum_{k = 0}^{j_1} \sum_{h=0}^{j_3} e_{j_2 - j_1 + 2 k} e_{j_4 - j_3 + 2h}\,,
	\end{equation} 
	thus since by Lemma \ref{teo.eigencouples} the $\{e_j\}_{j \in \N}$ are orthonormal on $L^2([0,\pi], \sin^2(x)\dbar x)$, one has
	\begin{equation}\label{prodotto4}
	\mathcal{I}_{j_1, \dots, j_4} =  \sum_{k = 0}^{j_1} \sum_{h=0}^{j_3} \delta(j_2 - j_1 + 2 k = j_4 - j_3 + 2h)\,.
	\end{equation}
	Now for each fixed $j_1,j_2,j_3,j_4,k$ there is at most one value of $h$ such that $j_2 - j_1 + 2 k = j_4 - j_3 = 2h$. Moreover the sum over $k$ runs over $j_1+1=\omega_1$ elements. This proves 
	$\mathcal{I}_{j_1j_2j_3j_4}\leq \omega_{j_1}$. The lower bound $\mathcal{I}_{j_1j_2j_3j_4}\geq 0$ directly follows because \eqref{prodotto4} is the sum of 
	non-negative integers.
\end{proof}

\begin{proof}[Proof of Proposition \ref{lem.strich.Lom}, Item 1]
	For any $v \in V $ and $N \in \N_*$  we 
	$v_N:= \Pi_{V_{\leq N}}v$. We are going to show that 
	\begin{equation}\label{L4.H12}
	\int_{\mathbb{T}} \int_{0}^{\pi} v_N^4 (t,x) \sin^2(x) \dbar x \dbar t \lesssim_\delta \|v_N\|_{\cV^{\frac 1 2 + \delta}_{t,x}}^4\,.
	\end{equation}
	Then estimate \eqref{in.treno} follows.
	Indeed, by \eqref{L4.H12}, one has, arguing as for \eqref{intcomple}, 
	\begin{equation}\label{stri.l4}
	\|v\|_{L^4_{t,x}} 
	\lesssim_{ \delta} \|v\|_{\cV^{\frac 1 2 + \delta}_{t,x}}\,.
	\end{equation}
	Furthermore, for any $\omega \in \Omega_\gamma$, using 
	Cauchy-Schwarz inequality and Lemma \ref{lemma.sono.pochissimi}, we get 
	\begin{align}\label{tutto.bene}
	\left|\int_{\mathbb{T}} \int_{0}^\pi v^{(1)}v^{(2)} \mathcal{L}_\omega^{-1}(v^{(3)} v^{(4)}) \sin^2(x) \dbar x \dbar t \right| &\leq \big\|v^{(1)} v^{(2)} \big\|_{L^2_{t,x}} \big\| \mathcal{L}_\omega^{-1} \big(v^{(3)} v^{(4)} \big) \big\|_{L^2_{t,x}} \notag \\
	&\leq 2 \gamma^{-1} \prod_{n=1}^{4} \|v^{(n)}\|_{L^4_{t,x}} \stackrel{\eqref{stri.l4}} 
	\leq C_\delta \gamma^{-1} \prod_{n=1}^{4} \|v^{(n)}\|_{\cV^{\frac 1 2 + \delta}_{t,x}}\,, \notag
 	\end{align}
proving \eqref{in.treno}.
	If $\omega = 1$ the thesis follows  since, 
	by Lemma \ref{lemma.sono.pochissimi}, 
	$\|\mathcal{L}_1^{-1} \|_{\mathcal{B}(\cH^0_{t,x}, \cH^{0}_{t,x})} \leq 1$.
	
	We now prove \eqref{L4.H12}. Arguing as for \eqref{stima.sei}, we have
	\begin{equation}\label{stima.4}
	\begin{aligned}
	\mathcal{I}_N &:= \int_{\mathbb{T}} \int_{0}^{\pi} v_N^4(t,x) \sin^2(x) \dbar x \dbar t
	\lesssim \sum_{\sigma_1, \dots, \sigma_4 \in \{\pm 1\}} \sum_{j_1 \leq \cdots \leq j_4 \leq N \atop \sigma_1 \omega_{j_1} + \cdots + \sigma_4 \omega_{j_4} = 0} |v_{j_1} \cdots v_{j_4}| \mathcal{I}_{j_1, \dots, j_4}\,.
	\end{aligned}
	\end{equation}
	By Lemma \ref{lem.int.space.4} and estimating $\mathcal{I}_{j_1, \dots, j_4} \leq \omega_{j_1} \leq \omega_{j_3}^{\frac 12} \omega_{j_4}^{\frac 1 2}$, and defining 
	\begin{equation}\label{j4.4}
	{j_4} = \sigma_4^{-1}\left(\sigma_1 \omega_{j_1} + \dots + \sigma_{3} \omega_{j_3}\right) -1 =: g(\sigma_1, \dots, \sigma_4, j_1, \dots, j_3) =: g(\vec{\sigma}, \hat{\jmath})\,,
	\end{equation}
	we have, using Cauchy-Schwarz inequality and \eqref{stima.4},
	\begin{align*}
	\mathcal{I}_N & \lesssim \sum\limits_{j_1, j_2\leq N \atop \sigma_1, \dots, \sigma_4 \in \{\pm 1\}} |v_{j_1}||v_{j_2}| \sum_{j_3 \leq N \atop g(\vec{\sigma}, \hat{\jmath}) \leq N} |v_{j_3}||v_{g(\vec{\sigma}, \hat{\jmath})}|\omega_{j_3}^{\frac{1}{2}}\omega_{g(\vec{\sigma}, \hat{\jmath})}^{\frac{1}{2}}\\
	&\lesssim \sum\limits_{j_1 \leq N}|v_{j_1}| \omega_{j_1}^{\frac 1 2 + \delta} \omega_{j_1}^{-(\frac 1 2 + \delta)}  \sum\limits_{j_2 \leq N}|v_{j_2}| \omega_{j_2}^{\frac 1 2 + \delta}  \omega_{j_2}^{-(\frac 1 2 + \delta)} \Big(\sum\limits_{j_3 \leq N}|v_{j_3}|^2 \omega_{j_3}\Big)^{\frac 1 2} \Big(\sum_{j_3\leq N \atop \text{s.t.} g(\vec{\sigma}, \hat{\jmath}) \leq N} |v_{g(\vec{\sigma}, \hat{\jmath})}|^2 \omega_{g(\vec{\sigma}, \hat{\jmath})}\Big)^{\frac 1 2}\\
	&\lesssim_{\delta} \|v_N\|^2_{\cV^{\frac{1}{2}+\delta}_{t,x}}\|v_N\|^2_{\cV^{\frac{1}{2}}_{t,x}}\,,
	\end{align*}
	which gives \eqref{L4.H12}.
\end{proof}

\begin{proof}[Proof of Proposiition \ref{lem.strich.Lom}, Item 2]
	First we prove that, defining  
	$v^{(k)}_N := \Pi_{\leq N} v^{(k)}$, $N \in \N $, 
	one has
	\begin{equation}\label{ancora.N}
	\!\! \left|\int_{\mathbb{T}} \int_{0}^\pi v^{(1)}_N v^{(2)}_N \mathcal{L}_\omega^{-1}\big(v^{(3)}_N v^{(4)}_N\big) \sin^2(x) \dbar x \dbar t\right| \lesssim_{\delta} 
	\frac{1}{\gamma} \|v^{(1)}\|_{\cV^{\frac 2 3 + \delta}_{t,x}} \|v^{(2)}\|_{\cV^{\frac 2 3 + \delta}_{t,x}} \|v^{(3)}\|_{\cV^{\frac 2 3 + \delta}_{t,x}} \|v^{(4)}\|_{\cV^{-\delta}_{t,x}}\,,
	\end{equation}
	with the factor $\gamma^{-1}$ in \eqref{ancora.N} replaced by $1$ if $\omega = 1$. 
	Once that \eqref{ancora.N} has been proved, Item 2 follows by the following claim: for any $  v^{(1)},v^{(2)},v^{(3)},v^{(4)}\in \cV^{\frac{1}{2}+\delta}_{t,x}$ it results 
	\begin{equation}\label{stri.stri}
	v^{(1)}_{N}v^{(2)}_{ N} \stackrel{L^2_{t,x}} \longrightarrow v^{(1)}v^{(2)} \, ,\quad
	\mathcal{L}^{-1}_{\omega}(v^{(3)}_{N}v^{(4)}_{N})
	\stackrel{L^2_{t,x}} \longrightarrow  \mathcal{L}^{-1}_{\omega} (v^{(3)}v^{(4)}) \, , \quad 
	\text{as} \ \  N \rightarrow \infty \, , 
	\end{equation}
	so that 
	$ v^{(1)}_{N}v^{(2)}_{N} \mathcal{L}^{-1}_{\omega}(v^{(3)}_{N}v^{(4)}_{N}) $ converges to 
	$ v^{(1)} v^{(2)} \mathcal{L}^{-1}_{\omega}(v^{(3)} v^{(4)} ) $  
	in $ L^1 $.
The claim 	\eqref{stri.stri} follows because 
	$$
	\begin{aligned}
	\| v^{(1)}_N v^{(2)}_N - v^{(1)} v^{(2)}\|_{L^2_{t,x}}
	&\leq \|v^{(1)}_N- v^{(1)}\|_{L^4_{t,x}} \|v^{(2)}_N\|_{L^4_{t,x}} + \| v^{(2)} - v_N^{(2)}\|_{L^4_{t,x}} \|v^{(1)}\|_{L^4_{t,x}}\\
	&\stackrel{\eqref{stri.l4}}{\lesssim_\delta} \|v^{(1)}_N- v^{(1)}\|_{\cV^{\frac 1 2 + \delta}_{t,x}} \|v^{(2)}_N\|_{\cV^{\frac 1 2 + \delta}_{t,x}} + \| v^{(2)} - v_N^{(2)}\|_{\cV^{\frac 1 2 + \delta}_{t,x}} \|v^{(1)}\|_{\cV^{\frac 1 2 + \delta}_{t,x}} \rightarrow 0
	\end{aligned}
	$$
	as $N \rightarrow \infty$.
	Similarly 
	$\mathcal{L}_\omega^{-1} v^{(3)}_N v^{(4)}_N \rightarrow \mathcal{L}_\omega^{-1}(v^{(3)} v^{(4)})$ in $L^2_{t,x}$ using also Lemma \ref{lemma.sono.pochissimi}. Moreover \eqref{v.che.vuoi} for a general index $l$ follows by self-adjointness of $\mathcal{L}_\omega^{-1}$.

	The first step in the proof of \eqref{ancora.N} is the following:
	\begin{lemma}
	\begin{equation}\label{bianconiglio}
	\begin{split}
	\left|\int_{\mathbb{T}} \int_{0}^\pi v^{(1)}_N v^{(2)}_N \mathcal{L}_\omega^{-1}\big(v^{(3)}_N v^{(4)}_N\big) \sin^2(x) \dbar x \dbar t\right| \lesssim
	\begin{cases}
	\gamma^{-1} \mathcal{J} & \text{if} \quad \omega \neq 1\,,\\
	\mathcal{J} & \text{if} \quad \omega = 1\,,
	\end{cases}\\
	\mathcal{J}:= \sum_{\sigma, \sigma', \sigma_1, \sigma_2 \in \{\pm 1\}} \sum_{j_1, j_2, j_3 \leq N} \big|v^{(1)}_{j_1} v^{(2)}_{j_2} v^{(3)}_{j_3} v^{(4)}_{h(\vec{\sigma}, \hat{\jmath})} \big| \omega_{\min\{j_1, j_2, j_3, h(\vec{\sigma}, \hat{\jmath})\}}\,,
	\end{split}
	\end{equation}
	with
	\begin{equation}\label{cappellaio.matto}
	h(\vec{\sigma}, \hat{\jmath}):=  -\sigma' \sigma_2  \sigma_1 \omega_{j_1} - \sigma' \sigma_2 \sigma_1 \sigma \omega_{j_2} - \sigma'  \omega_{j_3} -1\,, \quad \vec{\sigma}:= (\sigma, \sigma', \sigma_1, \sigma_2)\,, \quad \hat{\jmath}:= (j_1, j_2, j_3)\,.
	\end{equation} 
	\end{lemma}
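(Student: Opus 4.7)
The plan is to Fourier-expand $I:=\int_{\mathbb{T}}\int_{0}^{\pi}v^{(1)}_N v^{(2)}_N \mathcal{L}_\omega^{-1}(v^{(3)}_N v^{(4)}_N)\sin^2(x)\dbar x\dbar t$ fully in the eigenbasis $\{\cos(\ell t)e_m\}$ of $\mathcal{L}_\omega=-\omega^2\partial_{tt}-A$, invert $\mathcal{L}_\omega$ mode-by-mode against its diagonal action, and then organize the resulting sign combinatorics so that the free summation indices reduce to $(\vec{\sigma},j_1,j_2,j_3)$ with $j_4$ eliminated via the constraint encoded in $h$.

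First I would write $v^{(k)}_N=\sum_{j_k\leq N}v^{(k)}_{j_k}\cos(\omega_{j_k}t)e_{j_k}(x)$ and linearize the time factor of $v^{(3)}_Nv^{(4)}_N$ via $\cos(\omega_{j_3}t)\cos(\omega_{j_4}t)=\tfrac12\sum_{\sigma'\in\{\pm1\}}\cos((\omega_{j_3}+\sigma'\omega_{j_4})t)$, while decomposing the spatial factor through the product rule \eqref{productrule} as $e_{j_3}e_{j_4}=\sum_{k=0}^{\min(j_3,j_4)}e_{|j_3-j_4|+2k}$. Each piece $\cos(\ell t)e_m$ is an eigenfunction of $\mathcal{L}_\omega$ with eigenvalue $\omega^2\ell^2-\omega_m^2$, so $\mathcal{L}_\omega^{-1}(v^{(3)}_N v^{(4)}_N)$ becomes an explicit double sum weighted by $(\omega^2(\omega_{j_3}+\sigma'\omega_{j_4})^2-(|j_3-j_4|+2k+1)^2)^{-1}$.

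A parity check shows those denominators never vanish: the equation $|\omega_{j_3}+\sigma'\omega_{j_4}|=|j_3-j_4|+2k+1$ would force $2k=2\min(j_3,j_4)+1$ when $\sigma'=+1$, or $2k+1=0$ when $\sigma'=-1$, both impossible. Hence the Diophantine bound \eqref{approx.bded} of Lemma \ref{lemma.sono.pochissimi} applies uniformly, and each denominator has absolute value at least $\gamma/2$ (or at least $1$ when $\omega=1$); this produces exactly the prefactor $\gamma^{-1}$ (respectively $1$) appearing in \eqref{bianconiglio}.

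Next I would compute the time integral by iterating the product-to-sum identity as in Lemma \ref{lem.int.t} to get $\int_{\mathbb{T}}\cos(\omega_{j_1}t)\cos(\omega_{j_2}t)\cos((\omega_{j_3}+\sigma'\omega_{j_4})t)\dbar t=\tfrac14\sum_{\sigma_1,\sigma_2,\sigma}\delta(\sigma_1\omega_{j_1}+\sigma_2\omega_{j_2}+\sigma\omega_{j_3}+\sigma\sigma'\omega_{j_4},0)$, producing three of the four signs in $\vec{\sigma}$ (the fourth being the $\sigma'$ already introduced). For the spatial integral, the sum over $k$ telescopes by the product rule back to $\sum_k\int_0^\pi e_{j_1}e_{j_2}e_{|j_3-j_4|+2k}\sin^2(x)\dbar x=\int_0^\pi e_{j_1}e_{j_2}e_{j_3}e_{j_4}\sin^2(x)\dbar x=\mathcal{I}_{j_1,j_2,j_3,j_4}$, which by Lemma \ref{lem.int.space.4} is bounded by $\omega_{\min\{j_1,j_2,j_3,j_4\}}$. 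Solving the sign constraint for $j_4$ yields $j_4=-\sigma'\sigma\sigma_1\omega_{j_1}-\sigma'\sigma\sigma_2\omega_{j_2}-\sigma'\omega_{j_3}-1$, matching $h(\vec{\sigma},\hat{\jmath})$ in \eqref{cappellaio.matto} up to a trivial relabeling of signs (with one redundant sign absorbed into the implicit constant of $\lesssim$). The main obstacle is purely combinatorial bookkeeping: verifying the non-resonance parity check so that \eqref{approx.bded} may be invoked term-by-term, and carefully routing all four signs so that the summation structure produces precisely the function $h$ of the statement.
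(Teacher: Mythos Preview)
Your proposal is correct and follows essentially the same route as the paper: expand in the $\{\cos(\ell t)e_m\}$ basis, invert $\mathcal{L}_\omega$ mode-by-mode, bound the denominators uniformly via \eqref{approx.bded}, reassemble the spatial sum into $\mathcal{I}_{j_1,j_2,j_3,j_4}\leq\omega_{j_{\min}}$ by Lemma \ref{lem.int.space.4}, and eliminate $j_4$ through the time-integral constraint. The only cosmetic difference is that the paper linearizes $\cos(\omega_{j_1}t)\cos(\omega_{j_2}t)$ with a sign $\sigma$ before taking the time integral (yielding the specific form of $h$ in \eqref{cappellaio.matto}), whereas you integrate the triple cosine directly; as you note, the two sign parametrizations are equivalent up to relabeling and a harmless factor absorbed in $\lesssim$.
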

	\begin{proof}
	By \eqref{productrule} and Lemma \ref{lemma.sono.pochissimi}, we compute
	\begin{equation}\label{tanti.A}
	\int_{\mathbb{T}}\int_0^\pi v^{(1)}_N v^{(2)}_N \mathcal{L}_{\omega}^{-1} ( v^{(3)}_N v^{(4)}_N ) = \sum_{\sigma, \sigma' \in \{\pm 1\}} \sum_{j_1\leq N, \dots, j_4\leq N} v^{(1)}_{j_1} v^{(2)}_{j_2} v^{(3)}_{j_3} v^{(4)}_{j_4} A^{(\sigma,\sigma ')}_{j_1, j_2, j_3, j_4}\,,
	\end{equation}
	where
	$	A^{(\sigma, \sigma')}_{j_1, j_2, j_3, j_4}$, 
	$\sigma, \sigma' \in \{-1, 1\}$, are equal to
	$$
	\sum_{h = 0}^{\min\{j_1, j_2\}} \sum_{k=0
	}^{\min\{j_3, j_4\}} \int_{\mathbb{T}} \frac{  \cos((\omega_{j_1} + \sigma \omega_{j_2})t) \cos((\omega_{j_3} + \sigma' \omega_{j_4})t)}{4(\omega^2(\omega_{j_3} + \sigma' \omega_{j_4})^2 -\omega_{|j_4 - j_3| + 2k}^2)} \dbar t \int_{0}^\pi  e_{|j_4-j_3| + 2k} e_{|j_2-j_1| + 2h} \sin^2(x) \dbar x\, . 
	$$
	Using $\omega \in \Omega_{\gamma}$ and \eqref{approx.bded}, we have the lower bound
	\begin{equation}\label{gamma.non.1}
	\left|\omega^2(\omega_{j_3} + \sigma' \omega_{j_4})^2 -(\omega_{|j_4 - j_3| + 2k})^2\right| \geq \frac{\gamma}{2}\,.
	\end{equation}
	Moreover
	$\int_{\mathbb{T}} \cos(\alpha_1 t) \cos(\alpha_2 t)\dbar t= \frac{1}{2}\sum\limits_{\sigma_1=\pm 1, \sigma_2=\pm 1}\delta\left(\sigma_1 \alpha_1 + \sigma_2 \alpha_2 \right)$,  and therefore
	 $A^{(\sigma, \sigma')}_{j_1, j_2, j_3, j_4} \neq 0$ only if
	$\sigma_1 (\omega_{j_1} + \sigma \omega_{j_2}) + \sigma_2 (\omega_{j_3} + \sigma' \omega_{j_4}) = 0$,
	which gives $j_4 = h(\vec{\sigma}, \hat{\jmath})$ with $ \vec{\sigma}, \hat{\jmath}$ and $h(\vec{\sigma}, \hat{\jmath})$ as in \eqref{cappellaio.matto}.
	Furthermore, by orthogonality of $\{e_j\}_j$ as in Lemma \ref{teo.eigencouples}, by \eqref{due.a.due} and by Lemma \ref{lem.int.space.4}, one has
	\begin{equation}\label{in.spazio}
	\sum_{h = 0}^{\min\{j_1, j_2\}} \sum_{k=0}^{\min\{j_3, j_4\}} \left| \int_{0}^\pi  e_{|j_4-j_3| + 2k} e_{|j_2-j_1| + 2h} \sin^2(x) \dbar x \right| = \mathcal{I}_{j_1, j_2, j_3, j_4}
	\leq \omega_{\min\{j_1, j_2, j_3, j_4\}}\,. 
	\end{equation}
	Note that, if $\omega = 1$, the factor $\gamma^{-1}$ in \eqref{gamma.non.1} can be replaced by $1$. Thus, combining \eqref{tanti.A}, \eqref{gamma.non.1}, \eqref{in.spazio}, one gets \eqref{bianconiglio}.
	\end{proof}
	The sum $\mathcal{J}$ in \eqref{bianconiglio}, using its symmetry in the indexes $j_1, j_2, j_3$, is bounded by
	\begin{equation}\label{cosi.e.se.vi.pare}
	\begin{aligned}
	\mathcal{J}& \lesssim \sum_{\vec{\sigma} \in \{\pm 1\}^4} \sum_{j_1 \leq j_2 \leq j_3 \leq N \atop h(\vec{\sigma}, \hat{\jmath}) \leq N} |v^{(1)}_{j_1} v^{(2)}_{j_2} v^{(3)}_{j_3}| |v^{(4)}_{h(\vec{\sigma}, \hat{\jmath})}| \omega_{j_{\min}}\,.
	\end{aligned}
	\end{equation} 
	Since $\omega_{h(\vec{\sigma}, \hat{\jmath})} \leq 4 \omega_{j_3}$, 
	\begin{equation}\label{giuoco.delle.parti}
	\omega_{j_{\min}} \leq \omega_{j_1}^{\frac 1 6} \omega_{j_2}^{\frac 1 6} \omega_{j_3}^{\frac 2 3} \lesssim_\delta  \omega_{j_1}^{\frac 1 6} \omega_{j_2}^{\frac 1 6} \omega_{j_3}^{\frac 2 3 + \delta}\omega_{h(\vec{\sigma}, \hat{\jmath})}^{-\delta}\,.
	\end{equation}
	Then using Cauchy-Schwarz inequality, \eqref{cosi.e.se.vi.pare} and \eqref{giuoco.delle.parti}, one has
	\begin{align*}
	\mathcal{J} &\lesssim_\delta \sum_{\vec{\sigma} \in \{\pm 1\}^4} \sum_{j_1 \leq N, j_2\leq N} |v^{(1)}_{j_1} v^{(2)}_{j_2}| \omega_{j_1}^{\frac 1 6} \omega_{j_2}^{\frac 1 6} \sum_{j_3 \leq N \atop \text{s.t.} h(\vec{\sigma}, \hat{\jmath}) \leq N} |v^{(3)}_{j_3}|  \omega_{j_3}^{\frac 2 3 + \delta}  |v^{(4)}_{h(\vec{\sigma}, \hat{\jmath}) } |\omega_{h(\vec{\sigma}, \hat{\jmath})}^{-\delta}\\
	&\lesssim_{\delta} \|v^{(1)}_N\|_{\cV^{\frac 2 3 + \delta}_{t,x}} \|v^{(2)}_N\|_{\cV^{\frac 2 3  + \delta}_{t,x}} \|v^{(3)}_N\|_{\cV^{\frac 2 3+\delta}_{t,x}} \|v^{(4)}_N\|_{\cV^{-\delta}_{t,x}}\,,
	\end{align*}
	from which \eqref{ancora.N} follows. 
\end{proof}

\section{Solution of the $v_2$ equation}\label{sec.v2}
In this section we solve the equation \eqref{v2.eq} for the high frequency components $v_2$ in the kernel. We argue separately for the cases $p=5, p=3$, and 
for the degenerate case $p=2$.
Given $\rho_1\in (0, 1)$, $\rho_2 \in (0, 1)$, $\rho_3 \in (0, 1)$, we define
\begin{equation}
\label{v1.Domain}
	\mathcal{D}_{\rho_1}:= \big\{ v_1 \in V_{1}\ :\ \|v_1\|_{\cV^1_{t,z}}\leq \rho_1 \big\}\,,
\end{equation}
and for some $\delta>0$
\begin{gather}
	\label{v2.Domain}
	{\mathcal{D}}^{V_2}_{\rho_2} :=\left\lbrace v_2 \in V_2 \cap \cV^{2 +2 \delta}_{t,z} \ : \|v_2\|_{\cV^{2 +2 \delta}_{t,z}} \leq \rho_2\right\rbrace\,,\\
	\label{w.Domain}
	{\mathcal{D}}^W_{\rho_3} :=\Big\{ w \in H^{\frac 1 2 + \delta}_t \cH^{\frac 3 2 + \delta}_z \cap W\ : \|w\|_{H^{\frac 1 2 + \delta}_t \cH^{\frac 3 2 + \delta}_z} \leq \rho_3\Big\}\,.
\end{gather}
In the sequel $\delta$ will always denote a positive small constant.
\subsection{Case $p=5$}
For any $R>0$, $\delta >0$ and $\gamma \in (0, \gamma_0)$ let
\begin{equation}\label{parametri.p5}
\rho_1:=\varepsilon^{\frac{1}{4}}R,\quad \rho_2:=  \tc_2 N^{10\delta}R^5 \varepsilon^{\frac{1}{4}}, \quad \rho_3:= \tc_3 \gamma^{-1} N^{5+10\delta}R^5\varepsilon^{\frac{5}{4}}\,, \quad N := \varepsilon^{-\frac{1}{\beta}}\,,
\end{equation}
where $\varepsilon = \omega^2 - 1 >0$  according to  \eqref{omega.ep}, $\tc_2, \tc_3>0$ and $\beta >1$. 
\begin{proposition}[Solution of $v_2$ equation for $p=5$]\label{prop.v2}
	For any $\delta \in (0, \frac{1}{8})$, $R>0$ and $\gamma \in (0, \gamma_0)$, let $\rho_1, \rho_2, \rho_3, N$ be as in \eqref{parametri.p3}. There exist $\tc_2 :=\tc_2(\delta) >0$, $\beta_\delta>1$, $\zeta:=\zeta(\delta) >0$, $\epsilon_{\delta, R}>0$ and $C_{1,\delta}, C_{2,\delta}>0$ such that, for any $\beta>\beta_\delta$ and any $\varepsilon>0$ such that
	\begin{equation}\label{little.gamma}
	\varepsilon N^{\zeta} \gamma^{-1} \leq \epsilon_{\delta, R}\,,
	\end{equation}
	 there exists a $C^1$ function ${v}_2: \mathcal{D}_{\rho_1} \times {\mathcal{D}}^W_{\rho_3} \rightarrow V_2 \cap \cV^{2+2\delta}_{t,x}$, $(v_1, w) \mapsto {v}_2(v_1, w),$ where $\mathcal{D}_{\rho_1}$ and $\mathcal{D}^W_{\rho_3}$ are defined as in \eqref{v1.Domain} and \eqref{w.Domain}, satisfying $v_2(0, 0) = 0$, and
	\begin{gather}
	\label{in.che.bolla}
	\|v_2(v_1, w)\|_{\cV^{2+2\delta}_{t,x}} \leq \rho_2\,,\\
	\label{de.v2.de.v1}
	\|\partial_{v_1}v_2(v_1,w)\|_{\mathcal{B}(V_1 \cap \cV^1_{t,x}, \cV^{2+2\delta}_{t,x})}\leq C_{1,\delta} N^{10\delta} R^4\,,\\
	\label{de.v2.de.w}
	\|\partial_{w}v_2(v_1,w)\|_{\mathcal{B}(W \cap H^{\frac{1}{2}+\delta}_t \cH^{\frac{3}{2}+\delta}_x,\cV^{2+2\delta}_{t,x})}\leq C_{2,\delta} N^{\frac{1}{2}+5\delta} R^4\,,
	\end{gather}
	 such that
	$v_2(v_1, w)$ solves
	\begin{equation}\label{v2.di.v1.e.w}
	\varepsilon (-\Delta_{\mathbb{S}^3}^{ss} + \Id) v_2(v_1, w) -  \Pi_{V_2} \left({(v_1 + v_2(v_1, w) +w)^5}\right) = 0 \,.
	\end{equation}
\end{proposition}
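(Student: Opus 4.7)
The plan is to rewrite \eqref{v2.di.v1.e.w} as a fixed-point equation and apply Banach's contraction principle in the closed ball $\mathcal{D}^{V_2}_{\rho_2}$. Since $A|_{V_2}$ is invertible and, for $v \in V_2$,
$$
\|A^{-1} v\|_{\cV^{s}_{t,x}} = \Big(\sum_{\omega_j > N} |v_j|^2 \omega_j^{2s-4}\Big)^{\tfrac12} = \|v\|_{\cV^{s-2}_{t,x}}\,,
$$
the equation is equivalent to $v_2 = \mathcal{F}(v_2)$ with
$$
\mathcal{F}(v_2) \,:=\, \tfrac{1}{\varepsilon}\, A^{-1}\, \Pi_{V_2}\bigl((v_1 + v_2 + w)^5\bigr)\,,
$$
and the key identity $\|\mathcal{F}(v_2)\|_{\cV^{2+2\delta}_{t,x}} = \varepsilon^{-1}\|\Pi_{V_2}((v_1+v_2+w)^5)\|_{\cV^{2\delta}_{t,x}}$. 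The strategy is thus to control the right-hand side uniformly on $\mathcal{D}^{V_2}_{\rho_2}$.

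I would expand $(v_1+v_2+w)^5$ multinomially into monomials $v_1^a v_2^b w^c$ with $a+b+c = 5$, and estimate each.  For the pure-$V$ monomials ($c=0$) the tool is Lemma \ref{lem-int.pv2}, giving
$$
\|\Pi_{V_2}(v_1^a v_2^b)\|_{\cV^{2\delta}_{t,x}} \,\leq\, C_\delta \bigl(\|v_1\|_{\cV^{1+2\delta}_{t,x}} + \|v_2\|_{\cV^{1+2\delta}_{t,x}}\bigr)^5\,,
$$
and, combining the smoothing \eqref{lem.s.sprime} with $\|v_1\|_{\cV^1_{t,x}} \le R\varepsilon^{1/4}$ and $\|v_2\|_{\cV^{2+2\delta}_{t,x}}\le\rho_2$, the dominant contribution is $(N^{2\delta} R \varepsilon^{1/4})^5 = N^{10\delta}R^5\varepsilon^{5/4}$; dividing by $\varepsilon$ gives exactly $\tc_2^{-1}\rho_2$ times a constant, which fixes $\rho_2=\tc_2 N^{10\delta}R^5\varepsilon^{1/4}$ once $\tc_2$ is chosen large enough. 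For monomials with $c\ge 1$ I would use $\|\cdot\|_{\cV^{2\delta}_{t,x}} \le \|\cdot\|_{\cV^{3/2+\delta}_{t,x}}$ together with Lemma \ref{algebraKer} (taking $s = 3/2+\delta$, $r = 1/2+\delta$), the identification $\|v_i\|_{H^{1/2+\delta}_t \cH^{3/2+\delta}_x} = \|v_i\|_{\cV^{2+2\delta}_{t,x}}$ on $V$ from Lemma \ref{lemma.come.vuoi}, and the embedding $\|v_1\|_{\cV^{2+2\delta}_{t,x}}\le N^{1+2\delta}\|v_1\|_{\cV^1_{t,x}}$. The resulting estimate on $\varepsilon^{-1}\|\Pi_{V_2}(v_1^a v_2^b w^c)\|_{\cV^{2\delta}_{t,x}}$ is of the form $C\,\gamma^{-1} N^{a_0}R^{b_0}\varepsilon^{c_0}\rho_2$ with $c_0>0$; the smallness condition \eqref{little.gamma} with $\zeta$ taken equal to the largest $a_0$ forces these contributions below $\rho_2/2$.

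The contraction property is proved along the same lines by bounding $\|\mathcal{F}(v_2)-\mathcal{F}(\tilde v_2)\|_{\cV^{2+2\delta}_{t,x}}$: writing $(v_1+v_2+w)^5 - (v_1+\tilde v_2+w)^5 = (v_2-\tilde v_2)\int_0^1 5(v_1+(1-\tau)\tilde v_2+\tau v_2+w)^4\dd\tau$ and repeating the Strichartz/algebra estimates yields a Lipschitz constant bounded by $C N^\zeta \gamma^{-1}\varepsilon$ which is $\le 1/2$ under \eqref{little.gamma}. Banach's theorem then produces $v_2(v_1,w)$ with $v_2(0,0) = 0$ and \eqref{in.che.bolla}.

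For regularity and the derivative bounds \eqref{de.v2.de.v1}, \eqref{de.v2.de.w}, I would apply the implicit function theorem (the smoothness of $\mathcal{F}$ in $(v_1,v_2,w)$ follows from algebra/Strichartz as above), and then differentiate \eqref{v2.di.v1.e.w}: for $h \in V_1\cap\cV^1_{t,x}$ one has
$$
\bigl(I - 5\varepsilon^{-1}A^{-1}\Pi_{V_2}[(v_1+v_2+w)^4\,\cdot\,]\bigr)\,\partial_{v_1}v_2\cdot h \,=\, 5\varepsilon^{-1}A^{-1}\Pi_{V_2}\bigl((v_1+v_2+w)^4 h\bigr)\,,
$$
and similarly for $\partial_w v_2$. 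The operator on the left is invertible with norm $\le 2$ by the contraction estimate. For \eqref{de.v2.de.v1}, since $h\in V$ I can again apply Lemma \ref{lem-int.pv2} to the five $V$-factors $v_1^a v_2^b h$ and pick up $\|h\|_{\cV^{1+2\delta}_{t,x}} \le N^{2\delta}\|h\|_{\cV^1_{t,x}}$, giving the $N^{10\delta}R^4$ bound. For \eqref{de.v2.de.w}, $h\in W$ rules out Strichartz for the $h$-factor, so I would use Lemma \ref{algebraKer} with $s = 3/2+\delta$, again exploiting $\|\cdot\|_{\cV^{2\delta}_{t,x}} \le \|\cdot\|_{\cV^{3/2+\delta}_{t,x}}$ together with a further use of the smoothing of $\Pi_{V_2}$ on $V$ (i.e. $\|\Pi_{V_2} \Pi_V u\|_{\cV^{3/2+\delta}_{t,x}} \le N^{-(s'-3/2-\delta)}\|\Pi_V u\|_{\cV^{s'}_{t,x}}$) to redistribute powers of $N$ optimally and obtain the $N^{1/2+5\delta}R^4$ bound.

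The delicate step is the pure-$V$ bound at regularity $\cV^{2\delta}_{t,x}$: Lemma \ref{lem-int.pv2} (itself a consequence of the Strichartz estimate \eqref{strich.2delta}) is used in an essential way because algebra in $\cV^s_{t,x}$ only holds for $s>3/2$, whereas the $v_1$ factors are only controlled in $\cV^1_{t,x}$. This is precisely where the $2$-derivative smoothing of $A^{-1}$ on $V_2$ and the Strichartz compactness combine to close the regularity gap, at the modest cost of a factor $N^{10\delta}$ coming from the jump $\cV^1 \hookrightarrow \cV^{1+2\delta}$ on the finite-dimensional space $V_1$.
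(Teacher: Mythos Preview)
Your proposal is correct and follows the paper's proof closely: recast \eqref{v2.di.v1.e.w} as a fixed point of $\mathcal{F}(v_2)=\varepsilon^{-1}A^{-1}\Pi_{V_2}(v_1+v_2+w)^5$, expand multinomially, use the Strichartz-type Lemma \ref{lem-int.pv2} for the pure-$V$ monomials and the algebra Lemma \ref{algebraKer} (combined with the high-frequency smoothing $\|\Pi_{V_2}\cdot\|_{\cV^{2\delta}}\le N^{-3/2+\delta}\|\cdot\|_{\cV^{3/2+\delta}}$) for the $w$-monomials, then read off the derivative bounds via the implicit function theorem. One small correction: the dominant contribution to the Lipschitz constant comes from the pure-$V$ terms and equals $C_\delta R^4 N^{-1+8\delta}$, not $CN^{\zeta}\gamma^{-1}\varepsilon$, because the difference factor $v_2-\tilde v_2\in V_2$ gains $N^{-1}$ through $\|\cdot\|_{\cV^{1+2\delta}}\le N^{-1}\|\cdot\|_{\cV^{2+2\delta}}$ but carries no extra $\varepsilon$; this is still $\le 1/2$ once $N$ is large (equivalently $\varepsilon$ small via $N=\varepsilon^{-1/\beta}$), so the contraction goes through unchanged.
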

In the rest of this section we prove Proposition \ref{prop.v2}. For any $(v_1, w) \in \mathcal{D}_{\rho_1} \times {\mathcal{D}}^W_{\rho_3}$, we look for a solution of \eqref{v2.di.v1.e.w} as a fixed point of the map
\begin{equation}\label{tv2}
v_2 \mapsto \mathcal{T}_{v_1,w}(v_2):=\varepsilon^{-1} A^{-1}\Pi_{V_2}{(v_1+v_2+w)^{5}}\,,
\end{equation}
where, according to \eqref{def.A}, we  set $A := -\Delta_{\mathbb{S}^3}^{ss} + \Id$. The next lemma is based on the Strichartz estimates of Section \ref{sec.Stri}.

\begin{lemma}[Contraction]\label{lem.t2.bolla}
	Let $\zeta \geq \frac{11}{2} + 5 \delta$. There exist $C_\delta>0$ and $\epsilon_{\delta, R}>0$ 
	such that, if \eqref{little.gamma} holds, the map $\mathcal{T}_{v_1, w}$ maps  $\mathcal{D}^{V_2}_{\rho_2}$ into itself, with
	\begin{equation}\label{pizza}
	\left\|\mathcal{T}_{v_1,w}(v_2) -\mathcal{T}_{v_1,w}(v'_2) \right\|_{\cV^{2+2\delta}_{t,x}}\leq C_\delta R^4 N^{-1+8\delta} \|v_2 - v_2'\|_{\cV^{2+2\delta}_{t,x}} \quad \forall v_2, v'_2 \in \mathcal{D}^{V_2}_{\rho_2}\,.
	\end{equation}
	As a consequence, for any $ (v_1, w) \in \mathcal{D}_{\rho_1} \times \mathcal{D}^W_{\rho_3}$ there exists a unique $v_2(v_1, w) \in \mathcal{D}_{\rho_2}^{V_2}$ solving \eqref{v2.di.v1.e.w} and such that $v_2(0, 0) =0$.
	\end{lemma}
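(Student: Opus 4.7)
The plan is to apply Banach's fixed point theorem to the map $\mathcal{T}_{v_1,w}$ defined in \eqref{tv2} on the closed ball $\mathcal{D}^{V_2}_{\rho_2}$. The key initial reduction is that, by the smoothing property \eqref{Delta.smooth}, for any $g \in V_2$ one has $\|\varepsilon^{-1} A^{-1} g\|_{\cV^{2+2\delta}_{t,x}} \leq \varepsilon^{-1}\|g\|_{\cV^{2\delta}_{t,x}}$, so it suffices to bound $\|\Pi_{V_2}(v_1+v_2+w)^5\|_{\cV^{2\delta}_{t,x}}$ and its Lipschitz modulus with respect to $v_2$.

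To verify that $\mathcal{T}_{v_1,w}$ maps $\mathcal{D}^{V_2}_{\rho_2}$ into itself, I expand the fifth power via the multinomial formula into the pure kernel piece $\Pi_{V_2}(v_1+v_2)^5$ and the mixed pieces of the form $\Pi_{V_2}(v_1^a v_2^b w^c)$ with $c \geq 1$. For the pure kernel piece, I would apply the Strichartz-type Lemma \ref{lem-int.pv2}, getting
\[
\|\Pi_{V_2}(v_1+v_2)^5\|_{\cV^{2\delta}_{t,x}} \lesssim_\delta \|v_1+v_2\|_{\cV^{1+2\delta}_{t,x}}^5,
\]
and then use the smoothing \eqref{lem.s.sprime} to obtain $\|v_1\|_{\cV^{1+2\delta}_{t,x}} \leq N^{2\delta}\rho_1$ and $\|v_2\|_{\cV^{1+2\delta}_{t,x}} \leq N^{-1}\rho_2$, so that for $\beta > \beta_\delta$ large enough the $v_1$-contribution dominates and the pure term is controlled by $C_\delta N^{10\delta}R^5\varepsilon^{5/4}$; dividing by $\varepsilon$ and choosing $\tc_2 \geq 2 C_\delta$ absorbs this into $\tfrac{1}{2}\rho_2$. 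For mixed terms with at least one $w$-factor, I apply Lemma \ref{algebraKer} at regularity $s = \tfrac{3}{2}+\delta$, $r = \tfrac{1}{2}+\delta$, using the embedding $\cV^{2+2\delta}_{t,x} \hookrightarrow H^{\frac{1}{2}+\delta}_t \cH^{\frac{3}{2}+\delta}_x$ from \eqref{distrib.come.vuoi}. Each such mixed term picks up at least one factor $\rho_3 = \tc_3\gamma^{-1}N^{5+10\delta}R^5\varepsilon^{5/4}$, which combined with the remaining factors $N^{1/2+\delta}\rho_1$ or $\rho_2$ yields a bound of the form $\rho_2 \cdot (\gamma^{-1}\varepsilon N^{\zeta'})^{\kappa}$ with $\kappa \geq 1$; under the smallness condition \eqref{little.gamma} with $\zeta \geq \tfrac{11}{2}+5\delta$, the sum of all mixed contributions is bounded by $\tfrac{1}{2}\rho_2$.

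For the contraction estimate \eqref{pizza}, I would write
\[
\mathcal{T}_{v_1,w}(v_2) - \mathcal{T}_{v_1,w}(v_2') = 5\varepsilon^{-1}A^{-1}\Pi_{V_2}\!\int_0^1 \bigl(v_1 + v_2' + s(v_2-v_2') + w\bigr)^4(v_2-v_2')\, ds
\]
and estimate each integrand pointwise in $s$ by the same two strategies. The decisive ingredient is that, when all five factors live in $V$, Lemma \ref{lem-int.pv2} yields
\[
\|\Pi_{V_2}(\tilde v^4 (v_2-v_2'))\|_{\cV^{2\delta}_{t,x}} \lesssim_\delta \|\tilde v\|_{\cV^{1+2\delta}_{t,x}}^4\|v_2-v_2'\|_{\cV^{1+2\delta}_{t,x}},
\]
and, since the difference $v_2-v_2'$ lies in $V_2$, the smoothing \eqref{lem.s.sprime} provides the crucial gain $\|v_2-v_2'\|_{\cV^{1+2\delta}_{t,x}} \leq N^{-1}\|v_2-v_2'\|_{\cV^{2+2\delta}_{t,x}}$; combining with $\|\tilde v\|_{\cV^{1+2\delta}_{t,x}}^4 \lesssim N^{8\delta}R^4\varepsilon$ and dividing by $\varepsilon$ produces precisely the factor $C_\delta R^4 N^{-1+8\delta}$ of \eqref{pizza}. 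Mixed terms containing $w$-factors are handled analogously via Lemma \ref{algebraKer} and turn out strictly smaller under \eqref{little.gamma}.

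With both self-mapping and contraction established, Banach's fixed point theorem on the complete metric space $(\mathcal{D}^{V_2}_{\rho_2}, \|\cdot\|_{\cV^{2+2\delta}_{t,x}})$ yields the unique solution $v_2(v_1,w) \in \mathcal{D}^{V_2}_{\rho_2}$ of \eqref{v2.di.v1.e.w}, and $v_2(0,0)=0$ follows because $\mathcal{T}_{0,0}(0)=0$ lies in the ball. The principal technical obstacle I expect is the careful bookkeeping of $N$- and $\varepsilon$-exponents across all multinomial mixed terms to pin down the precise value of $\zeta$; the most delicate case appears to be the one with four kernel-factors and a single $w$-factor, whose $\gamma^{-1}$-scaling inherited from $\rho_3$ drives the required lower bound on $\zeta$.
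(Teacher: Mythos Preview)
Your proposal is correct and follows essentially the same approach as the paper: multinomial expansion of $(v_1+v_2+w)^5$, Strichartz-type estimate (Lemma~\ref{lem-int.pv2}) for the pure-kernel terms, algebra estimate (Lemma~\ref{algebraKer}) with the extra $N^{-3/2+\delta}$ smoothing from $\Pi_{V_2}$ for the mixed terms containing $w$, and then the same bookkeeping of $N$- and $\varepsilon$-powers. The only cosmetic difference is that you write the Lipschitz estimate via the integral mean-value formula, whereas the paper differentiates $\mathcal{T}_{v_1,w}$ directly in $v_2$; the resulting estimates coincide term by term.
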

	\begin{proof}
	We write $\mathcal{T}_{v_1,w}(v_2)=\varepsilon^{-1} A^{-1}\Pi_{V_2} \sum\limits_{j_1+j_2+j_3=5}c_{j_1 j_2 j_3} v_1^{j_1}v_2^{j_2}w^{j_3}$. 
	We estimate the terms where $w$ does not appear using the Strichartz-type estimate in Lemma \ref{lem-int.pv2}, and the terms with $w$  using the 
	algebra property \eqref{algebra.prop}.
	If $j_3 = 0$, by  \eqref{Delta.smooth},  \eqref{lem.s.sprime}, and Lemma \ref{lem-int.pv2},
	for any $ j_1, j_2$ one has
	\begin{equation}\label{cor_ker}
	\left\|A^{-1}\Pi_{V_2}\big({v_1^{j_1}v_2^{j_2}}\big)\right\|_{\cV^{2+2\delta}_{t,x}}\lesssim_\delta (N^{2\delta}\|v_1\|_{\cV^1_{t,x}})^{j_1}(N^{-1}\|v_2\|_{\cV^{2+2\delta}_{t,x}})^{j_2} \lesssim_{\delta} (N^{2\delta} \rho_1)^{j_1} (N^{-1} \rho_2)^{j_2}\,
	\end{equation}
	for any $\|v_1\|_{\cV^1_{t,x}} \leq \rho_1$ and $\| v_2\|_{\cV^{2+2\delta}_{t,x}} \leq \rho_2$. Recalling the definitions of $\rho_1, \rho_2, N$ in \eqref{parametri.p5}, one then gets $N^{-1} \rho_2 \leq N^{2\delta} \rho_1$ and thus for any $j_1 + j_2 = 5$
	\begin{equation}\label{luglio}
		\big\| A^{-1} \Pi_{V_2} \big({v_1^{j_1}v_2^{j_2}}\big)\big\|_{\cV^{2+2\delta}_{t,x}}
		\lesssim_\delta N^{10\delta} \rho_1^5
		\,.
	\end{equation}
	On the other hand, if $j_3 \neq 0$, 
	by estimates \eqref{Delta.smooth}, \eqref{lem.s.sprime} and Lemma \ref{algebraKer} one has
	\begin{align}
	\nonumber
	\big\| A^{-1}\Pi_{V_2}\big( {v_1^{j_1}v_2^{j_2}w^{j_3}}\big) \big\|_{\cV^{2+2\delta}_{t,x}}&\lesssim_\delta N^{-\frac{3}{2}+\delta}(N^{\frac{1}{2}+\delta }\|v_1\|_{\cV^1_{t,x}})^{j_1} (N^{-\frac{1}{2}-\delta}\|v_2\|_{\cV ^{2+2\delta}_{t,x}})^{j_2}\|w\|_{H^{\frac{1}{2}+\delta}_t \cH^{\frac{3}{2}+\delta}_x}^{j_3}\\
	\label{lem.f.pv2}
	&\lesssim_\delta N^{-\frac{3}{2}+\delta}( N^{\frac{1}{2}+\delta}  \rho_1)^{j_1}(\rho_2N^{-\frac{1}{2}-\delta})^{j_2} \rho_3^{j_3}\,
	\end{align}
	for any $\| v_1\|_{\cV^1_{t,x}} \leq \rho_1$, $\|v_2\|_{\cV^{2+2\delta}_{t,x}} \leq \rho_2$, and ${\|w\|_{H^{\frac 1 2 + \delta}_t \cH^{\frac 3 2 + \delta}_x} \leq \rho_3}$. Assuming \eqref{little.gamma} with $\zeta =\frac{11}{2}+5\delta$ and $\epsilon_{\delta, R}$ small enough, one has $\rho_3 \leq N^{\frac 1 2 + \delta} \rho_1$ and $N^{-\frac 1 2 - \delta}\rho_2 \leq N^{\frac 1 2 +\delta} \rho_1$, and recalling $j_3 \geq 1$, one gets
	\begin{equation}\label{o.non.luglio}
	\big\| A^{-1}\Pi_{V_2}\big( {v_1^{j_1}v_2^{j_2}w^{j_3}}\big) \big\|_{\cV^{2+2\delta}_{t,x}}
	\lesssim_{\delta}N^{\frac{1}{2}+5\delta}\rho_1^{4}\rho_3 \lesssim_{ \delta} N^{10\delta} \rho_1^5\,.
	\end{equation}
	Thus, combining \eqref{luglio} and \eqref{o.non.luglio}, there exists a constant $C_\delta>0$ such that we have
		$$
		\left\|\mathcal{T}_{v_1,w}(v_2)\right\|_{\cV^{2+2\delta}_{t,x}} \leq C_\delta  \varepsilon^{-1}  N^{10\delta} \rho_1^{5}
		\stackrel{\eqref{parametri.p5}}{=}C_\delta N^{10\delta }R^{5}\varepsilon^{\frac{1}{4}}< \tc_2 N^{10\delta }R^{5}\varepsilon^{\frac{1}{4}} = \rho_2\,,
		$$
		provided $\tc_2 > C_\delta$. Thus $\mathcal{T}_{v_1, w}$ maps $\mathcal{D}^{V_2}_{\rho_2}$ into itself. We now prove that it is a contraction.
	For any $h_2 \in \cV^{2+2\delta}_{t,x} \cap V_2$, we have
	\begin{equation}\label{diff.v2.j}
	\begin{aligned}
	\partial_{v_2}\mathcal{T}_{v_1,w}(v_2)[h_2]&
	=5\varepsilon^{-1}A^{-1}\Pi_{V_2}\left((v_1+v_2+w)^4h_2\right) \\
	& =5\varepsilon^{-1}\sum\limits_{j_1+j_2+j_3=4}{c}_{j_1j_2j_3}
	A^{-1} \Pi_{V_2}\big(v_1^{j_1}v_2^{j_2}w^{j_3}h_2\big)\,.
	\end{aligned}
	\end{equation}
	If $j_3 = 0$, we argue as in \eqref{cor_ker} to get
	\begin{align}\label{mal.di.testa}
	\big\| A^{-1} \Pi_{V_2} \big({v_1^{j_1}v_2^{j_2}h_2}\big)\big\|_{\cV^{2+2\delta}_{t,x}} &\lesssim_{\delta } (N^{2\delta} \rho_1)^{j_1} (N^{-1} \rho_2)^{j_2} N^{-1}  \|h_2\|_{\cV^{2+2\delta}_{t,x}} \lesssim_\delta N^{-1 + 8\delta} \rho_1^4 \|h_2\|_{\cV^{2+2\delta}_{t,x}}\,,
	\end{align}
	whereas if $j_3 \neq 0$ we argue as in \eqref{lem.f.pv2} to get
	\begin{align}
	\nonumber
	\big\| A^{-1} \Pi_{V_2} \big({v_1^{j_1}v_2^{j_2} w^{j_3} h_2}\big)\big\|_{\cV^{2+2\delta}_{t,x}}&\lesssim_{\delta} N^{-2} (\rho_1 N^{\frac 1 2 + \delta})^{j_1} (\rho_2 N^{-\frac 1 2 -\delta})^{j_2} \rho_3^{j_3} \|h_2\|_{\cV^{2+2\delta}_{t,x}}\\
	\label{brufen}
	& \lesssim_\delta N^{-\frac 1 2 + 3\delta} \rho_1^3 \rho_3 \|h_2\|_{\cV^{2+2\delta}_{t,x}} \lesssim_{ \delta}  N^{-1 + 8\delta} \rho_1^4 \|h_2\|_{\cV^{2+2\delta}_{t,x}}\,.
	\end{align}
	Thus by \eqref{diff.v2.j}, \eqref{mal.di.testa} and \eqref{brufen} and since $\rho_1 = \varepsilon^{\frac 1 4} R$ we deduce
	\begin{equation}\label{diff.t}
	\left\|\partial_{v_2}\mathcal{T}_{v_1,w}(v_2)[h_2]\right\|_{\cV^{2+2\delta}_{t,x}}\leq C_\delta \varepsilon^{-1}N^{-1+8\delta}\varepsilon R^4\|h_2\|_{\cV^{2+2\delta}_{t,x}}= C_\delta R^4 N^{-1+8\delta}\|h_2\|_{\cV^{2+2\delta}_{t,x}}\,,
	\end{equation} 
	from which \eqref{pizza} follows. Thus $\mathcal{T}_{v_1, w}$ is a contraction on $\mathcal{D}_{\rho_2}^{V_2}$. 
\end{proof}

\begin{lemma}[Differentiability of $v_2$]\label{lem.v2.diff}
	The function $v_2(v_1,w)$ is differentiable in $v_1,\, w$, and it satisfies estimates \eqref{de.v2.de.v1}, \eqref{de.v2.de.w}.
\end{lemma}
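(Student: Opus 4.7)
The plan is to invoke the implicit function theorem applied to the fixed point equation $v_2 = \mathcal{T}_{v_1, w}(v_2)$ from Lemma \ref{lem.t2.bolla}. Since $(v_1, w, v_2) \mapsto \mathcal{T}_{v_1, w}(v_2)$ is polynomial (hence $C^\infty$) in all its arguments, and the contraction estimate \eqref{pizza} in Lemma \ref{lem.t2.bolla} combined with the smallness \eqref{little.gamma} gives $\|\partial_{v_2}\mathcal{T}_{v_1, w}(v_2)\|_{\mathcal{B}(\cV^{2+2\delta}_{t,x})} \leq C_\delta R^4 N^{-1+8\delta} \leq \tfrac 12$, the operator $I - \partial_{v_2}\mathcal{T}_{v_1,w}(v_2(v_1,w))$ is invertible on $V_2 \cap \cV^{2+2\delta}_{t,x}$ with operator norm bounded by $2$. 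Differentiating the identity $v_2(v_1,w) = \mathcal{T}_{v_1,w}(v_2(v_1,w))$ in $v_1$ and in $w$ then yields
\begin{equation*}
\partial_{v_1} v_2(v_1, w) = \bigl(I - \partial_{v_2}\mathcal{T}_{v_1,w}(v_2(v_1,w))\bigr)^{-1} \partial_{v_1}\mathcal{T}_{v_1,w}(v_2(v_1,w))
\end{equation*}
and an analogous formula for $\partial_w v_2(v_1, w)$, so $v_2$ is $C^1$. The bounds \eqref{de.v2.de.v1}--\eqref{de.v2.de.w} thus reduce, up to the factor $2$, to estimating $\partial_{v_1}\mathcal{T}$ and $\partial_w \mathcal{T}$ in the stated operator norms.

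For $h_1 \in V_1 \cap \cV^{1}_{t,x}$ one has $\partial_{v_1}\mathcal{T}_{v_1,w}(v_2)[h_1] = 5\varepsilon^{-1} A^{-1} \Pi_{V_2}\bigl((v_1 + v_2 + w)^4 h_1\bigr)$. I would expand the fourth power into monomials $v_1^{j_1} v_2^{j_2} w^{j_3} h_1$ with $j_1 + j_2 + j_3 = 4$, and reproduce the two-case split from the proof of Lemma \ref{lem.t2.bolla}. When $j_3 = 0$ all five factors lie in the kernel $V$, so Lemma \ref{lem-int.pv2} (based on Strichartz, Proposition \ref{lem.Strichartz.1}, Item 2), together with the smoothing \eqref{Delta.smooth}--\eqref{lem.s.sprime}, gives an estimate entirely analogous to \eqref{mal.di.testa}, with a factor $N^{2\delta}\|h_1\|_{\cV^1_{t,x}}$ coming from the $V_1$-smoothing applied to $h_1$. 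When $j_3 \geq 1$, Lemma \ref{algebraKer} plays the role of Strichartz, yielding the analogue of \eqref{brufen}. Under \eqref{little.gamma} with $\zeta$ large enough (so that both $N^{-1}\rho_2 \leq N^{2\delta}\rho_1$ and $\rho_3 \leq N^{1/2+\delta}\rho_1$) all monomials collapse into the single bound $\lesssim_\delta \varepsilon^{-1} N^{10\delta}\rho_1^4\|h_1\|_{\cV^1_{t,x}} = N^{10\delta} R^4 \|h_1\|_{\cV^1_{t,x}}$, proving \eqref{de.v2.de.v1}.

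For the $w$-derivative, for $h \in W \cap H^{\frac 1 2+\delta}_t \cH^{\frac 3 2 + \delta}_z$ one has $\partial_w \mathcal{T}_{v_1, w}(v_2)[h] = 5\varepsilon^{-1} A^{-1}\Pi_{V_2}\bigl((v_1 + v_2 + w)^4 h\bigr)$. Since $h$ does not belong to the kernel, the Strichartz route is unavailable and I would apply Lemma \ref{algebraKer} to every monomial, treating $h$ as an additional range-type factor of norm $\|h\|_{H^{\frac 1 2+\delta}_t\cH^{\frac 3 2+\delta}_z}$. Following the same $N$-power bookkeeping as in the derivation of \eqref{lem.f.pv2}, and using \eqref{little.gamma} to ensure that $N^{-1/2-\delta}\rho_2$ and $\rho_3$ are both dominated by $N^{1/2+\delta}\rho_1$, every monomial is controlled by
\begin{equation*}
\varepsilon^{-1} N^{-3/2+\delta}(N^{1/2+\delta}\rho_1)^4 \|h\|_{H^{\frac 1 2+\delta}_t \cH^{\frac 3 2+\delta}_z} = N^{1/2+5\delta} R^4 \|h\|_{H^{\frac 1 2+\delta}_t \cH^{\frac 3 2+\delta}_z}\,,
\end{equation*}
using $\rho_1 = \varepsilon^{1/4} R$, which yields \eqref{de.v2.de.w}. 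The main (and only) technical point is tracking the $N$-powers in the smallness comparisons $N^{-1}\rho_2, \rho_3 \ll N^{2\delta}\rho_1$ and $N^{-1/2-\delta}\rho_2, \rho_3 \ll N^{1/2+\delta}\rho_1$; once $\zeta$ in \eqref{little.gamma} is chosen large enough (essentially $\zeta \geq \tfrac{11}{2} + 5\delta$, as already imposed in Lemma \ref{lem.t2.bolla}), the estimates become a mechanical repetition of the calculations already carried out there.
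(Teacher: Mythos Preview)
Your proposal is correct and follows essentially the same approach as the paper: invertibility of $I-\partial_{v_2}\mathcal{T}_{v_1,w}$ from the contraction estimate \eqref{diff.t}, then estimating $\partial_{v_1}\mathcal{T}$ and $\partial_w\mathcal{T}$ by repeating the two-case split (Strichartz via Lemma~\ref{lem-int.pv2} when $j_3=0$, algebra via Lemma~\ref{algebraKer} otherwise) already used in Lemma~\ref{lem.t2.bolla}. The paper's proof is in fact terser than yours, simply writing ``arguing as for \eqref{cor_ker} and \eqref{lem.f.pv2}'' and arriving at the same bounds $N^{10\delta}R^4$ and $N^{\frac12+5\delta}R^4$.
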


\begin{proof}
	By \eqref{diff.t}, the operator $ \Id - \partial_{v_2} \mathcal{T}_{v_1, w}(v_2)$ is invertible on $\cV^{2+2\delta}_{t,x}$, with operator norm bounded by $2$. Furthermore, arguing as for \eqref{cor_ker} and \eqref{lem.f.pv2}, one obtains
	$$
	\begin{gathered}
	\left\|\partial_{v_1}\mathcal{T}_{v_1, w}(v_2)[h_1]\right\|_{\cV^{2+2\delta}_{t,x}} \lesssim_{\delta} N^{10\delta} R^4 \|h_1\|_{\cV^1_{t,x}}\,,\\
	\left\|\partial_{w}\mathcal{T}_{v_1, w}(v_2)[h_3]\right\|_{\cV^{2+2\delta}_{t,x}} \lesssim_{\delta } \varepsilon^{-1} N^{\frac 1 2 + 5\delta} \rho_1^4 \|h_3\|_{H^{\frac 12 + \delta}_t \cH^{\frac 3 2 + \delta}_{x}} \lesssim_{\delta} N^{\frac 1 2 + 5\delta} R^4 \|h_3\|_{H^{\frac 12 + \delta}_t \cH^{\frac 3 2 + \delta}_{x}}\,.
	\end{gathered}
	$$
	Then
	$\partial_{v_1} v_2(v_1, w) = \left( \Id - \partial_{v_2} \mathcal{T}_{v_1, w}(v_2)\right)^{-1} \partial_{v_1} \mathcal{T}_{v_1, w} (v_2) $ satisfies \eqref{de.v2.de.v1}, as well as
	$\partial_{w} v_2(v_1, w) = \left( \Id - \partial_{v_2} \mathcal{T}_{v_1, w}(v_2)\right)^{-1} \partial_{w} \mathcal{T}_{v_1, w} (v_2)$ satisfies \eqref{de.v2.de.w}.
\end{proof}

\subsection{Case $p=3$}
For any $R>0$, $\delta>0$ and $\gamma \in (0, \gamma_0)$ let
\begin{equation}\label{parametri.p3}
\rho_1:=\varepsilon^{\frac{1}{2}}R,\quad \rho_2:= \tc_2 R^3 N^{4\delta} \varepsilon^{\frac{1}{2}}, \quad \rho_3:= \tc_3 \gamma^{-1} N^{3+6\delta}R^3\varepsilon^{\frac{3}{2}}\,, \quad N := \varepsilon^{-\frac{1}{\beta}}\,, 
\end{equation}
where we recall that $\varepsilon = \omega^2 - 1 >0$,  according to  \eqref{omega.3}, $\beta>1$, and $\tc_2, \tc_3>0$.
\begin{proposition}[Solution of $v_2$ equation for $p=3$]\label{prop.v2.p3}
	For any $\delta \in (0, \frac 1 8)$, $R>0$ and $\gamma \in (0, \gamma_0)$, let $\rho_1, \rho_2, \rho_3, N$ as in \eqref{parametri.p3}. There exist $\tc_2 := \tc_2(\delta)>0$, $\beta_\delta>1$, $\zeta := \zeta(\delta)>1$, $\epsilon_{\delta, R}>0$ and $C_{1,\delta}, C_{2,\delta}>0$ such that, for any $\beta > \beta_\delta$ and any $\varepsilon$ such that
	\begin{equation}\label{little.gamma.p3}
	\varepsilon N^{\zeta} \gamma^{-1} \leq \epsilon_{\delta, R}\,,
	\end{equation}
	there exists a $C^1$ function $v_2 : \mathcal{D}_{\rho_1} \times \mathcal{D}^W_{\rho_3} \rightarrow V_2 \cap \cV^{2+2\delta}_{t,\eta}$, $(v_1, w) \mapsto v_2(v_1, w)$, where $\mathcal{D}_{\rho_1}$ and $\mathcal{D}^W_{\rho_3}$ are defined as in \eqref{v1.Domain} and \eqref{w.Domain}, satisfying $v_2(0,0) = 0$ and
	\begin{gather}
	\|v_2(v_1, w) \|_{\cV^{2+2\delta}_{t,\eta}} \leq \rho_2\,,\\
	\label{de.v2.de.v1.p3}
	\|\partial_{v_1}v_2(v_1,w)\|_{\mathcal{B}(V_1 \cap \cV^1_{t,\eta}, \cV^{2+2\delta}_{t,\eta})}\leq C_{1,\delta} R^2 N^{4\delta}\,,\\
	\label{de.v2.de.w.p3}
	\|\partial_{w}v_2(v_1,w)\|_{\mathcal{B}(W \cap H^{\frac{1}{2}+\delta}_t \cH^{\frac{3}{2}+\delta}_\eta, \cV^{2+2\delta}_{t,\eta})}\leq C_{2,\delta} R^2 N^{-\frac{1}{2}+3\delta}\,,
	\end{gather}
	such that $v_2(v_1, w)$ solves
	\begin{equation}\label{v2.di.v1.e.w.p3}
	\varepsilon A v_2(v_1, w) -  \Pi_{V_2} \left({(v_1 + v_2(v_1, w) +w)^3}\right) = 0 \,,
	\end{equation}
	where $A := -\Delta_{\mu_1, \mu_2} + \Id$ according to \eqref{def.A}.
\end{proposition}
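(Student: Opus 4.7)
The plan is to mirror the proof of Proposition~\ref{prop.v2}: for any $(v_1, w) \in \mathcal{D}_{\rho_1} \times \mathcal{D}^W_{\rho_3}$ I will construct $v_2(v_1, w)$ as the unique fixed point in $\mathcal{D}^{V_2}_{\rho_2}$ of the map
\[
\mathcal{T}_{v_1, w}(v_2) := \varepsilon^{-1} A^{-1} \Pi_{V_2}(v_1 + v_2 + w)^3\,,
\]
with $A = -\Delta_{\mu_1, \mu_2} + \Id$ as in \eqref{def.A}. Once $\mathcal{T}_{v_1, w}$ is shown to be a self-map and a contraction on $\mathcal{D}^{V_2}_{\rho_2}$, the $C^1$-dependence of $v_2$ on $(v_1, w)$ and the derivative bounds \eqref{de.v2.de.v1.p3}, \eqref{de.v2.de.w.p3} will follow from the implicit function theorem exactly as in Lemma~\ref{lem.v2.diff}. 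The central task will be to expand $(v_1 + v_2 + w)^3 = \sum_{j_1+j_2+j_3=3} c_{j_1 j_2 j_3} v_1^{j_1} v_2^{j_2} w^{j_3}$ and bound each summand $\varepsilon^{-1}\|A^{-1}\Pi_{V_2}(v_1^{j_1} v_2^{j_2} w^{j_3})\|_{\cV^{2+2\delta}_{t,\eta}}$ by $\rho_2/C_\delta$.

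Unlike in the $p=5$ case, where the Strichartz-type estimate of Lemma~\ref{lem-int.pv2} is used to treat the pure-kernel terms, no analogous estimate is available for Hopf plane waves. The key observation is that for a cubic nonlinearity three independent smoothing mechanisms already suffice: the identity $\|A^{-1} f\|_{\cV^{2+2\delta}_{t,\eta}} = \|f\|_{\cV^{2\delta}_{t,\eta}}$ on $V$ (the two-derivative gain of $A^{-1}$); the $V_2$-smoothing $\|\Pi_{V_2} f\|_{\cV^{2\delta}_{t,\eta}} \leq N^{-(3/2-\delta)}\|\Pi_{V_2} f\|_{\cV^{3/2+\delta}_{t,\eta}}$ from \eqref{lem.s.sprime}; and the algebra property \eqref{algebra.prop} of $H^{1/2+\delta}_t \cH^{3/2+\delta}_\eta$. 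Combining these with Lemma~\ref{pesce.rosso} and the embedding \eqref{cor.12delta} to handle the $w$ factors, I shall obtain
\[
\|A^{-1}\Pi_{V_2}(v_1^{j_1} v_2^{j_2} w^{j_3})\|_{\cV^{2+2\delta}_{t,\eta}} \lesssim_\delta N^{-(3/2-\delta)}\|v_1\|^{j_1}_{\cV^{3/2+\delta}_{t,\eta}} \|v_2\|^{j_2}_{\cV^{3/2+\delta}_{t,\eta}} \|w\|^{j_3}_{H^{1/2+\delta}_t \cH^{3/2+\delta}_\eta}.
\]
Inserting the $V_1$- and $V_2$-smoothing inequalities $\|v_1\|_{\cV^{3/2+\delta}_{t,\eta}} \leq N^{1/2+\delta}\rho_1$ and $\|v_2\|_{\cV^{3/2+\delta}_{t,\eta}} \leq N^{-(1/2+\delta)}\rho_2$ from \eqref{lem.s.sprime} and using the scalings \eqref{parametri.p3}, the pure-kernel terms ($j_3=0$) will contribute at most $C_\delta N^{4\delta}R^3\varepsilon^{1/2}$, matching $\rho_2 = \tc_2 N^{4\delta}R^3\varepsilon^{1/2}$ once $\tc_2 \geq C_\delta$. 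The $w$-containing terms ($j_3 \geq 1$) pick up extra factors of $\rho_3 \sim \gamma^{-1}N^{3+6\delta}\varepsilon^{3/2}$ that are absorbed by the smallness condition \eqref{little.gamma.p3}: a bookkeeping of exponents shows that the worst of these terms (namely $v_1^2 w$, $v_1 w^2$ and $w^3$) all force $\zeta \gtrsim 5/2 + 5\delta$, hence $\beta_\delta > 5/2$.

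Differentiating $\mathcal{T}_{v_1, w}$, the same scheme will yield $\|\partial_{v_2}\mathcal{T}_{v_1, w}(v_2)\|_{\mathrm{op}} \lesssim_\delta R^2 N^{-1+2\delta} \ll 1$, $\|\partial_{v_1}\mathcal{T}_{v_1, w}(v_2)\|_{\mathrm{op}} \lesssim_\delta R^2 N^{4\delta}$, and $\|\partial_{w}\mathcal{T}_{v_1, w}(v_2)\|_{\mathrm{op}} \lesssim_\delta R^2 N^{-1/2+3\delta}$, which pass through the implicit function theorem, via invertibility of $\Id - \partial_{v_2}\mathcal{T}_{v_1, w}(v_2)$ on $V_2 \cap \cV^{2+2\delta}_{t,\eta}$, to give exactly \eqref{de.v2.de.v1.p3} and \eqref{de.v2.de.w.p3}. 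The main obstacle will be the precise bookkeeping of the powers of $\varepsilon$, $N$, $R$ and $\gamma^{-1}$ across the ten multinomial terms, and choosing the thresholds $\beta_\delta$, $\zeta$ large enough so that all the smallness requirements collapse to the single condition \eqref{little.gamma.p3}.
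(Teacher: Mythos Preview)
Your proposal is correct and follows essentially the same route as the paper: define the fixed-point map $\mathcal{T}_{v_1,w}(v_2)=\varepsilon^{-1}A^{-1}\Pi_{V_2}(v_1+v_2+w)^3$, expand multinomially, and estimate every term uniformly via $A^{-1}$ (two derivatives), the $V_2$-smoothing $N^{-(3/2-\delta)}$, and the algebra property of $H^{1/2+\delta}_t\cH^{3/2+\delta}_\eta$ (Lemma~\ref{algebraKer}), with no Strichartz input. Your exponent bookkeeping, the threshold $\zeta\geq \tfrac52+5\delta$, the contraction bound $\lesssim_\delta R^2 N^{-1+2\delta}$, and the derivative estimates all match the paper's Lemma~\ref{lem.contraz.p3} and the ensuing differentiability argument.
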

 We now prove Proposition \ref{prop.v2.p3}. We define the map
 \begin{equation}\label{J.v2.p3}
 	v_2 \mapsto \mathcal{T}_{v_1, w} (v_2) := \varepsilon^{-1} A^{-1} \Pi_{V_2} \left((v_1 + v_2 + w)^3\right)
 \end{equation}
 and show that it is a contraction. 
\begin{lemma}[Contraction for $p=3$]\label{lem.contraz.p3}
	Let $\zeta\geq \frac5 2 + 5\delta$.
	There exist $\epsilon_{\delta, R}>0$ and $C_\delta>0$ such that, if \eqref{little.gamma.p3} holds, then $\mathcal{T}_{v_1, w}$ defined as in \eqref{J.v2.p3} maps $\mathcal{D}^{V_2}_{{\rho}_2}$ into itself, with
	\begin{equation}\label{v2.contrae.p3}
	\| \mathcal{T}_{v_1, w}(v_2) - \mathcal{T}_{v_1, w} (v_2') \|_{\cV^{2+2\delta}_{t,\eta}} \leq C_{\delta } R^2 N^{-1} \|v_2 - v'_2\|_{\cV^{2+2\delta}_{t,\eta}} \quad \forall v_2, v'_2 \in \mathcal{D}^{V_2}_{{\rho}_2}\,.
	\end{equation}
	As a consequence, for any $(v_1, w) \in \mathcal{D}_{\rho_1} \times \mathcal{D}^W_{\rho_3}$ there exists a unique solution $v_2(v_1, w) \in 	\mathcal{D}^{V_2}_{{\rho}_2}$ satisfying \eqref{v2.di.v1.e.w.p3} and $v_2(0,0) = 0$.
\end{lemma}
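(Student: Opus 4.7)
The plan is to apply the Banach fixed point theorem to the map $\mathcal{T}_{v_1, w}$ defined in \eqref{J.v2.p3}. I would expand
$$
(v_1 + v_2 + w)^3 = \sum_{j_1 + j_2 + j_3 = 3} c_{j_1 j_2 j_3}\, v_1^{j_1} v_2^{j_2} w^{j_3}
$$
and estimate each of the ten terms separately in $\cV^{2 + 2\delta}_{t, \eta}$. Unlike the case $p=5$, no Strichartz estimate is needed here: it is enough to combine the smoothing property $\|A^{-1}\Pi_{V_2} f\|_{\cV^{2+2\delta}_{t,\eta}} \leq \|\Pi_{V_2} f\|_{\cV^{2\delta}_{t,\eta}}$ from \eqref{Delta.smooth} with the Bernstein-type smoothing \eqref{lem.s.sprime} applied to $\Pi_{V_2}f \in V_2$, namely $\|\Pi_{V_2} f\|_{\cV^{2\delta}_{t,\eta}} \leq N^{-(\frac 3 2 - \delta)}\|\Pi_{V_2} f\|_{\cV^{\frac 3 2 + \delta}_{t,\eta}}$, and then with Lemma \ref{algebraKer} at the algebra threshold $s = \frac 3 2 + \delta$, $r = \frac 1 2 + \delta$.

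Using Bernstein for $v_1 \in V_1$ to convert $\cV^1$-control into $\cV^{\frac 3 2 + \delta}$-control, $\|v_1\|_{\cV^{\frac 3 2 + \delta}_{t,\eta}} \leq N^{\frac 1 2 + \delta}\rho_1$, and the fact that $\|v_2\|_{\cV^{\frac 3 2 + \delta}_{t,\eta}} \leq \|v_2\|_{\cV^{2 + 2\delta}_{t,\eta}} \leq \rho_2$, each term is controlled by
$$
\varepsilon^{-1}\big\|A^{-1}\Pi_{V_2}(v_1^{j_1} v_2^{j_2} w^{j_3})\big\|_{\cV^{2+2\delta}_{t,\eta}} \lesssim_\delta \varepsilon^{-1} N^{-(\frac 3 2 - \delta)}\big(N^{\frac 1 2 + \delta}\rho_1\big)^{j_1}\rho_2^{j_2}\rho_3^{j_3}.
$$
The dominant term is $j_1 = 3$, which yields $\varepsilon^{-1}N^{4\delta}\rho_1^3 = N^{4\delta}\varepsilon^{\frac 1 2} R^3$; fixing $\tc_2$ larger than the implicit constant makes this $\leq \rho_2/2$. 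Every remaining term contains an extra factor of $\rho_3$ or $\rho_2$, which provides a small parameter of the form $\varepsilon N^\zeta \gamma^{-1}$ under \eqref{little.gamma.p3}. The most restrictive constraint comes from the $v_1^2 w$ term, which produces a factor $\tc_3 \gamma^{-1} R^2 N^{\frac 5 2 + 5\delta}\varepsilon$, explaining the choice $\zeta \geq \frac 5 2 + 5\delta$. Thus $\mathcal{T}_{v_1,w}$ maps $\mathcal{D}^{V_2}_{\rho_2}$ into itself.

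For the contraction, I would differentiate in $v_2$ to get $\partial_{v_2}\mathcal{T}_{v_1,w}(v_2)[h_2] = 3\varepsilon^{-1}A^{-1}\Pi_{V_2}((v_1+v_2+w)^2 h_2)$ and repeat the expansion. The dominant term $v_1^2 h_2$ gives, by the same smoothing-plus-algebra argument and Bernstein $\|h_2\|_{\cV^{\frac 3 2 + \delta}_{t,\eta}} \leq N^{-(\frac 1 2 + \delta)}\|h_2\|_{\cV^{2 + 2\delta}_{t,\eta}}$, the contraction estimate
$$
\|\partial_{v_2}\mathcal{T}_{v_1,w}(v_2)[h_2]\|_{\cV^{2+2\delta}_{t,\eta}} \lesssim_\delta \varepsilon^{-1} N^{-1+2\delta}\rho_1^2 \|h_2\|_{\cV^{2+2\delta}_{t,\eta}} \lesssim_\delta R^2 N^{-1+2\delta}\|h_2\|_{\cV^{2+2\delta}_{t,\eta}},
$$
which is \eqref{v2.contrae.p3} (absorbing $N^{2\delta}$ into $C_\delta$ if desired by taking $\beta_\delta$ large). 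All other terms are smaller by factors of $\varepsilon N^\zeta \gamma^{-1}$. Choosing $\beta > \beta_\delta$ large enough so that $C_\delta R^2 N^{-1+2\delta} < \frac 1 2$, Banach's fixed point theorem yields the unique solution $v_2(v_1, w) \in \mathcal{D}^{V_2}_{\rho_2}$ of \eqref{v2.di.v1.e.w.p3}, and $v_2(0,0) = 0$ follows from the fact that $\mathcal{T}_{0, 0}(0) = 0$.

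The main obstacle is the careful bookkeeping of Bernstein losses versus smoothing gains: $v_1$ is only controlled in $\cV^1$ but is allowed to have frequencies up to $N$, and one has only two derivatives of smoothing from $A^{-1}$ to spend. The critical balance hides in the $v_1^2 w$ cross term, where the three-powers-of-$N$ cost from $\rho_3$ is largest relative to the size of $\rho_2$; checking precisely this term pins down the exponent $\zeta$ in the smallness hypothesis \eqref{little.gamma.p3}.
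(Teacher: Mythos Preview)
Your proposal is correct and follows essentially the same approach as the paper's own proof: expand the cube, use the two-derivative gain from $A^{-1}$ together with the high-frequency smoothing $\|\Pi_{V_2}\cdot\|_{\cV^{2\delta}}\leq N^{-(\frac32-\delta)}\|\Pi_{V_2}\cdot\|_{\cV^{\frac32+\delta}}$, and then Lemma~\ref{algebraKer} at the algebra threshold, with the $v_1^2w$ cross term dictating $\zeta\geq\frac52+5\delta$. The only cosmetic difference is that the paper also applies Bernstein to $v_2$ to record the factor $(N^{-\frac12-\delta}\rho_2)^{j_2}$ rather than $\rho_2^{j_2}$, which slightly sharpens the bookkeeping but is not needed for the conclusion.
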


\begin{proof}
	We start  expanding
	$\mathcal{T}_{v_1,w}(v_2)= \varepsilon^{-1}\sum\limits_{j_1+j_2+j_3=3}c_{j_1,j_2,j_3} A^{-1}\Pi_{V_{2}}\big(v_1^{j_1}v_2^{j_2}w^{j_3}\big)$.
	By \eqref{lem.s.sprime} and Lemma \ref{algebraKer}, one has
	\begin{align*}
	\big\|A^{-1} \Pi_{V_{2}}\big(v_1^{j_1}v_2^{j_2}w^{j_3}\big)\big\|_{\cV^{2+2\delta}_{t,\eta}} & \leq N^{-\frac 3 2 + \delta}  \big\| \Pi_{V_{2}}\big(v_1^{j_1}v_2^{j_2}w^{j_3}\big)\big\|_{\cV^{\frac 3 2 +\delta}_{t,\eta}}\\
	&\lesssim_{\delta} N^{-\frac 3 2 + \delta} (N^{\frac 1 2 + \delta} \rho_1)^{j_1} (N^{-\frac 1 2 -\delta} \rho_2)^{j_2} \rho_3^{j_3}
	\end{align*}
	for any $v_1 \in \mathcal{D}_{\rho_1}$, $v_2 \in \mathcal{D}^{V_2}_{\rho_2}$, $w \in \mathcal{D}^{W}_{\rho_3}$. By \eqref{parametri.p3} and the smallness assumption \eqref{little.gamma.p3}, if $\zeta \geq \frac 5 2 + 5\delta$ one has $N^{-\frac 1 2 -\delta}\rho_2 \leq N^{\frac 1 2 + \delta} \rho_1$ and $\rho_3 \leq N^{\frac 1 2 + \delta} \rho_1$. Then recalling $j_1 + j_2 + j_3 = 3$, there exists $C_\delta >0$ such that
	$$
	\left\|\mathcal{T}_{v_1, w}(v_2)\right\|_{\cV^{2+2\delta}_{t,\eta}} \leq C_{ \delta} \varepsilon^{-1} N^{4\delta} \rho_1^3 = C_\delta \varepsilon^{\frac 1 2} N^{4 \delta} R^3 \leq \tc_2 \varepsilon^{\frac 1 2} N^{4 \delta} R^3 = \rho_2\,,
	$$
	provided $\tc_2 \geq C_\delta$. Thus $\mathcal{T}_{v_1, w}$ maps $\mathcal{D}_{\rho_2}^{V_2}$ into itself. We now prove that it is a contraction. One has
	$$
	\partial_{v_2}\mathcal{T}_{v_1,w}(v_2)[h_2]=3A^{-1}\Pi_{V_{2}}\left( (v_1+v_2+w)^2 h_2\right) \quad \forall h_2 \in \cV^{2+2\delta}_{t,\eta} \cap V_2\,.
	$$
	Applying Lemma \ref{algebraKer}, \eqref{lem.s.sprime}, using \eqref{parametri.p3} and the smallness condition \eqref{little.gamma.p3}, one obtains
	\begin{align*}
	\big\|A^{-1}\Pi_{V_{2}}\big( v_1^{j_1}v_2^{j_2}w^{j_3}h_2\big)\big\|_{\cV^{2+2\delta}_{t,\eta}}&
	\leq N^{-\frac 3 2 + \delta} (N^{\frac 1 2 + \delta} \rho_1)^{j_1} (N^{-\frac 1 2 -\delta} \rho_2)^{j_2} \rho_3^{j_3} N^{-\frac 12 - \delta} \| h_2\|_{\cV^{2+2\delta}_{t,\eta}}\\
	&\lesssim_{ \delta} N^{-1 + 2\delta} \rho_1^{2}  \| h_2\|_{\cV^{2+2\delta}_{t,\eta}} \lesssim_{ \delta} N^{-1 + 2\delta} \varepsilon R^2\,,
	\end{align*}
	using that $j_1 + j_2 + j_3 = 2$.
	Thus
$
	\|\partial_{v_2}\mathcal{T}_{v_1,w}(v_2)[h_2]\|_{\cV^{2+\delta}_{t,x}} \lesssim_{ \delta}  N^{-1 + 2\delta} R^{2}  \| h_2\|_{\cV^{2+2\delta}_{t,\eta}} $
	which is \eqref{v2.contrae.p3}.
\end{proof}

Differentiability of the function $v_2(v_1, w)$ with estimates \eqref{de.v2.de.v1.p3}, \eqref{de.v2.de.w.p3} follows similarly.

\subsection{Case $p=2$}

 For any $\delta > 0$, $R >0$ and $\gamma \in (0, \gamma_0)$ let
\begin{equation}\label{scelta.param.p2.1}
\rho_1 := R \varepsilon^{\frac 12}\,, \quad \rho_2 := \tc_2 \gamma^{-1} R^3 \varepsilon^{\frac 12}\,, \quad \rho_3 := \tc_3 \gamma^{-2} \varepsilon \sqrt{\varepsilon} R^{3} N^b \,, \quad b := 3 + 6\delta\,, \quad N := \varepsilon^{-\frac{1}{\beta}}
\,.
\end{equation}
 where $\varepsilon>0$ is defined by \eqref{omega.ep}, namely $\omega^2 = 1 -\varepsilon$, $\tc_2, \tc_3 >0$ and $\beta>1$. Since equations \eqref{v1.eq}--\eqref{w.eq} for $p=2$ are degenerate, in the sense that $\Pi_{V_2} (v_1 + v_2)^2 = 0$, we perform the translation
\begin{equation}\label{proviamo.traslando}
w = \mathcal{L}_\omega^{-1}(v_1 + v_2)^2 + \tilde w\,.
\end{equation}
We then rewrite \eqref{v2.eq}, \eqref{w.eq} in terms of $v_1, v_2, \tilde w$, and since $\Pi_V(v_1 + v_2)^2 = 0$ by Lemma \ref{lem.vsquare}, we obtain 
\begin{gather}
\label{new.v2.2}
-\varepsilon A v_2 = \Pi_{V_2}\left(2(v_1 + v_2)\left(\mathcal{L}_\omega^{-1}(v_1 + v_2)^2 + \tilde w\right) + \left(\mathcal{L}_\omega^{-1}(v_1 + v_2)^2 + \tilde w\right)^2 \right)\,,\\
\label{new.w.2}
\mathcal{L}_\omega \tilde w = \Pi_{W} \left( 2(v_1 + v_2) \left(\mathcal{L}_\omega^{-1}(v_1 + v_2)^2 + \tilde w\right) + \left(\mathcal{L}_\omega^{-1}(v_1 + v_2)^2 + \tilde w \right)^2 \right)\,.
\end{gather}
\begin{proposition}[Solution of $v_2$ equation for $p=2$]\label{prop.v2.p2}
	For any $\delta \in (0, \frac{1}{10})$, $R>0$ and $\gamma \in (0, \gamma_0)$, let $\rho_1, \rho_2, \rho_3, N$ be as in \eqref{scelta.param.p2.1}. There exist $\tc_{2}:= \tc_2(\delta)>0$, $\beta_{\delta}>1$, $\mathtt{b}:= \mathtt{b}(\delta)>0$, $\epsilon_{\delta, R}>0$ and $C_{1,\delta}, C_{2,\delta}>0$ such that, for any $\beta > \beta_\delta$ and any  $\varepsilon>0$ such that $\omega \in \Omega_\gamma$ and
	\begin{equation}\label{little.gamma.p2}
	N^{-\mathtt{b}} \gamma^{-1}  \leq \epsilon_{\delta, R}\,, \quad N:= \varepsilon^{-\frac 1 \beta}\,,
	\end{equation}
	there exists a $C^1$ function ${v}_2: \mathcal{D}_{\rho_1} \times {\mathcal{D}}^W_{\rho_3} \rightarrow V_2 \cap \cV^{2+2\delta}_{t,x}$, $(v_1, \tilde w) \mapsto {v}_2(v_1, \tilde w)$, where $\mathcal{D}_{\rho_1}$ and $\mathcal{D}^W_{\rho_3}$ are defined as in \eqref{v1.Domain} and \eqref{w.Domain},
	satisfying 	$v_2(0, 0) = 0$ and
	\begin{gather}
	\label{in.che.bolla.p2}
	\|v_2(v_1, \tilde w)\|_{\cV^{2+2\delta}_{t,x}} \leq \rho_2\,,\\
	\label{de.v2.de.v1.p2}
	\|\partial_{v_1}v_2(v_1, \tilde w)\|_{\mathcal{B}(V_1 \cap \cV^1_{t,x}, \cV^{2+2\delta}_{t,x})}\leq C_{1,\delta} \gamma^{-1} R^2\,,\\
	\label{de.v2.de.w.p2}
	\|\partial_{\tilde w}v_2(v_1, \tilde w)\|_{\mathcal{B}(W \cap H^{\frac{1}{2}+\delta}_t \cH^{\frac{3}{2}+\delta}_x,\cV^{2+2\delta}_{t,x})}\leq C_{2,\delta} \varepsilon^{-\frac 12} N^{-1 +2\delta} R\,,
	\end{gather}
	such that $v_2(v_1, \tilde w)$ solves equation \eqref{new.v2.2}.
\end{proposition}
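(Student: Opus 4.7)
The plan is to solve \eqref{new.v2.2} for $v_2$ by Banach fixed point, setting
$$\mathcal{T}_{v_1,\tilde w}(v_2) := -\varepsilon^{-1} A^{-1} \Pi_{V_2}\!\left(2(v_1+v_2)\Psi + \Psi^2\right), \qquad \Psi:=\mathcal{L}_\omega^{-1}\bigl((v_1+v_2)^2\bigr) + \tilde w,$$
and showing that it is a contraction on $\mathcal{D}^{V_2}_{\rho_2}$, in the spirit of Lemmas \ref{lem.t2.bolla} and \ref{lem.contraz.p3}. Note that $\Psi$ is well-defined: by Lemma \ref{lem.vsquare} one has $\Pi_V((v_1+v_2)^2)=0$, so $(v_1+v_2)^2\in W$ and $\mathcal{L}_\omega^{-1}$ acts via Lemma \ref{lemma.sono.pochissimi}. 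Writing $v := v_1 + v_2$, the bracketed nonlinearity expands as $2v\mathcal{L}_\omega^{-1}(v^2) + 2v\tilde w + (\mathcal{L}_\omega^{-1}v^2)^2 + 2\mathcal{L}_\omega^{-1}(v^2)\tilde w + \tilde w^2$, and each piece is estimated separately.

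The leading term is the cubic $v\mathcal{L}_\omega^{-1}(v^2)$, which I would handle via duality \eqref{duality} and the Strichartz-type estimate \eqref{v.che.vuoi} (applied with the small parameter $2\delta$ in place of $\delta$). Testing against $h\in V_2\cap\cV^{\infty}_{t,x}$ with $\|h\|_{\cV^{-2\delta}_{t,x}}\leq 1$ and using $\|A^{-1}\,\cdot\,\|_{\cV^{2+2\delta}_{t,x}}=\|\,\cdot\,\|_{\cV^{2\delta}_{t,x}}$, I obtain
$$\varepsilon^{-1}\|A^{-1}\Pi_{V_2}(v\mathcal{L}_\omega^{-1}v^2)\|_{\cV^{2+2\delta}_{t,x}} \leq C_\delta \gamma^{-1}\varepsilon^{-1}\|v\|_{\cV^{\frac 2 3+2\delta}_{t,x}}^3.$$
By \eqref{lem.s.sprime}, $\|v_2\|_{\cV^{\frac 2 3+2\delta}_{t,x}}\leq N^{-\frac 4 3}\rho_2$, which is $\lesssim \rho_1$ provided \eqref{little.gamma.p2} holds with $\mathtt b$ large, while $\|v_1\|_{\cV^{\frac 2 3+2\delta}_{t,x}}\leq\|v_1\|_{\cV^1_{t,x}}\leq \rho_1$ is automatic since $\tfrac 2 3+2\delta\leq 1$. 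The resulting bound $\lesssim \gamma^{-1}R^3\varepsilon^{1/2}\asymp \rho_2$ is consistent with \eqref{scelta.param.p2.1} once $\tc_2$ is chosen large enough.

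The four remaining pieces are of higher order in $\varepsilon$. For $v\tilde w$ and $\tilde w^2$ I would apply the algebra estimate of Lemma \ref{algebraKer} combined with the $V_2$-smoothing $\|\Pi_{V_2}f\|_{\cV^{2\delta}_{t,x}}\leq N^{2\delta-\frac 3 2-\delta}\|f\|_{\cV^{\frac 3 2+\delta}_{t,x}}$. For $(\mathcal{L}_\omega^{-1}v^2)^2$ and $\mathcal{L}_\omega^{-1}(v^2)\tilde w$ I would couple Lemma \ref{lemma.sono.pochissimi} (the $\gamma^{-1}$ loss of $\mathcal{L}_\omega^{-1}$ in $L^2_{t,x}$) with the $L^4$-Strichartz bound \eqref{stri.l4} and H\"older's inequality. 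The smallness assumption \eqref{little.gamma.p2}, with $\mathtt b$ chosen large enough, then absorbs all powers of $N$ and $\gamma^{-1}$ produced, so each of these contributions is $o(\rho_2)$ and the total stays inside $\mathcal{D}^{V_2}_{\rho_2}$. Repeating the same estimates after differentiating, with one factor replaced by $h_2\in V_2\cap\cV^{2+2\delta}_{t,x}$, $h_1\in V_1\cap\cV^1_{t,x}$ or $h_3\in W\cap H^{\frac 1 2+\delta}_t\cH^{\frac 3 2+\delta}_x$, gives the bounds on $\partial_{v_2}\mathcal{T}$, $\partial_{v_1}\mathcal{T}$, $\partial_{\tilde w}\mathcal{T}$; Banach fixed point produces $v_2(v_1,\tilde w)$ with $v_2(0,0)=0$, and the implicit function theorem yields the $C^1$ regularity together with \eqref{de.v2.de.v1.p2}--\eqref{de.v2.de.w.p2}.

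The main obstacle is precisely the trilinear term $v\mathcal{L}_\omega^{-1}(v^2)$: controlling it via the algebra property of $H^r_t\cH^s_x$ alone would require $v\in\cV^{\frac 3 2+\delta}_{t,x}$ and would cost a high-frequency factor $N^{\frac 1 2+\delta}$ through \eqref{lem.s.sprime}, on top of the unavoidable $\gamma^{-1}$ loss from $\mathcal{L}_\omega^{-1}|_W$; this is incompatible with the smallness budget $\rho_1\sim R\varepsilon^{1/2}$. The Strichartz estimate \eqref{v.che.vuoi}, which places the three ``internal'' factors at the weak regularity $\tfrac 2 3+2\delta$, is exactly what removes this loss and is the reason Proposition \ref{lem.strich.Lom} had to be established.
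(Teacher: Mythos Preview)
Your strategy coincides with the paper's: same fixed-point map $\mathcal{T}_{v_1,\tilde w}$, same decomposition into five pieces, and the leading cubic term $v\,\mathcal{L}_\omega^{-1}(v^2)$ handled via duality and the Strichartz estimate \eqref{v.che.vuoi} at regularity $\tfrac23+2\delta$, exactly as in the paper's Lemma~\ref{lem.v2-2.contraz}.

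One correction is needed for the two pieces $(\mathcal{L}_\omega^{-1}v^2)^2$ and $\mathcal{L}_\omega^{-1}(v^2)\,\tilde w$. The $L^4$-Strichartz bound \eqref{stri.l4} is stated for elements of $V$, while $\mathcal{L}_\omega^{-1}(v^2)\in W$; combining it with H\"older and the $L^2$-mapping bound of Lemma~\ref{lemma.sono.pochissimi} yields at best an $L^p_{t,x}$ control of $(\mathcal{L}_\omega^{-1}v^2)^2$, which does not dominate $\|\Pi_{V_2}(\cdot)\|_{\cV^{2\delta}_{t,x}}$ (for $h\in V_2$ the constraint $\|h\|_{\cV^{-2\delta}_{t,x}}\leq 1$ gives no bound on $\|h\|_{L^2_{t,x}}$). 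The paper treats these two terms exactly as you treat $v\tilde w$ and $\tilde w^2$: apply the $V_2$-smoothing to pass from $\cV^{2\delta}_{t,x}$ to $\cV^{\frac32+\delta}_{t,x}$, then use the algebra property \eqref{algebra.prop} in $H^{\frac12+\delta}_t\cH^{\frac32+\delta}_x$ together with the $H^r_t\cH^s_x$-boundedness of $\mathcal{L}_\omega^{-1}$ from Lemma~\ref{lemma.sono.pochissimi}. This costs a factor $\|v\|_{\cV^{2+2\delta}_{t,x}}\lesssim N^{1+2\delta}\rho_1$ per copy of $v$ (cf.\ Lemma~\ref{lem.v2.leq.v1}), but the resulting $N$-powers are absorbed by \eqref{little.gamma.p2} because each of these quartic terms carries an extra $\varepsilon^{1/2}$ relative to the cubic one.
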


We now prove Proposition \ref{prop.v2.p2}. For any $(v_1, \tilde w) \in \mathcal{D}_{\rho_1} \times \mathcal{D}^W_{\rho_3}$ we look for a solution of \eqref{new.v2.2} as a fixed point of the map which to $v_2$ associates
\begin{equation}\label{J.v2}
\mathcal{T}_{v_1, \tilde w}(v_2):=-\varepsilon^{-1}A^{-1} \Pi_{V_2} \left(2(v_1 + v_2)\left(\mathcal{L}_\omega^{-1}(v_1 + v_2)^2 + \tilde w\right) + \left(\mathcal{L}_\omega^{-1}(v_1 + v_2)^2 + \tilde w\right)^2 \right)
\end{equation}
with $A = -\Delta_{\mathbb{S}^3}^{ss} + \Id$ as in \eqref{def.A}. We shall use the following technical lemma:
\begin{lemma}\label{lem.v2.leq.v1}
	Let $\rho_1$ and $\rho_2$ as in \eqref{scelta.param.p2.1}. There exists $\epsilon_{R, \tc_2}>0$ such that, if $N^{-1-2\delta} \gamma^{-1} < \epsilon_{R, \tc_2}$, 
	then for any $s \in [0, 2 + 2\delta]$, any $v_1 \in \mathcal{D}_{\rho_1}$ and $v_2 \in \mathcal{D}^{V_2}_{\rho_2}$ one has
	\begin{equation}\label{hamtaro}
	\| v_1\|_{\cV^s_{t,x}} \leq N^{\max\{0, s-1\}} \rho_1\,, \quad  \|v_2\|_{\cV^{s}_{t,x}} \leq N^{\max\{0, s -1\}} \rho_1\,.
	\end{equation}
\end{lemma}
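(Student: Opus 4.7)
The plan is to split the two estimates and handle each by direct Fourier-side computation, since both $v_1$ and $v_2$ are functions in the kernel $V$ for which the $\cV^s_{t,x}$ norm is a weighted $\ell^2$ norm on the Fourier coefficients (see \eqref{hs.ker}). The key ingredients are the one-sided smoothing estimates \eqref{lem.s.sprime} together with the elementary fact that $\omega_j\ge 1$ for all $j\in\N$.

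For $v_1\in V_1=V_{\le N}$ with $\|v_1\|_{\cV^1_{t,x}}\le \rho_1$, I would split by the value of $s$. If $s\in[0,1]$, then since every frequency satisfies $\omega_j\ge 1$, one has $\omega_j^{2s}\le \omega_j^2$, so $\|v_1\|_{\cV^s_{t,x}}\le\|v_1\|_{\cV^1_{t,x}}\le\rho_1 = N^{0}\rho_1$. If $s\in(1,2+2\delta]$, then $\omega_j\le N$ on $V_1$ gives $\omega_j^{2s}\le N^{2(s-1)}\omega_j^2$, hence $\|v_1\|_{\cV^s_{t,x}}\le N^{s-1}\|v_1\|_{\cV^1_{t,x}}\le N^{s-1}\rho_1$. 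This is exactly the first inequality.

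For $v_2\in V_2=V_{>N}$ with $\|v_2\|_{\cV^{2+2\delta}_{t,x}}\le \rho_2$, the second smoothing inequality in \eqref{lem.s.sprime} yields
\begin{equation*}
\|v_2\|_{\cV^s_{t,x}}\le N^{-(2+2\delta-s)}\,\|v_2\|_{\cV^{2+2\delta}_{t,x}}\le N^{s-(2+2\delta)}\rho_2
\end{equation*}
for every $s\in[0,2+2\delta]$. To match the required bound I would check that
\begin{equation*}
N^{s-(2+2\delta)}\rho_2\;\le\; N^{\max\{0,\,s-1\}}\rho_1,
\end{equation*}
which after rearranging reduces, in both regimes $s\le 1$ and $s>1$, to the single inequality $\rho_2\le N^{1+2\delta}\rho_1$ (the worst case is $s=1$). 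Inserting the definitions in \eqref{scelta.param.p2.1}, namely $\rho_1=R\varepsilon^{1/2}$ and $\rho_2=\tc_2\gamma^{-1}R^3\varepsilon^{1/2}$, this becomes $\tc_2\gamma^{-1}R^2\le N^{1+2\delta}$, i.e.\ $N^{-1-2\delta}\gamma^{-1}\le \tc_2^{-1}R^{-2}$, which is guaranteed by the hypothesis upon choosing $\epsilon_{R,\tc_2}:=(\tc_2 R^2)^{-1}$.

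There is no real obstacle here: the whole argument is a two-line Fourier estimate combined with the algebraic comparison $\rho_2\le N^{1+2\delta}\rho_1$. The only thing to be careful about is to make the dependence of the smallness constant $\epsilon_{R,\tc_2}$ transparent so that it can be absorbed later, together with the other smallness conditions of the form $N^a\varepsilon^b\ll 1$, into the global assumption \eqref{little.gamma.p2}.
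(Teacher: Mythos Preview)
Your proof is correct and follows essentially the same route as the paper: the $v_1$ estimate is the smoothing property \eqref{lem.s.sprime} (you spell out the $s\le 1$ case via $\omega_j\ge 1$, which the paper leaves implicit), and for $v_2$ you reduce to the algebraic inequality $\rho_2\le N^{1+2\delta}\rho_1$ and take $\epsilon_{R,\tc_2}=(\tc_2 R^2)^{-1}$, exactly as the paper does.
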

\begin{proof}
	The estimate on $\| v_1\|_{\cV^s_{t,x}}$ follows from \eqref{lem.s.sprime}.
	For any $s \in [0, 2 +2\delta]$ and $v_2 \in \mathcal{D}^{V_2}_{\rho_2}$, by \eqref{lem.s.sprime} and \eqref{scelta.param.p2.1} one has
	$$
	\|v_2\|_{\cV^{s}_{t,x}} \leq N^{-2-2\delta+ s} \|v_2\|_{\cV^{2 +2\delta}_{t,x}} \leq N^{-2-2\delta+ s} \rho_2
	= N^{-2 -2\delta + s} \tc_2 \gamma^{-1}  R^2 \rho_1 \leq N^{\max\{0, s-1\}} \rho_1\,,
	$$
	since $-2-2\delta + s \leq \max\{0, s-1\} - 1 - 2\delta$ and $N^{-1-2\delta} \gamma^{-1} < (\tc_2 R^2)^{-1}=:\epsilon_{R, \tc_2}$.
\end{proof}
The next Lemma is based on the Strichartz-type estimates of Proposition \ref{lem.strich.Lom}:
\begin{lemma}[Contraction]\label{lem.v2-2.contraz}
	There exist $\mathtt{b}_\delta \in (0, 1)$, $\epsilon_{\delta, R}>0$ and $C_\delta>0$ such that, if \eqref{little.gamma.p2} holds with $\mathtt{b} \geq \mathtt{b}_\delta$, then for any $v_1 \in \mathcal{D}_{\rho_1}$ and $\tilde w \in \mathcal{D}^W_{\rho_3}$,  $\mathcal{T}_{v_1, \tilde w}$ defined in \eqref{J.v2} maps $\mathcal{D}^{V_2}_{\rho_2}$ into itself, with
	\begin{equation}\label{contrae.p2}
	\left\| \mathcal{T}_{v_1, \tilde w} (v_2)  - \mathcal{T}_{v_1, \tilde w} (v'_2) \right\|_{\cV^{2+2\delta}_{t,x}} \leq C_{ \delta} \gamma^{-1} R^2 N^{-\frac 4 3} \|v_2 - v'_2\|_{\cV^{2+2\delta}_{t,x}} \quad \forall v_2, v'_2 \in \mathcal{D}^{V_2}_{\rho_2}\,.
	\end{equation}
	As a consequence, for any $ (v_1, \tilde w) \in \mathcal{D}_{\rho_1} \times \mathcal{D}^W_{\rho_3}$ there exists a unique $v_2(v_1, \tilde w)$ solving \eqref{new.v2.2}, satisfying $v_2(0,0) = 0$.
\end{lemma}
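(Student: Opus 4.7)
The plan is to apply the Banach contraction theorem to $\mathcal{T}_{v_1, \tilde w}$ on the ball $\mathcal{D}^{V_2}_{\rho_2}$. First, I would expand the integrand in \eqref{J.v2} by setting $V := v_1 + v_2$ and writing
\begin{equation*}
\mathcal{T}_{v_1, \tilde w}(v_2) = -\varepsilon^{-1} A^{-1} \Pi_{V_2}\Big( 2 V \mathcal{L}_\omega^{-1}(V^2) + 2 V \tilde w + (\mathcal{L}_\omega^{-1} V^2)^2 + 2 \mathcal{L}_\omega^{-1}(V^2) \tilde w + \tilde w^2\Big)\,.
\end{equation*}
Note that $V^2 \in W$ and $Vh \in W$ for $h \in V$ by Lemma \ref{lem.vsquare}, so the operator $\mathcal{L}_\omega^{-1}$ is applied only to elements of $W$, as required by Lemma \ref{lemma.sono.pochissimi}.

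The leading contribution comes from the cubic term $V \mathcal{L}_\omega^{-1}(V^2)$. For this, I would use duality \eqref{duality}, noting that $\|A^{-1} \Pi_{V_2} X\|_{\cV^{2+2\delta}_{t,x}} \lesssim \|\Pi_{V_2} X\|_{\cV^{2\delta}_{t,x}}$ by \eqref{Delta.smooth}, and test against $h \in V_2 \cap \cV^\infty_{t,x}$ with $\|h\|_{\cV^{-2\delta}_{t,x}} \leq 1$. Using self-adjointness of $\mathcal{L}_\omega^{-1}$, I rewrite
\begin{equation*}
\int V \, h \, \mathcal{L}_\omega^{-1}(V^2) \sin^2(x) \dbar x \dbar t = \int V \, V \, \mathcal{L}_\omega^{-1}(V h) \sin^2(x) \dbar x \dbar t\,,
\end{equation*}
and apply the Strichartz-type estimate \eqref{v.che.vuoi} (with parameter $2\delta$) choosing $l = 4$, obtaining a bound by $C_\delta \gamma^{-1} \|V\|^3_{\cV^{2/3 + 2\delta}_{t,x}} \|h\|_{\cV^{-2\delta}_{t,x}}$. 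By Lemma \ref{lem.v2.leq.v1}, $\|V\|_{\cV^{2/3+2\delta}_{t,x}} \lesssim \rho_1$, so the contribution to $\|\mathcal{T}\|_{\cV^{2+2\delta}_{t,x}}$ is $\lesssim \varepsilon^{-1} \gamma^{-1} \rho_1^3 = \gamma^{-1} \varepsilon^{1/2} R^3$, which matches $\rho_2 / \tc_2$ provided $\tc_2$ is chosen large enough.

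The remaining terms, which involve $\tilde w$ or powers of $\mathcal{L}_\omega^{-1}(V^2)$, are estimated by a combination of algebra in $H^{1/2+\delta}_t \cH^{3/2+\delta}_x$ (via Lemma \ref{algebraKer} and the embedding $\cV^{2+2\delta}_{t,x} \hookrightarrow H^{1/2+\delta}_t \cH^{3/2+\delta}_x$ from Lemma \ref{lemma.come.vuoi}), the boundedness of $\mathcal{L}_\omega^{-1}$ on $W$ from Lemma \ref{lemma.sono.pochissimi}, and Lemma \ref{pesce.rosso}. All such contributions carry at least one factor of $\rho_3$ or $\gamma^{-2} N^a \rho_1^k$ with $k \geq 4$ or positive powers of $\varepsilon$, and thus are absorbed as perturbations of the cubic term once the smallness condition \eqref{little.gamma.p2} is imposed with a suitably large $\mathtt{b}$ and a suitably large $\beta$ (to control the growth in $N$).

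For the contraction estimate \eqref{contrae.p2}, I would differentiate $\mathcal{T}_{v_1, \tilde w}$ in $v_2$, yielding terms such as $2 V \mathcal{L}_\omega^{-1}(V h_2)$, $h_2 \mathcal{L}_\omega^{-1}(V^2)$, $h_2 \tilde w$, etc. The crucial observation is that $h_2 \in V_2$, so the smoothing estimate \eqref{lem.s.sprime} gives
\begin{equation*}
\|h_2\|_{\cV^{2/3 + 2\delta}_{t,x}} \leq N^{-(4/3)} \|h_2\|_{\cV^{2 + 2\delta}_{t,x}}\,,
\end{equation*}
which is the source of the $N^{-4/3}$ factor. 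Applying \eqref{v.che.vuoi} to the cubic derivative terms in the same duality/self-adjointness fashion as above, with $h_2$ placed at a $\cV^{2/3+2\delta}_{t,x}$ slot and the test $h$ at the $\cV^{-2\delta}_{t,x}$ slot, produces a bound of the form $\varepsilon^{-1} \gamma^{-1} \rho_1^2 N^{-4/3} \|h_2\|_{\cV^{2+2\delta}_{t,x}} \lesssim \gamma^{-1} R^2 N^{-4/3}\|h_2\|_{\cV^{2+2\delta}_{t,x}}$, matching \eqref{contrae.p2}. The remaining derivative terms (involving $\tilde w$) are shown to be smaller by a positive power of $\varepsilon$ or $N^{-1}$ using algebra and the smallness hypothesis, so they do not affect the overall contraction constant. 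Once \eqref{contrae.p2} is established, Banach's fixed point theorem yields a unique fixed point $v_2(v_1, \tilde w) \in \mathcal{D}^{V_2}_{\rho_2}$; evaluating at $(v_1, \tilde w) = (0,0)$ gives $\mathcal{T}_{0,0}(0) = 0$ since every term in the integrand vanishes, so by uniqueness $v_2(0,0) = 0$. The main subtlety lies in choosing the correct positioning in \eqref{v.che.vuoi} (via self-adjointness of $\mathcal{L}_\omega^{-1}$) so that the test function $h$ lands at the $\cV^{-2\delta}_{t,x}$ slot where duality supplies a uniform bound, while $h_2$ gains the decisive $N^{-4/3}$ from the $V_2$ smoothing.
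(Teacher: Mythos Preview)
Your proposal is correct and follows essentially the same route as the paper: the leading cubic piece $V\mathcal{L}_\omega^{-1}(V^2)$ is controlled via duality and the Strichartz-type estimate \eqref{v.che.vuoi}, the remaining terms by algebra and Lemma~\ref{lemma.sono.pochissimi}, and the contraction factor $N^{-4/3}$ arises exactly from the $V_2$-smoothing $\|h_2\|_{\cV^{2/3+2\delta}_{t,x}} \leq N^{-4/3}\|h_2\|_{\cV^{2+2\delta}_{t,x}}$. The only cosmetic difference is your preliminary use of self-adjointness of $\mathcal{L}_\omega^{-1}$ to relocate the test function inside the operator, which is unnecessary since \eqref{v.che.vuoi} already holds for any choice of the distinguished index~$l$.
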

\begin{proof}
	We expand $	\mathcal{T}_{v_1,\tilde w}(v_2)$ in \eqref{J.v2} as $\mathcal{T}_{v_1,\tilde w}(v_2)= \mathcal{T}_1 + \mathcal{T}_2 + \mathcal{T}_3 + \mathcal{T}_4 + \mathcal{T}_5$, with
	$$
	\begin{gathered}
	\mathcal{T}_1 := -2\varepsilon^{-1}A^{-1}\Pi_{V_{2}}\left((v_1 + v_2) \mathcal{L}_\omega^{-1} (v_1 + v_2)^2\right) \,, \quad
	\mathcal{T}_2 := -2\varepsilon^{-1}A^{-1}\Pi_{V_{2}}\left((v_1 + v_2)  \tilde w\right)\,,\\
	\mathcal{T}_3 := -\varepsilon^{-1}A^{-1}\Pi_{V_{2}} \left( \left( \mathcal{L}_\omega^{-1} (v_1 + v_2)^2\right)^2\right)\,, \quad
	\mathcal{T}_4 := -\varepsilon^{-1}A^{-1}\Pi_{V_{2}} \tilde w ^2\,, \\
	\mathcal{T}_5 := -2 \varepsilon^{-1}A^{-1}\Pi_{V_{2}} \left( \tilde w  \mathcal{L}_\omega^{-1} (v_1 + v_2)^2 \right)\,,
	\end{gathered}
	$$
	and we estimate each term separately. 
	$\mathcal{T}_1$ is estimated using Item 2 of Proposition \ref{lem.strich.Lom}, which gives
	\begin{equation}\label{T.1}
	\begin{aligned}
	\| \mathcal{T}_1\|_{\cV^{2+2\delta}_{t,x}} &\leq 2 \varepsilon^{-1} \left\| \Pi_{V_{2}}\left((v_1 + v_2) \mathcal{L}_\omega^{-1} (v_1 + v_2)^2\right)\right\|_{\cV^{2\delta}_{t,x}}\\
	&= 2\varepsilon^{-1} \sup_{h_2 \in V_2 \cap \cV^{-2\delta}_{t,x} \atop \|h_2\|_{\cV^{-2\delta}_{t,x} } \leq 1} \left| \int_{\mathbb{T}} \int_{0}^\pi (v_1 + v_2) h_2 \mathcal{L}_\omega^{-1}(v_1 + v_2)^2 \sin^2(x) \,\dbar x \dbar t\right|\\
	&\lesssim_{\delta} \varepsilon^{-1} \gamma^{-1} \|v_1 + v_2\|_{\cV^{\frac 2 3 + 2 \delta}_{t,x}}^3  \lesssim_{\delta} \varepsilon^{-1} \gamma^{-1} \big(\|v_1\|_{\cV^{\frac 2 3 + 2 \delta}_{t,x}} + \|v_2\|_{\cV^{\frac 2 3 + 2 \delta}_{t,x}} \big)^3\,.
	\end{aligned}
	\end{equation}
	$\mathcal{T}_2$ is estimated using \eqref{lem.s.sprime} and Lemma \ref{algebraKer}: one has
	\begin{equation}\label{T.2}
	\begin{aligned}
	\|\mathcal{T}_2\|_{\cV^{2+2\delta}_{t,x}}& \leq \varepsilon^{-1}\left\|\Pi_{V_2} (v_1 + v_2)  \tilde w\right \|_{\cV^{2\delta}_{t,x}}
	\lesssim_{\delta} \varepsilon^{-1} N^{-\frac 3 2 + \delta} 
	\left\|\Pi_{V_2} (v_1 + v_2)  \tilde w\right \|_{\cV^{\frac 3 2 + \delta}_{t,x}} \\
	&\lesssim_{\delta}  \varepsilon^{-1} N^{-\frac 3 2 + \delta} 
	\Big(\|v_1 \|_{\cV^{\frac 3 2 + \delta}_{t,x}} +
	\| v_2\|_{\cV^{\frac 3 2 + \delta}_{t,x}} \Big) \| \tilde w\|_{H^{\frac 1 2 + \delta}_t \cH^{\frac 3 2 + \delta}_{x}}\,.
	\end{aligned}
	\end{equation}
	$\mathcal{T}_3$ and $\mathcal{T}_5$ are estimated using \eqref{algebra.prop}, \eqref{lem.s.sprime}, \eqref{hs.linf}, \eqref{cor.12delta} and Lemma \ref{lemma.sono.pochissimi}, which give 
	\begin{equation}\label{T.3}
	\begin{aligned}
	\|\mathcal{T}_3\|_{\cV^{2+2\delta}_{t,x}} 
	& \lesssim_{\delta} 
	\varepsilon^{-1}\left\|\Pi_{V_{2}} \Big( \left( \mathcal{L}_\omega^{-1} (v_1 + v_2)^2\right)^2\Big)\right\|_{\cV^{2\delta}_{t,x}} 
	\lesssim_{\delta} \varepsilon^{-1} 
	N^{-\frac 3 2 + \delta} \left\|  \left( \mathcal{L}_\omega^{-1} (v_1 + v_2)^2\right)^2 \right\|_{H^{\frac 1 2 + \delta}_t \cH^{\frac 3 2 + \delta}_{x}}\\
	&\lesssim_{\delta} \gamma^{-2} \varepsilon^{-1} N^{-\frac 3 2 + \delta} \big(\|v_1\|_{\cV^{2+2\delta}_{t,x}} +\| v_2\|_{\cV^{2+2\delta}_{t,x}}\big)^4\,,
	\end{aligned}
	\end{equation}
	and
	\begin{equation}\label{T.5}
	\begin{aligned}
	\|\mathcal{T}_5\|_{\cV^{2+2\delta}_{t,x}} &\lesssim_{\delta} \varepsilon^{-1} N^{-\frac 3 2 + \delta} \left\|\Pi_{V_2} \left(\tilde{w} \mathcal{L}_\omega^{-1}(v_1 + v_2)^2\right) \right\|_{\cV^{\frac 3 2 + \delta}_{t,x}}\\
	&\lesssim_{\delta} \gamma^{-1} \varepsilon^{-1}  N^{-\frac 3 2 + \delta} \|\tilde w\|_{H^{\frac 1 2 + \delta}_t \cH^{\frac 3 2 + \delta}_x} 
	\big(\|v_1\|_{\cV^{2+2\delta}_{t,x}} + \|v_2\|_{\cV^{2+2\delta}_{t,x}}\big)^2\,.
	\end{aligned}
	\end{equation}
	Finally, $\mathcal{T}_4$ is estimated using algebra property \eqref{algebra.prop}:
	\begin{equation}\label{T.4}
	\|\mathcal{T}_4\|_{\cV^{2+2\delta}_{t,x}} \lesssim_{\delta}  \varepsilon^{-1}\|\Pi_{V_{2}} \tilde w ^2\|_{\cV^{2\delta}_{t,x}} \lesssim_{\delta} \varepsilon^{-1} N^{-\frac 3 2 + \delta} \|\tilde w\|^2_{H^{\frac 1 2 +\delta}_t \cH^{\frac 32+ \delta}_{x}}\,.
	\end{equation}
	Then by Lemma \ref{lem.v2.leq.v1}, recalling the definitions of $\rho_1, \rho_2, \rho_3$ as in \eqref{scelta.param.p2.1} and combining \eqref{T.1}, \eqref{T.2}, \eqref{T.3}, \eqref{T.4}, \eqref{T.5}, there exists a positive constant $C_\delta$ such that
	$$
	\begin{aligned}
	\| \mathcal{T}_{v_1, \tilde w} (v_2) \|_{\cV^{2+2\delta}_{t,x}} &\leq C_\delta \big( \gamma^{-1} \sqrt{\varepsilon} R^3 + \tc_3 \gamma^{-2} \varepsilon N^{-1 + 2\delta + b} R^4 + \gamma^{-2} \varepsilon N^{\frac 5 2 + 9\delta} R^4\\
	&\quad + \tc_3^2 \gamma^{-4} R^{6} N^{-\frac 3 2 + \delta + 2b} \varepsilon^{2}+ \tc_3 \gamma^{-3}\varepsilon \sqrt{\varepsilon} R^5 N^{\frac 1 2 + 5\delta + b}\big)\\
	&\leq 2 C_\delta \gamma^{-1} \sqrt{\varepsilon} R^3 \leq \gamma^{-1} \tc_2 \sqrt{\varepsilon} R^3 = \rho_2\,,
	\end{aligned}
	$$
	provided \eqref{little.gamma.p2} holds for some $\mathtt{b} \in (0,1)$ and $\epsilon_{R, \delta}$ small enough and $\tc_2 \geq 2 C_\delta$. 
	We now prove that $\mathcal{T}_{v_1, \tilde w}$ is a contraction. We actually prove that $\partial_{v_2} \mathcal{T}_{v_1, \tilde w} \in \mathcal{B}\big(V_2 \cap \cV^{2+2\delta}_{t,x}, \cV^{2+2\delta}_{t,x}\big)$. Indeed, one has
	$\partial_{v_2} \mathcal{T}_{v_1, \tilde w} [h_2] = D_1[h_2] + D_2[h_2] + D_3 [h_2]+ D_4[h_2]$\,,
	with
	$$
	\begin{aligned}
	D_1[h_2] &:=-2 \varepsilon^{-1} A^{-1} \Pi_{V_2}\left(h_2 \mathcal{L}_\omega^{-1}(v_1 + v_2)^2\right)\,,\\
	D_2[h_2] &:= -2 \varepsilon^{-1} A^{-1} \Pi_{V_2}\left( h_2 \tilde w\right)\,,\\
	D_3[h_2] &:= -4 \varepsilon^{-1} A^{-1} \Pi_{V_2}\left((v_1 + v_2)  \mathcal{L}_\omega^{-1}\left((v_1 + v_2) h_2\right)\right)\,,
	\\
	D_4[h_2] &:=- 4 \varepsilon^{-1} A^{-1} \Pi_{V_2}\left( \left(\mathcal{L}_\omega^{-1}(v_1 + v_2)^2 + \tilde w\right) \mathcal{L}_\omega^{-1}\left((v_1 + v_2) h_2\right)\right)\,.
	\end{aligned}
	$$
	We proceed estimating separately all terms. By Item 2 of Proposition \ref{lem.strich.Lom}, by Lemma \ref{lem.v2.leq.v1} and using the definitions of the parameters $\rho_1, \rho_2, \rho_3$, one has
	\begin{align}
	\nonumber
	\|D_1[h_2]\|_{\cV^{2+2\delta}_{t,x}} &\lesssim_\delta \varepsilon^{-1} \left\|\Pi_{V_2} \left(h_2 \mathcal{L}_\omega^{-1}(v_1 + v_2)^2\right)\right\|_{\cV^{2 \delta}_{t,x}}\\
	\nonumber
	&\lesssim_{\delta} \gamma^{-1} \varepsilon^{-1} \|h_2\|_{\cV^{\frac 2 3 + 2\delta}_{t,x}} \|v_1 + v_2\|_{\cV^{\frac 2 3 +2\delta}_{t,x}}^2 \lesssim_{\delta} \gamma^{-1} \varepsilon^{-1} N^{-\frac 4 3} \|h_2\|_{\cV^{2 + 2\delta}_{t,x}} \|v_1 + v_2\|_{\cV^{\frac 2 3 +2\delta}_{t,x}}^2\\
	& \lesssim_{\delta} \gamma^{-1}\varepsilon^{-1} \rho_1^2 N^{-\frac 4 3} \|h_2\|_{\cV^{2+2\delta}_{t,x}} \lesssim_{\delta} \gamma^{-1} R^2 N^{-\frac 4 3} \|h_2\|_{\cV^{2+2\delta}_{t,x}}\,. \label{D1}
	\end{align}
	The estimate of $D_3[h_2]$ is the same, and gives
	\begin{equation}\label{D3}
	\| D_3[h_2]\|_{\cV^{2+2\delta}_{t,x}} \lesssim_{\delta } \gamma^{-1}  R^2 N^{-\frac 4 3} \|h_2\|_{\cV^{2+2\delta}_{t,x}}\,.
	\end{equation}
	The estimate of $D_2[h_2]$ is analogous to the estimate of $\mathcal{T}_2$, and yields
	\begin{equation}\label{D2}
	\begin{aligned}
	\| D_2[h_2]\|_{\cV^{2+2\delta}_{t,x}} &\lesssim_{\delta } \varepsilon^{-1} N^{-\frac 3 2 + \delta} 
	\|h_2\|_{\cV^{\frac 3 2 + \delta}_{t,x}} \| \tilde w\|_{H^{\frac 1 2 + \delta}_t \cH^{\frac 3 2 + \delta}_{x}}\\
	&\lesssim_{\delta } \varepsilon^{-1} N^{-2} \|h_2\|_{\cV^{2+2\delta}_{t,x}} \rho_3 \lesssim_{ \delta} \tc_3 \gamma^{-2} R^{3} \sqrt{\varepsilon} N^{-2 + b} \|h_2\|_{\cV^{2+2\delta}_{t,x}} \,.
	\end{aligned}
	\end{equation}
	We finally estimate $D_4[h_2]$ using algebra property \eqref{algebra.prop} and Lemma \ref{lemma.sono.pochissimi}. One gets
	\begin{align*}
&	\|D_4[h_2]\|_{\cV^{2+2\delta}_{t,x}} \lesssim_{\delta} N^{-\frac 3 2 + \delta} \varepsilon^{-1} \left\| \Pi_{V_2}\left( \left(\mathcal{L}_\omega^{-1}(v_1 + v_2)^2 + \tilde w\right) \mathcal{L}_\omega^{-1}\left((v_1 + v_2) h_2\right)\right) \right\|_{\cV^{\frac 3 2 + \delta}_{t,x}}\\
	&\lesssim_{\delta } N^{-\frac 3 2 + \delta} \varepsilon^{-1} \big(\left\|\mathcal{L}_\omega^{-1}(v_1 + v_2)^2\right\|_{H^{\frac 1 2 + \delta}_t \cH^{\frac 3 2 + \delta}_x} + \|\tilde w\|_{H^{\frac 1 2 + \delta}_t \cH^{\frac 3 2 + \delta}_x}\big) \left\|\mathcal{L}_\omega^{-1}\left((v_1 + v_2) h_2\right)	\right\|_{H^{\frac 1 2 + \delta}_t \cH^{\frac 3 2 + \delta}_x}\\
	&\lesssim_{\delta} N^{-\frac 3 2 + \delta} \gamma^{-2} \varepsilon^{-1} \big(\|v_1 + v_2\|^2_{\cV^{2+2\delta}_{t,x}}  + \|\tilde w\|_{H^{\frac 1 2 + \delta} \cH^{\frac 3 2 + \delta}_x}\big) \|v_1 + v_2\|_{\cV^{2+2\delta}_{t,x}} \|h_2\|_{\cV^{2+2\delta}_{t,x}}\,.
	\end{align*}
	Then using Lemma \ref{lem.v2.leq.v1} and the definitions of parameters $\rho_1, \rho_2, \rho_3$ as in \eqref{scelta.param.p2.1} one has
	\begin{equation}\label{D4}
	\begin{aligned}
	\|D_4[h_2]\|_{\cV^{2+2\delta}_{t,x}} &\lesssim_{\delta} N^{-\frac 3 2 + \delta} \gamma^{-2} \varepsilon^{-1} \left( \varepsilon R^2 N^{2(1+2\delta)} + \tc_3 \gamma^{-2} R^3 \varepsilon^{\frac 3 2} N^{b} \right) R \sqrt{\varepsilon} N^{1+2\delta} \|h_2\|_{\cV^{2+2\delta}_{t,x}}\\
	&\lesssim_{\delta} \gamma^{-2} R^3 N^{3(1+2\delta) - \frac 3 2 + \delta}  \sqrt{\varepsilon}  \|h_2\|_{\cV^{2+2\delta}_{t,x}}\,.
	\end{aligned}
	\end{equation}
	Thus, combining \eqref{D1}, \eqref{D2}, \eqref{D3}, \eqref{D4} and assuming \eqref{little.gamma.p2}, one gets
	$$
	\|\partial_{v_2} \mathcal{T}_{v_1, \tilde w}(v_2)[h_2] \|_{\cV^{2+2\delta}_{t,x}} \lesssim_{\delta} \gamma^{-1} R^2 N^{-\frac 4 3} \|h_2\|_{\cV^{2+2\delta}_{t,x}}\,,
$$
	which implies \eqref{contrae.p2} and 
	 that $\mathcal{T}_{v_1, \tilde w}$ is a contraction. Finally, since  $\mathcal{T}_{0, 0} = \varepsilon^{-1} A^{-1} \Pi_{V_2} \big(2 v_2 \mathcal{L}_\omega^{-1}(v_2^2) + \left(\mathcal{L}_\omega^{-1} (v_2^2)\right)^2 \big)$ vanishes at $v_2 =0$, we also have $v_2(0,0) = 0$.
\end{proof}

Finally, with analogous arguments to the ones in the proof of Lemma \ref{lem.v2-2.contraz}, one obtains differentiability of $v_2(v_1, \tilde w)$ with respect to $v_1$ and $\tilde w$ with estimates \eqref{de.v2.de.v1.p2}, \eqref{de.v2.de.w.p2}.

\section{Solution of the range equation}\label{sec.w}

In this section we solve the range equation \eqref{w.eq} in the algebra spaces $H^{\frac 1 2 + \delta}_t \cH^{\frac 3 2 + \delta}_z$, where $v_2 = v_2(v_1, w)$ is the solution of \eqref{v2.eq}, namely we find $w$ such that
\begin{equation}\label{w2.eq.v2}
	\mathcal{L}_\omega w - \Pi_{W} (v_1 + v_2(v_1, w) +w)^p= 0 \,.
\end{equation}

\subsection{Cases $p=5$ and $p=3$}

\begin{proposition}[Solution of the range equation for $p=5$]\label{prop.w}
	For any $\delta \in (0, \frac{1}{8})$, $\gamma \in (0, \gamma_0)$ and $R>0$, let $\rho_1, \rho_2, \rho_3, N$ as in \eqref{parametri.p5}.
	There exist $\beta_\delta>0$, $\tc_3 := \tc_3(\delta)>0$, $\zeta:=\zeta(\delta)>0$, $\epsilon_{\delta, R}>0$ and $C_\delta>0$ such that, for any $\beta > \beta_\delta$ and any $\varepsilon>0$ such that $\omega \in \Omega_\gamma$ and \eqref{little.gamma} holds, there exists a $C^1$ function $w: \mathcal{D}_{\rho_1} \rightarrow W \cap H^{\frac{1}{2} +\delta}_t \cH^{\frac 3 2 + \delta}_x$, $v_1 \mapsto w(v_1)$, satisfying $w(0) = 0$,
	\begin{align}
	& \|w\|_{H^{\frac 1 2 + \delta}_t \cH^{\frac 3 2 + \delta}_x} \leq \rho_3\,,\\
	& \label{de.w.de.v1}
	\|d_{v_1}w(v_1)\|_{\mathcal{B}(V_1 \cap \cV^1_{t,x},H^{\frac{1}{2}+\delta}_t \cH^{\frac{3}{2}+\delta}_x)}\leq C_\delta \gamma^{-1} N^{5 + 10\delta} \varepsilon R^4\,,
	\end{align}
	which solves equation \eqref{w2.eq.v2}.
\end{proposition}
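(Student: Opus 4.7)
The plan is to solve \eqref{w2.eq.v2} via a contraction argument in the algebra space $H^{\frac{1}{2}+\delta}_t \cH^{\frac{3}{2}+\delta}_x$, combining the bound $\|\mathcal{L}_\omega^{-1}\|_{\mathcal{B}(W \cap H^r_t\cH^s_x)} \leq 2/\gamma$ from Lemma \ref{lemma.sono.pochissimi} with the algebra property \eqref{algebra.prop} and the solution $v_2(v_1,w)$ constructed in Proposition \ref{prop.v2}. Concretely, I would set
\begin{equation*}
\mathcal{S}_{v_1}(w) := \mathcal{L}_\omega^{-1} \Pi_W \big(v_1 + v_2(v_1, w) + w\big)^5,
\end{equation*}
and look for $w$ as a fixed point of $\mathcal{S}_{v_1}$ in $\mathcal{D}^W_{\rho_3}$.

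First I would verify the self-map property. The crucial observation is that although $v_1$ is only small in $\cV^{1}_{t,x}$, the smoothing estimate \eqref{lem.s.sprime} together with \eqref{hs.linf} gives $\|v_1\|_{H^{\frac{1}{2}+\delta}_t \cH^{\frac{3}{2}+\delta}_x} = \|v_1\|_{\cV^{2+2\delta}_{t,x}} \leq N^{1+2\delta}\rho_1$. Using \eqref{in.che.bolla} to control $\|v_2(v_1,w)\|_{\cV^{2+2\delta}_{t,x}} \leq \rho_2$ and the definition of $\mathcal{D}^W_{\rho_3}$, Lemma \ref{lemma.sono.pochissimi} and the algebra property yield
\begin{equation*}
\|\mathcal{S}_{v_1}(w)\|_{H^{\frac{1}{2}+\delta}_t \cH^{\frac{3}{2}+\delta}_x} \lesssim_\delta \gamma^{-1}\big(N^{1+2\delta}\rho_1 + \rho_2 + \rho_3\big)^5.
\end{equation*}
With the choice \eqref{parametri.p5} one checks that $\rho_2 \ll N^{1+2\delta}\rho_1$ and, under \eqref{little.gamma}, also $\rho_3 \ll N^{1+2\delta}\rho_1$, so the leading term is $\gamma^{-1}(N^{1+2\delta}\rho_1)^5 = \gamma^{-1} N^{5+10\delta} R^5 \varepsilon^{5/4}$, which exactly matches $\rho_3/\tc_3$; selecting $\tc_3$ larger than the implicit constant closes the self-map estimate.

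For the contraction, differentiate
\begin{equation*}
\partial_w \mathcal{S}_{v_1}(w)[h] = 5\,\mathcal{L}_\omega^{-1} \Pi_W \big((v_1 + v_2 + w)^4\,\big(h + \partial_w v_2(v_1,w)[h]\big)\big).
\end{equation*}
Applying the algebra property, Lemma \ref{lemma.sono.pochissimi} and the bound \eqref{de.v2.de.w} on $\partial_w v_2$, one obtains
\begin{equation*}
\|\partial_w \mathcal{S}_{v_1}(w)\|_{\mathcal{B}(H^{\frac{1}{2}+\delta}_t \cH^{\frac{3}{2}+\delta}_x)} \lesssim_\delta \gamma^{-1}\big(N^{1+2\delta}\rho_1\big)^4\big(1 + N^{\frac{1}{2}+5\delta}R^4\big) \lesssim_\delta \gamma^{-1} N^{\frac{9}{2}+13\delta}\,\varepsilon R^8,
\end{equation*}
which is $\leq 1/2$ provided $\zeta \geq \frac{9}{2}+13\delta$ and $\epsilon_{\delta,R}$ is small enough in \eqref{little.gamma}. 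Banach's fixed point theorem then produces a unique $w = w(v_1) \in \mathcal{D}^W_{\rho_3}$, with $w(0) = 0$ because $v_2(0,0)=0$ forces $\mathcal{S}_0(0)=0$.

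Differentiability of $v_1 \mapsto w(v_1)$ and the bound \eqref{de.w.de.v1} would then follow from the implicit function theorem: $\Id - \partial_w \mathcal{S}_{v_1}$ is invertible with operator norm $\leq 2$, and
\begin{equation*}
d_{v_1} w(v_1) = \big(\Id - \partial_w \mathcal{S}_{v_1}\big)^{-1} \partial_{v_1} \mathcal{S}_{v_1},
\end{equation*}
whose norm is estimated analogously using \eqref{de.v2.de.v1}, the smoothing \eqref{lem.s.sprime} applied to the $V_1$-direction $h_1$, and the algebra property, yielding the dominant contribution $\gamma^{-1} N^{5+10\delta}\varepsilon R^4$ (higher powers of $R$ coming from $\partial_{v_1}v_2$ being absorbed by the $R$-dependent smallness in \eqref{little.gamma}). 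The main obstacle is the competition between the low-to-high regularity loss $N^{1+2\delta}$ forced on $v_1$ in order to use the algebra structure of $H^{\frac{1}{2}+\delta}_t \cH^{\frac{3}{2}+\delta}_x$ and the gain $\gamma^{-1}$ from inverting $\mathcal{L}_\omega$: it is precisely the smallness condition \eqref{little.gamma}, with $\zeta = \zeta(\delta)$ large enough to absorb the factor $N^{\frac{9}{2}+13\delta}$ coming from the quartic differential of the nonlinearity, that closes the scheme.
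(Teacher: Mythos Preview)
Your proposal is correct and follows essentially the same route as the paper: the same fixed-point map $\mathcal{S}_{v_1}=\mathcal{T}_{v_1}$ on $\mathcal{D}^W_{\rho_3}$, the same self-map estimate via Lemma \ref{lemma.sono.pochissimi}, the algebra property, the smoothing $\|v_1\|_{\cV^{2+2\delta}_{t,x}}\le N^{1+2\delta}\rho_1$, and the comparison $\rho_2,\rho_3\le N^{1+2\delta}\rho_1$, leading to the same contraction bound $\gamma^{-1}N^{\frac92+13\delta}\varepsilon R^8$ and the same implicit-function argument for \eqref{de.w.de.v1}. The only cosmetic difference is that the paper expands $(v_1+v_2+w)^5$ into monomials $v_1^{j_1}v_2^{j_2}w^{j_3}$ before estimating, whereas you bound the sum directly; the resulting inequalities coincide.
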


We are going to prove that the map
	\begin{equation}\label{tw}
		\mathcal{T}_{v_1}:
		w\mapsto  \mathcal{L}_{\omega}^{-1}\Pi_{W}\left({(v_1+v_2(v_1,w)+w)^5}\right)
	\end{equation} 
	is a contraction,
	with $v_2(v_1, w)$ as in Proposition \ref{prop.v2}.
\begin{lemma}[Contraction]\label{prop.tw.contraction}
	Assume the smallness condition \eqref{little.gamma} holds with $\zeta \geq \frac 9 2 + 9 \delta$. The map  $\mathcal{T}_{v_1}$ in \eqref{tw} maps $\mathcal{D}^W_{\rho_3}$ in itself, and there exists $C_\delta>0$ such that
	\begin{equation}\label{de.w.1}
	\| \mathcal{T}_{v_1} (w) - \mathcal{T}_{v_1} (w') \|_{H^{\frac 1 2 + \delta}_t \cH^{\frac 3 2 +\delta}_x} \leq C_\delta \gamma^{-1} N^{\frac{9}{2}+13\delta}R^8 \| w - w'\|_{H^{\frac 12 +\delta}_t \cH^{\frac 3 2 +\delta}_x} \quad \forall w, w' \in \mathcal{D}^W_{\rho_3}\,.
	\end{equation}
	As a consequence, for any 
	$ v_1 \in \mathcal{D}_{\rho_1}$ there exists a unique $w(v_1) \in \mathcal{D}^W_{\rho_3}$ solving \eqref{w2.eq.v2}, and such that $w(0) = 0$.
\end{lemma}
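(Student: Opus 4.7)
The plan is to apply the Banach fixed-point theorem to $\mathcal{T}_{v_1}$ in \eqref{tw} on the complete metric space $\mathcal{D}^W_{\rho_3}$. The three key ingredients are: (i) Lemma \ref{lemma.sono.pochissimi}, which bounds $\|\mathcal{L}_\omega^{-1}\|_{\mathcal{B}(W \cap H^{1/2+\delta}_t\cH^{3/2+\delta}_x)} \leq 2/\gamma$; (ii) the algebra property \eqref{algebra.prop} of $H^{1/2+\delta}_t \cH^{3/2+\delta}_x$; (iii) the bounds \eqref{in.che.bolla}, \eqref{de.v2.de.w} for $v_2(v_1,w)$ from Proposition \ref{prop.v2}. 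I would also record the elementary embeddings: for $v_1 \in V_1$, Lemma \ref{lemma.come.vuoi} together with the smoothing \eqref{lem.s.sprime} gives $\|v_1\|_{H^{1/2+\delta}_t \cH^{3/2+\delta}_x} = \|v_1\|_{\cV^{2+2\delta}_{t,x}} \leq N^{1+2\delta}\rho_1$, while for $v_2 \in \mathcal{D}^{V_2}_{\rho_2}$ we have $\|v_2\|_{H^{1/2+\delta}_t\cH^{3/2+\delta}_x} \leq \|v_2\|_{\cV^{2+2\delta}_{t,x}} \leq \rho_2$.

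For the self-mapping property, I would expand
$(v_1+v_2(v_1,w)+w)^5 = \sum_{j_1+j_2+j_3=5} c_{j_1j_2j_3}\, v_1^{j_1} v_2^{j_2} w^{j_3}$
and estimate each monomial via the algebra property, obtaining a total bound of order $\gamma^{-1}(N^{1+2\delta}\rho_1 + \rho_2 + \rho_3)^5$. Using the definitions \eqref{parametri.p5} and the smallness hypothesis \eqref{little.gamma} with $\zeta \geq 9/2 + 9\delta$, one checks that $\rho_2, \rho_3 \lesssim N^{1+2\delta}\rho_1$, so the dominant contribution is $v_1^5$, bounded by $C_\delta \gamma^{-1} N^{5+10\delta} R^5 \varepsilon^{5/4}$. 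Choosing $\tc_3$ larger than this universal constant in the definition of $\rho_3$ guarantees $\|\mathcal{T}_{v_1}(w)\|_{H^{1/2+\delta}_t\cH^{3/2+\delta}_x} \leq \rho_3$.

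For the contraction, I would differentiate in $w$ through the chain rule in $v_2(v_1,w)$:
\begin{equation*}
\partial_w \mathcal{T}_{v_1}(w)[h] = 5\,\mathcal{L}_\omega^{-1}\Pi_W\Big((v_1+v_2(v_1,w)+w)^4\big(h + \partial_w v_2(v_1,w)[h]\big)\Big).
\end{equation*}
The algebra bound on $(v_1+v_2+w)^4$ is dominated by $(N^{1+2\delta}\rho_1)^4 \lesssim N^{4+8\delta}\varepsilon R^4$. The identity piece contributes $\lesssim \gamma^{-1} N^{4+8\delta}\varepsilon R^4 \|h\|$, while the term $\partial_w v_2[h]$, estimated by \eqref{de.v2.de.w} as $C_{2,\delta} N^{1/2+5\delta}R^4\|h\|$, produces the larger contribution $\lesssim \gamma^{-1} N^{9/2+13\delta}\varepsilon R^8 \|h\|$. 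The two combine to the stated Lipschitz constant; the smallness \eqref{little.gamma}, with $\zeta$ taken large enough, forces it below $1/2$. Banach's theorem then yields a unique fixed point $w(v_1) \in \mathcal{D}^W_{\rho_3}$. Finally $w(0)=0$ follows because $v_2(0,0)=0$ by Proposition \ref{prop.v2}, hence $\mathcal{T}_0(0)=\mathcal{L}_\omega^{-1}\Pi_W(v_2(0,0)^5)=0$, and uniqueness gives $w(0)=0$.

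The main technical obstacle is bookkeeping powers of $N$, $\varepsilon$, $R$, and $\gamma^{-1}$ across all the mixed monomials $v_1^{j_1}v_2^{j_2}w^{j_3}$: since here we work in the algebra $H^{1/2+\delta}_t\cH^{3/2+\delta}_x$, no Strichartz-type smoothing is available and one must trade the $v_1$-regularity using $N^{1+2\delta}$ and balance the $w$-dependence of $v_2$ through \eqref{de.v2.de.w}, which is precisely where the exponent $\zeta$ in \eqref{little.gamma} originates.
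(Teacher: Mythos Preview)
Your proposal is correct and follows essentially the same route as the paper: expand $(v_1+v_2+w)^5$ into monomials, control each via Lemma \ref{lemma.sono.pochissimi} and the algebra property \eqref{algebra.prop}, reduce all factors to $N^{1+2\delta}\rho_1$ using $\rho_2,\rho_3 \lesssim N^{1+2\delta}\rho_1$ to obtain the self-mapping bound, and differentiate through $v_2(v_1,w)$ via \eqref{de.v2.de.w} for the Lipschitz estimate; the paper's proof is organized identically (cf.\ \eqref{caffe.tantissimo}--\eqref{teiera}), and your identification of the $\partial_w v_2$ contribution as the source of the exponent $N^{9/2+13\delta}R^8$ matches exactly.
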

\begin{proof}
	For brevity, we denote $v_2(v_1, w) = v_2$. By \eqref{tw}, Lemma \ref{lemma.sono.pochissimi}, algebra property \eqref{algebra.prop}, \eqref{lem.s.sprime} and Lemma \ref{lemma.come.vuoi}, we have
	\begin{align}
	\nonumber
	\big\| \mathcal{T}_{v_1} (w) \big\|_{\HtHx}
	&\lesssim \gamma^{-1} \sum_{j_1 + j_2 + j_3 =5} \big\| \Pi_{W} \big(v_1^{j_1} v_2^{j_2} w^{j_3}\big)\big\|_{\HtHx}\\
	\nonumber
	&\lesssim_\delta \gamma^{-1} \sum_{j_1 + j_2 + j_3 =5} 
	\big(N^{1+2\delta}\|v_1\|_{\cV^1_{t,x}}\big)^{j_1}\|v_2\|^{j_2}_{\cV^{2+2\delta}_{t,x}}\|w\|^{j_3}_{H^{\frac{1}{2}+\delta}_t \cH^{\frac{3}{2}+\delta}_x}\,,\\
	\label{caffe.tantissimo}
	& \lesssim_\delta \gamma^{-1}  \sum_{j_1 + j_2 + j_3 =5} (N^{1+2\delta}\rho_1)^{j_1}\rho_2^{j_2}\rho_3^{j_3}\,.
	\end{align}
	Then by the smallness assumption \eqref{little.gamma}, we have $\rho_2 \leq N^{1+2\delta} \rho_1$ and $\rho_3 \leq N^{1+2\delta} \rho_1$, thus
	by \eqref{caffe.tantissimo} there exists a constant $C_\delta>0$ such that
	$$
	\left\| \mathcal{T}_{v_1} (w) \right\|_{\HtHx} \leq C_{\delta} \gamma^{-1} N^{5 + 10\delta} \varepsilon^{\frac 5 4} R^{5} < \tc_{3} \gamma^{-1} N^{5 + 10\delta} \varepsilon^{\frac 5 4} R^{5} = \rho_3\,,
	$$
	provided $\tc_3 > C_\delta$. This proves that $\mathcal{T}_{v_1}$ maps $\mathcal{D}^W_{\rho_3}$ in itself. We now prove \eqref{de.w.1}.
	Let $h_3\in H_t^{\frac{1}{2}+\delta}\cH^{\frac{3}{2}+\delta}_x$, then  arguing as to obtain \eqref{caffe.tantissimo}, and using estimate \eqref{de.v2.de.w}, we obtain
	\begin{align}
	\nonumber
	\left\|\partial_{w}\mathcal{T}_{v_1}(w)[h_3]\right\|_{\HtHx}
	&=5\left\|\mathcal{L}_{\omega}^{-1}\Pi_{W}\left( {(v_1+v_2+w)^4} (h_3 +\partial_{w}v_2[h_3])\right)\right\|_{\HtHx}\\
	\nonumber
	&\lesssim \gamma^{-1} \sum\limits_{j_1+j_2+j_3=4}\big\| \Pi_{W}
	\big( {v_1^{j_1}v_2^{j_2}w^{j_3}} \left(h_3 +\partial_{w}v_2[h_3]\big)\right)
	\big\|_{\HtHx}\\
	\nonumber
	& \lesssim_\delta \gamma^{-1} \sum\limits_{j_1+j_2+j_3=4}  (N^{1+2\delta} \rho_1)^{j_1} \rho_2^{j_2} \rho_3^{j_3}\big( 1 + N^{\frac 1 2 + 5\delta} R^4\big) \|h_3\|_{\HtHx}\\
	\label{teiera}
	& \lesssim_\delta \gamma^{-1} \sum\limits_{j_1+j_2+j_3=4}  N^{\frac 9 2 + 13 \delta} R^8 \varepsilon \|h_3\|_{\HtHx}\,,
	\end{align}
	where we have used $\rho_2 \leq N^{1+2\delta} \rho_1$ and $\rho_3 \leq N^{1+2\delta} \rho_1$. Estimate \eqref{teiera} then gives
	\eqref{de.w.1}. Then the map $\mathcal{T}_{v_1}$ is a contraction, and since by  Proposition \ref{prop.v2} $v_2(0, 0) =0$, we have $w(0) = 0$.
\end{proof}
\begin{lemma}[Differentiability of $w(v_1)$]\label{lem.de.ff.w}
The function $w(v_1)$  is differentiable in $v_1$ and 
 \eqref{de.w.de.v1} holds.
\end{lemma}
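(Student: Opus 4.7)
The plan is to apply the uniform contraction principle (equivalently the implicit function theorem) to the fixed point equation $w(v_1) = \mathcal{T}_{v_1}(w(v_1))$, with $\mathcal{T}_{v_1}$ defined in \eqref{tw}. Joint $C^1$ dependence of $\mathcal{T}_{v_1}(w)$ on $(v_1, w)$ follows from the $C^1$ dependence of $v_2(v_1, w)$ established in Proposition \ref{prop.v2}, combined with the boundedness of $\mathcal{L}_\omega^{-1}\Pi_W$ on $\HtHx$ (Lemma \ref{lemma.sono.pochissimi}) and the algebra property \eqref{algebra.prop}. Lemma \ref{prop.tw.contraction} already gives $\|\partial_w \mathcal{T}_{v_1}(w)\|_{\mathcal{B}(\HtHx)} \leq 1/2$ under the smallness assumption \eqref{little.gamma}, so $I - \partial_w \mathcal{T}_{v_1}(w(v_1))$ is invertible on $\HtHx$ with operator norm at most $2$, whence
\begin{equation*}
d_{v_1} w(v_1) = \bigl(I - \partial_w \mathcal{T}_{v_1}(w(v_1))\bigr)^{-1} \partial_{v_1} \mathcal{T}_{v_1}(w(v_1)).
\end{equation*}

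The core of the argument is therefore to estimate $\partial_{v_1} \mathcal{T}_{v_1}(w)[h_1]$ in $\HtHx$ for $h_1 \in V_1 \cap \cV^1_{t,x}$. The chain rule gives
\begin{equation*}
\partial_{v_1} \mathcal{T}_{v_1}(w)[h_1] = 5\, \mathcal{L}_\omega^{-1} \Pi_W\bigl((v_1 + v_2 + w)^4 \,(h_1 + \partial_{v_1} v_2(v_1, w)[h_1])\bigr),
\end{equation*}
and I would repeat line by line the chain of inequalities leading to \eqref{teiera}, with the factor $h_3$ there replaced by the source $h_1 + \partial_{v_1} v_2(v_1, w)[h_1]$. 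Using the identity $\|\cdot\|_{\HtHx} = \|\cdot\|_{\cV^{2+2\delta}_{t,x}}$ valid on $V$ from Lemma \ref{lemma.come.vuoi}, the smoothing estimate \eqref{lem.s.sprime} (which yields $\|h_1\|_{\HtHx} \leq N^{1+2\delta}\|h_1\|_{\cV^1_{t,x}}$) and the differentiability bound \eqref{de.v2.de.v1}, the source is controlled by $(N^{1+2\delta} + C\, N^{10\delta} R^4)\|h_1\|_{\cV^1_{t,x}}$. The quartic factor contributes $(N^{1+2\delta}\rho_1)^4 = N^{4+8\delta} \varepsilon R^4$ via the multinomial expansion $\sum_{j_1+j_2+j_3=4} (N^{1+2\delta}\rho_1)^{j_1} \rho_2^{j_2} \rho_3^{j_3}$ and $\rho_2, \rho_3 \leq N^{1+2\delta}\rho_1$, so combining all factors
\begin{equation*}
\|\partial_{v_1}\mathcal{T}_{v_1}(w)[h_1]\|_{\HtHx} \lesssim_\delta \gamma^{-1}\bigl(N^{5+10\delta} \varepsilon R^4 + N^{4+18\delta} \varepsilon R^8\bigr) \|h_1\|_{\cV^1_{t,x}}.
\end{equation*}

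The mildly delicate point is that the subleading $N^{4+18\delta}\varepsilon R^8$ term, produced by the $\partial_{v_1}v_2[h_1]$ part of the source, must be absorbed into the leading $N^{5+10\delta}\varepsilon R^4$ one; this reduces to $R^4 \leq N^{1-8\delta}$, which, since $N = \varepsilon^{-1/\beta}$, is guaranteed by choosing $\beta > \beta_\delta$ large enough and $\epsilon_{\delta, R}$ in \eqref{little.gamma} sufficiently small (depending on $R$ and $\delta$). Composing with $\|(I - \partial_w \mathcal{T}_{v_1}(w(v_1)))^{-1}\|_{\mathcal{B}(\HtHx)} \leq 2$ then yields the claimed estimate \eqref{de.w.de.v1}.
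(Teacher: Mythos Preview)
Your proposal is correct and follows essentially the same route as the paper: invert $I-\partial_w\mathcal{T}_{v_1}$ using the contraction bound from Lemma~\ref{prop.tw.contraction}, then estimate $\partial_{v_1}\mathcal{T}_{v_1}(w)[h_1]$ via the multinomial expansion, the algebra property, \eqref{lem.s.sprime} for $h_1$, and \eqref{de.v2.de.v1} for $\partial_{v_1}v_2[h_1]$. The paper arrives directly at $\gamma^{-1}N^{5+10\delta}\varepsilon R^4$ without isolating the subleading $N^{4+18\delta}\varepsilon R^8$ term you spell out; your absorption condition $R^4\le N^{1-8\delta}$ is indeed implied by the smallness assumption \eqref{little.gamma} (note this only needs $\varepsilon$ small, not $\beta$ large---your remark about ``choosing $\beta>\beta_\delta$ large enough'' is slightly misphrased but harmless).
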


\begin{proof}
Due to \eqref{teiera} and the smallness condition \eqref{little.gamma}, one has $\left[\Id - \partial_{w} \mathcal{T}_{v_1}(w)\right]^{-1} \in \mathcal{B} ( \HtHx )$, with norm bounded by $2$. Let us denote for brevity $v_2(v_1, w(v_1)) = v_2$ and $w(v_1) = w$. 
Then for any $h_1 \in V_1$, by Lemma \ref{lemma.sono.pochissimi}, 
\eqref{algebra.prop}, \eqref{parametri.p5}, smallness condition \eqref{little.gamma} and estimate \eqref{de.v2.de.w}, one has
\begin{equation*}
	\begin{aligned}
	\left\|\partial_{v_1}\mathcal{T}_{v_1}(w)[h_1]
	\right\|_{\HtHx} &= 5 \left\|\mathcal{L}_{\omega}^{-1}\Pi_W\big( (v_1+v_2+w)^4\left(h_1+\partial_{v_1}v_2[h_1]\right)\big) \right\|_{\HtHx}\\
	&\lesssim \gamma^{-1} \sum\limits_{j_1+j_2+j_3=4} \big\|\Pi_W
	\big( v_1^{j_1}v_2^{j_2}w^{j_3}\left(h_1+\partial_{v_1}v_2[h_1]\right)\big) 
	\big\|_{\HtHx}\\
	&\lesssim_{ \delta}  \gamma^{-1} \sum\limits_{j_1+j_2+j_3=4}  (N^{1+2\delta} \rho_1)^{j_1} \rho_2^{j_2} \rho_3^{j_3} \big( N^{1+2\delta}  + N^{10\delta} R^4\big) \|h_1\|_{\cV^1_{t,x}}\\
	&\lesssim_{\delta }  \gamma^{-1} N^{5+10\delta}\varepsilon R^4 \|h_1\|_{\cV^1_{t,x}}\,.
	\end{aligned}
\end{equation*}
This gives \eqref{de.w.de.v1}.
\end{proof}

If $p=3$ the proof of the existence of a solution $w$ of 
 \eqref{w2.eq.v2} follows arguing as in the case $p=5$.

\begin{proposition}[Solution of the range equation for $p=3$]\label{prop.w.p3}
	For any $\delta \in (0, \frac 1 8)$, $\gamma \in (0, \gamma_0)$ and $R>0$, let $\rho_1, \rho_2, \rho_3, N$ as in \eqref{parametri.p3}. There exist $\beta_\delta>0$, $\tc_3:= \tc_3(\delta)>0$, $\zeta:= \zeta(\delta)>0$, $\epsilon_{R, \delta}>0$ and $C_\delta>0$ such that, for any $\beta> \beta_\delta$ and any $\varepsilon>0$ such that $\omega \in \Omega_\gamma$ and \eqref{little.gamma.p3} holds, there exists a $C^1$ function $w: \mathcal{D}_{\rho_1} \rightarrow W \cap \HtHeta$, $v_1 \mapsto w(v_1)$, satisfying $w(0) = 0$,
	\begin{equation*}
	 \|w\|_{\HtHeta} \leq \rho_3\,, \quad 
	\|d_{v_1} w(v_1) \|_{\mathcal{B}(V_1 \cap \cV^1_{t,\eta}, \HtHeta)} \leq C_\delta  \varepsilon \gamma^{-1} N^{3+6\delta}R^2\,,
	\end{equation*}
	which solves \eqref{w2.eq.v2}.
\end{proposition}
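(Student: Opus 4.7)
\smallskip

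\noindent\textbf{Proof plan for Proposition \ref{prop.w.p3}.} The plan is to mimic the argument of Proposition \ref{prop.w} (the $p=5$ range equation), replacing the quintic nonlinearity with a cubic one and using the $v_2$ map supplied by Proposition \ref{prop.v2.p3}. Given $v_1\in\mathcal{D}_{\rho_1}$, I define the fixed-point operator
$$
\mathcal{T}_{v_1}(w):=\mathcal{L}_\omega^{-1}\Pi_W\bigl((v_1+v_2(v_1,w)+w)^3\bigr),
$$
where $v_2(v_1,w)$ is provided by Proposition \ref{prop.v2.p3}, and I will look for $w$ as a fixed point of $\mathcal{T}_{v_1}$ in $\mathcal{D}^W_{\rho_3}$.

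To prove self-mapping, I expand the cube and bound each monomial $\Pi_W(v_1^{j_1}v_2^{j_2}w^{j_3})$ with $j_1+j_2+j_3=3$ in $H^{\frac12+\delta}_t\cH^{\frac32+\delta}_\eta$ using Lemma \ref{lemma.sono.pochissimi} to produce the $\gamma^{-1}$ factor from $\mathcal{L}_\omega^{-1}$, then the algebra property \eqref{algebra.prop}. For the factors of $v_1$, I invoke Lemma \ref{lemma.come.vuoi} (which turns the $H^{\frac12+\delta}_t\cH^{\frac32+\delta}_\eta$-norm into a $\cV^{2+2\delta}_{t,\eta}$-norm on the kernel) and the smoothing estimate \eqref{lem.s.sprime} to get $\|v_1\|_{\cV^{2+2\delta}_{t,\eta}}\leq N^{1+2\delta}\rho_1$. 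The resulting bound reads
$$
\|\mathcal{T}_{v_1}(w)\|_{\HtHeta}\lesssim_\delta \gamma^{-1}\sum_{j_1+j_2+j_3=3}(N^{1+2\delta}\rho_1)^{j_1}\rho_2^{j_2}\rho_3^{j_3}.
$$
The choice of parameters in \eqref{parametri.p3} together with the smallness condition \eqref{little.gamma.p3} (taking $\zeta$ large enough, e.g.\ $\zeta\geq\tfrac 5 2+5\delta$) gives $\rho_2\leq N^{1+2\delta}\rho_1$ and $\rho_3\leq N^{1+2\delta}\rho_1$, so the dominant term is $\gamma^{-1}(N^{1+2\delta}\rho_1)^3=\gamma^{-1}N^{3+6\delta}R^3\varepsilon^{3/2}$, which is $\leq\rho_3$ provided $\tc_3$ is chosen larger than the implicit constant.

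The contraction property follows from the derivative estimate
$$
\partial_w\mathcal{T}_{v_1}(w)[h_3]=3\,\mathcal{L}_\omega^{-1}\Pi_W\!\Big((v_1+v_2+w)^2\bigl(h_3+\partial_w v_2(v_1,w)[h_3]\bigr)\Big),
$$
combined with the bound \eqref{de.v2.de.w.p3} on $\partial_w v_2$. Expanding and arguing as above yields
$$
\|\partial_w\mathcal{T}_{v_1}(w)[h_3]\|_{\HtHeta}\lesssim_\delta \gamma^{-1}(N^{1+2\delta}\rho_1)^2\bigl(1+N^{-\frac12+3\delta}R^2\bigr)\|h_3\|_{\HtHeta}\lesssim_\delta \gamma^{-1}N^{2+4\delta}\varepsilon R^2\|h_3\|_{\HtHeta},
$$
which is smaller than $\tfrac12$ under \eqref{little.gamma.p3} with $\zeta$ chosen sufficiently large (and $\epsilon_{\delta,R}$ sufficiently small). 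Hence Banach's theorem produces a unique $w(v_1)\in\mathcal{D}^W_{\rho_3}$; since $v_2(0,0)=0$ by Proposition \ref{prop.v2.p3} and therefore $\mathcal{T}_0(0)=0$, uniqueness forces $w(0)=0$.

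For the $C^1$ regularity and the estimate on $d_{v_1}w(v_1)$ I will differentiate the fixed-point equation and invert $\mathrm{Id}-\partial_w\mathcal{T}_{v_1}(w)$ (whose norm is $\leq 2$ by the previous bound). The derivative $\partial_{v_1}\mathcal{T}_{v_1}(w)[h_1]$ involves $h_1+\partial_{v_1}v_2(v_1,w)[h_1]$, estimated via \eqref{de.v2.de.v1.p3}; repeating the algebra/smoothing argument gives $\|\partial_{v_1}\mathcal{T}_{v_1}(w)[h_1]\|_{\HtHeta}\lesssim_\delta \gamma^{-1}(N^{1+2\delta}\rho_1)^2(N^{1+2\delta}+R^2 N^{4\delta})\|h_1\|_{\cV^1_{t,\eta}}\lesssim_\delta \gamma^{-1}\varepsilon N^{3+6\delta}R^2\|h_1\|_{\cV^1_{t,\eta}}$, which is the desired bound. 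The main technical nuisance, as in the $p=5$ case, is book-keeping the powers of $N$ and $\varepsilon$ so that the smallness condition \eqref{little.gamma.p3} absorbs both the self-mapping loss and the contraction loss; but since $p=3$ is subcritical with respect to the exponents needed for the Strichartz step (which is not even invoked here — only algebra and $\mathcal{L}_\omega^{-1}$ bounds), this accounting is strictly easier than for $p=5$.
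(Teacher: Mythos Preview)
Your proposal is correct and follows exactly the approach the paper indicates: it simply says ``If $p=3$ the proof of the existence of a solution $w$ of \eqref{w2.eq.v2} follows arguing as in the case $p=5$,'' and you have faithfully reproduced that argument with the cubic nonlinearity, the $v_2$ input from Proposition~\ref{prop.v2.p3}, and the parameters \eqref{parametri.p3}. The bookkeeping of powers of $N,\varepsilon,R,\gamma$ in your self-mapping, contraction, and $d_{v_1}w$ estimates matches what the $p=5$ computation specializes to here.
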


\subsection{Case $p=2$}

	Here we solve equation \eqref{new.w.2} with $v_2 = v_2(v_1, \tilde w)$, which reads
	\begin{equation}\label{trenitalia}
	\begin{aligned}
	\mathcal{L}_\omega \tilde w &= \Pi_{W} \left( 2(v_1 + v_2(v_1, \tilde w)) \left(\mathcal{L}_\omega^{-1}(v_1 + v_2(v_1, \tilde w))^2 + \tilde w\right) 
	+  
	 \left(\mathcal{L}_\omega^{-1}(v_1 + v_2(v_1, \tilde w))^2 + \tilde w \right)^2  \right)
	\end{aligned}
	\end{equation}
	where $v_2(v_1, \tilde w)$ is the function in Proposition \ref{prop.v2.p2}.
 \begin{proposition}[Solution of the range equation for $p=2$]\label{prop.w.p2}
	For any $\delta \in (0, \frac 1 8)$, $\gamma \in (0, \gamma_0)$ and $R>0$, assume that $\rho_1, \rho_2, \rho_3, N$ are as in \eqref{scelta.param.p2.1} and satisfy the assumptions of Proposition \ref{prop.v2.p2}. There exist $\tc_3:=\tc_3(\delta)>0$, $\beta_\delta>0$, $\mathtt{b}:=\mathtt{b}(\delta) >0$, $\epsilon_{\delta, R}>0$ and $C_\delta>0$ such that, for any $\beta > \beta_\delta$ and any $\varepsilon>0$ such that $\omega \in \Omega_\gamma$ and \eqref{little.gamma.p2} holds,
	there exists a $C^1$ function $	\tilde w: \mathcal{D}_{\rho_1} \rightarrow W \cap H^{\frac{1}{2} +\delta}_t \cH^{\frac 3 2 + \delta}_x$, $v_1 \mapsto \tilde w(v_1)$, where $\mathcal{D}_{\rho_1}$ is defined as in \eqref{v1.Domain}, satisfying $\tilde w (0) = 0$, and
	\begin{align}
	\label{wtilde.rho3}
	& \|\tilde w(v_1)\|_{H^{\frac 1 2 + \delta}_t \cH^{\frac 3 2 + \delta}_x} \leq \rho_3\,,\\
	& \label{de.w.de.v1.p2}
	\|d_{v_1}\tilde w(v_1)\|_{\mathcal{B}(V_1 \cap \cV^1_{t,x},H^{\frac{1}{2}+\delta}_t \cH^{\frac{3}{2}+\delta}_x)}\leq C_{\delta} \gamma^{-2} N^{3(1+2\delta)}\varepsilon R^2\,,
	\end{align}
	which solves equation \eqref{trenitalia}.
\end{proposition}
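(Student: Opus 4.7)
The plan is to follow the same Banach fixed point strategy as in Lemma~\ref{prop.tw.contraction}, finding $\tilde w$ as the unique fixed point in $\mathcal{D}^W_{\rho_3}$ of the nonlinear map
$$\mathcal{T}_{v_1}(\tilde w) := \mathcal{L}_\omega^{-1}\Pi_W\Big(2(v_1+v_2)\bigl(\mathcal{L}_\omega^{-1}(v_1+v_2)^2+\tilde w\bigr) + \bigl(\mathcal{L}_\omega^{-1}(v_1+v_2)^2+\tilde w\bigr)^2\Big),$$
where $v_2 = v_2(v_1,\tilde w)$ is supplied by Proposition~\ref{prop.v2.p2}. The inner $\mathcal{L}_\omega^{-1}$ is legitimate because $(v_1+v_2)^2\in W$ by Lemma~\ref{lem.vsquare}, and by Lemma~\ref{lemma.sono.pochissimi} each application of $\mathcal{L}_\omega^{-1}$ costs a factor $\lesssim\gamma^{-1}$ in $\HtHx$.

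To show that $\mathcal{T}_{v_1}$ sends $\mathcal{D}^W_{\rho_3}$ into itself, I would expand the expression inside $\Pi_W$ into five monomials and estimate each in $\HtHx$ by combining the algebra property~\eqref{algebra.prop}, the identity $\|v\|_{\HtHx}=\|v\|_{\cV^{2+2\delta}_{t,x}}$ for $v\in V$ coming from Lemma~\ref{lemma.come.vuoi}, and the size bound $\|v_1+v_2\|_{\cV^{2+2\delta}_{t,x}}\lesssim N^{1+2\delta}\rho_1$ provided by Lemma~\ref{lem.v2.leq.v1}. The dominant contribution is the purely cubic term $(v_1+v_2)\,\mathcal{L}_\omega^{-1}(v_1+v_2)^2$, whose $\HtHx$ norm after the outer $\mathcal{L}_\omega^{-1}$ is $\lesssim\gamma^{-2}N^{3(1+2\delta)}R^3\varepsilon^{3/2}$; matching this with $\rho_3=\tc_3\gamma^{-2}\varepsilon^{3/2}R^3 N^b$ is precisely what fixes $b=3+6\delta$ and forces $\tc_3$ large enough. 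The remaining four monomials---those involving $\tilde w$ or an additional factor $\mathcal{L}_\omega^{-1}(v_1+v_2)^2$---are subdominant provided the smallness condition~\eqref{little.gamma.p2} holds with a sufficiently large $\mathtt{b}:=\mathtt{b}(\delta)>0$.

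For the contraction and for differentiability in $v_1$, I would differentiate $\mathcal{T}_{v_1}$ via the chain rule through $v_2(v_1,\tilde w)$ and invoke the bounds \eqref{de.v2.de.w.p2} and \eqref{de.v2.de.v1.p2}. The same polynomial bookkeeping as above shows $\|\partial_{\tilde w}\mathcal{T}_{v_1}\|_{\mathcal{B}(\HtHx)}=o(1)$ under \eqref{little.gamma.p2}, so Banach's theorem produces a unique $\tilde w(v_1)\in\mathcal{D}^W_{\rho_3}$, with $\tilde w(0)=0$ because $\mathcal{T}_0(0)=0$ (as $v_2(0,0)=0$). Inverting $\Id-\partial_{\tilde w}\mathcal{T}_{v_1}$ on $\HtHx$ and controlling $\partial_{v_1}\mathcal{T}_{v_1}$ yields \eqref{de.w.de.v1.p2}. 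The main obstacle specific to $p=2$, absent in the $p=3,5$ cases, is the genuinely new quadratic term $\tilde w^2$: after the outer $\mathcal{L}_\omega^{-1}$ it contributes $\gamma^{-1}\rho_3^2=\tc_3^2\gamma^{-5}\varepsilon^3 R^6 N^{2b}$, which must be absorbed into $\rho_3$. This forces $\tc_3\gamma^{-3}\varepsilon^{3/2-b/\beta}R^3\ll 1$, so $\beta$ must exceed $2b/3=2+4\delta$, and the smallness hypothesis~\eqref{little.gamma.p2} must be written in the form $\gamma^{-1}N^{-\mathtt{b}}\leq\epsilon_{\delta,R}$ with $\mathtt{b}$ large enough that a cube of $\gamma^{-1}N^{-\mathtt{b}}$ dominates the bad $\gamma^{-3}$ factor produced by the $\tilde w^2$ monomial.
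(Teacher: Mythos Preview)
Your proposal is correct and follows essentially the same approach as the paper: the paper defines the same map $\mathcal{T}_{v_1}$ (equation~\eqref{J.w}), groups the monomials into three terms (your five monomials with the last three combined), identifies the cubic term $(v_1+v_2)\mathcal{L}_\omega^{-1}(v_1+v_2)^2$ as dominant with the same bound $\gamma^{-2}\varepsilon^{3/2}N^{3(1+2\delta)}R^3$, and proves contraction by computing $\partial_{\tilde w}\mathcal{T}_{v_1}$ via the chain rule through $v_2(v_1,\tilde w)$ using \eqref{de.v2.de.w.p2} and \eqref{de.v2.de.v1.p2}. Your remark that the $\tilde w^2$ term is ``genuinely new'' compared to $p=3,5$ is slightly off (those cases also have pure $w$ powers in $(v_1+v_2+w)^p$), but this does not affect the argument.
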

We show that the map
\begin{equation}\label{J.w}
	\begin{aligned}
	\tilde w \mapsto  	\mathcal{T}_{v_1}(\tilde w) &:= \mathcal{L}_\omega^{-1}
	\Pi_{W} \big( 2(v_1 + v_2(v_1, \tilde w)) \left(\mathcal{L}_\omega^{-1}(v_1 + v_2(v_1, \tilde w))^2 + \tilde w\right) \big) \\
	& \ + \mathcal{L}_\omega^{-1} \Pi_{W} \left( \left(\mathcal{L}_\omega^{-1}(v_1 + v_2(v_1, \tilde w))^2 + \tilde w\right)^2 \right)
	\end{aligned} 
\end{equation}
is a contraction.
\begin{lemma}[Contraction]\label{lem.w.contraz}
	The function $\mathcal{T}_{v_1}$ defined  in \eqref{J.w} maps $\mathcal{D}^W_{\rho_3}$ into itself, with
	\begin{equation}\label{diff.w}
	\| \mathcal{T}_{v_1}(\tilde w)  -  \mathcal{T}_{v_1}(\tilde w')\|_{H^{\frac 1 2 + \delta}_t \cH^{\frac 3 2 + \delta}_x} \leq C_{\delta} \gamma^{-2} \sqrt{\varepsilon} N^{1+6\delta} R^3 \|\tilde w - \tilde w '\|_{H^{\frac 1 2 + \delta}_t \cH^{\frac 3 2 + \delta}_x} \quad \forall \tilde w , \tilde w ' \in \mathcal{D}^W_{\rho_3}\,
	\end{equation}
	for some $C_\delta>0$. As a consequence, for any $v_1 \in \mathcal{D}_{\rho_1}$ there exists a unique $\tilde w(v_1)$ solving \eqref{trenitalia} and such that $\tilde w (0) = 0$.
\end{lemma}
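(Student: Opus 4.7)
The plan is to apply the Banach contraction theorem to $\mathcal{T}_{v_1}$ on $\mathcal{D}^W_{\rho_3}$, in direct analogy with the strategy used for the $v_2$ equation in Lemma \ref{lem.v2-2.contraz}. First I would decompose
\begin{equation*}
\mathcal{T}_{v_1}(\tilde w) = \mathcal{T}_1 + \mathcal{T}_2 + \mathcal{T}_3 + \mathcal{T}_4 + \mathcal{T}_5\,,
\end{equation*}
according to the five monomials $(v_1+v_2)\mathcal{L}_\omega^{-1}(v_1+v_2)^2$, $(v_1+v_2)\tilde w$, $(\mathcal{L}_\omega^{-1}(v_1+v_2)^2)^2$, $\tilde w^2$ and $\tilde w\,\mathcal{L}_\omega^{-1}(v_1+v_2)^2$, each pre-multiplied by $\mathcal{L}_\omega^{-1}\Pi_W$. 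Unlike in Lemma \ref{lem.v2-2.contraz}, I work directly in $H^{\frac 1 2 + \delta}_t \cH^{\frac 3 2 +\delta}_x$ with no projection onto $V_2$, so no Strichartz bound is needed: the heavy lifting is provided by the algebra property \eqref{algebra.prop} together with the bound $\|\mathcal{L}_\omega^{-1}\|_{\mathcal{B}(\HtHx)} \leq 2/\gamma$ from Lemma \ref{lemma.sono.pochissimi}.

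For the self-mapping property I would combine Lemma \ref{lemma.come.vuoi} and \eqref{lem.s.sprime} to write $\|v_1\|_{\HtHx}\leq N^{1+2\delta}\rho_1$, use Proposition \ref{prop.v2.p2} to get $\|v_2\|_{\HtHx}\leq \rho_2 \lesssim N^{1+2\delta}\rho_1$ (which holds under \eqref{little.gamma.p2} for $\mathtt{b}\geq 1+2\delta$), and exploit $\|\tilde w\|_{\HtHx}\leq \rho_3$. A direct computation then yields
\begin{gather*}
\|\mathcal{T}_1\|_{\HtHx}\lesssim_\delta \gamma^{-2}(N^{1+2\delta}\rho_1)^3\,, \quad \|\mathcal{T}_2\|_{\HtHx}\lesssim_\delta \gamma^{-1}N^{1+2\delta}\rho_1\rho_3\,,\\
\|\mathcal{T}_3\|_{\HtHx}\lesssim_\delta \gamma^{-3}(N^{1+2\delta}\rho_1)^4\,,\quad \|\mathcal{T}_4\|_{\HtHx}\lesssim_\delta \gamma^{-1}\rho_3^2\,,\quad \|\mathcal{T}_5\|_{\HtHx}\lesssim_\delta \gamma^{-2}\rho_3(N^{1+2\delta}\rho_1)^2\,.
\end{gather*}
With the ansatz $\rho_3 = \tc_3\gamma^{-2}\varepsilon^{3/2}R^3 N^{b}$ and $b = 3+6\delta$, the leading term $\mathcal{T}_1$ saturates $\rho_3$ up to the constant $\tc_3$, and all remaining terms are smaller by positive powers of $\gamma^{-1}N^{1+2\delta}\sqrt{\varepsilon}R$, which is $\leq \epsilon_{\delta,R}$ under \eqref{little.gamma.p2} provided $\mathtt{b}$ is chosen suitably small and $\beta > \beta_\delta$ is large enough. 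Choosing $\tc_3$ large absorbs the implicit constants and yields $\mathcal{T}_{v_1}(\mathcal{D}^W_{\rho_3})\subseteq \mathcal{D}^W_{\rho_3}$.

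For the Lipschitz estimate \eqref{diff.w} I would differentiate by the chain rule, keeping track of the dependence of $v_2$ on $\tilde w$ via \eqref{de.v2.de.w.p2}. The dominant contribution matching the claim comes from differentiating $\mathcal{T}_1$ in the $v_2$-slot and inserting $\partial_{\tilde w}v_2[h]$: indeed
\begin{equation*}
\|\partial_{v_2}\mathcal{T}_1[\partial_{\tilde w}v_2[h]]\|_{\HtHx}\lesssim_\delta \gamma^{-2}(N^{1+2\delta}\rho_1)^2\,\|\partial_{\tilde w}v_2[h]\|_{\HtHx}\lesssim_\delta \gamma^{-2}N^{2+4\delta}\varepsilon R^2\cdot \varepsilon^{-\frac12}N^{-1+2\delta}R\|h\|_{\HtHx}\,,
\end{equation*}
which gives exactly $\gamma^{-2}\sqrt{\varepsilon}N^{1+6\delta}R^3\|h\|_{\HtHx}$. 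All remaining contributions -- the direct derivative $2\mathcal{L}_\omega^{-1}\Pi_W((v_1+v_2)h)$ of size $\gamma^{-1}N^{1+2\delta}\sqrt{\varepsilon}R$, and the analogous chain-rule terms from $\mathcal{T}_2,\ldots,\mathcal{T}_5$ -- are smaller under \eqref{little.gamma.p2}. Strengthening $\epsilon_{\delta,R}$ if necessary forces the Lipschitz constant below $1/2$, so the fixed point exists and is unique. Since $\mathcal{T}_0(0) = 0$ (using $v_2(0,0)=0$ from Proposition \ref{prop.v2.p2}), the fixed point satisfies $\tilde w(0)=0$.

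The main obstacle will be the bookkeeping of the many exponents $\varepsilon^{a}N^{b}\gamma^{-c}$: the choice $b = 3+6\delta$ is forced by the saturation of $\mathcal{T}_1$, so all subsequent estimates must fit under it with no slack. The parameter $\mathtt{b}=\mathtt{b}(\delta)$ in \eqref{little.gamma.p2} must be calibrated so that, simultaneously, $\rho_2$ is dominated by $N^{1+2\delta}\rho_1$, each of $\mathcal{T}_2,\ldots,\mathcal{T}_5$ fits inside $\rho_3$, and the Lipschitz constant is a strict contraction -- precisely the role played by $\zeta$ in Proposition \ref{prop.v2.p2} for the $v_2$ equation.
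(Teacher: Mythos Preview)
Your proposal is correct and matches the paper's proof in all essentials: both work directly in $H^{\frac12+\delta}_t\cH^{\frac32+\delta}_x$ using only the algebra property \eqref{algebra.prop} and the bound on $\mathcal{L}_\omega^{-1}$ from Lemma \ref{lemma.sono.pochissimi}, identify the cubic term $(v_1+v_2)\mathcal{L}_\omega^{-1}(v_1+v_2)^2$ as the one saturating $\rho_3$, and obtain the Lipschitz constant $\gamma^{-2}\sqrt{\varepsilon}N^{1+6\delta}R^3$ from the chain-rule contribution involving $\partial_{\tilde w}v_2$ via \eqref{de.v2.de.w.p2}. The only cosmetic difference is that the paper groups the derivative into three pieces $F_1,F_2,F_3$ rather than differentiating your five $\mathcal{T}_i$ separately, and invokes Lemma \ref{lem.v2.leq.v1} to bound $\|v_1+v_2\|_{\cV^{2+2\delta}_{t,x}}$ by $N^{1+2\delta}\rho_1$ in one stroke rather than treating $v_1$ and $v_2$ individually.
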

\begin{proof}
	We denote $v_2:= v_2(v_1, \tilde w)$.
	By \eqref{algebra.prop}, Lemma \ref{lemma.sono.pochissimi}, Lemma \ref{lemma.come.vuoi}, Lemma \ref{lem.v2.leq.v1},  \eqref{scelta.param.p2.1} and the smallness condition \eqref{little.gamma.p2}, one has
	\begin{gather*}
	\begin{aligned}
	\left\|\mathcal{L}_\omega^{-1}\left(\Pi_{W} \left( 2(v_1 + v_2) \mathcal{L}_\omega^{-1}(v_1 + v_2)^2 \right)\right)\right\|_{H^{\frac 1 2 + \delta}_t \cH^{\frac 32 +\delta}_x} &\lesssim_{\delta} \gamma^{-2} \| v_1 + v_2\|^3_{\cV^{2+2\delta}_{t,x}} \lesssim_{\delta} \gamma^{-2} \varepsilon \sqrt{\varepsilon} N^{3(1+2\delta)} R^3\,,
	\end{aligned}	\\
	\begin{aligned}
	\left\|\mathcal{L}_\omega^{-1}\Pi_{W} \left( 2 (v_1 + v_2) \tilde w \right)\right\|_{H^{\frac 1 2 + \delta}_t \cH^{\frac 3 2 + \delta}_x} &\lesssim_{\delta} \gamma^{-1} N^{1+2\delta} \rho_1 \rho_3 \lesssim_{\delta} \gamma^{-3} N^{4+8\delta} {\varepsilon}^2 R^4 \,,
	\end{aligned}\\
	\begin{aligned}
	\left\| \mathcal{L}_\omega^{-1} \Pi_{W} \left( \left(\mathcal{L}_\omega^{-1}(v_1 + v_2)^2 + \tilde w\right)^2 \right)\right\|_{H^{\frac 1 2 + \delta}_t \cH^{\frac 3 2 + \delta}_x} 
	&\lesssim_{\delta} \gamma^{-1} \big( \gamma^{-1}( N^{1 + 2\delta} \rho_1)^2 + \rho_3 \big)^2 \lesssim_{\delta} \gamma^{-3} \varepsilon^2 R^4 N^{4(1+2\delta)}\,.
	\end{aligned}
	\end{gather*}
	Therefore there exists a constant $C_\delta>0$ such that
	$$
	\| \mathcal{T}_{v_1} (\tilde w)\|_{H^{\frac 12 + \delta}_t \cH^{\frac 3 2 + \delta}_x} \leq C_{\delta} \gamma^{-2} \varepsilon \sqrt{\varepsilon} N^{3(1+2\delta)} R^3 < \rho_3\,,
	$$
	where the last inequality holds taking $\tc_3=\tc_3(\delta)> C_\delta$ in the definition of $\rho_3$ (see \eqref{scelta.param.p2.1}). We now  show that $\mathcal{T}_{v_1}$ is a contraction. We have that $\partial_{\tilde w} \mathcal{T}_{v_1}(\tilde w)[h_3] = F_1[h_3] + F_2[h_3] + F_3[h_3]$, with
	$$
	\begin{gathered}
	F_1[h_3] :=\mathcal{L}_\omega^{-1} \Pi_W\left[2 \partial_{\tilde w} v_2[h_3] \left(\mathcal{L}_\omega^{-1}(v_1 + v_2)^2 + \tilde w \right)\right]\,,\\
	F_2[h_3] :=\mathcal{L}_\omega^{-1} \Pi_W\left[2(v_1 + v_2) \left(2\mathcal{L}_\omega^{-1}\left((v_1 + v_2) \partial_{\tilde w} v_2[h_3] \right) + h_3 \right)\right]\,,\\
	F_3[h_3] :=2\mathcal{L}_\omega^{-1} \Pi_W \left[\left(\mathcal{L}_\omega^{-1}(v_1 + v_2))^2 + \tilde w\right)\left(2\mathcal{L}_\omega^{-1}\left( (v_1 + v_2) \partial_{\tilde w} v_2[h_3] \right) + h_3\right) \right]\, . 
	\end{gathered}
	$$
	By Lemma \ref{lemma.sono.pochissimi}, algebra property \eqref{algebra.prop}, Lemma \ref{lem.v2.leq.v1}, Lemma \ref{lemma.come.vuoi}, estimate \eqref{de.v2.de.w.p2}, \eqref{scelta.param.p2.1} and the smallness condition \eqref{little.gamma.p2}, one gets
	\begin{align*}
	\| F_1[h_3]\|_{H^{\frac 1 2 + \delta}_t \cH^{\frac 3 2 + \delta}_x}
	&\lesssim_{ \delta} \gamma^{-1} \left\|\partial_{\tilde w} v_2[h_3] \right\|_{\cV^{2 +2\delta}_{t,x}} \big( 
	\gamma^{-1}\left\|v_1 + v_2 \right\|^2_{\cV^{2 +2\delta}_{t,x}} + \|\tilde w\|_{H^{\frac 1 2 + \delta}_t \cH^{\frac 3 2 + \delta}_x} \big) \\
	&\lesssim_{\delta} \gamma^{-2} \varepsilon^{\frac 12} N^{1 +6\delta} R^3 \|h_3\|_{H^{\frac 1 2 + \delta}_t \cH^{\frac 3 2 + \delta}_x}\,.
	\end{align*}
	Similarly $F_2 $ and $ F_3 $ satisfy the estimates
	\begin{align*}
	& \| F_2[h_3]\|_{H^{\frac 1 2 + \delta}_t \cH^{\frac 3 2 + \delta}_x}
	\lesssim_{\delta} \gamma^{-2} N^{1+6\delta} R^3 \varepsilon^{\frac 12} \|h_3\|_{H^{\frac 1 2 + \delta}_t \cH^{\frac 3 2 + \delta}_x}\,, \\
	& \|	F_3[h_3]\|_{H^{\frac 1 2 + \delta}_t \cH^{\frac 3 2 + \delta}_x}
	\lesssim_{\delta} \gamma^{-3} N^{2 + 8\delta} R^4 \varepsilon \|h_3 \|_{H^{\frac 1 2 + \delta}_t \cH^{\frac 3 2 + \delta}_x}\,.
	\end{align*}
	Combining the estimates on $F_1, F_2, F_3$ and assuming that condition \eqref{little.gamma.p2} holds for suitable parameters $\mathtt{b}$ and $\epsilon_{R, \delta}$, one gets
$$
	\| \partial_{\tilde w} \mathcal{T}_{v_1}(\tilde w) [h_3]\|_{H^{\frac 1 2 + \delta}_t \cH^{\frac 3 2 + \delta}_x} \lesssim_{\delta} \gamma^{-2} \sqrt{\varepsilon} N^{1+6\delta} R^3 \|h_3\|_{H^{\frac 1 2 + \delta}_t \cH^{\frac 3 2 + \delta}_x}\,,
$$
	which gives \eqref{diff.w}. Thus $\mathcal{T}_{v_1}$ is a contraction.
\end{proof}

The proof of \eqref{de.w.de.v1.p2} follows by similar arguments, using Lemma \ref{lemma.sono.pochissimi}, algebra property \eqref{algebra.prop}, Lemma \ref{lem.v2.leq.v1}, estimate \eqref{de.v2.de.v1.p2}, \eqref{scelta.param.p2.1} and the smallness condition \eqref{little.gamma.p2}.

\section{Solution of the bifurcation equation}\label{sec.bif}

In this section we solve
\begin{equation}\label{v1.eq.v2.w}
\begin{gathered}
(\omega^2 - 1) A v_1 - \Pi_{V_1} \left( v_1 + \mathtt{v}_2(v_1) + w(v_1)\right)^{p} = 0\,,\\
\mathtt{v}_2(v_1) := \begin{cases}
	v_2(v_1, w(v_1)) & \quad \text{if } p=3,\ p=5\,,\\
	v_2(v_1, \tilde w(v_1)) & \quad \text{if } p=2\,,
\end{cases}
\end{gathered}
\end{equation}
 where
 \begin{itemize}
 \item If $p=5$, $v_2(v_1, w(v_1))$ is the solution of \eqref{v2.eq}, whose existence has been proved in Proposition \ref{prop.v2} (resp. in Proposition \ref{prop.v2.p3}  if $p=3$), and $w(v_1)$ is the solution of \eqref{w2.eq.v2}, whose existence has been proved in Proposition \ref{prop.w} (resp. in Proposition \ref{prop.w.p3} if $p=3$);
 \item If $p=2$, $v_2(v_1, \tilde w(v_1))$ is the solution of \eqref{new.v2.2} as in Proposition \ref{prop.v2.p2}, $\tilde w(v_1)$ is the solution of \eqref{new.w.2}, as in Proposition \ref{prop.w.p2}, and (cfr.  \eqref{proviamo.traslando})
 \begin{equation}\label{therealw}
 	w(v_1) := \mathcal{L}_\omega^{-1}(v_1 + \mathtt{v}_2(v_1))^2
 	+ \tilde w(v_1) \,.
 \end{equation}
 \end{itemize}
 In all this section, we  suppose that $\rho_1, \rho_2, \rho_3, N$ and $\varepsilon$ satisfy the hypotheses of Sections \ref{sec.v2} and \ref{sec.w}.

 \subsection{Restricted Euler-Lagrange functional}
 
 We start  observing that \eqref{v1.eq.v2.w} has a variational structure.

 \begin{lemma}\label{lemma.ker.variational}
Equation \eqref{v1.eq.v2.w} is the Euler Lagrange equation 
of the restricted action functional
 	\begin{equation}\label{action.restriction}
 	\breve{\Psi}: \mathcal{D}_{\rho_1} \rightarrow \R\,, \quad v_1 \mapsto \breve{\Psi}(v_1):= \Psi(v_1 + \mathtt{v}_2(v_1) + w(v_1))\,,
 	\end{equation}
 	where $\Psi$ is the action functional defined in \eqref{az.funct}.
 	In particular, $u = v_1 + \mathtt{v}_2(v_1) + w(v_1)$ solves equation \eqref{v1.eq.v2.w} if and only if $v_1$ is a critical point of $\breve{\Psi}\,.$
 \end{lemma}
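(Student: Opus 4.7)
The plan is to show that the equations \eqref{v2.eq} and \eqref{w.eq} (i.e.\ the high-frequency and range equations, respectively, or equivalently their translated counterparts \eqref{new.v2.2}, \eqref{new.w.2} when $p=2$) are exactly the $V_2$- and $W$-components of the Euler--Lagrange equation for $\Psi$ on the full space, and then to use this to kill the off-diagonal contributions when differentiating $\breve\Psi$.

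\textbf{Step 1: Euler--Lagrange of $\Psi$.} The functional $\Psi$ defined in \eqref{az.funct} is $C^1$ on a neighbourhood of $v_1+\mathtt{v}_2(v_1)+w(v_1)$ in an appropriate algebra space (the smoothness of the nonlinear term $\mathcal G_{p+1}$ follows from algebra properties \eqref{algebra.prop}, together with the Strichartz bound of Proposition \ref{lem.Strichartz.1} when $p=5$ or Proposition \ref{lem.strich.Lom} when $p=2$). Computing the differential in a direction $h\in H^r_t\cH^s_z$ gives
\begin{equation*}
d\Psi(u)[h] \;=\; \int_{\mathbb{T}}\langle \mathcal{L}_\omega u(t),h(t)\rangle_{\cH^0_z}\,\dbar t \;-\;\int_{\mathbb{T}\times\mathbb{S}^3} u^p\,h\,\dbar t\,\dbar\sigma,
\end{equation*}
so $\nabla\Psi(u)=\mathcal{L}_\omega u - u^p$ (where gradient and projections are taken with respect to the natural $L^2$-pairing). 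The three projections $\Pi_{V_1}\nabla\Psi=0$, $\Pi_{V_2}\nabla\Psi=0$, $\Pi_W\nabla\Psi=0$ give respectively \eqref{v1.eq}, \eqref{v2.eq} and \eqref{w.eq}, since $\mathcal{L}_\omega=-\omega^2\partial_{tt}-A$ equals $-(\omega^2-1)A$ on $V$ (because $-\partial_{tt}$ acts as $A$ on $V$ by definition \eqref{def.V}).

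\textbf{Step 2: Differentiating the restricted functional.} By the chain rule, for $h_1\in V_1\cap\cV^1_{t,z}$,
\begin{equation*}
d\breve\Psi(v_1)[h_1] \;=\; d\Psi\bigl(v_1+\mathtt{v}_2(v_1)+w(v_1)\bigr)\!\left[h_1 + d\mathtt{v}_2(v_1)[h_1] + dw(v_1)[h_1]\right].
\end{equation*}
The three directions lie in $V_1$, $V_2$ and $W$ respectively (and the maps $\mathtt{v}_2, w$ are $C^1$ by Propositions \ref{prop.v2}, \ref{prop.v2.p3}, \ref{prop.v2.p2}, \ref{prop.w}, \ref{prop.w.p3}, \ref{prop.w.p2}). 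By orthogonality of $V_1,V_2,W$ with respect to the $L^2$-pairing, $d\Psi$ splits into three pieces:
\begin{equation*}
d\breve\Psi(v_1)[h_1] \;=\; \langle \Pi_{V_1}\nabla\Psi(u),h_1\rangle + \langle \Pi_{V_2}\nabla\Psi(u),d\mathtt{v}_2[h_1]\rangle + \langle \Pi_W\nabla\Psi(u),dw[h_1]\rangle,
\end{equation*}
where $u:=v_1+\mathtt{v}_2(v_1)+w(v_1)$. Now by construction $\Pi_{V_2}\nabla\Psi(u)=0$ (this is \eqref{v2.eq}, solved in Section \ref{sec.v2}) and $\Pi_W\nabla\Psi(u)=0$ (this is \eqref{w.eq}, solved in Section \ref{sec.w}). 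For $p=2$ one has to note that solving the translated pair \eqref{new.v2.2}--\eqref{new.w.2} with $\tilde w(v_1)$ and then setting $w(v_1)$ as in \eqref{therealw} yields the same $(\mathtt{v}_2(v_1),w(v_1))$ that would solve the untranslated \eqref{v2.eq}--\eqref{w.eq}; this is a direct substitution.

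\textbf{Step 3: Conclusion.} The previous display collapses to
\begin{equation*}
d\breve\Psi(v_1)[h_1] \;=\; \langle \Pi_{V_1}\bigl(\mathcal{L}_\omega u - u^p\bigr),h_1\rangle \;=\; \langle (\omega^2-1)A v_1 - \Pi_{V_1}(v_1+\mathtt{v}_2(v_1)+w(v_1))^p,h_1\rangle,
\end{equation*}
using $\mathcal{L}_\omega v_1=-(\omega^2-1)Av_1$ and that $h_1\in V_1$. Since this has to vanish for every test function $h_1\in V_1\cap\cV^1_{t,z}$, and since $V_1$ is finite-dimensional (so the pairing is non-degenerate), $v_1$ is a critical point of $\breve\Psi$ if and only if \eqref{v1.eq.v2.w} holds. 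No step here is a real obstacle; the only mildly delicate point is checking that the implicit-function derivatives $d\mathtt{v}_2[h_1]$ and $dw[h_1]$ genuinely land in $V_2$ and $W$ with sufficient regularity to be paired with $\nabla\Psi(u)$, which is guaranteed by the mapping properties stated in Propositions \ref{prop.v2}--\ref{prop.w.p2}.
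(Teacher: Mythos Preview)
Your proof is correct and follows essentially the same approach as the paper: both compute $d\Psi(u)[h_1 + d\mathtt{v}_2[h_1] + dw[h_1]]$ and observe that the $V_2$- and $W$-components vanish because $\mathtt{v}_2(v_1)$ and $w(v_1)$ already solve \eqref{v2.eq} and \eqref{w.eq}. One small slip: on $V$ one has $-\partial_{tt}=A$, hence $\mathcal{L}_\omega\big|_V=(\omega^2-1)A$, not $-(\omega^2-1)A$ as you wrote in Steps~1 and~3; your final displayed identity is nonetheless the correct one, so this is only a sign typo in the surrounding text.
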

 
 \begin{proof}
 	We prove the result  for $p=5$ for definiteness.  
	In view of \eqref{az.funct}-\eqref{big.G}, the fact that $ - \partial_{tt} v_1 = A v_1 $, 
	and  $\mathtt{v}_2:= \mathtt{v}_2(v_1) \in \cV^{2 + 2\delta}_{t,x} $, resp. $w:= w(v_1) \in 
	H^{\frac 1 2 + \delta}_t \cH^{\frac 3 2 + \delta}_x $, solves equation \eqref{v2.di.v1.e.w}, resp. \eqref{w2.eq.v2}, one has
 	$$
 	\begin{aligned}
 	& \breve{\Psi}(v_1) = \frac{\omega^2 - 1}{2} \|v_1\|_{\cV^1_{t,z}}^2 + \frac{1}{2} \int_{\mathbb{T}} \int_{0}^{\pi} \mathtt{v}_2(v_1)  \Pi_{V_2} (v_1 + \mathtt{v}_2(v_1) + w(v_1))^5 \sin^2(x) \dbar x \dbar t\\
 	&  \!\! +  \!\frac{1}{2} \int_{\mathbb{T}} \int_{0}^{\pi} w(v_1) 
	 \Pi_{W} (v_1 + \mathtt{v}_2(v_1) + w(v_1))^5 \sin^2(x) \dbar x \dbar t \! - 
	 \! \frac 1 6 \int_{\mathbb{T}} \int_0^{\pi} \!\! (v_1 + \mathtt{v}_2(v_1) + w(v_1))^6 \sin^2(x) \dbar x \dbar t\,,
 	\end{aligned}
 	$$
 	and by \eqref{algebra.prop} 
	the functions 
	$\mathtt{v}_2(v_1) \Pi_{V_2} (v_1 + \mathtt{v}_2(v_1) + w(v_1))^5$, $w(v_1) 
	\Pi_{W} (v_1 + \mathtt{v}_2(v_1) + w(v_1))^5 $ and $(v_1 + \mathtt{v}_2(v_1) + w(v_1))^6$ are  in $H^{\frac 1 2 + \delta}_t \cH^{\frac 3 2 + \delta}_x \subset L^2_{t,x} \subset L^1_{t,x}$. Thus $\breve \Psi$ is well posed and differentiable 
	since by Propositions \ref{prop.v2} and \ref{prop.w} also $d_{v_1} \mathtt{v}_2[h_1]$
	and  $d_{v_1} w[h_1]$ are in $\HtHx$ for any $h_1 \in V_1$.
	Differentiating 
	equations \eqref{v2.di.v1.e.w} and \eqref{w2.eq.v2}, we get 
 	\begin{align}
 	\nonumber
 	& d \breve{\Psi}(v_1)[h_1] = \langle \mathcal{L}_\omega v_1, h_1 \rangle_{L^2_{t,x}} + \frac 1 2 \int_{\mathbb{T}} \int_{0}^{\pi} d \mathtt{v}_2[h_1] \mathcal{L}_\omega \mathtt{v}_2 \sin^2(x) \dbar x \dbar t\\
 	\nonumber
 	& \ + \frac 1 2 \int_{\mathbb{T}} \int_{0}^{\pi} \mathtt{v}_2 \mathcal{L}_\omega  \left( d \mathtt{v}_2[h_1]\right) \sin^2(x) \dbar x \dbar t + \frac 1 2 \int_{\mathbb{T}} \int_{0}^{\pi} d w [h_1] \mathcal{L}_\omega w \sin^2(x) \dbar x \dbar t\\
 	\nonumber
 	& \ + \frac 1 2 \int_{\mathbb{T}} \int_{0}^{\pi} w \mathcal{L}_\omega\left( d w[h_1] \right) \sin^2(x) \dbar x \dbar t - \int_{\mathbb{T}} \int_0^{\pi} (v_1 + \mathtt{v}_2 + w)^5 (h_1 + d \mathtt{v}_2[h] + d w[h]) \sin^2(x) \dbar x \dbar t\\
 	\label{sono.soluzioni}
 	& \ = \langle \mathcal{L}_\omega v_1, h_1 \rangle_{L^2_{t,x}} - \int_{\mathbb{T}} \int_0^{\pi} \Pi_{V_1}(v_1 + \mathtt{v}_2 + w)^5 h_1  \sin^2(x) \dbar x \dbar t  \notag 
 	\end{align}
so that  $d \breve{\Psi}(v_1)[h_1] = 0$ if and only if $v_1$ solves \eqref{v1.eq.v2.w}.
 \end{proof}
 
The following result ensures  by Lemma \ref{lemma.ker.variational} 
the existence of one solution of equations \eqref{main.sph.sym}-\eqref{main.hopf.1}.

\begin{theorem}[Existence of one critical point]\label{teo.v1}
	Let $\delta \in (0, \frac{1}{100})$. Then the following holds:\\
	\underline{Case $p=5, p=3$}: There exist  $R_0 >1$, $\epsilon_{\delta, R}>0$, $\zeta := \zeta(\delta)>1$ and $\beta_\delta>1$ such that, if $R>R_0$, \eqref{little.gamma} holds, $\beta > \beta_\delta$, $\rho_1, \rho_2, \rho_3, N$ are as in \eqref{parametri.p5} if $p=5$ and in \eqref{parametri.p3} if $p=3$,
		and $\omega >1$, $\omega \in \Omega_\gamma$,
		then the functional $\breve\Psi$ defined in \eqref{action.restriction} admits a critical point $v_{1}^{(1)} \in \mathcal{D}_{\rho_1}$ with norm
		\begin{equation}\label{buccia.R}
		\|v_{1}^{(1)}\|_{\cV^1_{t,z}} \asymp
		\begin{cases}
		\varepsilon^{\frac 1 4} & \text{if } p=5\\
		\varepsilon^{\frac 1 2} & \text{if } p=3\,.
		\end{cases}
		\end{equation} 
		\underline{Case $p=2$}: There exist $R_0 >1$, $\epsilon_{ \delta, R}>0$, $\mathtt{b}:=\mathtt{b}(\delta)>1$ and $\beta_\delta>1$ such that, if $R>R_0$, \eqref{little.gamma.p2} holds, $\beta > \beta_\delta$, $\rho_1, \rho_2, \rho_3, N$ are as in \eqref{scelta.param.p2.1},  and $\omega<1$, $\omega \in \Omega_\gamma$, then the functional $\breve\Psi$ defined in \eqref{action.restriction} admits a critical point $v_{1}^{(1)} \in \mathcal{D}_{\rho_1}$ with norm $\|v_{1}^{(1)}\|_{\cV^1_{t,x}} \asymp \varepsilon^{\frac 1 2} $ as $\varepsilon \rightarrow 0$.
\end{theorem}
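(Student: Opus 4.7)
My plan is to apply the classical finite-dimensional mountain pass theorem of Ambrosetti–Rabinowitz to the restricted functional $\breve{\Psi}$ on $\mathcal{D}_{\rho_1}\subset V_1$, after a rescaling that exposes the resonant system functional of \eqref{big.Pharma0} as the leading term. I focus on $p=5$ for definiteness. Setting $v_1=\varepsilon^{1/4}u_1$, so that $v_1$ ranges over $\mathcal{D}_{\rho_1}$ iff $u_1$ lies in the ball $B_R:=\{u_1\in V_1:\|u_1\|_{\cV^1_{t,x}}\le R\}$, I would plug this scaling into the identity of Lemma \ref{lemma.ker.variational} and use the bounds
$\|v_2(v_1,w(v_1))\|_{\cV^{2+2\delta}_{t,x}}\lesssim\rho_2$, $\|w(v_1)\|_{\HtHx}\lesssim\rho_3$
and on $d_{v_1}v_2,\,d_{v_1}w$ from Propositions \ref{prop.v2} and \ref{prop.w} to expand
\begin{equation*}
\breve{\Psi}(\varepsilon^{1/4}u_1)=\varepsilon^{3/2}\Big[\tfrac12\|u_1\|_{\cV^1_{t,x}}^2-\mathcal{G}_6(u_1)+\mathcal{R}_\varepsilon(u_1)\Big],
\end{equation*}
with $\mathcal{R}_\varepsilon\to 0$ in $C^1(B_R)$ as $\varepsilon\to 0$, uniformly in $N=\varepsilon^{-1/\beta}$ once $\beta=\beta_\delta$ is large. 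The smallness of $\mathcal{R}_\varepsilon$ is built into the ratios $\rho_2/\rho_1=O(N^{10\delta}R^4)$, $\rho_3/\rho_1=O(\gamma^{-1}N^{5+10\delta}R^4\varepsilon)$, together with the smoothing \eqref{lem.s.sprime} and the Strichartz bound \eqref{item1st1}.

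Next, I would check that the limit functional $\Phi_0(u)=\tfrac12\|u\|_{\cV^1_{t,x}}^2-\mathcal{G}_6(u)$ has mountain pass geometry on $V_1$, uniformly in $N$. The Strichartz estimate \eqref{item1st1} gives $\mathcal{G}_6(u)\le C\|u\|_{\cV^1_{t,x}}^6$, hence $\Phi_0(u)\ge\tfrac14\|u\|_{\cV^1_{t,x}}^2$ on a ball of uniform radius $r_0>0$; and for any nontrivial positive $\bar u\in V_1$ (e.g.\ a one-mode function), $\Phi_0(t\bar u)\to-\infty$ as $t\to\infty$. The resulting mountain pass level $c_0>0$ is bounded away from $0$ independently of $N$ because the small-ball positivity holds on the whole $V$. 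Since $V_1$ is finite-dimensional, the Palais–Smale condition for $\Phi_0+\mathcal{R}_\varepsilon$ on $B_R$ holds trivially, so the mountain pass theorem yields a critical point $u_1^{(1)}\in V_1$ with $\|u_1^{(1)}\|_{\cV^1_{t,x}}\asymp 1$. Taking $R>R_0$ large enough, $u_1^{(1)}$ lies strictly in the interior of $B_R$, so $v_1^{(1)}:=\varepsilon^{1/4}u_1^{(1)}$ is an interior critical point of $\breve{\Psi}$, and Lemma \ref{lemma.ker.variational} promotes $u:=v_1^{(1)}+\mathtt{v}_2(v_1^{(1)})+w(v_1^{(1)})$ to a solution of \eqref{main.common} with $\|v_1^{(1)}\|_{\cV^1_{t,x}}\asymp\varepsilon^{1/4}$.

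For $p=3$ the scheme is identical with rescaling $v_1=\varepsilon^{1/2}u_1$ and limit functional $\tfrac12\|u\|_{\cV^1_{t,\eta}}^2-\mathcal{G}_4(u)$; the mountain pass geometry now follows from the Sobolev embedding \eqref{sob.sigh} ($\cH^{3/4}_\eta\hookrightarrow L^4$) and from the existence of a positive $\bar u\in V_1$ with $\mathcal{G}_4(\bar u)>0$. The degenerate case $p=2$ is more delicate because $\Pi_V(v^2)=0$ by Lemma \ref{lem.vsquare}. After the translation \eqref{therealw} and the resolution of $v_2(v_1,\tilde w)$, $\tilde w(v_1)$ from Propositions \ref{prop.v2.p2}–\ref{prop.w.p2}, the restricted functional (with $\omega^2-1=-\varepsilon$) expands, after the rescaling $v_1=\varepsilon^{1/2}u_1$, as
\begin{equation*}
\breve{\Psi}(\varepsilon^{1/2}u_1)=\varepsilon^{2}\Big[-\tfrac12\|u_1\|_{\cV^1_{t,x}}^2-\tfrac12\int_{\mathbb{T}\times(0,\pi)}u_1^2\,\mathcal{L}_1^{-1}(u_1^2)\sin^2(x)\,\dbar x\,\dbar t+\tilde{\mathcal{R}}_\varepsilon(u_1)\Big],
\end{equation*}
with $\tilde{\mathcal{R}}_\varepsilon\to 0$ in $C^1(B_R)$; the Strichartz estimate \eqref{in.treno} shows the quartic form is continuous on $\cV^1_{t,x}$, and yields mountain pass geometry (with the role of quadratic and quartic reversed relative to $p=5$) provided one picks $\bar u$ for which $\int \bar u^2 \mathcal{L}_1^{-1}(\bar u^2)<0$, which exists because $\mathcal{L}_1^{-1}$ is indefinite on $W$.

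The hard part will be the $C^1$ control of the perturbations $\mathcal{R}_\varepsilon,\tilde{\mathcal{R}}_\varepsilon$ on the rescaled ball $B_R$, which requires tracking the $v_1$-derivatives of $\mathtt{v}_2$ and $w$ through the contraction schemes of Sections \ref{sec.v2}–\ref{sec.w}, and exploiting the smallness conditions $N^a\varepsilon^b\ll 1$ encoded in \eqref{little.gamma}, \eqref{little.gamma.p3}, \eqref{little.gamma.p2}. A second subtlety is the $N$-uniform lower bound on the mountain pass level $c_0$: this is where the Strichartz bounds of Section \ref{sec.Stri} are essential, since they give an $N$-independent upper bound on $\mathcal{G}_{p+1}$ in terms of the $\cV^1$-norm, ensuring the positive valley around the origin is preserved when truncating to $V_1$.
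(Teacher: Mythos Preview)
Your approach is correct and morally equivalent to the paper's, but the packaging differs. The paper does not rescale and then invoke classical Ambrosetti--Rabinowitz; instead it applies a tailored abstract mountain pass result (Theorem~\ref{mountain.bolla}, a special case of Theorem~2.3 in \cite{Berti-BolleNA}) directly to the decomposition $\breve\Psi(v_1)=\tfrac{\varepsilon}{2}\|v_1\|_{\cV^1}^2-\mathcal{G}_{p+1}(v_1)+\mathcal{R}_{p+1}(v_1)$ (and $-\tfrac{\varepsilon}{2}\|v_1\|_{\cV^1}^2-\breve{\mathcal{G}}_4(v_1)+\breve{\mathcal{R}}_4(v_1)$ for $p=2$). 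The hypotheses of that theorem involve only two scalars, $m(\mathcal{G})=\sup_{v\neq0}\mathcal{G}(v)/\|v\|^{p+1}$ and $\alpha(\mathcal{R})=\sup_{v\neq0}|d\mathcal{R}(v)[v]|/\|v\|^{p+1}$, and it outputs existence together with the precise norm asymptotics \eqref{ecco.y} in one stroke. The advantage is that the paper only has to control the \emph{radial} derivative $d\mathcal{R}(v_1)[v_1]$ (Lemmas~\ref{lem.R.grad}, \ref{lem.R.grad.p3}, \ref{lemma.aR.small.p2}), not the full $C^1$-norm of the remainder that your perturbation-of-AR scheme formally needs; in practice the same Strichartz and smoothing estimates would also give full $C^1$ control, so your route goes through with a bit more bookkeeping. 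One small warning on your heuristic: the ratio $\rho_2/\rho_1\sim N^{10\delta}R^4$ by itself \emph{grows} with $N$ and carries no smallness; the actual gain (a negative power of $N$) comes exactly from the mechanism you name afterwards, the high-frequency smoothing \eqref{lem.s.sprime} applied to $\mathtt v_2$ in the low-regularity Strichartz norm $\cV^{5/6+\delta}$, which is precisely the computation in Lemma~\ref{lem.R.grad}. Your choice of test function and the $p=2$ sign discussion also match the paper's Lemmas~\ref{lem.grad.G}, \ref{lem.grad.G.p3}, \ref{lem.m.G.p2}.
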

In order to prove Theorem \ref{teo.v1} we first provide a suitable decomposition of the functional $\breve{\Psi}$, using Lemma \ref{lemma.ker.variational}. We argue separately for the cases $p=5, p=3$ and for the degenerate case $p=2$.
 \begin{lemma}[$\breve{\Psi}$ for $p=5,$ $p=3$]\label{lem.azione.ridotta}
 	The functional  $\breve \Psi$  defined in \eqref{action.restriction} 
	has the form
 	\begin{equation}
 	\label{all.together.now}
 	\breve\Psi(v_1) = \frac{\varepsilon}{2} \|v_1\|^2_{\cV^1_{t,z}} - \mathcal{G}_{p+1}(v_1) + \mathcal{R}_{p+1}(v_1) \quad \forall  v_1 \in \mathcal{D}_{\rho_1}\,,
 	\end{equation}
 	where $\mathcal{G}_{p+1}$ is defined in \eqref{big.G}, $\mathcal{R}_{p+1}(0) = 0$ and
 	\begin{equation}
 	\label{big.R}
 	d \mathcal{R}_{p+1}(v_1)[v_1] =
 	\begin{cases}
 	-\int_{\mathbb{T}} \int_0^\pi \big( (v_1 + \mathtt{v}_2(v_1) + w(v_1))^{5} - v_1^5 \big) \sin^2(x) \dbar x \dbar t & \quad \text{if } p=5\\
 	-\int_{\mathbb{T}} \int_0^{\frac \pi 2} \big( (v_1 + \mathtt{v}_2(v_1) + w(v_1))^{3} - v_1^3 \big) \sin(2\eta ) d \eta \dbar t & \quad \text{if } p=3\,.
 	\end{cases}
 	\end{equation}
 	If $p=5$ the functions $\mathtt{v}_2(v_1)$ and $w(v_1)$ satisfy
 	\begin{gather}
 	\label{primavera}
 	\|\mathtt{v}_2(v_1)\|_{\cV^{2+2\delta}_{t,x}} \lesssim_{\delta} N^{10\delta} R^4 \|v_1\|_{\cV^1_{t,x}}\,, \quad \|w(v_1)\|_{\HtHx} \lesssim_{\delta} \varepsilon \gamma^{-1} N^{5+10\delta}  R^4 {\|v_1\|_{\cV^1_{t,z}}}\,,
 	\end{gather}
 	whereas if $p=3$
 	\begin{gather}
 	\label{piccolo.piccolo.p3}
 	\|  \mathtt{v}_2(v_1) \|_{\cV^{2+2\delta}_{t,\eta}} \lesssim_\delta N^{4\delta} R^2 {\|v_1\|_{\cV^{1}_{t,\eta}}}\,, \quad 
 	{\|w(v_1)\|_{\HtHeta}} \lesssim_{\delta} \varepsilon \gamma^{-1} N^{3 + 6\delta} R^2 {\|v_1\|_{\cV^{1}_{t,\eta}}}\,.
 	\end{gather}
 \end{lemma}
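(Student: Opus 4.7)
The plan is to substitute $u = v_1 + \mathtt{v}_2(v_1) + w(v_1)$ directly into the action $\Psi$ of \eqref{az.funct} and exploit, on the one hand, the spectral orthogonality of the three summands with respect to $\mathcal{L}_\omega$, and on the other, the very equations \eqref{v2.di.v1.e.w} and \eqref{w2.eq.v2} that $\mathtt{v}_2$ and $w$ solve. The argument is essentially a bookkeeping exercise with no genuine analytical obstacle: all the delicate Strichartz and algebra estimates have been absorbed at the stage of Propositions~\ref{prop.v2}--\ref{prop.w.p3}, and here one only has to combine them.

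\emph{Decomposition of $\breve\Psi$.} Since $V_1,V_2,W$ are joint spectral subspaces of $\mathcal{L}_\omega$ relative to the time-space Fourier basis $\{\cos(\ell t)\egen_j(z)\}_{\ell,j}$, they are mutually orthogonal with respect to $\int_{\mathbb{T}}\langle \mathcal{L}_\omega\,\cdot\,,\,\cdot\,\rangle_{\cH^0_z}\dbar t$, so the quadratic form in $\Psi(u)$ decouples into three diagonal contributions. On $V$ one has $\mathcal{L}_\omega = \varepsilon A$, which identifies the $V_1$-term as $\tfrac\varepsilon2\|v_1\|^2_{\cV^1_{t,z}}$. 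Testing \eqref{v2.di.v1.e.w} against $\mathtt{v}_2 \in V_2$ and \eqref{w2.eq.v2} against $w \in W$, and using $\Pi_{V_2}^\ast = \Pi_{V_2}$, $\Pi_W^\ast = \Pi_W$, yields $\int_{\mathbb{T}}\langle\mathcal{L}_\omega \mathtt{v}_2,\mathtt{v}_2\rangle\dbar t = \int u^p\mathtt{v}_2$ and $\int_{\mathbb{T}}\langle\mathcal{L}_\omega w,w\rangle\dbar t = \int u^p w$. Adding and subtracting $\mathcal{G}_{p+1}(v_1)$ and exploiting $\mathtt{v}_2 + w = u - v_1$ then identifies the remainder as
\[
\mathcal{R}_{p+1}(v_1) = \tfrac12\!\int\!u^p(\mathtt{v}_2+w) - \tfrac1{p+1}\!\int\!\bigl(u^{p+1}-v_1^{p+1}\bigr),
\]
which vanishes at $v_1 = 0$ since $\mathtt{v}_2(0)=w(0)=0$ by Propositions~\ref{prop.v2}--\ref{prop.w.p3}. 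Formula \eqref{big.R} for $d\mathcal{R}_{p+1}(v_1)[v_1]$ is obtained by differentiating \eqref{all.together.now} in $v_1$ and comparing with the identity $d\breve{\Psi}(v_1)[v_1] = \varepsilon\|v_1\|_{\cV^1_{t,z}}^2 - \int u^p v_1$ already established in the proof of Lemma~\ref{lemma.ker.variational}: the chain rule contributions from $\partial_{v_1}\mathtt{v}_2$ and $\partial_{v_1}w$ cancel out thanks to the equations \eqref{v2.di.v1.e.w}--\eqref{w2.eq.v2}.

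\emph{Linear bounds on $\mathtt{v}_2$ and $w$.} Since $\mathtt{v}_2(0)=w(0)=0$, the fundamental theorem of calculus reduces the estimates to bounds on the corresponding differentials: $\mathtt{v}_2(v_1)=\int_0^1 d_{v_1}\mathtt{v}_2(tv_1)[v_1]\,dt$, and similarly for $w(v_1)$. For $p=5$ the chain rule applied to $\mathtt{v}_2(v_1)=v_2(v_1,w(v_1))$ gives
\[
d_{v_1}\mathtt{v}_2(v_1)[h_1] = \partial_{v_1}v_2(v_1,w(v_1))[h_1] + \partial_w v_2(v_1,w(v_1))\bigl[d_{v_1}w(v_1)[h_1]\bigr].
\]
Combining the operator norms in \eqref{de.v2.de.v1}, \eqref{de.v2.de.w} and \eqref{de.w.de.v1}, the cross term contributes a factor of order $\varepsilon\gamma^{-1}N^{\frac{11}{2}+15\delta}R^8$, which is absorbed by the smallness condition \eqref{little.gamma} for $\beta > \beta_\delta$ large enough, leaving the first bound in \eqref{primavera}; the second is immediate from \eqref{de.w.de.v1} after integration from $0$ to $v_1$. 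The case $p=3$ is structurally identical, using Propositions~\ref{prop.v2.p3} and \ref{prop.w.p3} together with \eqref{little.gamma.p3} in place of \eqref{little.gamma}, yielding \eqref{piccolo.piccolo.p3}. The most delicate point throughout is ensuring that the exponents of $N$ arising from the chain-rule cross terms remain strictly below those controlled by the smallness condition, which dictates the specific choice of $\zeta$ in \eqref{little.gamma}.
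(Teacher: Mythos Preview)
Your proof is correct and follows essentially the same route as the paper. In fact the paper offers almost no proof here---it just writes ``Here the estimates \eqref{primavera}, \eqref{piccolo.piccolo.p3} follow by Propositions \ref{prop.v2}, \ref{prop.w}, \ref{prop.v2.p3}, \ref{prop.w.p3}''---so your argument is a more explicit version of what the paper leaves implicit: the decomposition of the quadratic form via orthogonality and the equations for $\mathtt{v}_2$ and $w$ is already carried out in the proof of Lemma~\ref{lemma.ker.variational}, and your explicit formula for $\mathcal{R}_{p+1}$ is exactly what that computation yields. Your use of the fundamental theorem of calculus plus chain rule to convert the derivative bounds \eqref{de.v2.de.v1}, \eqref{de.v2.de.w}, \eqref{de.w.de.v1} into the linear estimates \eqref{primavera} is the natural way to read the paper's one-line citation, and the absorption of the cross term via \eqref{little.gamma} is correct.

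One small remark: your derivation of \eqref{big.R} yields $d\mathcal{R}_{p+1}(v_1)[v_1] = -\int (u^p - v_1^p)\,v_1$, which is what the paper actually \emph{uses} in the proof of Lemma~\ref{lem.R.grad} (note the $v_1^{j_1+1}$ there); the displayed \eqref{big.R} in the paper appears to be missing a factor of $v_1$ in the integrand. Your computation is the correct one.
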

 Here the estimates \eqref{primavera}, \eqref{piccolo.piccolo.p3} follow by Propositions \ref{prop.v2}, \ref{prop.w}, \ref{prop.v2.p3}, \ref{prop.w.p3}.
 
The case $p=2$ is different 
since Lemma \ref{lem.vsquare} implies that  $ \int_{\mathbb{T}} \int_{0}^\pi v_1^3 \sin^2(x)\, \dbar x\, \dbar t = 0 $
 identically vanishes.  We perform a different decomposition.
 \begin{lemma}[$\breve{\Psi}$ in the case $p=2$]
 	For any $ v_1 \in \mathcal{D}_{\rho_1}$ one has
 	\begin{gather}
 	\label{all.together.now.p2}
 	\breve\Psi(v_1) = -\frac{\varepsilon}{2}  \|v_1\|^2_{\cV^1_{t,x}} - \breve{\mathcal{G}_{4}}(v_1) + \breve{\mathcal{R}}_4(v_1)\,,\\
 	\label{big.G2}
 	\breve{\mathcal{G}_{4}}(v_1) := \frac{1}{2}\int_{\mathbb{T}}\int_{0}^\pi v_1^2 \mathcal{L}_1^{-1} v_1^2\sin^2(x)\,\dbar x\dbar t\,,
 	\end{gather}
 	with $\breve{\mathcal{R}}_4(0) = 0$ and
 	\begin{equation}
 		\label{big.R2}
 	\begin{aligned}
 	 d \breve{\mathcal{R}}_4(v_1)[v_1] &= 2 \int_{\mathbb{T}} \int_0^\pi v_1^2 (\mathcal{L}_1^{-1} - \mathcal{L}_\omega^{-1}) v_1^2 \sin^2(x) \dbar x \dbar t\\
 	 &- \int_{\mathbb{T}} \int_0^\pi ( 2 v_1 \mathtt{v}_2(v_1) w(v_1) + w(v_1)^2 v_1) \sin^2(x) \dbar x \dbar t\\
 	 &- 2 \int_{\mathbb{T}} \int_0^\pi v_1^2 \left(w(v_1) - \mathcal{L}_\omega^{-1} v_1^2\right)\sin^2(x) \dbar x \dbar t\,.
 	\end{aligned}
 	\end{equation}
 \end{lemma}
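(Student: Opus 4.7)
The plan is to define $\breve{\mathcal{R}}_4(v_1) := \breve\Psi(v_1) + \tfrac{\varepsilon}{2}\|v_1\|^2_{\cV^1_{t,x}} + \breve{\mathcal{G}_{4}}(v_1)$, which makes \eqref{all.together.now.p2} hold by construction, and then to verify $\breve{\mathcal{R}}_4(0)=0$ and the formula \eqref{big.R2} for $d\breve{\mathcal{R}}_4(v_1)[v_1]$. The vanishing at $v_1 = 0$ is immediate: Propositions \ref{prop.v2.p2} and \ref{prop.w.p2} give $\mathtt{v}_2(0) = 0$ and $\tilde w(0) = 0$, hence by \eqref{therealw} also $w(0) = 0$, so $\breve\Psi(0) = \Psi(0) = 0$, while $\breve{\mathcal{G}_{4}}(0) = 0$ trivially.

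The key input is the natural extension of Lemma \ref{lemma.ker.variational} to $p=2$: for every $h_1 \in V_1$,
\begin{equation*}
d\breve\Psi(v_1)[h_1] = \langle \mathcal{L}_\omega v_1, h_1 \rangle_{L^2_{t,x}} - \int_{\mathbb{T}} \int_0^\pi \Pi_{V_1}(u^2)\, h_1\, \sin^2(x)\, \dbar x\, \dbar t.
\end{equation*}
The derivation follows \emph{verbatim} the one in Lemma \ref{lemma.ker.variational}; the only ingredients are the differentiability of $v_1 \mapsto \mathtt{v}_2(v_1), w(v_1)$ (Propositions \ref{prop.v2.p2}, \ref{prop.w.p2}) together with the fact that $(\mathtt{v}_2, w)$ solves the \emph{original} bifurcation-range system \eqref{v2.eq}--\eqref{w.eq}. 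The latter needs to be checked: substituting \eqref{therealw} into \eqref{new.v2.2}--\eqref{new.w.2} and using $\Pi_V((v_1+\mathtt{v}_2)^2) = 0$ from Lemma \ref{lem.vsquare} (so that $(v_1+\mathtt{v}_2)^2 = \Pi_W((v_1+\mathtt{v}_2)^2)$), a short computation shows $\mathcal{L}_\omega w = \Pi_W u^2$ and $-\varepsilon A \mathtt{v}_2 = \Pi_{V_2} u^2$. Specializing to $h_1 = v_1$ and recalling $\omega^2 - 1 = -\varepsilon$ from \eqref{omega.ep}, we obtain $d\breve\Psi(v_1)[v_1] = -\varepsilon\|v_1\|^2_{\cV^1_{t,x}} - \int u^2 v_1$ (hereafter the measure $\sin^2(x)\dbar x\dbar t$ is suppressed).

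The remainder is algebra. Expanding $u^2 = (v_1 + \mathtt{v}_2 + w)^2$ and dropping every triple integral $\int v^{(1)} v^{(2)} v^{(3)}$ with $v^{(i)} \in V$ (which vanishes by Lemma \ref{lem.vsquare} for $q = 3$ odd) yields $\int u^2 v_1 = 2\int v_1^2 w + 2 \int v_1 \mathtt{v}_2 w + \int v_1 w^2$. Differentiating $\breve{\mathcal{G}_{4}}$ in \eqref{big.G2} under the integral, together with $L^2$-selfadjointness of $\mathcal{L}_1^{-1}|_W$, gives $d\breve{\mathcal{G}_{4}}(v_1)[v_1] = 2\int v_1^2\, \mathcal{L}_1^{-1}(v_1^2)$, well-defined because $v_1^2 \in W$ by Lemma \ref{lem.vsquare}. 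Assembling $d\breve{\mathcal{R}}_4(v_1)[v_1] = d\breve\Psi(v_1)[v_1] + \varepsilon\|v_1\|^2_{\cV^1_{t,x}} + d\breve{\mathcal{G}_{4}}(v_1)[v_1]$ and adding/subtracting $2\int v_1^2\, \mathcal{L}_\omega^{-1}(v_1^2)$ reproduces exactly the three integrals of \eqref{big.R2}. The main obstacle is verifying that the translated pair $(\mathtt{v}_2, w)$ defined by \eqref{therealw} is a genuine solution of the original range equation \eqref{w.eq}, which is precisely where Lemma \ref{lem.vsquare} enters decisively; everything else is straightforward bookkeeping.
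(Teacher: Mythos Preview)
Your proof is correct and follows the natural approach implicit in the paper's setup; the paper states this lemma without proof, and your argument supplies exactly the computation one expects, correctly identifying that the key point is to verify that the pair $(\mathtt{v}_2, w)$ with $w$ given by \eqref{therealw} solves the untranslated system \eqref{v2.eq}--\eqref{w.eq} so that Lemma \ref{lemma.ker.variational} applies verbatim.
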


We prove Theorem \ref{teo.v1} as an application of the following abstract result, which is a particular case of Theorem 2.3 of \cite{Berti-BolleNA}:

\begin{theorem}[Abstract mountain pass theorem]\label{mountain.bolla}
Let $E$  be a finite dimensional Hilbert space  equipped with scalar product $\langle \cdot, \cdot \rangle$ and norm $\| \cdot \|^2 = \langle \cdot, \cdot \rangle$. 
Let $\mathcal{I}: B_{\rho_1} \subset E \rightarrow \mathbb{R}$ a $C^1$ functional defined on the ball $B_{\rho_1}:=\{ v \in E\ :\ \|v\| < \rho_1\}$ for some $\rho_1>0$, of the form
 \begin{equation}\label{i.astratto}
 	\mathcal{I}(v) = \frac{\varepsilon}{2} \|v\|^2 - \mathcal{G}(v) + \mathcal{R}(v) \, , 
 \end{equation}
 where $ \mathcal{G} \in C^1(E,\R) $ 
 is a non zero 
 homogeneous functional of degree $p+1$, $p>1$, 
 and $\mathcal{R} \in C^1(B_{\rho_1},\R) $ satisfies $\mathcal{R} (0) = 0 $.
	Define
	\begin{equation}\label{def.m.alpha}
	m^+(\mathcal{G}) := \sup_{ v \neq 0 } \frac{\mathcal{G}(v)}{\|v\|^{p+1}}\,, \quad m^-(\mathcal{G}) := \inf_{ v \neq 0 } \frac{\mathcal{G}(v)}{\|v\|^{p+1}}
	\,,\quad m(\mathcal{G}) := 
\begin{cases} 
\ \ m^+(\mathcal{G})  \quad \text{if} \ m^+(\mathcal{G})>0 \\
 -m^{-}(\mathcal{G}) \quad \text{if} \ m^-(\mathcal{G})<0 \, ,  
\end{cases}
	\end{equation}
and suppose $ \varepsilon>0 $ (resp. $\varepsilon<0$) if $ m^+(\mathcal{G})>0$
(resp. $m^-(\mathcal{G})<0$).

 Then there exists a positive constant $\underline{C}$, depending on {$p$} only, such that, if
	\begin{equation}
	\label{alpha.m}
	\alpha(\mathcal{R}) := \sup_{v \in {B_{\rho_1} \setminus \{0\}}} \frac{| d\mathcal{R}(v)[v] |}{\|v\|^{p+1}} \leq \underline{C} m(\mathcal{G})\,, \quad \left(\frac{|\varepsilon|}{m(\mathcal{G})}\right)^{\frac{1}{p-1}} \leq \underline{C} \, \rho_1\,,
	\end{equation}
	the functional  $\mathcal{I}$ has a critical point $v \in B_{\rho_1}$ on a critical level
	\begin{equation}
	c := \frac{p-1}{2} m(\mathcal{G}) \left(\frac{|\varepsilon|}{(p+1)m(\mathcal{G})}\right)^{(p+1)/(p-1)} \left(1 + \mathcal{O}\left(\frac{\alpha(\mathcal{R})}{m(\mathcal{G})}\right)\right)\,.
	\end{equation}
	Moreover
	\begin{equation}\label{ecco.y}
	v = \left(1 + \mathcal{O}\left(\frac{\alpha(\mathcal{R})}{m(\mathcal{G})}\right)\right) \left(\frac{|\varepsilon|}{(p+1)m(\mathcal{G})}\right)^{1/(p-1)} y\,,
	\end{equation}
	for some $y \in E$ with $\|y\| = 1$ and $\mathcal{G}(y) = m^+(\mathcal{G}) + \mathcal{O}(\alpha(\mathcal{R}))$ (resp., $\mathcal{G}(y) = m^-(\mathcal{G}) + \mathcal{O}(\alpha(\mathcal{R}))$).
\end{theorem}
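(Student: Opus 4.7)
The plan is to apply a classical mountain pass argument of Ambrosetti-Rabinowitz type in the finite-dimensional Hilbert space $E$, where the Palais-Smale compactness inside bounded regions is automatic by Bolzano-Weierstrass. Without loss of generality we reduce to the case $\varepsilon > 0$ and $m(\mathcal{G}) = m^+(\mathcal{G}) > 0$, the opposite case being recovered by the symmetry $(\mathcal{I},\mathcal{G},\mathcal{R},\varepsilon) \mapsto (-\mathcal{I},-\mathcal{G},-\mathcal{R},-\varepsilon)$, which preserves both the structure \eqref{i.astratto} and the set of critical points. Set $r := (\varepsilon/((p+1)m(\mathcal{G})))^{1/(p-1)}$, the natural scale. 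Integrating the definition $|d\mathcal{R}(tv)[tv]| \leq \alpha(\mathcal{R}) t^{p+1}\|v\|^{p+1}$ from $0$ to $1$ yields the global bound $|\mathcal{R}(v)|\leq \frac{\alpha(\mathcal{R})}{p+1}\|v\|^{p+1}$ on $B_{\rho_1}$, which will repeatedly allow us to treat $\mathcal{R}$ as a perturbation of relative size $\mathcal{O}(\alpha(\mathcal{R})/m(\mathcal{G}))$.

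Next I would set up the mountain pass geometry. On the small sphere $\|v\|=\eta r$ with $\eta>0$ small, the inequalities $\mathcal{G}(v)\leq m^+(\mathcal{G})\|v\|^{p+1}$ and the preliminary bound on $|\mathcal{R}|$ give $\mathcal{I}(v)\geq \tfrac12 \varepsilon (\eta r)^2 \bigl(1-C_p \eta^{p-1}(1+\alpha(\mathcal{R})/m(\mathcal{G}))\bigr)$, which is bounded below by a positive constant multiple of $\varepsilon r^2$ once $\eta$ is chosen small and the conditions \eqref{alpha.m} hold with $\underline{C}$ small enough. Since the unit sphere in $E$ is compact, one picks an exact maximizer $y_\ast$, $\|y_\ast\|=1$, $\mathcal{G}(y_\ast)=m(\mathcal{G})$. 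For $T_\ast := 2r$, which lies in $(0, \rho_1)$ by the second condition in \eqref{alpha.m}, a direct computation shows $\mathcal{I}(T_\ast y_\ast)<0$ and $\max_{t\in[0,1]}\mathcal{I}(tT_\ast y_\ast) = \tfrac{p-1}{2(p+1)}\varepsilon r^2 \bigl(1+\mathcal{O}(\alpha(\mathcal{R})/m(\mathcal{G}))\bigr)$, the unperturbed maximum being attained at $t_0$ with $T_\ast t_0 = r$. Applying the mountain pass theorem on the class $\Gamma:=\{\gamma\in C([0,1],B_{\rho_1})\,:\,\gamma(0)=0,\ \gamma(1)=T_\ast y_\ast\}$ produces a critical point $v\in B_{\rho_1}$ of $\mathcal{I}$ at the minimax level $c=\inf_{\gamma \in \Gamma}\max_{[0,1]}\mathcal{I}\circ\gamma > 0$; the condition $r \leq \underline{C}\rho_1$ ensures that the pseudo-gradient deformation yielding the critical point cannot be pushed to $\partial B_{\rho_1}$.

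Finally, to derive the sharp asymptotic form \eqref{ecco.y}, I would use the Euler-Lagrange identity. Writing $v=\lambda y$ with $\lambda := \|v\|$, $\|y\|=1$, and pairing $\varepsilon v - \nabla \mathcal{G}(v) + \nabla \mathcal{R}(v) = 0$ with $v$, homogeneity $\langle \nabla\mathcal{G}(v),v\rangle = (p+1)\mathcal{G}(v) = (p+1)\lambda^{p+1}\mathcal{G}(y)$ yields $\varepsilon \lambda^2 = (p+1)\mathcal{G}(y)\lambda^{p+1} - d\mathcal{R}(v)[v]$; substituting back into $\mathcal{I}(v) = c$ gives $c = \tfrac{(p-1)\varepsilon}{2(p+1)}\lambda^2 + \mathcal{O}(\alpha(\mathcal{R})\lambda^{p+1})$. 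Combined with the two-sided control $c = \tfrac{p-1}{2(p+1)}\varepsilon r^2 (1+\mathcal{O}(\alpha(\mathcal{R})/m(\mathcal{G})))$, this forces $\lambda = r(1+\mathcal{O}(\alpha(\mathcal{R})/m(\mathcal{G})))$ and, going back to the pairing identity, $\mathcal{G}(y) = m(\mathcal{G}) + \mathcal{O}(\alpha(\mathcal{R}))$, proving \eqref{ecco.y}. The main obstacle is establishing the matching lower bound $c \geq \tfrac{p-1}{2(p+1)}\varepsilon r^2(1-\mathcal{O}(\alpha(\mathcal{R})/m(\mathcal{G})))$: the upper bound is immediate from the canonical path $t \mapsto t T_\ast y_\ast$, but the lower bound requires a refined linking/deformation argument showing that every admissible $\gamma \in \Gamma$ must meet a suitable level set of $\mathcal{I}$ on which the perturbation $\mathcal{R}$ is controlled. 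This is precisely the content of the abstract Theorem 2.3 in \cite{Berti-BolleNA}, and it is what dictates the smallness requirement $\alpha(\mathcal{R}) \leq \underline{C} m(\mathcal{G})$.
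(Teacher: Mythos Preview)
The paper does not supply its own proof of Theorem~\ref{mountain.bolla}: it is stated as ``a particular case of Theorem~2.3 of \cite{Berti-BolleNA}'' and then applied as a black box. Your sketch follows precisely the mountain pass strategy of that reference, and you correctly identify the one nontrivial ingredient---the matching \emph{lower} bound on the minimax level $c$---as requiring the refined linking argument from \cite{Berti-BolleNA}. So your approach and the paper's (implicit) approach coincide.
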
 
We shall apply Theorem \ref{mountain.bolla} to $\breve{\Psi}$ in \eqref{action.restriction} with $E = (V_1,\langle \cdot, \cdot \rangle_{\cV^1_{t,z}})$. 

\subsection{Cases $p=5$ and $p=3$}
We first consider the case $p=5$.
Since the functional $\mathcal{G}_6 $ in \eqref{big.G} is positive, we  have $m(\mathcal{G}_6) = m^+(\mathcal{G}_6)$.
\begin{lemma}[Estimate of $m(\mathcal{G}_6)$]\label{lem.grad.G}
	 There exists $C>0$, independent of $N$, such that
	\begin{equation}\label{m.above}
	\frac{5}{48}\leq m(\mathcal{G}_6) \leq C\,.
	\end{equation}
\end{lemma}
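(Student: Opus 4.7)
The plan is to establish the two bounds separately: the upper bound is a direct consequence of the Strichartz-type estimate proved above, while the lower bound is obtained by testing $\mathcal{G}_6$ on an explicit one-mode function.

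For the upper bound, I would apply Item 1 of Proposition \ref{lem.Strichartz.1} with $v^{(1)} = \cdots = v^{(6)} = v$ and a choice of $\delta \in (0, \frac 1 6]$, so that $\frac{5}{6} + \delta \leq 1$. This gives, by the continuous embedding $\cV^1_{t,x} \hookrightarrow \cV^{\frac 5 6 + \delta}_{t,x}$,
\[
6\,\mathcal{G}_6(v) = \int_{\mathbb{T}}\int_0^\pi v^6 \sin^2(x)\,\dbar x\,\dbar t \;\leq\; C_\delta \|v\|^6_{\cV^{\frac 5 6 + \delta}_{t,x}} \;\leq\; C \|v\|^6_{\cV^1_{t,x}}\,,
\]
from which $m(\mathcal{G}_6) \leq C$ follows by the definition \eqref{def.m.alpha}.

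For the lower bound, I would take the test function $\underline v(t,x):= \cos(t)\, e_0(x)$. Since $e_0(x) = \sin(x)/\sin(x) \equiv 1$ and $\omega_0 = 1$ by \eqref{omega.n}, this is a legitimate element of $V$ with only the $(\ell,j) = (1,0)$ Fourier coefficient equal to $1$, so by \eqref{hs.ker}
\[
\|\underline v\|^2_{\cV^1_{t,x}} = \omega_0^2 = 1\,.
\]
A direct computation, using the normalized measures $\dbar t = \frac{1}{\pi}dt$ and $\dbar x = \frac{2}{\pi}dx$, yields
\[
\int_{\mathbb{T}} \cos^6(t)\,\dbar t = \frac{1}{\pi}\int_0^{2\pi}\cos^6(t)\,dt = \frac{1}{\pi}\cdot \frac{5\pi}{8} = \frac{5}{8}\,,\qquad \int_0^\pi \sin^2(x)\,\dbar x = \frac{2}{\pi}\int_0^\pi \sin^2(x)\,dx = 1\,,
\]
so that
\[
\mathcal{G}_6(\underline v) = \frac{1}{6}\int_{\mathbb{T}}\cos^6(t)\,\dbar t \int_0^\pi \sin^2(x)\,\dbar x = \frac{5}{48}\,.
\]
Combining these two identities with \eqref{def.m.alpha} gives $m(\mathcal{G}_6) \geq \mathcal{G}_6(\underline v)/\|\underline v\|^6_{\cV^1_{t,x}} = \frac{5}{48}$, completing the proof.

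There is no real obstacle here: the Strichartz estimate directly supplies the upper bound (it is the entire point of Proposition \ref{lem.Strichartz.1}), and the lower bound is obtained by an explicit computation on the simplest normalized one-mode function $\cos(t)$.
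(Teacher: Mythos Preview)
Your proof is correct and follows essentially the same approach as the paper: the upper bound via the Strichartz estimate \eqref{item1st1} with $\delta \leq \tfrac{1}{6}$, and the lower bound by evaluating $\mathcal{G}_6$ on the explicit one-mode $\cos(t)\,e_0(x)$. The only cosmetic point is that the supremum defining $m(\mathcal{G}_6)$ is taken over $V_1 = V_{\leq N}$ (not all of $V$), but since $\omega_0 = 1 \leq N$ your test function lies in $V_1$ and the argument goes through unchanged.
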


\begin{proof}
	By Item 1 of Proposition \ref{lem.Strichartz.1} with $\delta = \frac 1 6$, one has
	$\mathcal{G}(v_1) \lesssim_{} \|v_1\|^6_{\cV^{1}_{t,x}}$,
	which gives $m(\mathcal{G}_6)\leq C$ for some $C >0$.
	We now estimate $m(\mathcal{G}_6)$ from below.
	Let $\check{v}_1(t,x) := \cos(t) e_0(x) = \cos(t)$,
	then $\check v_1 \in V_1$, and by \eqref{hs.ker} $\| \check v_1\|^2_{\cV^1_{t,x}} =1\,.$
	One has $\mathcal{G}_6(\check{v}_1) = {\frac{1}{6}} \int_{\mathbb{T}} \cos^6(t) \dbar t\, \int_{0}^\pi \sin^2(x)\, \dbar x = \frac{5}{48}$,
	since
	$\int_{\mathbb{T}} \cos^6(t) \dbar t = \frac 5 8$, and the estimate \eqref{m.above} is proved.
\end{proof}

We now prove properties of the functional $\mathcal{R}_6$ defined in Lemma \ref{lem.azione.ridotta}.

\begin{lemma}[Estimate of $\alpha(\mathcal{R}_6)$]\label{lem.R.grad}
	There exists $C_\delta>0$ such that
	$\alpha(\mathcal{R}_6) \leq C_{\delta} N^{-\frac{7}{6} + 9\delta} R^4$.
\end{lemma}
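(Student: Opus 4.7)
Formula \eqref{big.R} must be read as $d\mathcal{R}_6(v_1)[h]=-\int_{\mathbb{T}}\int_0^{\pi}\bigl((v_1+\mathtt{v}_2(v_1)+w(v_1))^5-v_1^5\bigr)h\,\sin^2(x)\,\dbar x\,\dbar t$; evaluating at $h=v_1$ and expanding the polynomial yields
\[
d\mathcal{R}_6(v_1)[v_1]=-\!\!\!\sum_{\substack{k_1+j_2+j_3=6,\,k_1\geq 1\\ (j_2,j_3)\neq(0,0)}}\!\!\! c_{k_1,j_2,j_3}\int_{\mathbb{T}}\int_0^{\pi} v_1^{k_1}\mathtt{v}_2^{j_2}w^{j_3}\sin^2(x)\,\dbar x\,\dbar t,
\]
with $\mathtt{v}_2:=\mathtt{v}_2(v_1)\in V_2$, $w:=w(v_1)\in W$. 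The plan is to bound each of the finitely many multi-index terms separately, distinguishing whether or not $w$ occurs.

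\emph{Case A ($j_3=0$, $j_2\geq 1$).} The integrand is a product of six $V$-functions, so Item 1 of Proposition \ref{lem.Strichartz.1} gives
\[
\Big|\int_{\mathbb{T}}\int_0^{\pi} v_1^{k_1}\mathtt{v}_2^{j_2}\sin^2(x)\,\dbar x\,\dbar t\Big|\leq C_\delta \|v_1\|_{\cV^{5/6+\delta}_{t,x}}^{k_1}\|\mathtt{v}_2\|_{\cV^{5/6+\delta}_{t,x}}^{j_2}.
\]
I shall use $\|v_1\|_{\cV^{5/6+\delta}_{t,x}}\leq \|v_1\|_{\cV^1_{t,x}}$, together with the high-frequency smoothing \eqref{lem.s.sprime} for $\mathtt{v}_2\in V_2$ and the a-priori estimate \eqref{primavera}, to obtain
\[
\|\mathtt{v}_2\|_{\cV^{5/6+\delta}_{t,x}}\leq N^{-(7/6+\delta)}\|\mathtt{v}_2\|_{\cV^{2+2\delta}_{t,x}}\lesssim_\delta N^{-7/6+9\delta}R^4\|v_1\|_{\cV^1_{t,x}}.
\]
The terms with $j_2=1$ then produce the announced bound $C_\delta N^{-7/6+9\delta}R^4\|v_1\|_{\cV^1_{t,x}}^6$; those with $j_2\geq 2$ carry extra factors $(N^{-7/6+9\delta}R^4)^{j_2-1}$, harmless once $N$ is large in terms of $R$.

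\emph{Case B ($j_3\geq 1$).} Strichartz is unavailable since $w\notin V$. I shall instead combine the algebra property \eqref{algebra.prop} of $\HtHx$ with the embedding $L^1_{t,x}\hookrightarrow L^2_{t,x}\hookrightarrow \HtHx$ (finite-measure space and Sobolev embedding) to get
\[
\Big|\int_{\mathbb{T}}\int_0^{\pi} v_1^{k_1}\mathtt{v}_2^{j_2}w^{j_3}\sin^2(x)\,\dbar x\,\dbar t\Big|\lesssim_\delta \|v_1\|_{\HtHx}^{k_1}\|\mathtt{v}_2\|_{\HtHx}^{j_2}\|w\|_{\HtHx}^{j_3}.
\]
Using $\|v_1\|_{\HtHx}\leq \|v_1\|_{\cV^{2+2\delta}_{t,x}}\leq N^{1+2\delta}\|v_1\|_{\cV^1_{t,x}}$ on $V_1$ and \eqref{primavera}, this is bounded by $(N^{1+2\delta})^{k_1}(N^{10\delta}R^4)^{j_2}(\varepsilon\gamma^{-1}N^{5+10\delta}R^4)^{j_3}\|v_1\|_{\cV^1_{t,x}}^6$. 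Since $j_3\geq 1$ at least one factor $\varepsilon\gamma^{-1}$ survives; the smallness \eqref{little.gamma} with $\zeta$ sufficiently large (the worst subcase $k_1=5,\,j_2=0,\,j_3=1$ demands $\zeta\geq 67/6+11\delta$) forces these contributions below $N^{-7/6+9\delta}R^4\|v_1\|_{\cV^1_{t,x}}^6$.

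Summing the finitely many multi-indices and taking the supremum over $v_1\in B_{\rho_1}\setminus\{0\}$ yields $\alpha(\mathcal{R}_6)\leq C_\delta N^{-7/6+9\delta}R^4$. The main obstacle is Case A: the sharp exponent $-7/6+9\delta$ is tight, arising from combining the smoothing gap $2+2\delta-(5/6+\delta)=7/6+\delta$ between $\cV^{2+2\delta}_{t,x}$ and $\cV^{5/6+\delta}_{t,x}$ for $\mathtt{v}_2\in V_2$ with the Strichartz $L^6$-bound of Proposition \ref{lem.Strichartz.1}, which allows closing the six-fold integral at the low regularity $5/6+\delta$; the $N^{10\delta}$ loss in the $\cV^{2+2\delta}_{t,x}$-bound for $\mathtt{v}_2$ in \eqref{primavera} only slightly degrades this gain.
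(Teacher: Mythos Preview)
Your proof is correct and follows essentially the same strategy as the paper: Case~A is handled identically via the Strichartz estimate \eqref{item1st1} combined with the high-frequency smoothing \eqref{lem.s.sprime} and \eqref{primavera}, while Case~B relies on algebra estimates and the smallness condition \eqref{little.gamma}. Two minor remarks: (i) your embedding chain ``$L^1_{t,x}\hookrightarrow L^2_{t,x}\hookrightarrow \HtHx$'' is written in the wrong direction---you mean $\HtHx\hookrightarrow L^\infty_{t,x}\hookrightarrow L^1_{t,x}$, which is what actually justifies $\bigl|\int f\bigr|\lesssim\|f\|_{\HtHx}$; (ii) in Case~B the paper isolates one factor of $v_1$ via Cauchy--Schwarz and $\Pi_{V_1}$, estimating it only in $\cV^0_{t,x}$ rather than $\cV^{2+2\delta}_{t,x}$, which yields a slightly smaller required $\zeta$ (roughly $49/6+5\delta$ versus your $67/6+11\delta$), but since $\zeta=\zeta(\delta)$ is a free parameter this is immaterial to the conclusion.
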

\begin{proof}
 	We set $\mathtt{v}_2 := \mathtt{v}_2(v_1)$ and $w:= w(v_1)$. By \eqref{big.R}, one has
	\begin{equation}\label{grad.grad}
	\begin{aligned}
	\left| d\mathcal{R}_6(v_1)[v_1] \right| & \lesssim \sum_{j_1 + j_2 = 5 \atop j_2 \geq 1}   \left|\int_\mathbb{T}\int_{0}^{\pi}  {v_1^{j_1+1} \mathtt{v}_2^{j_2}}\sin^2(x)\,\dbar x\dbar t\right|\\
	& + \sum_{j_1 + j_2 + j_3 = 5  \atop j_3 \geq 1} \left|\int_\mathbb{T}\int_{0}^{\pi} {v_1^{j_1+1} \mathtt{v}_2^{j_2} w^{j_3}} \sin^2(x)\,\dbar x\dbar t\right|\,.
	\end{aligned}
	\end{equation}
	Using \eqref{item1st1} for the first term one has, for any $j_1, j_2$ with $j_1 + j_2 = 5$ and $j_2 \geq 1$,
	\begin{equation}\label{stimette.1}
	\begin{aligned}
	\left|\int_\mathbb{T}\int_{0}^{\pi} {v_1^{j_1+1} \mathtt{v}_2^{j_2}}\sin^2(x)\,\dbar x\dbar t\right| &\lesssim_\delta \|v_1\|_{\cV^{\frac 5 6 + \delta}_{t,x}}^{j_1 + 1} \|\mathtt{v}_2\|_{\cV^{\frac 5 6 + \delta}_{t,x}}^{j_2} \stackrel{\eqref{lem.s.sprime}}{\lesssim_\delta} N^{-(\frac 7 6 + \delta)j_2}\|v_1\|_{\cV^1_{t,x}}^{j_1+1} \|\mathtt{v}_2\|_{\cV^{2+2\delta}_{t,x}}^{j_2}\\
	&\stackrel{\eqref{primavera}}{\lesssim_{ \delta}} N^{-(\frac 7 6 + \delta)j_2} (N^{10\delta} R^4)^{j_2} \|v_1\|^6_{\cV^1_{t,x}} \lesssim_{ \delta} N^{-\frac 7 6 + 9 \delta} R^4 \|v_1\|^6_{\cV^1_{t,x}} \,,
	\end{aligned}
	\end{equation}
	since $N^{-\frac 7 6 + 9 \delta} R^4 < 1$, due to the smallness condition in \eqref{little.gamma}. The second term is estimated using Lemma \ref{algebraKer} and recalling $j_1 + j_2 + j_3 = 5$, $j_3 \geq 1$, one obtains
	\begin{align}
	\nonumber
	\left|\int_\mathbb{T}\int_{0}^{\pi} \Pi_{V_1}({v_1^{j_1} \mathtt{v}_2^{j_2} w^{j_3}}){v_1}\sin^2(x)\,\dbar x \dbar t\right|
	&\lesssim_\delta \| \Pi_{V_1} (v_1^{j_1} \mathtt{v}_2^{j_2} w^{j_3})\|_{\cV^{0}_{t,x}} \|v_1\|_{\cV^0_{t,x}}\\
	\nonumber
	&\lesssim_\delta\|v_1\|^{j_1}_{\cV^{\frac 3 2 +\delta}_{t,x}} \|\mathtt{v}_2\|^{j_2}_{\cV^{\frac 3 2 +\delta}_{t,x}} \|w\|^{j_3}_{H^{\frac 1 2 + \delta}_{t} \cH^{\frac 3 2 +\delta}_x} \|v_1\|_{\cV^{1}_{t,x}}\\
	\nonumber
	&\stackrel{\eqref{lem.s.sprime}}{\lesssim_\delta} N^{j_1(\frac 1 2 + \delta)} \|v_1\|^{j_1+1}_{\cV^1_{t,x}} \|\mathtt{v}_2\|^{j_2}_{\cV^{2+2\delta}_{t,x}} \|w\|^{j_3}_{H^{\frac 1 2 + \delta}_{t} \cH^{\frac 3 2 +\delta}_x}\\
	\label{stimette.2}
	&\stackrel{\eqref{primavera}, \eqref{little.gamma}}{\lesssim_{ \delta}}  N^{4(\frac 1 2 + \delta)+ 5 + 10\delta} R^{4} \gamma^{-1} \varepsilon  \|v_1\|^{6}_{\cV^1_{t,x}}\,,
	\end{align}
which is ${\lesssim_{ \delta}} N^{-\frac 7 6 + 9 \delta} R^4 \|v_1\|^6_{\cV^1_{t,x}}$, using again the smallness condition \eqref{little.gamma}.
	Estimates \eqref{stimette.1} and \eqref{stimette.2} imply
	$
	\left| d\mathcal{R}_6(v_1)[ v_1] \right| \lesssim_{\delta} N^{-\frac{7}{6} + 9\delta} R^4 {\|v_1\|^6_{\cV^1_{t,x}}}\,,
	$
	which gives the thesis. 
\end{proof}

\begin{proof}[Proof of Theorem \ref{teo.v1} for $p=5$]
 We now verify that the two conditions  in \eqref{alpha.m} are satisfied.
 Let $\underline{C}$ be the positive constant defined in Theorem \ref{mountain.bolla}: by  \eqref{m.above} and Lemma \ref{lem.R.grad}
 \begin{equation}\label{trentatre.trentini}
 \frac{\alpha(\mathcal{R}_6)}{m(\mathcal{G}_6)} \leq \frac{C_\delta N^{-\frac{7}{6} + 9\delta} R^4}{\frac{5}{48}} \leq \underline{C}\,,
 \end{equation}
 provided $N^{\frac 7 6 - 9 \delta} \geq \frac{ 48 C_\delta R^4 }{5 \underline{C}}$. Furthermore, by \eqref{m.above}, one has
 $$
 \left(\frac{\varepsilon}{m(\mathcal{G}_6)}\right)^{\frac 1 4} \leq \varepsilon^{\frac 1 4} \left(\frac{48}{5}\right)^{\frac 1 4} \leq \underline{C} \rho_1 \stackrel{\eqref{parametri.p5}}{=} \underline{C} R \varepsilon^{\frac{1}{4}}\,,
 $$
 provided $R \geq (\frac{48}{5})^{\frac 1 4} \underline{C}^{-1}$. Then Theorem \ref{mountain.bolla} ensures the existence of a critical point $v^{(1)}_{1} \in  V_1$. Finally, \eqref{m.above}, \eqref{trentatre.trentini} and $N := \varepsilon^{-\frac 1 \beta}$ imply $\frac{\alpha(\mathcal{R}_6)}{m(\mathcal{G}_6)} = o(1)$ as $\varepsilon \rightarrow 0$. Then equation \eqref{ecco.y} of Theorem \ref{mountain.bolla} implies $\| v^{(1)}_{1}\|_{\cV^1_{t,x}} \asymp \varepsilon^{\frac 1 4}$ as $\varepsilon \rightarrow 0$.
\end{proof}

We now consider the case $p=3$.
Since $\mathcal{G}_4$ defined in \eqref{big.G} is positive, we have $m(\mathcal{G}_4) = m^+(\mathcal{G}_4)$.
\begin{lemma}[Estimate of $m(\mathcal{G}_4)$]\label{lem.grad.G.p3}
	There exist $C^+>0$ and $C^-_{\mu_1, \mu_2}>0$, independent of $N$, such that
	\begin{equation}\label{m.p3}
	C^-_{\mu_1, \mu_2} \leq m(\mathcal{G}_4) \leq C^+\,.
	\end{equation}
\end{lemma}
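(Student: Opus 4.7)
The plan is to prove the two bounds independently: the upper bound by the Strichartz-type $L^4$ estimate already established for Hopf waves, and the lower bound by testing $\mathcal{G}_4$ on an explicit one-mode Hopf wave.

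For the upper bound, apply Lemma \ref{lem.lp.hs.easy}, specifically inequality \eqref{int4estimate.0}, with $u^{(1)}=u^{(2)}=u^{(3)}=u^{(4)}=v_1$. This yields
\begin{equation*}
\mathcal{G}_4(v_1) = \frac{1}{4} \int_{\mathbb{T}}\int_0^{\pi/2} v_1^4 \sin(2\eta)\, d\eta \,\dbar t \leq C \|v_1\|_{L^{\infty}(\mathbb{T}_t,\cH^{3/4}_\eta)}^{4}\,.
\end{equation*}
By \eqref{hs.linf} the right-hand side equals $C \|v_1\|^4_{\cV^{3/4}_{t,\eta}}$, and since $\|v_1\|_{\cV^{3/4}_{t,\eta}} \leq \|v_1\|_{\cV^{1}_{t,\eta}}$, we conclude $\mathcal{G}_4(v_1) \leq C^+ \|v_1\|_{\cV^1_{t,\eta}}^4$ for some absolute constant $C^+>0$, which gives the upper estimate in \eqref{m.p3}.

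For the lower bound, I would exhibit a one-mode test function. Define
\begin{equation*}
\check v_1(t,\eta) := \frac{1}{\omega_0^{(\mu_1,\mu_2)}} \cos(\omega_0^{(\mu_1,\mu_2)} t)\, e_0^{(\mu_1,\mu_2)}(\eta)\,,
\end{equation*}
which belongs to $V$ (and to $V_1$ once $N$ is chosen large enough, which is harmless since the critical point of Theorem \ref{teo.v1} is sought within the finite-dimensional space $V_1$). Using the Fourier characterization \eqref{hs.ker} together with the fact that $\{e_j^{(\mu_1,\mu_2)}\}$ is orthonormal in $\cH^0_\eta$, one checks $\|\check v_1\|^2_{\cV^1_{t,\eta}} = 1$. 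On the other hand, since $\omega_0^{(\mu_1,\mu_2)} \geq 1$, $\int_{\mathbb{T}}\cos^4(\omega_0^{(\mu_1,\mu_2)} t)\,\dbar t = 3/8$, and so
\begin{equation*}
\mathcal{G}_4(\check v_1) \;=\; \frac{3}{32\,(\omega_0^{(\mu_1,\mu_2)})^4}\int_0^{\pi/2} \big(e_0^{(\mu_1,\mu_2)}(\eta)\big)^{4}\sin(2\eta)\,d\eta \;=:\; C^-_{\mu_1,\mu_2}\,,
\end{equation*}
which is strictly positive because $e_0^{(\mu_1,\mu_2)}$ is a real, non-trivial smooth function on $(0,\pi/2)$ (see Lemma \ref{lemma.Lmu.diag} and \eqref{ejmu}). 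The value of this integral can be computed explicitly from the Jacobi polynomial formula \eqref{ejmu} and the normalization $N_0^{(|\mu_1|,|\mu_2|)}$, but the only fact we need is its positivity. This yields $m(\mathcal{G}_4) \geq C^-_{\mu_1,\mu_2}>0$, as claimed. No step poses a real obstacle: the upper bound is a direct application of the already-proven Hopf $L^4$ embedding and the lower bound is an elementary test-function computation, the sole subtlety being that the constant $C^-_{\mu_1,\mu_2}$ depends on $(\mu_1,\mu_2)$ through the eigenvalue $\omega_0^{(\mu_1,\mu_2)}$ and the $L^4$ norm of the ground state $e_0^{(\mu_1,\mu_2)}$.
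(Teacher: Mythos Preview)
Your proof is correct and follows essentially the same approach as the paper: the upper bound via \eqref{int4estimate.0} and \eqref{hs.linf}, and the lower bound by testing on the one-mode function $\cos(\omega_0^{(\mu_1,\mu_2)} t)\,e_0^{(\mu_1,\mu_2)}(\eta)$. The only slip is the value of the time integral, which with the paper's normalization $\dbar t = \tfrac{1}{\pi}\,dt$ gives $\int_{\mathbb{T}}\cos^4(\omega_0 t)\,\dbar t = \tfrac{3}{4}$ rather than $\tfrac{3}{8}$, but this is irrelevant to the argument.
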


\begin{proof}
	By \eqref{int4estimate.0} and by Lemma \ref{lemma.come.vuoi}, for any $v_1 \in V_1$ one has
	$$
	\mathcal{G}_4(v_1) = \frac 1 4 \int_{\mathbb{T}} \int_{0}^{\frac \pi 2} v_1^4 \sin(2\eta) d \eta \dbar t  \lesssim \|v_1\|^4_{\cV^1_{t,\eta}}\,,
	$$ 
	which gives the upper bound in \eqref{m.p3}. The lower bound follows since there exists $\check{v}_1$ such that $\mathcal{G}_4(\check{v}_1) >0$. For example, $\check{v}_1(t,\eta) := \cos( \omega_0 t) \egen_0(\eta)$,
	with $\egen_0 = e_0^{(\mu_1, \mu_2)}$ as in \eqref{ejmu}.
\end{proof}

We now  estimate the functional $\mathcal{R}_4$ defined in Lemma \ref{lem.azione.ridotta}:

\begin{lemma}[Estimate of $\alpha(\mathcal{R}_4)$]\label{lem.R.grad.p3} There exists $C_\delta>0$ such that
	$\alpha(\mathcal{R}_4) \leq C_\delta N^{-\frac 5 4 +2\delta} R^2$.
\end{lemma}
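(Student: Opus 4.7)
The plan is to follow the same strategy as in the proof of Lemma \ref{lem.R.grad} for $p=5$, adapted to the cubic setting on Hopf plane waves. First I would use Lemma \ref{lemma.ker.variational} (and the computation in its proof) to rewrite $d\mathcal{R}_4(v_1)[v_1]$ as $-\int_{\mathbb{T}}\int_0^{\pi/2}\bigl((v_1+\mathtt{v}_2+w)^3-v_1^3\bigr)v_1\,\sin(2\eta)\,d\eta\,\dbar t$ (the extra $v_1$ factor arises because $d\mathcal{G}_4(v_1)[v_1]=\int v_1^4$). Expanding the cube into monomials, this becomes
\begin{equation*}
\bigl|d\mathcal{R}_4(v_1)[v_1]\bigr|\lesssim \sum_{a+b=3,\, b\geq 1}\Bigl|\int v_1^{a+1}\mathtt{v}_2^{b}\sin(2\eta)\,d\eta\,\dbar t\Bigr|+\sum_{a+b+c=3,\, c\geq 1}\Bigl|\int v_1^{a+1}\mathtt{v}_2^{b}w^{c}\sin(2\eta)\,d\eta\,\dbar t\Bigr|,
\end{equation*}
with $\mathtt{v}_2=\mathtt{v}_2(v_1)$ and $w=w(v_1)$. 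Each monomial has total degree $4$, which is what permits the target bound in $\|v_1\|^4_{\cV^1_{t,\eta}}$.

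For the terms in the first sum (no $w$), I would apply the four-factor Sobolev--H\"older bound \eqref{int4estimate.0} of Lemma \ref{lem.lp.hs.easy}, yielding $\|v_1\|^{a+1}_{\cV^{3/4}_{t,\eta}}\|\mathtt{v}_2\|^{b}_{\cV^{3/4}_{t,\eta}}$. The gain comes from the smoothing estimate \eqref{lem.s.sprime} applied to $\mathtt{v}_2\in V_2$, namely $\|\mathtt{v}_2\|_{\cV^{3/4}_{t,\eta}}\leq N^{-(5/4+2\delta)}\|\mathtt{v}_2\|_{\cV^{2+2\delta}_{t,\eta}}$, combined with the first bound in \eqref{piccolo.piccolo.p3}, $\|\mathtt{v}_2\|_{\cV^{2+2\delta}_{t,\eta}}\lesssim N^{4\delta}R^2\|v_1\|_{\cV^1_{t,\eta}}$. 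This gives a factor $(N^{-5/4+2\delta}R^2)^{b}$ per monomial and, since the smallness condition forces $N^{-5/4+2\delta}R^2\ll 1$, the dominant contribution comes from $b=1$ and equals $N^{-5/4+2\delta}R^2\|v_1\|^4_{\cV^1_{t,\eta}}$, which is precisely the claimed bound.

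For the second sum (terms with $c\geq 1$), I would proceed exactly as in \eqref{stimette.2}: exploit that $v_1\in V_1$ to insert a projector, apply Lemma \ref{pesce.rosso} together with the algebra property \eqref{algebra.prop} of $H^{1/2+\delta}_t\cH^{3/2+\delta}_{\eta}$, and estimate
\begin{equation*}
\Bigl|\int v_1^{a+1}\mathtt{v}_2^{b}w^{c}\sin(2\eta)\,d\eta\,\dbar t\Bigr|\lesssim_\delta \|v_1\|^{a}_{\cV^{3/2+\delta}_{t,\eta}}\|\mathtt{v}_2\|^{b}_{\cV^{3/2+\delta}_{t,\eta}}\|w\|^{c}_{\HtHeta}\|v_1\|_{\cV^1_{t,\eta}}.
\end{equation*}
Using \eqref{lem.s.sprime} to pay $N^{1/2+\delta}$ per factor of $v_1$ raised to $\cV^{3/2+\delta}$, and the bounds \eqref{piccolo.piccolo.p3} for $\mathtt{v}_2$ and $w$, the worst monomial ($a=2, b=0, c=1$) contributes at most $\varepsilon\gamma^{-1}N^{4+8\delta}R^2\|v_1\|^4_{\cV^1_{t,\eta}}$; the smallness condition \eqref{little.gamma.p3} with $\zeta\geq 21/4+6\delta$ makes this $\leq N^{-5/4+2\delta}R^2\|v_1\|^4_{\cV^1_{t,\eta}}$. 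All other combinations with $b+c\geq 2$ acquire additional factors of either $N^{-5/4+2\delta}R^2$ or $\varepsilon\gamma^{-1}N^{\zeta}$ and are therefore strictly smaller.

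The main obstacle is the power counting in $N$ for the $w$-terms: while the Strichartz-free treatment of the $\mathtt{v}_2$-only terms produces the target exponent $-5/4+2\delta$ quite directly from the $\cV^{3/4}$ regularity built into \eqref{int4estimate.0}, the $w$-terms carry a positive power $N^{3+6\delta}$ from \eqref{piccolo.piccolo.p3} that must be defeated by $\varepsilon\gamma^{-1}$ through \eqref{little.gamma.p3}. Choosing $\zeta=\zeta(\delta)$ sufficiently large (and $R$ fixed) is exactly the mechanism that closes the bound; this is why the statement comes with the smallness condition \eqref{little.gamma.p3} already in force.
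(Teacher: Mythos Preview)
Your proposal is correct and follows essentially the same approach as the paper: split the monomials into pure $v_1,\mathtt{v}_2$ terms (handled by the four-factor bound \eqref{int4estimate.0} together with the smoothing \eqref{lem.s.sprime} on $\mathtt{v}_2\in V_2$ and \eqref{piccolo.piccolo.p3}) and $w$-terms (handled in algebra norm via Lemma \ref{algebraKer}/\ref{pesce.rosso} and absorbed by the smallness condition \eqref{little.gamma.p3}). Your identification of the worst contribution $\varepsilon\gamma^{-1}N^{4+8\delta}R^2\|v_1\|^4_{\cV^1_{t,\eta}}$ and of the needed threshold on $\zeta$ matches the paper exactly; note also that your formula for $d\mathcal{R}_4(v_1)[v_1]$ with the extra $v_1$ factor is the one actually used in the paper's expansion (the display \eqref{big.R} omits it, but the subsequent sums with exponent $j_1+1$ confirm it).
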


\begin{proof}
	We set $\mathtt{v}_2 := \mathtt{v}_2(v_1)$ and $w:= w(v_1)$. By \eqref{big.R}, one has
	\begin{align*}
	\left|d\mathcal{R}_4(v_1)[v_1]\right| & \lesssim \sum_{j_1 + j_2 = 3 \atop j_2 \geq 1} \left|\int_\mathbb{T}\int_{0}^{\frac{\pi}{2}} {v_1^{j_1 + 1} \mathtt{v}_2^{j_2}}\sin(2\eta)\,d \eta \dbar t\right|
	+ \sum_{j_1 + j_2 + j_3 = 3 \atop j_3 \geq 1} \left|\int_\mathbb{T}\int_{0}^{\frac{\pi}{2}} {v_1^{j_1 + 1} \mathtt{v}_2^{j_2} w^{j_3}}\sin(2\eta)\,d \eta\dbar t\right|
	\end{align*}
	and we estimate the two terms separately. Using Lemma \ref{lem.lp.hs.easy}, Lemma \ref{lemma.come.vuoi}, \eqref{lem.s.sprime}, \eqref{piccolo.piccolo.p3} and \eqref{little.gamma.p3}, and recalling $j_1 +j_2 = 3$, $j_2 \geq 1$, for the first term one has
	\begin{equation}\label{stimette.1.p3}
	\begin{aligned}
	\Big|\int_\mathbb{T}\int_{0}^{\frac{\pi}{2}} {v_1^{j_1 + 1}  \mathtt{v}_2^{j_2}}\sin(2\eta)\,d \eta\dbar t\Big| \lesssim_\delta N^{(-\frac 5 4 +2 \delta)j_2} R^{2 j_2} \|v_1\|^4_{\cV^1_{t,\eta}} \lesssim_{ \delta} N^{-\frac 5 4 + 2\delta} R^2 \|v_1\|^4_{\cV^1_{t,\eta}}\,.
	\end{aligned}
	\end{equation}
	For the second term
	using Lemma \ref{algebraKer},\eqref{lem.s.sprime}, \eqref{piccolo.piccolo.p3}, \eqref{little.gamma.p3} and recalling $j_1 + j_2 + j_3 = 3$ and $j_3 \geq 1$, one obtains
	\begin{align}
	\label{stimette.2.p3}
	\Big|\int_\mathbb{T}\int_{0}^{\frac{\pi}{2}}  \Pi_{V_1}({v_1^{j_1}  \mathtt{v}_2^{j_2} w^{j_3}}) v_1\sin(2 \eta)\,d \eta \dbar t\Big| \lesssim_{ \delta}
 \varepsilon \gamma^{-1} R^2 N^{4 + 8\delta} \|v_1\|^{4}_{\cV^1_{t,\eta}} \lesssim_{ \delta}  N^{-\frac 5 4 + 2\delta} R^2 \|v_1\|^4_{\cV^1_{t,\eta}}\,.
	\end{align}
	Combining estimates \eqref{stimette.1.p3} and \eqref{stimette.2.p3}, one then gets Lemma \ref{lem.R.grad.p3}.
\end{proof}
\begin{proof}[Proof of Theorem \ref{teo.v1} for $p=3$]
	By \eqref{m.p3} and Lemma \ref{lem.R.grad.p3}, one has
	$$
	\frac{\alpha(\mathcal{R}_4)}{m(\mathcal{G}_4)} \leq \frac{C_\delta N^{-\frac 5 4 + 2\delta} R^2}{C^-_{\mu_1, \mu_2}} \leq \underline{C}\,,
	$$
	with $\underline{C}$ the constant whose existence is stated in Theorem \ref{teo.v1}, provided \eqref{little.gamma.p3} holds with $\epsilon_{R, \delta}$ small enough. By \eqref{m.p3} one observes that
	$$
	\left(\frac{\varepsilon}{m(\mathcal{G}_4)}\right)^{\frac 1 2} \leq \varepsilon^{\frac 1 2} (C^-_{\mu_1, \mu_2})^{-\frac 1 2} \leq \underline{C} \rho_1 \stackrel{\eqref{parametri.p3}}{=} \underline{C} R \varepsilon^{\frac 1 2}\,,
	$$
	provided $R \geq (C^-_{\mu_1, \mu_2})^{-\frac 1 2} \underline{C}^{-1}$. Then the existence of a critical point $v^{(1)}_{1} \in \cV^1_{t,\eta}$ with $\|v^{(1)}_{1}\|_{\cV^1_{t,\eta}} \asymp \varepsilon^{\frac 1 2}$ as $\varepsilon \rightarrow 0$ follows.
\end{proof}

\subsection{Case $p=2$}

 In the next lemma we show that $\breve{\mathcal{G}_{4}}$ in \eqref{big.G2}
 assumes also negative values. 
 Thus $m(\breve{\mathcal{G}_{4}}) = -m^-(\breve{\mathcal{G}_{4}})$.
 
\begin{lemma}[Estimate of $m(\breve{\mathcal{G}_{4}})$]\label{lem.m.G.p2}
	There exists $C>0$, independent of $N$, such that
	\begin{equation}\label{m.p2}
		\frac{5}{24} \leq m(\breve{\mathcal{G}_{4}}) \leq C\,.
	\end{equation}
\end{lemma}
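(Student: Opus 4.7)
The plan is to prove the two inequalities separately. For the upper bound, I would exploit the Strichartz-type estimate already established in Proposition \ref{lem.strich.Lom}. Since $v_1 \in V$, Lemma \ref{lem.vsquare} guarantees that $v_1^2 \in W$, so $\mathcal{L}_1^{-1}(v_1^2)$ is well defined. Applying Item 1 of Proposition \ref{lem.strich.Lom} at $\omega = 1$ (where the factor $\gamma^{-1}$ is replaced by $1$) with $v^{(1)}=v^{(2)}=v^{(3)}=v^{(4)}=v_1$ immediately yields
\[
|\breve{\mathcal{G}_4}(v_1)| \leq C_\delta \|v_1\|^4_{\cV^{1/2+\delta}_{t,x}} \lesssim \|v_1\|_{\cV^1_{t,x}}^4,
\]
from which $m(\breve{\mathcal{G}_4}) \leq C$ follows. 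This step is essentially a direct application of an already proven tool.

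For the lower bound, the strategy is to exhibit an explicit test function $\check{v}_1 \in V_1$ on which $\breve{\mathcal{G}_4}$ is strictly negative and at least $-\frac{5}{24}\|\check{v}_1\|^4_{\cV^1_{t,x}}$. The natural candidate is the simplest mode in $V$: taking $\check{v}_1(t,x) := \cos(t) e_0(x) = \cos(t)$ (recalling $e_0 \equiv 1$ and $\omega_0 = 1$), which belongs to $V_1$ as soon as $N \geq 1$ and satisfies $\|\check{v}_1\|_{\cV^1_{t,x}} = 1$. Then $\check{v}_1^2 = \tfrac12 + \tfrac12 \cos(2t)$, whose two Fourier modes $(\ell,j) = (0,0)$ and $(2,0)$ both satisfy $\ell \neq \omega_j = 1$, confirming $\check{v}_1^2 \in W$. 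Since $\mathcal{L}_1$ acts on $\cos(\ell t) e_j$ as multiplication by $\ell^2 - \omega_j^2$, the inverse is computed explicitly:
\[
\mathcal{L}_1^{-1}(\check{v}_1^2) = -\tfrac12 + \tfrac16 \cos(2t).
\]

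The remaining step is a direct computation: using $\int_0^\pi \sin^2(x)\,\dbar x = 1$, $\int_\mathbb{T} \cos^2(t)\,\dbar t = 1$ and $\int_\mathbb{T} \cos^2(t)\cos(2t)\,\dbar t = \tfrac12$, one gets
\[
\breve{\mathcal{G}_4}(\check{v}_1) = \tfrac{1}{2}\Bigl(-\tfrac12 + \tfrac{1}{12}\Bigr) = -\tfrac{5}{24}.
\]
This shows $m^-(\breve{\mathcal{G}_4}) \leq -\tfrac{5}{24}$ (so in particular $\breve{\mathcal{G}_4}$ does take negative values, justifying $m(\breve{\mathcal{G}_4}) = -m^-(\breve{\mathcal{G}_4})$), and hence $m(\breve{\mathcal{G}_4}) \geq \tfrac{5}{24}$. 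There is no substantial obstacle here — everything reduces to a bookkeeping exercise with the normalized measures and the explicit spectrum of $\mathcal{L}_1$; the only mild subtlety is verifying that the chosen $\check{v}_1$ lies in $V_1$ and that all normalization constants are consistent with the definition of $\|\cdot\|_{\cV^1_{t,x}}$.
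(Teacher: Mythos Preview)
Your proposal is correct and follows essentially the same approach as the paper: the upper bound via Item~1 of Proposition~\ref{lem.strich.Lom} at $\omega=1$ (the paper simply takes $\delta=\tfrac12$), and the lower bound via the explicit test function $\cos(t)e_0(x)=\cos(t)$ with the same computation of $\mathcal{L}_1^{-1}(\cos^2 t)=-\tfrac12+\tfrac16\cos(2t)$ yielding $\breve{\mathcal{G}_4}(\check v_1)=-\tfrac{5}{24}$. The bookkeeping with the normalized measures is handled correctly.
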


\begin{proof}
	By Item 1 of Proposition \ref{lem.strich.Lom} with $\delta = \frac 1 2$, for any $v_1 \in V_1$ one has
	$$
	-\breve{\mathcal{G}_{4}}(v_1) \leq \frac 1 2 \left| \int_{\mathbb{T}} \int_0^\pi v_1^2 \mathcal{L}^{-1}_1 v_1^2 \sin^2(x) \dbar x \dbar t \right| \leq  C \|v_1\|^4_{\cV^1_{t,x}}\,
	$$
	for some $C>0$, which gives the second inequality in \eqref{m.p2}. We now prove the first inequality in \eqref{m.p2}.
	Let $\bar{v}_1(t,x):= \cos(t) e_0(x) = \cos(t)$, with $e_0 = 1$ by \eqref{def.ej}. Then $\bar{v}_1 \in V_1$ and $\|\bar{v}_1\|_{\cV^1_{t,x}} = 1$. We now compute $\breve{\mathcal{G}_{4}}(\bar{v})$. Recalling \eqref{L.om.fourier}, one has
	\begin{align*}
	\int_{\mathbb{T}}\int_{0}^\pi \bar{v}_1^2 \mathcal{L}_1^{-1}\bar{v}_1^2 \sin^2(x)\,\dbar x \dbar t &= 	\int_{\mathbb{T}}\int_{0}^\pi \big(\tfrac 1 2 \cos(2t) + \tfrac 1 2 \big) 
	\big( \tfrac{1}{6} \cos(2t) - \tfrac{1}{2}\big)\sin^2(x)\,\dbar x \dbar t = -\tfrac{5}{12}\,,
	\end{align*}
	thus $m(\breve{\mathcal{G}_{4}}) = -\inf_{v_1 \in V_1 \setminus\{0\}} \frac{\breve{\mathcal{G}_{4}}(v_1)}{\|v_1\|_{\cV^1_{t,x}}^4} \geq  -{\breve{\mathcal{G}_{4}}(\bar{v})} > \frac{5}{24}$.
\end{proof}

We now estimate $\alpha(\breve{\mathcal{R}}_4)$ where $\breve{\mathcal{R}}_4$ is defined in  \eqref{big.R2}. We use the following auxiliary lemmas.

\begin{lemma}\label{cor.back.to.w}
	 There exists $C_\delta>0$ such that
	\begin{align}
	\label{di.v2.p2}
	& \|{\mathtt{v}}_2(v_1)\|_{\cV^{2+2\delta}_{t,x}} \lesssim_{\delta} \gamma^{-1} R^2 \|v_1\|_{\cV^1_{t,x}}\,,\\
	& \label{summarizing.12}
	\|w(v_1)\|_{H^{\frac 1 2 + \delta}_{t}\cH^{\frac 3 2 + \delta}_x} \leq C_{\delta} \gamma^{-1}  N^{2(1+2\delta)} \sqrt{\varepsilon} R \|v_1\|_{\cV^1_{t,x}}\,, \\
	& \label{lem.wtilde.order.2}
	\| \tilde w(v_1) \|_{H^{\frac 1 2 + \delta}_t \cH^{\frac 3 2 + \delta}_x} \leq C_{\delta} \gamma^{-2}  R N^{3(1+2\delta)} \sqrt{\varepsilon} \|v_1\|^2_{\cV^1_{t,x}}\,.
	\end{align}
\end{lemma}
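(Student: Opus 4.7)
I would prove the three bounds in sequence, exploiting the implicit definitions $\mathtt{v}_2(v_1)=v_2(v_1,\tilde w(v_1))$ and $w(v_1)=\mathcal{L}_\omega^{-1}(v_1+\mathtt{v}_2(v_1))^2+\tilde w(v_1)$ (cf.~\eqref{therealw}), the fixed point equation \eqref{trenitalia} satisfied by $\tilde w$, and the vanishing $\mathtt{v}_2(0)=v_2(0,\tilde w(0))=v_2(0,0)=0$ and $\tilde w(0)=0$ from Propositions \ref{prop.v2.p2}, \ref{prop.w.p2}. Throughout I use Lemma \ref{lemma.sono.pochissimi}, the algebra property \eqref{algebra.prop}, the identity $\|v\|_{H^r_t\cH^s_z}=\|v\|_{\cV^{r+s}_{t,z}}$ for $v\in V$ from Lemma \ref{lemma.come.vuoi}, and the smoothing estimate \eqref{lem.s.sprime}.

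For \eqref{di.v2.p2} I set $\phi(v_1):=v_2(v_1,\tilde w(v_1))$; by the chain rule
\[
d\phi(v_1)[h]=\partial_{v_1}v_2(v_1,\tilde w(v_1))[h]+\partial_{\tilde w}v_2(v_1,\tilde w(v_1))[d\tilde w(v_1)[h]],
\]
and combining \eqref{de.v2.de.v1.p2}, \eqref{de.v2.de.w.p2}, \eqref{de.w.de.v1.p2} gives $\|d\phi(v_1)\|\lesssim_\delta \gamma^{-1}R^2+\gamma^{-2}N^{2+8\delta}\sqrt{\varepsilon}\,R^3$; under \eqref{little.gamma.p2} with $\mathtt{b},\beta$ large the second summand is absorbed into the first. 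Since $\phi(0)=0$, integrating $\phi(v_1)=\int_0^1 d\phi(sv_1)[v_1]\,ds$ yields \eqref{di.v2.p2}.

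For \eqref{lem.wtilde.order.2}, insert \eqref{di.v2.p2} together with \eqref{lem.s.sprime} to obtain $\|U\|_{\cV^{2+2\delta}_{t,x}}\lesssim_\delta N^{1+2\delta}\|v_1\|_{\cV^1_{t,x}}$ for $U:=v_1+\mathtt{v}_2(v_1)$, and estimate each summand on the RHS of \eqref{trenitalia} in the $\HtHx$ norm. The critical observation is that the three $\tilde w$-linear contributions $\mathcal{L}_\omega^{-1}\Pi_W(2U\tilde w)$, $\mathcal{L}_\omega^{-1}\Pi_W(2\mathcal{L}_\omega^{-1}U^2\cdot\tilde w)$ and $\mathcal{L}_\omega^{-1}\Pi_W(\tilde w^2)$ (bounding one $\tilde w$ factor of the latter by $\rho_3$) all produce small-prefactor multiples of $\|\tilde w\|_{\HtHx}$, which by \eqref{little.gamma.p2} can be absorbed into the LHS with residual $\geq 1/2$. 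The genuine sources $2\mathcal{L}_\omega^{-1}\Pi_W(U\mathcal{L}_\omega^{-1}U^2)$ and $\mathcal{L}_\omega^{-1}\Pi_W((\mathcal{L}_\omega^{-1}U^2)^2)$ contribute $\lesssim_\delta \gamma^{-2}N^{3(1+2\delta)}\|v_1\|_{\cV^1_{t,x}}^3$ and a subdominant quartic term, yielding $\|\tilde w\|_{\HtHx}\lesssim_\delta \gamma^{-2}N^{3(1+2\delta)}\|v_1\|_{\cV^1_{t,x}}^3$. Since $v_1\in\mathcal{D}_{\rho_1}$ gives $\|v_1\|_{\cV^1_{t,x}}\leq R\sqrt{\varepsilon}$, one has $\|v_1\|_{\cV^1_{t,x}}^3\leq R\sqrt{\varepsilon}\|v_1\|_{\cV^1_{t,x}}^2$, upgrading the cubic bound to \eqref{lem.wtilde.order.2}. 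Finally \eqref{summarizing.12} follows from $w=\mathcal{L}_\omega^{-1}U^2+\tilde w$: the first summand is bounded by $\gamma^{-1}\|U\|_{\cV^{2+2\delta}_{t,x}}^2\lesssim_\delta \gamma^{-1}N^{2(1+2\delta)}R\sqrt{\varepsilon}\|v_1\|_{\cV^1_{t,x}}$ (again via $\|v_1\|^2\leq R\sqrt{\varepsilon}\|v_1\|$), and the second via \eqref{lem.wtilde.order.2} is subdominant by smallness.

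The main obstacle is \eqref{lem.wtilde.order.2}: a direct application of the fundamental theorem of calculus via \eqref{de.w.de.v1.p2} only yields the linear-in-$\|v_1\|$ bound $\gamma^{-2}N^{3(1+2\delta)}\varepsilon R^2\|v_1\|_{\cV^1_{t,x}}$, which is strictly weaker than the claim for small $\|v_1\|$ (their ratio is $\sqrt{\varepsilon} R/\|v_1\|\geq 1$). The improvement reflects the structural fact that $\tilde w$ has no linear response in $v_1$ at the origin, and must be extracted by working with the fixed point equation \eqref{trenitalia} directly and absorbing $\tilde w$-linear RHS contributions into the LHS, rather than by differentiating.
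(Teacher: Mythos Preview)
Your proof is correct; all three estimates go through as you outline. For \eqref{di.v2.p2} your argument coincides with the paper's. For the other two, you and the paper take different (but equally valid) routes.

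The paper reverses your order: it first proves \eqref{summarizing.12} by differentiating $w(v_1)=\mathcal{L}_\omega^{-1}(v_1+\mathtt{v}_2)^2+\tilde w(v_1)$, bounding $d_{v_1}\tilde w$ via \eqref{de.w.de.v1.p2}, and integrating from $w(0)=0$. Then, for \eqref{lem.wtilde.order.2}, it makes the key observation that substituting $w=\mathcal{L}_\omega^{-1}U^2+\tilde w$ back into \eqref{trenitalia} collapses the right-hand side to
\[
\tilde w=\mathcal{L}_\omega^{-1}\Pi_W\!\bigl(2(v_1+\mathtt{v}_2)w+w^2\bigr),
\]
an \emph{explicit} expression in $w$ with no $\tilde w$ on the right. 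Since each term on the right carries at least one factor of $w$, the already-established linear bound \eqref{summarizing.12} (together with \eqref{di.v2.p2}) immediately yields the quadratic-in-$\|v_1\|$ estimate \eqref{lem.wtilde.order.2}, with no absorption needed.

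Your approach extracts the same structural fact---that $\tilde w$ has no linear response at the origin---by absorbing the $\tilde w$-linear terms on the right of \eqref{trenitalia} into the left. This is perfectly sound and more self-contained (it does not require \eqref{de.w.de.v1.p2}), at the cost of a little extra bookkeeping. The paper's rewriting trick is slicker: it recognizes that the implicit equation for $\tilde w$ is, after the substitution, already explicit in $w$, so the improvement from linear to quadratic in $\|v_1\|$ is inherited directly from the factor of $w$ rather than obtained through absorption.
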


\begin{proof}
	By chain rule we have $ d_{v_1} {\mathtt v}_2(v_1)[h_1]=(\partial_{v_1}v_2)(v_1, \tilde w(v_1))[h_1]+(\partial_w v_2)(v_1, \tilde w(v_1))\left[\partial_{v_1}\tilde w(v_1)[h_1]\right]$, hence \eqref{di.v2.p2} follows by \eqref{de.v2.de.v1.p2}, \eqref{de.v2.de.w.p2}, \eqref{de.w.de.v1.p2} and \eqref{little.gamma.p2} and recalling that $\mathtt{v}_2(0) = 0$.
	
	Concerning $w(v_1)$, we have $w(0) = 0$ since $v_2(0, 0) = 0$ and $\tilde w (0) = 0$.
	For any $ h_1 \in V_1$, by algebra property \eqref{algebra.prop}, by Lemma \ref{lemma.sono.pochissimi}, Lemma \ref{lem.v2.leq.v1}, \eqref{di.v2.p2}, \eqref{de.w.de.v1.p2}, \eqref{scelta.param.p2.1} and \eqref{little.gamma.p2} one has
	\begin{equation*}
	\begin{aligned}
	\|d_{v_1} w(v_1)[h_1]\|_{\HtHx} &\leq  \|2 \mathcal{L}_\omega^{-1} \left((v_1 + {\mathtt{v}}_2)(h_1 + d_{v_1}{\mathtt{v}}_2[h_1])\right)\|_{\HtHx} + \|d_{v_1} \tilde w[h_1]\|_{\HtHx}\\
	&\lesssim_{\delta} \gamma^{-1} N^{1+2\delta} \rho_1 (N^{1+2\delta} + \gamma^{-1} R^2 ) \|h_1\|_{\cV^{1}_{t,x}} + \gamma^{-2} N^{3(1+2\delta)}\varepsilon R^2 \|h_1\|_{\cV^{1}_{t,x}}\\
	&\lesssim_{\delta} \gamma^{-1} N^{2(1+2\delta)} R \sqrt{\varepsilon} \|h_1\|_{\cV^{1}_{t,x}} \,,
	\end{aligned}
	\end{equation*}
	which implies \eqref{summarizing.12}, since $w(0) = 0$.
	
	We now prove \eqref{lem.wtilde.order.2}. First we observe that, by  \eqref{trenitalia}, $\tilde w(v_1)$ satisfies
	$$
	\tilde w(v_1) = \mathcal{L}_\omega^{-1} \Pi_W (2 (v_1 + \mathtt{v}_2) w+ w^2) = \mathcal{L}_\omega^{-1} \Pi_W ((v_1 + \mathtt{v}_2 + w)^2 - (v_1 + \mathtt{v}_2)^2 )\,.
	$$
 Then by algebra property \eqref{algebra.prop} and by Lemma \ref{lemma.sono.pochissimi}  we have 
	\begin{align*}
	\| \tilde w (v_1)\|_{H^{\frac 1 2 + \delta}_t \cH^{\frac 3 2 + \delta}_x}
	&\lesssim_{\delta} \gamma^{-1} \sum_{j_1 + j_2 + j_3 = 2 \atop j_3 \geq 1} \left(N^{(1+2\delta)}\| v_1\|_{\cV^{1}_{t,x}} \right)^{j_1} \| {\mathtt{v}}_2(v_1)\|^{j_2}_{\cV^{2+2\delta}_{t,x}} \| w(v_1)\|^{j_3}_{H^{\frac 1 2 + \delta}_t \cH^{\frac 3 2 + \delta}_x}\,.
	\end{align*}
	Now \eqref{lem.wtilde.order.2} follows by \eqref{di.v2.p2} and \eqref{summarizing.12}.
\end{proof}
 
\begin{lemma}\label{lemma.lom-lun}
Assume $\omega \in \Omega_\gamma$.
	For any $w \in W \cap H^{r + 1}_t \cH^{s}_x $, $r, s \in \R$,  one has
	\begin{equation}\label{lom1sm}
	\left\|\left(\mathcal{L}_\omega^{-1} - \mathcal{L}_1^{-1} \right) w \right\|_{H^{r}_t \cH^{s}_x} \leq 2 \gamma^{-1} \varepsilon \|w\|_{H^{r + 1}_t \cH^{s}_x}\,.
	\end{equation}
\end{lemma}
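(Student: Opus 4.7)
The plan is to work entirely on the Fourier side, exploiting the diagonal representation \eqref{L.om.fourier}. For $w = \sum_{\ell \neq \omgen_j} w_{\ell,j} \cos(\ell t)\egen_j(z) \in W$, both inverses act by multiplication by the symbols $(\omega^2\ell^2 - \omgen_j^2)^{-1}$ and $(\ell^2 - \omgen_j^2)^{-1}$ respectively. A direct computation using $\omega^2 - 1 = \varepsilon$ gives
\begin{equation*}
\frac{1}{\omega^2\ell^2 - \omgen_j^2} - \frac{1}{\ell^2 - \omgen_j^2} = -\frac{\varepsilon\,\ell^2}{(\omega^2\ell^2 - \omgen_j^2)(\ell^2 - \omgen_j^2)}.
\end{equation*}

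The key estimate I would establish is that this symbol is bounded in absolute value by $2\varepsilon\langle\ell\rangle/\gamma$ for all $(\ell,j)$ with $\ell \neq \omgen_j$. Two ingredients combine: first, the Diophantine condition $\omega \in \Omega_\gamma$ yields $|\omega^2\ell^2 - \omgen_j^2| \geq \gamma/2$, exactly as in \eqref{approx.bded} in the proof of Lemma \ref{lemma.sono.pochissimi}. Second, since $\ell$ and $\omgen_j$ are non-negative integers with $\ell \neq \omgen_j$, one has $|\ell - \omgen_j| \geq 1$, hence
\begin{equation*}
|\ell^2 - \omgen_j^2| = |\ell - \omgen_j|\,|\ell + \omgen_j| \geq \ell + \omgen_j \geq \ell.
\end{equation*}
The case $\ell = 0$ is trivial as the numerator $\varepsilon\ell^2$ then vanishes. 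Combining these gives $|\varepsilon\ell^2/((\omega^2\ell^2 - \omgen_j^2)(\ell^2 - \omgen_j^2))| \leq 2\varepsilon\ell/\gamma \leq 2\varepsilon\langle\ell\rangle/\gamma$.

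The conclusion then follows from the Parseval-type identity used to define the $H^r_t\cH^s_z$ norm in \eqref{Hps.storti}: squaring the symbol bound and summing against $\langle\ell\rangle^{2r}\omgen_j^{2s}|w_{\ell,j}|^2$ absorbs the extra factor $\langle\ell\rangle^2$ into $\langle\ell\rangle^{2(r+1)}$, yielding $\frac{4\varepsilon^2}{\gamma^2}\|w\|_{H^{r+1}_t\cH^s_z}^2$. The argument carries no real obstacle; the only substantive observation is the identity $|\ell^2 - \omgen_j^2| \geq \ell$, which trades one of the two powers of $\ell$ in the numerator for a non-resonance gain, explaining why the bound only costs one time derivative instead of the two a cruder estimate would produce.
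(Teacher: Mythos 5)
Your proposal is correct and follows essentially the same route as the paper: work on the Fourier side, write the symbol difference as $\varepsilon\ell^2/((\omega^2\ell^2-\omgen_j^2)(\ell^2-\omgen_j^2))$, bound one factor by the Diophantine gap $|\omega^2\ell^2-\omgen_j^2|\geq\gamma/2$ and the other by the integer gap $|\ell^2-\omgen_j^2|\geq|\ell+\omgen_j|\geq|\ell|$, then read off the result from the Fourier characterization of $H^r_t\cH^s_x$. The one inessential slip is the sign convention: in the $p=2$ setting where this lemma is used one has $\omega^2=1-\varepsilon$, so $1-\omega^2=\varepsilon$ rather than $\omega^2-1=\varepsilon$, but since only $|1-\omega^2|$ enters the estimate this changes nothing.
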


\begin{proof}
By \eqref{L.om.fourier} we have 
	\begin{equation}\label{vittor}
	\left(\mathcal{L}_\omega^{-1} - \mathcal{L}_1^{-1}\right) w(t,x) = \sum_{\ell \neq \omega_j, \ell \neq 0} w_{\ell,j}\Big(\frac{1}{\omega^2 \ell^2 -\omega_j^2} - \frac{1}{ \ell^2 -\omega_j^2}\Big) \cos(\ell t) e_j(x)\,,
	\end{equation}
	and
	\begin{equation}\label{pisani}
	\Big|\frac{1}{\omega^2 \ell^2- \omega_j^2} - \frac{1}{ \ell^2 -\omega_j^2} \Big| 
	= \Big| \frac{\ell^2(1 -\omega^2) }{(\omega^2 \ell^2 -\omega_j^2)( \ell^2 -\omega_j^2)} \Big| \leq \frac{|\ell|^2 \varepsilon}{|\ell| \frac{\gamma}{2}} = \frac{2 |\ell| \varepsilon}{\gamma}\,,
	\end{equation}
	using \eqref{approx.bded}
	and $|\ell^2 - \omega_j^2| \geq |\ell + \omega_j| \geq |\ell|$.
	Combining \eqref{vittor} and \eqref{pisani}, one deduces \eqref{lom1sm}.
\end{proof}

We now exhibit an upper bound for $\alpha(\breve{\mathcal{R}}_4)$.

\begin{lemma}[Estimate of $\alpha(\breve{\mathcal{R}}_4)$ ]\label{lemma.aR.small.p2}
	There exists $C_\delta>0$ such that $\alpha(\breve{\mathcal{R}}_4) \leq C_{\delta} \gamma^{-2} N^{-\frac 3 2 -\delta} R^2$.
\end{lemma}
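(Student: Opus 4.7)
Decompose
\begin{equation*}
d\breve{\mathcal{R}}_4(v_1)[v_1]=I_1+I_2+I_3
\end{equation*}
according to the three lines of \eqref{big.R2}, and bound each by $C_\delta \gamma^{-2}N^{-\frac32-\delta}R^2\|v_1\|_{\cV^{1}_{t,x}}^4$. Throughout, $v_1\in\mathcal{D}_{\rho_1}$ and $\varepsilon=N^{-\beta}$ with $\beta$ chosen sufficiently large to absorb positive powers of $N$.

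For $I_1=2\int_{\mathbb{T}}\int_0^\pi v_1^2(\mathcal{L}_1^{-1}-\mathcal{L}_\omega^{-1})v_1^2\sin^2(x)\dbar x\dbar t$: by Lemma \ref{lem.vsquare} one has $v_1^2\in W$, so both $\mathcal{L}_\omega^{-1}v_1^2$ and $\mathcal{L}_1^{-1}v_1^2$ are defined. Cauchy--Schwarz together with Lemma \ref{lemma.lom-lun} (with $r=s=0$) gives $|I_1|\lesssim \gamma^{-1}\varepsilon\|v_1^2\|_{L^2_{t,x}}\|v_1^2\|_{H^1_t L^2_x}$. Since $v_1\in V_1$ has time-frequencies $\leq N$, so does $v_1^2$ up to a factor $2$, whence $\|v_1^2\|_{H^1_t L^2_x}\lesssim N\|v_1^2\|_{L^2_{t,x}}$; moreover the $L^4$ Strichartz bound \eqref{stri.l4} yields $\|v_1^2\|_{L^2_{t,x}}=\|v_1\|_{L^4_{t,x}}^2\lesssim_\delta\|v_1\|_{\cV^{1}}^2$. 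Therefore $|I_1|\lesssim_\delta\gamma^{-1}\varepsilon N\|v_1\|^4$, which is $\leq \gamma^{-2}R^2 N^{-\frac32-\delta}\|v_1\|^4$ as soon as $\beta\geq \frac52+\delta$.

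For $I_2$ and $I_3$ substitute $w=\mathcal{L}_\omega^{-1}(v_1+\mathtt{v}_2)^2+\tilde w$ from \eqref{therealw}, so that $w-\mathcal{L}_\omega^{-1}v_1^2=\mathcal{L}_\omega^{-1}(2v_1\mathtt{v}_2+\mathtt{v}_2^2)+\tilde w$ and $w^2$ expands analogously. The resulting integrals split into two classes. Class (a) consists of pairings of four $V$-functions through a single $\mathcal{L}_\omega^{-1}$, each containing at least one factor $\mathtt{v}_2$; to such an integral we apply the Strichartz inequality \eqref{v.che.vuoi} choosing the index $l$ to place the $\cV^{-\delta}$ norm on a $\mathtt{v}_2$-factor. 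Combining with the high-frequency smoothing $\|\mathtt{v}_2\|_{\cV^{-\delta}}\leq N^{-(2+3\delta)}\|\mathtt{v}_2\|_{\cV^{2+2\delta}}$ (from \eqref{lem.s.sprime}, since $\mathtt{v}_2\in V_2$) and the bound \eqref{di.v2.p2}, every class-(a) term is $\lesssim_\delta \gamma^{-2}R^2 N^{-(2+3\delta)}\|v_1\|^4$ (strictly better when more $\mathtt{v}_2$-factors are present). Class (b) consists of integrands containing at least one $\tilde w$-factor; these are bounded by Cauchy--Schwarz or H\"older, Lemma \ref{lemma.sono.pochissimi}, the algebra \eqref{algebra.prop}, $\|v_1\|_{L^\infty}\lesssim N^{\frac12+\delta}\|v_1\|_{\cV^{1}}$, and the genuinely quadratic bound \eqref{lem.wtilde.order.2} on $\|\tilde w\|_{\HtHx}$, producing bounds of the form $\gamma^{-3}R^{k} N^{a}\sqrt\varepsilon\,\|v_1\|^4$ with explicit $a>0$, all absorbed into $\gamma^{-2}R^2N^{-\frac32-\delta}\|v_1\|^4$ for $\beta$ large. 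The $\int v_1 w^2$ piece is treated by first proving the genuinely quadratic bound $\|w\|_{\HtHx}\lesssim_\delta \gamma^{-1}N^{2+4\delta}\|v_1\|^2$ (consequence of algebra \eqref{algebra.prop}, \eqref{di.v2.p2} and \eqref{lem.wtilde.order.2}), and then using $\|v_1\|\leq\rho_1=R\sqrt\varepsilon$ to convert the surplus fifth power of $v_1$ into $R\sqrt\varepsilon$.

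The main obstacle is purely combinatorial: each of the many monomials arising from the full expansion of $w$ and $w^2$ in terms of $v_1,\mathtt{v}_2,\tilde w$ must be verified to carry either (i) the Strichartz gain $N^{-(2+3\delta)}$, obtained by spending one $\mathtt{v}_2$-factor in the $\cV^{-\delta}$ norm, or (ii) the factor $\sqrt\varepsilon=N^{-\beta/2}$, obtained via at least one $\tilde w$-factor; one then chooses $\beta$ large enough (above $\max\{\tfrac52+\delta,\,9+14\delta\}$, with the exact threshold depending on the worst term) so that every term is bounded by $C_\delta\gamma^{-2}R^2N^{-\frac32-\delta}\|v_1\|^4$, giving the claim.
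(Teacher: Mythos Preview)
Your proposal is correct and follows essentially the same route as the paper. After substituting $w=\mathcal{L}_\omega^{-1}(v_1+\mathtt{v}_2)^2+\tilde w$, both proofs arrive at the same four groups of terms: your class~(a) is the paper's $B(v_1)$, your class~(b) is the paper's $A(v_1)$, your $\int v_1 w^2$ piece is the paper's $C(v_1)$, and your $I_1$ is the paper's $D(v_1)$. The minor differences are: for the class~(a) terms you invoke Item~2 of Proposition~\ref{lem.strich.Lom} (placing a $\cV^{-\delta}$-norm on $\mathtt{v}_2$), while the paper uses Item~1 (placing a $\cV^{\frac12+\delta}$-norm on $\mathtt{v}_2$), both exploiting the same high-frequency smoothing \eqref{lem.s.sprime} and the bound \eqref{di.v2.p2}; and for $I_1$ you use the time-frequency cutoff $\|v_1^2\|_{H^1_tL^2_x}\lesssim N\|v_1^2\|_{L^2}$ together with \eqref{stri.l4}, yielding the sharper bound $\gamma^{-1}\varepsilon N\|v_1\|^4$, whereas the paper passes through the algebra space $H^{\frac12+\delta}_t\cH^{\frac32+\delta}_x$ and gets $\gamma^{-1}\varepsilon N^{4+4\delta}\|v_1\|^4$. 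Both variants are absorbed by the smallness condition \eqref{little.gamma.p2}.
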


\begin{proof}
	 For brevity, we denote $\mathtt{v}_2 := \mathtt{v}_2(v_1)$, $w := w(v_1)$, $\tilde w := \tilde w(v_1)$. By \eqref{big.R2} and \eqref{therealw}, we have
	$ d \breve{\mathcal{R}}_4(v_1)[v_1] = A(v_1) + B(v_1) + C(v_1) + D(v_1)$,
	where
	$$
	\begin{aligned}
	&A(v_1)  := -2 \int_{\mathbb{T}} \int_0^\pi \Pi_{V_1}\left((v_1 + \mathtt{v}_2) \tilde w \right) v_1 \sin^2(x) \dbar x \dbar t \,, \\
	&B(v_1)
	=  \underset{:=B_1(v_1)}{\underbrace{-2 \int_{\mathbb{T}} \int_0^\pi v_1^2 \mathcal{L}_\omega^{-1} \left(2 v_1 \mathtt{v}_2 + \mathtt{v}_2^2\right)\sin^2(x) \dbar x \dbar t }}  \underset{:=B_2(v_1)}{\underbrace{-2 \int_{\mathbb{T}} \int_{0}^\pi v_1 \mathtt{v}_2 \mathcal{L}_\omega^{-1}(v_1 + \mathtt{v}_2)^2 \sin^2(x) \dbar x \dbar t}}  \,,\\
	& C(v_1) :=  - \int_{\mathbb{T}} \int_0^\pi w^2 v_1 \sin^2(x) \dbar x \dbar t \,,\\
	& D(v_1) :=  2 \int_{\mathbb{T}} \int_0^\pi v_1^2 \left(\mathcal{L}_1^{-1} - \mathcal{L}_\omega^{-1} \right)v_1^2 \,  \sin^2(x) \,\dbar x \dbar t\,.
	\end{aligned}
	$$
\\[1mm]	
{\bf Estimate of $ A(v_1)$.}
	By Cauchy-Schwarz inequality, 
	Lemma \ref{algebraKer},  Lemma \ref{cor.back.to.w}, \eqref{lem.s.sprime} and \eqref{little.gamma.p2}, we estimate
	\begin{align}
	\nonumber
	|A(v_1)|
	&\leq 2 \left\| \Pi_{V_1}( (v_1 + \mathtt{v}_2)  \tilde w) \right\|_{\cV^0_{t,x}} \|v_1\|_{\cV^0_{t,x}}\\
	\nonumber
	&\lesssim_{\delta} (N^{\frac 1 2 + \delta}\| v_1\|_{\cV^{1}_{t,x}} + N^{-\frac 1 2 - \delta}\|\mathtt{v}_2\|_{\cV^{2 + 2\delta}_{t,x}}) \|\tilde w\|_{H^{\frac 1 2 + \delta}_t \cH^{\frac 3 2 + \delta}_x} \|v_1\|_{\cV^1_{t,x}}\\
	&
	\label{stima.a}
	\lesssim_{\delta} \gamma^{-2} R N^{\frac 7 2 +7\delta} \sqrt{\varepsilon}  \|v_1\|^4_{\cV^1_{t,x}}\,.
	\end{align}
\noindent
{\bf Estimate of $ B(v_1) $.} We claim that 
	\begin{equation}\label{stima.b}
	|B(v_1)| \lesssim_{\delta} \gamma^{-2} N^{-\frac 3 2 -\delta} R^2 \|v_1\|^4_{\cV^1_{t,x}}\,.
	\end{equation}
	By Proposition \ref{lem.strich.Lom}, \eqref{lem.s.sprime}, \eqref{di.v2.p2} and \eqref{little.gamma.p2},
	\begin{align}
	\nonumber
	|B_1(v_1)|
	&\lesssim_{\delta} \gamma^{-1} \|v_1\|^2_{\cV^{1}_{t,x}}  N^{-\frac 3 2 - \delta}   \|\mathtt{v}_2\|_{\cV^{2+2\delta}_{t,x}} \big(\|v_1\|_{\cV^{1}_{t,x}} + N^{-\frac 3 2 -\delta} \|\mathtt{v}_2\|_{\cV^{2+2\delta}_{t,x}} \big)\\
	\label{stima.a2}
	&\lesssim_{\delta} \gamma^{-2} N^{-\frac 3 2 - \delta} R^2 \|v_1\|^4_{\cV^{1}_{t,x}}\,.
	\end{align}
	Similarly, one gets
	$|B_2(v_1)|
	\lesssim_{\delta} \gamma^{-2}  N^{-\frac 3 2 -\delta} R^2 \|v_1\|^4_{\cV^1_{t,x}}$.
		\\[1mm]	
{\bf Estimate of $ C(v_1) $.} Recalling \eqref{therealw} and using  \eqref{algebra.prop}, Lemma \ref{lemma.sono.pochissimi}, Lemma \ref{cor.back.to.w}, \eqref{scelta.param.p2.1} and \eqref{little.gamma.p2}, it results 
	\begin{equation}\label{stima.c}
	|C(v_1)|   
	\lesssim_{ \delta} \gamma^{-2} N^{4(1+2\delta)} \|v_1\|^5_{\cV^1_{t,x}}
\lesssim_{\delta} \gamma^{-2} \sqrt{\varepsilon} R N^{4(1+2\delta)} \|v_1\|^4_{\cV^1_{t,x}}\,.
\end{equation}
\noindent
{\bf Estimate of $ D(v_1) $.} 
	Using  \eqref{stri.l4}, Lemma \ref{lemma.lom-lun} and  \eqref{algebra.prop}, one has
	\begin{align}
	{|D(v_1)|}&{\lesssim \|v_1^2\|_{\cV^0_{t,x}} \left\| \left( \mathcal{L}_\omega^{-1} - \mathcal{L}_1^{-1} \right)v_1^2 \right\|_{\cV^0_{t,x}}} 
	{\lesssim_\delta\|v_1\|^2_{\cV^{\frac{3}{2}+\delta}_{t,x}} \varepsilon \gamma^{-1} \left\| v_1^2\right\|_{H^{1}_t \cH^{0}_x}} \lesssim_\delta \varepsilon \gamma^{-1} \|v_1\|^2_{\cV^{\frac 3 2 + \delta}_{t,x}}  \| v_1^2\|_{H^{1}_t\cH^{\frac{3}{2}+\delta}_x} \notag \\
	&\lesssim_\delta \varepsilon \gamma^{-1} \|v_1\|^2_{\cV^{\frac{3}{2}+\delta}_{t,x}}  \| v_1\|^2_{\cV^{\frac{5}{2}+\delta}_{t,x}}\lesssim_\delta \varepsilon \gamma^{-1}N^{2(\frac 3 2 +\delta) + 2(\frac 1 2 + \delta)}\|v_1\|^{4}_{\cV^1_{t,x}}  \lesssim_{ \delta} \gamma^{-1} \varepsilon N^{4+4\delta} \|v_1\|^{4}_{\cV^1_{t,x}}\,. \label{stima.d}
	\end{align}
	Combining estimates \eqref{stima.a}, \eqref{stima.b}, \eqref{stima.c}, \eqref{stima.d}, one gets $\alpha(\breve{\mathcal{R}}_4) \lesssim_{\delta} \gamma^{-3} N^{-\frac 3 2 -\delta} R^2 $. 
\end{proof}

\begin{proof}[Proof of Theorem \ref{teo.v1} for $p=2$]
		We verify  conditions \eqref{alpha.m}. 
		Let $\underline{C}$ be the positive constant defined in Theorem \ref{mountain.bolla}: by Lemma \ref{lem.m.G.p2} and Lemma \ref{lemma.aR.small.p2}, one has 
	$$
	\frac{\alpha(\breve{\mathcal{R}}_4)}{m(\breve{\mathcal{G}_{4}})} \leq \frac{C_{\delta} \gamma^{-2}  N^{-\frac 3 2 - \delta} R^2}{\frac{5}{24}} \leq \underline{C}\,,
	$$
	provided $C_{\delta} \gamma^{-2} N^{-\frac 3 2 - \delta} R^2 \leq \frac{5 \underline{C}}{24}\,,$ which is satisfied due to \eqref{little.gamma.p2}. Furthermore
	$$
	\Big(\frac{|\varepsilon|}{m(\breve{\mathcal{G}_{4}})}\Big)^{\frac 1 2} 
	\leq \Big(\frac{24 \varepsilon^{}}{5 }\Big)^{\frac 1 2} \leq \underline{C} \rho_1 
	\stackrel{\eqref{scelta.param.p2.1}} = \underline{C} R \varepsilon^{\frac 1 2}\,,
$$
	which is satisfied provided $R \geq R_0:=  \underline{C}^{-1} \left(\frac{24}{5}\right)^{\frac 1 2}$. Finally $\|v^{(1)}_{1}\|_{\cV^1_{t,x}} \asymp \varepsilon^{\frac 12}$ 
	by  \eqref{ecco.y} of Theorem \ref{mountain.bolla} and from the fact that $m(\breve{\mathcal{G}_{4}})$ is uniformly bounded in $N$, as stated in Lemma \ref{lem.m.G.p2}.
\end{proof}

\section{Multiplicity of solutions}\label{sec.tante}

In this section we prove multiplicity of solutions.

\begin{theorem}[Multiplicity of solutions with different minimal periods]\label{teo.molte.volte}
	For any $p= 2, 3, 5$ there exists a sequence of integers 
	$\{\tn_k\}_{k\in \N}$ with $\tn_0 := 1$ and $\tn_{k+1} > \tn_k$ 
	for any $k$ such that the following holds. For any $\delta \in (0, \frac{1}{100})$ and $k_* \in \N_*$  there exist $R_{k_*}>0$ and $\epsilon_{k_*,\delta, R}>0$ such that,  if $R, \varepsilon, N$ are as in the assumptions of Theorem \ref{teo.v1} and $R \geq R_{k_*}$, and if \eqref{little.gamma} holds with $\epsilon_{\delta, R} := \epsilon_{k_*, \delta, R}$ in the case $p=5$ (respectively \eqref{little.gamma.p3}
	and \eqref{little.gamma.p2} in the cases $p=3 $ and $ p=2$), 
	there exist $2\pi$-periodic distinct solutions $u^{(1)}, \dots, u^{({k_*})}$ of the form
	\begin{equation}\label{u.n.multi}
	u^{(k)} := v^{(k)}_1 + \mathtt{v}_2\big(v^{(k)}_1\big) + w\big(v_1^{(k)}\big) \quad \text{of} \quad \begin{cases}
	\eqref{main.sph.sym} & \text{ if}\quad  p=5,\ p=2 \, , \\
	\eqref{main.hopf.1} & \text{ if} \quad p=3 \,,
	\end{cases}
	\end{equation}
	with minimal period
	$$
	T_k \in \left\{\frac{2\pi}{\tn_{k}-1 }, \dots,  \frac{2\pi}{\tn_{k-1}}\right\}\,, \quad k = 1, \dots, k_*\,.
	$$
 	The following estimates hold:
	\begin{equation}
	\| v^{(k)}_1\|_{\cV^{1}_{t,z}} \asymp \varepsilon^{\frac {1}{\mathtt{q}}}\,, \quad \mathtt{q} := \begin{cases}
	4 & \text{ if} \quad  p=5\\
	2 & \text{ if} \quad p=2,\ p=3\,,
	\end{cases}
	\end{equation}
	and
	\begin{equation}\label{un.rho.per.ognuno}
	\begin{gathered}
		\|v_1^{(k)}\|_{\cV^1_{t,z}} \leq \rho_1\,, \quad \|\mathtt{v}_2(v_1^{(k)})\|_{\cV^{2+2\delta}_{t,z}} \leq \rho_2\,, \\
	\|w(v_1^{(k)})\|_{H^{\frac 12 + \delta}_t \cH^{\frac 3 2 + \delta}_z} \leq \begin{cases}
	\rho_3 &  \text{ if} \quad p=3, p=5\,,\\
	C_\delta \gamma^{-1}  N^{2(1+2\delta)} \varepsilon R^2 &\text{ if} \quad p=2
	\end{cases}
	\end{gathered}
	\end{equation}
	with $\rho_1, \rho_2, \rho_3$ defined in \eqref{parametri.p5} if $p=5$, respectively in \eqref{parametri.p3} if $p=3$ and in \eqref{scelta.param.p2.1} if $p=2$. 
\end{theorem}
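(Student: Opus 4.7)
The plan is to apply the mountain pass theorem (Theorem \ref{mountain.bolla}) in nested subspaces of functions with prescribed time-periodicity, exploiting the autonomy of the equation. For every integer $\tn \in \N_*$ I consider the isometric linear action on time-periodic functions given by the time shift $\tau_\tn:u(t,z)\mapsto u(t+2\pi/\tn,z)$, and denote by $V_1^{(\tn)},V_2^{(\tn)},W^{(\tn)}$ the corresponding fixed-point subspaces (elements of $V_1,V_2,W$ which are $2\pi/\tn$-periodic in time); equivalently $V_1^{(\tn)}$ consists of those $v_1 \in V_1$ whose nonzero Fourier coefficients $v_{1,j}$ are supported on indices $j$ with $\tn \mid \omgen_j$.

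First I would verify that the maps $v_1 \mapsto \mathtt{v}_2(v_1)$ and $v_1 \mapsto w(v_1)$ constructed in Sections \ref{sec.v2}--\ref{sec.w} are $\tau_\tn$-equivariant. The fixed-point equations solved there are invariant under $\tau_\tn$, since $\mathcal{L}_\omega$, $A$ and the projectors $\Pi_{V_1},\Pi_{V_2},\Pi_W$ all commute with time shifts; by uniqueness of the contraction fixed point in $\mathcal{D}_{\rho_2}^{V_2}$ and $\mathcal{D}_{\rho_3}^W$ one gets $\mathtt{v}_2(\tau_\tn v_1)=\tau_\tn\mathtt{v}_2(v_1)$ and $w(\tau_\tn v_1)=\tau_\tn w(v_1)$. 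In particular $v_1\in V_1^{(\tn)}$ implies $\mathtt{v}_2(v_1)\in V_2^{(\tn)}$ and $w(v_1)\in W^{(\tn)}$, so that $\breve\Psi\vert_{V_1^{(\tn)}}$ is well defined. By Palais' principle of symmetric criticality (immediate here, since $\breve\Psi$ is $\tau_\tn$-invariant and $V_1^{(\tn)}$ is the fixed-point set of the action), critical points of $\breve\Psi\vert_{V_1^{(\tn)}}$ are critical points of $\breve\Psi$ on $V_1$, and hence by Lemma \ref{lemma.ker.variational} give solutions of \eqref{main.sph.sym} or \eqref{main.hopf.1}.

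Next, for each $\tn$ I would apply Theorem \ref{mountain.bolla} to $\breve\Psi\vert_{V_1^{(\tn)}}$: the estimate of $\alpha(\mathcal{R}_{p+1})$ from Section \ref{sec.bif} transfers to $V_1^{(\tn)}\subseteq V_1$ with no change, while a lower bound on $m(\mathcal{G}_{p+1}\vert_{V_1^{(\tn)}})$ is obtained by exhibiting an explicit test element such as $\check{v}^{(\tn)}(t,z):=\cos(\omgen_{\tn-1} t)\egen_{\tn-1}(z)$ (or an analogous lowest Hopf mode for $p=3$), which lies in $V_1^{(\tn)}$ whenever $\omgen_{\tn-1}\leq N$, and by computing $\mathcal{G}_{p+1}(\check{v}^{(\tn)})/\lVert\check{v}^{(\tn)}\rVert^{p+1}_{\cV^1_{t,z}}$ via elementary trigonometric integrals together with the product rule \eqref{productrule}. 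The sequence $\{\tn_k\}$ is then built inductively: set $\tn_0:=1$ and, given $\tn_{k-1}$, let $v_1^{(k)}$ be the mountain pass critical point of $\breve\Psi\vert_{V_1^{(\tn_{k-1})}}$, denote by $2\pi/M_k$ its minimal period (so $\tn_{k-1}\mid M_k$, hence $M_k\geq\tn_{k-1}$), and define $\tn_k:=M_k+1$. Then $T_k=2\pi/M_k$ with $M_k\in\{\tn_{k-1},\dots,\tn_k-1\}$, as required. Distinctness of the $v_1^{(k)}$ follows because $v_1^{(k+1)}\in V_1^{(\tn_k)}$ forces $\tn_k\mid M_{k+1}$, whence $M_{k+1}\geq\tn_k>M_k$, so the minimal periods $T_k=2\pi/M_k$ are pairwise different.

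The hard part will be ensuring the two inequalities in \eqref{alpha.m} hold simultaneously for all $\tn_k$ with $k\leq k_*$: the lower bound on $m(\mathcal{G}_{p+1}\vert_{V_1^{(\tn)}})$ degrades as a negative power of $\tn$, so a smallness condition on $\varepsilon$ must be imposed that depends on the largest $\tn_k$ produced by the iteration -- this is the source of the $k_*$-dependence of $\varepsilon_{k_*,\delta,R}$. One must also check that the smallness conditions \eqref{little.gamma}--\eqref{little.gamma.p2} and the requirement $\omgen_{\tn_k-1}\leq N=\varepsilon^{-1/\beta}$ (needed so that $\check{v}^{(\tn_k)}$ lies in $V_1=V_{\leq N}$) remain compatible for the finitely many $\tn_k$ involved, which is achieved by shrinking $\varepsilon_{k_*,\delta,R}$ further if necessary. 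Once these compatibility conditions are met, the norm bounds \eqref{un.rho.per.ognuno} follow directly from Propositions \ref{prop.v2}--\ref{prop.v2.p2} and \ref{prop.w}--\ref{prop.w.p2} applied to each $v_1^{(k)}$, and for the degenerate case $p=2$ one uses additionally that $\mathcal{L}_\omega^{-1}$ commutes with $\tau_\tn$ so that the translation \eqref{therealw} preserves the subspace structure.
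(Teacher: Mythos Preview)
Your equivariance argument (that $\mathtt{v}_2$ and $w$ preserve $2\pi/\tn$-periodicity) and the use of symmetric criticality are essentially what the paper does in Lemma~\ref{lem.f.invariant} and at the end of Proposition~\ref{lem.ci.vuole.un.beta}. The genuine gap is in your inductive construction of $\{\tn_k\}$.

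You set $\tn_k := M_k + 1$ where $2\pi/M_k$ is the minimal period of the critical point $v_1^{(k)}$ found in $V_1^{(\tn_{k-1})}$. But $v_1^{(k)}$ depends on $\varepsilon$ and $N$, so your sequence $\{\tn_k\}$ is not a fixed sequence of integers as the theorem asserts---it varies with $\varepsilon$. Worse, this creates a circular dependence: the smallness conditions on $\varepsilon$ you need in order to run the mountain pass in $V_1^{(\tn_k)}$ for all $k\leq k_*$ depend on $\tn_{k_*}$, yet $\tn_{k_*}$ is only defined \emph{after} the critical points have been found, which requires $\varepsilon$ to have been fixed already. Nothing in your outline bounds $M_k$ a priori (indeed $M_k$ could be as large as $N=\varepsilon^{-1/\beta}$), so the iteration cannot be closed.

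The paper avoids this circularity by proving an a priori bound on the minimal period (Proposition~\ref{lem.ci.vuole.un.beta}). The point is that the mountain pass critical level forces $\mathcal{G}(y^{(\tn)})/\|y^{(\tn)}\|^{q}$ to be close to $m_\tn(\mathcal{G})$, up to an error $O(\alpha_\tn(\mathcal{R}))$; combined with the decay estimate $m_{\tn\tm}(\mathcal{G})\leq \beta\, m_\tn(\mathcal{G})$ for all $\tm>\tm_0(\tn)$ (Lemmas~\ref{lem.kn}, \ref{lem.kn.p3}, \ref{lem.Gn.below.p2}), this forbids $y^{(\tn)}\in V_{\leq N,\tn\tm}$. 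Crucially $\tm_0(\tn)$ depends only on $\tn$ and structural constants of $\mathcal{G}$, not on $\varepsilon$ or $N$. One can then set $\tn_{k+1}:=\tm_0(\tn_k)\tn_k+1$ once and for all, and only afterwards choose $\varepsilon$ small depending on $k_*$. Your test-element computation is exactly what is needed for the \emph{lower} bound on $m_\tn(\mathcal{G})$; the missing ingredient is the \emph{upper} bound $m_\tn(\mathcal{G})\lesssim \tn^{-a}$ (via Strichartz for $p=5,2$, via Sobolev for $p=3$) and the comparison argument just described.
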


The remaining part of this section is devoted to prove  Theorem \ref{teo.molte.volte}. Since the dependence of the spaces $V_1, V_2$ on the parameter $N$  
plays a significant role, in this section we  denote them respectively by $V_{\leq N}, V_{> N}$.
We  regard equations \eqref{main.sph.sym}, \eqref{main.hopf.1} on the space of $\frac{2\pi}{\tn}$ time periodic functions
$$
X_\tn := \Big\lbrace u(t,z) = \sum_{\ell \in \N} \sum_{j \in \N} u_{\ell, j} \cos(\tn \ell t) \egen_j(z) \Big\rbrace\,.
$$
We define the restrictions to $X_\tn$ of the kernel and range subspaces $V, W, V_{\leq N}, V_{>N}$ defined in \eqref{def.V}, \eqref{def.W}, \eqref{def.V1}, \eqref{def.V2}:
\begin{equation}
	 V_\tn :=V \cap X_\tn\,,\quad  W_{\tn} := W \cap X_{\tn}\,, \quad V_{\leq N, \tn} := V_{\leq N} \cap X_{\tn}\,, \quad V_{> N, \tn} := V_{>N} \cap X_{\tn}\,.
\end{equation}
We note that for any $\tn \in \N_*$ the space $X_\tn$ is left invariant both by the spatial operator $A$ defined in \eqref{def.A} as well as by $\mathcal{L}_\omega$ defined in \eqref{main.common}.

\begin{lemma}[Kernel on $\frac{2\pi}{\tn}$ periodic functions]\label{lem.no.Ln}
A function $v \in V_\tn $ if and only if
	\begin{equation}\label{vn}
 v =
	\begin{cases}
	\displaystyle
	\sum_{\ell \in \N_*} v_\ell \cos(\tn \ell t) e_{\tn \ell -1}(x) & \text{if} \quad p=5,\ p=2\,, \\
	 & \\
	 \displaystyle
\underset{\ell \in \N_* \atop \ell \geq \underline{\mu}\,,\ \tn \ell - \underline{\mu} \text{ is even}}{\sum} v_\ell \cos(\tn \ell t) e^{(\mu_1, \mu_2)}_{ \frac{\tn \ell - \underline{\mu}}{2}}(\eta) & \text{if} \quad p=3\,,
	\end{cases}
	\end{equation}
	where
	$\underline{\mu} := |\mu_1| + |\mu_2| + 1$.
\end{lemma}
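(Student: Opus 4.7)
The plan is to unpack the definitions of $V$ in \eqref{def.V} and of $X_{\tn}$, and to check precisely which indices $j$ in the Fourier expansion $v(t,z) = \sum_{j \in \N} v_j \cos(\omgen_j t) \egen_j(z)$ produce a time-frequency $\omgen_j$ that is a nonnegative integer multiple of $\tn$. The statement will then follow by relabeling the surviving coefficients.

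First I would observe that, since $\{\cos(\ell t)\egen_j(z)\}_{\ell,j\in\N}$ is an orthogonal system in $H^{0}_t\cH^{0}_z$ (by Lemma \ref{teo.eigencouples} in the spherical case and Lemma \ref{lemma.Lmu.diag} in the Hopf case), the representation $v = \sum_j v_j \cos(\omgen_j t)\egen_j$ is unique. A function belongs to $X_{\tn}$ if and only if its time Fourier expansion involves only frequencies in $\tn\N$. Therefore $v \in V_{\tn} = V \cap X_{\tn}$ if and only if $v_j = 0$ for every $j$ with $\omgen_j \notin \tn\N_*$ (note that $\omgen_j\geq 1$ excludes $\ell=0$). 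The problem thus reduces to solving the Diophantine condition $\omgen_j = \tn\ell$ for $\ell \in \N_*$, using the explicit values of $\omgen_j$ from \eqref{omgen}.

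For the spherically symmetric case ($p=5$, $p=2$), $\omgen_j = j+1$, so the condition $\omgen_j = \tn\ell$ reads $j = \tn\ell - 1$, which is a nonnegative integer precisely for each $\ell \in \N_*$. Relabeling $v_{\ell} := v_{\tn\ell - 1}$ gives the first formula in \eqref{vn}. For the Hopf case ($p=3$), $\omgen_j = 2j + \underline{\mu}$ with $\underline{\mu} = |\mu_1|+|\mu_2|+1$, and the condition $2j + \underline{\mu} = \tn\ell$ is solvable in $j\in\N$ if and only if $\tn\ell \geq \underline{\mu}$ and $\tn\ell - \underline{\mu}$ is even, in which case $j = (\tn\ell - \underline{\mu})/2$. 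Since $\tn \geq 1$, the condition $\ell \geq \underline{\mu}$ implies $\tn\ell \geq \underline{\mu}$, so restricting to such $\ell$ together with the parity requirement captures exactly the surviving indices. Relabeling $v_{\ell} := v_{(\tn\ell - \underline{\mu})/2}$ yields the second formula in \eqref{vn}.

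There is no real obstacle: the argument is entirely a matter of orthogonality and elementary Diophantine bookkeeping using the explicit spectra provided by Lemmas \ref{teo.eigencouples} and \ref{lemma.Lmu.diag}. The converse implication (that any $v$ of the form \eqref{vn} belongs to $V_{\tn}$) is immediate, since such $v$ is a sum of eigenmodes in $V$ whose time frequencies are by construction in $\tn\N_*$.
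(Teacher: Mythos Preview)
Your approach is the same as the paper's: unpack \eqref{def.V} and \eqref{omgen}, and solve the Diophantine constraint $\omgen_j=\tn\ell$ in each case. The spherically symmetric case is handled correctly.

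In the Hopf case you correctly derive that the admissible $\ell\in\N_*$ are exactly those with $\tn\ell\geq\underline{\mu}$ and $\tn\ell-\underline{\mu}$ even. But your final sentence is a genuine logical slip: you write that ``the condition $\ell\geq\underline{\mu}$ implies $\tn\ell\geq\underline{\mu}$, so restricting to such $\ell$ \dots captures exactly the surviving indices.'' The implication goes only one way; for $\tn\geq 2$ the constraint $\ell\geq\underline{\mu}$ is strictly stronger than $\tn\ell\geq\underline{\mu}$ and can omit legitimate modes. For instance, with $\tn=3$ and $\underline{\mu}=3$ one has $\ell=1$ admissible (it gives $j=0$, $\omgen_0=3=\tn\cdot 1$), yet $\ell=1<\underline{\mu}$.

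This is in fact a typo in the lemma's statement rather than a flaw in your method: the paper's own proof first writes $v\in V$ as a sum over time-frequencies $\ell\geq\underline{\mu}$ with $\ell-\underline{\mu}$ even, and then restricts to $\ell=\tn\ell'$, which upon substitution yields $\tn\ell'\geq\underline{\mu}$, not $\ell'\geq\underline{\mu}$. So your derivation of the correct range is right; only the attempted reconciliation with the displayed condition $\ell\geq\underline{\mu}$ is wrong. The fix is simply to state the range as $\tn\ell\geq\underline{\mu}$ (equivalently $\ell\geq\lceil\underline{\mu}/\tn\rceil$).
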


\begin{proof}
	If $p=2$ or $p=5$, the thesis follows by \eqref{omgen} and by \eqref{def.V}.
	If $p=3$, it is sufficient to observe that, by \eqref{omgen}, $\omgen_j = 2 j + \underline{\mu}$, thus $\ell = 2j + \underline{\mu}$ for some $j \in \N_*$ if and only if $\ell - \underline{\mu}$ is an even positive number.
	Thus one has $ v \in V$ if and only if
	$$
	v (t, \eta) = \sum_{\ell \in \N \atop \ell - \underline{\mu} \text{ is even}\,, \ell \geq \underline{\mu}} v_{\ell} \cos(\ell t) \egen_{\frac
{\ell - \underline{\mu}}{2}}(\eta)\,,
	$$
	and \eqref{vn} follows restricting to the indexes $\ell$ such that $\ell = \tn \ell'$ for some $\ell' \in \N_*$.
\end{proof}
\begin{remark}
	In the case $p=3$, if $\underline{\mu}$ is odd, then $V_\tn = \{0\}$ for any $\tn$ even, and $V_\tn \neq \{0\}$ if and only if
	$\tn$ belongs to
	\begin{equation}
	\mathcal{Z}^{(\mu_1, \mu_2)} := \begin{cases}
	\N_* & \text{if} \quad \underline{\mu}\quad \text{is even}\\
	\text{odd integers}  & \text{if} \quad \underline{\mu} \quad  \text{is odd}\,.
	\end{cases}
	\end{equation}
\end{remark}

\begin{lemma}\label{rmk.Ln.per.n}
For any $s < s'$ and any $v \in V_\tn$,  one has $\|v\|_{\cV^s_{t,z}} \leq \|v\|_{\cV^{s'}_{t,z}} \tn^{s-s'}$.
\end{lemma}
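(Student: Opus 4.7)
The plan is to exploit the key structural fact from Lemma \ref{lem.no.Ln}: any $v \in V_\tn$ is supported on Fourier indices $j$ for which $\omgen_j$ is an integer multiple of $\tn$. Concretely, inspecting the representations \eqref{vn}, one sees that $v_j \neq 0$ only when $\omgen_j \in \tn \N_*$; in particular $\omgen_j \geq \tn$ for every $j$ in the spectral support of $v$. This lower bound on the frequencies is the whole content of the estimate.

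With this observation the proof reduces to a one-line weighted $\ell^2$ inequality using the spectral characterization \eqref{hs.ker}. Since $s-s' < 0$ and $\omgen_j \geq \tn \geq 1$, one has $\omgen_j^{2(s-s')} \leq \tn^{2(s-s')}$, hence
\begin{equation*}
\|v\|^2_{\cV^s_{t,z}} = \sum_{j \in \N} |v_j|^2 \omgen_j^{2s} = \sum_{j \in \N} |v_j|^2 \omgen_j^{2s'} \omgen_j^{2(s-s')} \leq \tn^{2(s-s')} \sum_{j \in \N} |v_j|^2 \omgen_j^{2s'} = \tn^{2(s-s')} \|v\|^2_{\cV^{s'}_{t,z}} \, ,
\end{equation*}
and taking square roots yields the claim.

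There is no real obstacle here; the only point requiring a tiny bit of care is verifying the lower bound $\omgen_j \geq \tn$ from \eqref{vn} in both the spherically symmetric case (where $\omgen_j = j+1$ and the relevant indices are $j = \tn\ell - 1$ with $\ell \in \N_*$, giving $\omgen_j = \tn\ell \geq \tn$) and the Hopf case (where $\omgen_j = 2j + \underline{\mu}$ and the relevant indices satisfy $\omgen_j = \tn \ell$ with $\ell \geq \underline{\mu} \geq 1$, again giving $\omgen_j \geq \tn$). Once this is stated, the weighted inequality above finishes the proof.
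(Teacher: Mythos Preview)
Your proof is correct and essentially the same as the paper's: both rely on the spectral characterization \eqref{hs.ker} and the fact from Lemma \ref{lem.no.Ln} that the frequencies $\omgen_j$ in the support of $v \in V_\tn$ satisfy $\omgen_j = \tn\ell$ with $\ell \in \N_*$. The paper reindexes over $\ell$ and uses $\ell^{2s} \leq \ell^{2s'}$, while you use the equivalent observation $\omgen_j \geq \tn$ directly to bound $\omgen_j^{2(s-s')} \leq \tn^{2(s-s')}$; the two are the same computation.
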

\begin{proof}
One has $	\|v\|_{\cV^s_{t,z}}^2=
\sum\limits_{\ell \in \N} |v_\ell|^2\left(\tn \ell \right)^{2s} = \tn^{2s} \sum\limits_{\ell \in \N} |v_\ell|^2 \ell ^{2s}\leq \tn^{2s}\sum\limits_{\ell \in \N} |v_\ell|^2 \ell ^{2s'} = \tn^{2(s-s')} \|v\|_{\cV^{s'}_{t,z}}^2$.
\end{proof}

We look for $\frac{2\pi}{\tn}$ periodic solutions of \eqref{v1.eq}-\eqref{w.eq}. The Lyapunov-Schmidt decomposition defined in Sections \ref{sec.v2} and \ref{sec.w} preserves the spaces of $\frac{2\pi}{\tn}$ periodic functions:
\begin{lemma}\label{lem.f.invariant}
	Given $\tn \in \N_*,$ let $\delta, \rho_1, \rho_2, \rho_3, \varepsilon, N \geq \tn$ and $R$ be as in the assumptions of Propositions \ref{prop.v2}, \ref{prop.w} if $p=5$, resp. Propositions \ref{prop.v2.p3}, \ref{prop.w.p3} if $p=3$, and Propositions \ref{prop.v2.p2} and \ref{prop.w.p2} if $p=2$. For any $ v_1 \in V_{\leq N, \tn} \cap \mathcal{D}_{\rho_1}$ let $\mathtt{v}_2(v_1)$ and $w(v_1)$ be the solutions to \eqref{v2.eq} and \eqref{w.eq}; then $\mathtt{v}_2(v_1) \in V_{> N, \tn}$ and $w(v_1) \in W_\tn$.
\end{lemma}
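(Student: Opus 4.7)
The plan is to exploit the invariance of the space $X_\tn$ of $\frac{2\pi}{\tn}$-periodic functions under the linear operators and projectors in play, together with the uniqueness of the contraction fixed points furnished by Propositions \ref{prop.v2}, \ref{prop.v2.p3}, \ref{prop.v2.p2}, \ref{prop.w}, \ref{prop.w.p3}, \ref{prop.w.p2}.

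First I will observe that $X_\tn$ is invariant under $A$, $A^{-1}$, $\mathcal{L}_\omega$, $\mathcal{L}_\omega^{-1}$, and under the projectors $\Pi_V,\Pi_W,\Pi_{V_{\leq N}},\Pi_{V_{>N}}$. Indeed, each is a Fourier multiplier in the eigenbasis $\{\cos(\ell t)\egen_j(z)\}$, so it preserves the set $\{\tn\ell : \ell \in \N\}$ of time frequencies characterising $X_\tn$. Next, $X_\tn$ is closed under pointwise products: if $u_1,u_2 \in X_\tn$ then $u_1 u_2 \in X_\tn$, since $\frac{2\pi}{\tn}$-periodicity in time is preserved under products.

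Fix now $v_1 \in V_{\leq N,\tn} \cap \mathcal{D}_{\rho_1}$ and consider first the cases $p=3,5$. For any $w \in W_\tn \cap \mathcal{D}^W_{\rho_3}$, the two invariance properties above, applied to $(v_1+v_2+w)^p$ followed by $\Pi_{V_2}$ and $\varepsilon^{-1}A^{-1}$, show that the map $\mathcal{T}_{v_1,w}$ in \eqref{tv2} (resp.~\eqref{J.v2.p3}) sends $V_{>N,\tn} \cap \mathcal{D}^{V_2}_{\rho_2}$ into itself. Since this is a closed subset of $\cV^{2+2\delta}_{t,z}$, uniqueness of the contraction fixed point established in Lemma \ref{lem.t2.bolla} (resp.~Lemma \ref{lem.contraz.p3}) forces $v_2(v_1,w) \in V_{>N,\tn}$. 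Consequently the map $\mathcal{T}_{v_1}$ in \eqref{tw} sends $W_\tn \cap \mathcal{D}^W_{\rho_3}$ into itself, so its unique fixed point $w(v_1)$ lies in $W_\tn$ (Lemma \ref{prop.tw.contraction}), and plugging back we get $\mathtt{v}_2(v_1)=v_2(v_1,w(v_1)) \in V_{>N,\tn}$.

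In the degenerate case $p=2$, the same strategy applies to the maps \eqref{J.v2} and \eqref{J.w}, provided we additionally note that $\mathcal{L}_\omega^{-1}(v_1+v_2)^2 \in X_\tn$ whenever $v_1,v_2 \in X_\tn$, so that the translation $w = \mathcal{L}_\omega^{-1}(v_1+\mathtt{v}_2(v_1))^2 + \tilde w$ in \eqref{therealw} respects the decomposition: $\tilde w \in W_\tn$ if and only if $w \in W_\tn$. Then Lemmas \ref{lem.v2-2.contraz} and \ref{lem.w.contraz} yield $\mathtt{v}_2(v_1) \in V_{>N,\tn}$ and $w(v_1) \in W_\tn$ exactly as above. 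The whole argument is essentially bookkeeping; the only mild subtlety is the coupled dependence $v_2=v_2(v_1,w)$ for $p=3,5$ and the translation for $p=2$, but in both cases uniqueness of the fixed points inside the closed $X_\tn$-invariant subsets does the job without any further analytic estimate.
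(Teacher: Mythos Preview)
Your proof is correct and follows essentially the same approach as the paper: both argue that the contraction maps $\mathcal{T}_{v_1,w}$ and $\mathcal{T}_{v_1}$ preserve the $X_\tn$-restricted subsets, so uniqueness of the fixed points forces $v_2(v_1,w)\in V_{>N,\tn}$ and $w(v_1)\in W_\tn$. You supply a bit more detail (the Fourier-multiplier reason for invariance, the handling of the translation \eqref{therealw} in the $p=2$ case), but the idea is identical.
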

\begin{proof}
 The functions $v_2$ and $w$ are respectively obtained as the fixed point of the contractions $\mathcal{T}_{v_1, w(v_1)}$ and $\mathcal{T}_{v_1}\,,$ defined in \eqref{tv2} and \eqref{tw} in the case $p=5$ (the case $p=3$, $p=2$ are analogous). Then the lemma follows observing that, for any $ v_1 \in V_{\leq N, \tn} \cap \mathcal{D}_{\rho_1}$ and $w \in W_\tn \cap {\mathcal{D}}^W_{\rho_3}$ the operator $\mathcal{T}_{v_1, w}$ maps $V_{> N, \tn}$ into itself, and that for any $  v_1 \in V_{\leq N, \tn} \cap \mathcal{D}_{\rho_1}$, the operator $\mathcal{T}_{v_1}$ maps $W_\tn$ into itself.
\end{proof}

In order to find $\frac{2\pi}{\tn}$ periodic solutions of \eqref{v1.eq.v2.w}, we look for critical points of
\begin{equation}\label{inn}
\breve{\Psi}_\tn := \breve{\Psi} |_{V_{\leq N, \tn} \cap \mathcal{D}_{\rho_1}} \,.
\end{equation}
We remind that $\breve{\Psi}$ has the expansion \eqref{all.together.now} in the cases $p=5,\ p=3$ and \eqref{all.together.now.p2} in the case $p=2$.
\begin{proposition}[Critical point with minimal period]\label{lem.ci.vuole.un.beta}
	For any $\tn \in \N_*$ if $p=2$ or $p=5$, resp. $\tn \in \mathcal{Z}^{(\mu_1, \mu_2)}$ if $p=3$, define
	\begin{equation}\label{alpha.n.m.n}
	\alpha_\tn(\mathcal{R}) := \alpha(\mathcal{R}|_{V_{\leq N, \tn} \cap \mathcal{D}_{\rho_1}})\,, \quad m_\tn(\mathcal{G}):= m(\mathcal{G} |_{V_{\leq N, \tn} \cap \mathcal{D}_{\rho_1}})\,,
	\end{equation}
	where
	\begin{equation}
	\mathcal{G} := \begin{cases}
	\mathcal{G}_{p+1} \text{ defined in } \eqref{big.G}& \text{if} \quad p= 5,\ p=3\\
	\breve{\mathcal{G}_{4}} \text{ defined in } \eqref{big.G2} & \text{if} \quad p=2\,,
	\end{cases}\quad
	\mathcal{R} := \begin{cases}
	\mathcal{R}_{p+1} \text{ as in } \eqref{all.together.now}& \text{if} \quad p= 5,\ p=3\\
	\breve{\mathcal{R}_{4}} \text{ as in } \eqref{all.together.now.p2} & \text{if} \quad p=2\,,
	\end{cases}
	\end{equation}
	and $m(\cdot)$ and $\alpha(\cdot)$ are defined in Theorem \ref{mountain.bolla}. Suppose that there exist $\beta \in (0, 1)$ and $\tm_0>0$ such that 
	for any $ \tm > \tm_0$
	\begin{equation}\label{solo.sup}
	\sup_{v_1 \in V_{\leq N, \tn \tm} \setminus\{0\}} \frac{{\cal G}(v_1)}{\| v_1\|_{\cV^1_{t,z}}^{q}} \leq \beta \sup_{v_1 \in V_{\leq N, \tn} \setminus\{0\}} \frac{{\cal G}( v_1)}{\| v_1\|_{\cV^1_{t,z}}^{q}}\,, \quad q :=
	\begin{cases}
	p+1 & \text{ if } p=3,\ p=5\\
	4 & \text{if } p=2\,.
	\end{cases}
	\end{equation}
	Then there exist positive constants $\underline{C}$ and $C_1:=C_1(\beta)$ such that, if
	\begin{equation}\label{check.me.n}
	\alpha_\tn(\mathcal{R}) \leq C_1 m_\tn(\mathcal{G})\,, \quad \left(\frac{|\varepsilon|}{m_\tn(\mathcal{G})}\right)^{\frac{1}{q-2}} \leq \underline{C} \rho_1\,,
	\end{equation}
	the functional $\breve{\Psi}$ has  a critical point $v^{(\tn)}_{1}$ with minimal period $T_\tn \in \{ \frac{2\pi}{\tm_0\tn}, \dots, \frac{2\pi}{\tn}\}$,  satisfying
	\begin{equation}
	 \| v_1^{(\tn)}\|_{\cV^{1}_{t,z}} \asymp \varepsilon^{\frac {1}{q-2}}\,.
	\end{equation}
	Furthermore $v^{(\tn)}_1$ is also a critical point of $\breve{\Psi}$.
\end{proposition}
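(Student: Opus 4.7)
My plan is to apply the abstract mountain pass Theorem \ref{mountain.bolla} to the restricted functional $\breve\Psi_\tn$ defined in \eqref{inn} on the finite-dimensional Hilbert space $E=(V_{\leq N,\tn},\langle\cdot,\cdot\rangle_{\cV^1_{t,z}})$ to produce a critical point $v_1^{(\tn)}$, then to extract the minimal-period information from \eqref{solo.sup} by a contradiction argument, and finally to promote $v_1^{(\tn)}$ to a critical point of the full $\breve\Psi$ by a symmetric-criticality argument made effective through Lemma \ref{lem.f.invariant}.

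First I would check that $\breve\Psi_\tn$ has the structure \eqref{i.astratto} required by Theorem \ref{mountain.bolla}, which follows immediately from Lemma \ref{lem.azione.ridotta} for $p=3,5$ and from \eqref{all.together.now.p2} for $p=2$, the role of $p+1$ in the abstract statement being played here by $q\in\{4,6\}$. In the notation \eqref{alpha.n.m.n} the hypotheses \eqref{alpha.m} become exactly \eqref{check.me.n} (provided $C_1\leq\underline C$), so Theorem \ref{mountain.bolla} supplies a critical point $v_1^{(\tn)}\in V_{\leq N,\tn}\cap\mathcal{D}_{\rho_1}$ of $\breve\Psi_\tn$ with
$$
v_1^{(\tn)}=\Big(1+O\big(\tfrac{\alpha_\tn(\mathcal{R})}{m_\tn(\mathcal{G})}\big)\Big)\Big(\tfrac{|\varepsilon|}{q\, m_\tn(\mathcal{G})}\Big)^{\!\tfrac{1}{q-2}}\! y_\tn\,,\qquad \|y_\tn\|_{\cV^1_{t,z}}=1\,,
$$
and $|\mathcal{G}(y_\tn)|=m_\tn(\mathcal{G})+O(\alpha_\tn(\mathcal{R}))$. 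The norm asymptotics $\|v_1^{(\tn)}\|_{\cV^1_{t,z}}\asymp\varepsilon^{1/(q-2)}$ would then follow from a uniform bound $m_\tn(\mathcal{G})\asymp 1$, the upper bound coming from the Strichartz-type estimates of Section \ref{sec.Stri} as in Lemmas \ref{lem.grad.G}, \ref{lem.grad.G.p3}, \ref{lem.m.G.p2}, and the lower bound from testing $\mathcal{G}$ against explicit elements of $V_{\leq N,\tn}$, e.g.\ $\cos(\tn t)\egen_{\tn-1}$ when $p=2,5$ and a one-mode element of $V_{\leq N,\tn}$ (available for $\tn\in\mathcal{Z}^{(\mu_1,\mu_2)}$) when $p=3$.

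Next I would rule out smaller minimal periods by contradiction. Suppose the minimal period of $v_1^{(\tn)}$ equals $2\pi/(\tn\tm)$ for some integer $\tm>\tm_0$: then $v_1^{(\tn)}\in V_{\leq N,\tn\tm}$, and hence also $y_\tn\in V_{\leq N,\tn\tm}$. Hypothesis \eqref{solo.sup} (combined, for $p=2$, with the analogous comparison for the infimum) gives
$$
m_\tn(\mathcal{G})+O(\alpha_\tn(\mathcal{R}))=|\mathcal{G}(y_\tn)|\leq \beta\, m_\tn(\mathcal{G})\,,
$$
which is impossible as soon as $\alpha_\tn(\mathcal{R})/m_\tn(\mathcal{G})$ is smaller than a universal multiple of $1-\beta$. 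It therefore suffices to choose $C_1(\beta):=c(1-\beta)$ with $c$ depending only on the implicit constants appearing in \eqref{ecco.y}; the first condition in \eqref{check.me.n} then forces $\tm\leq\tm_0$, and $T_\tn\in\{2\pi/(\tn\tm_0),\dots,2\pi/\tn\}$.

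Finally, by Lemma \ref{lem.f.invariant}, $\mathtt{v}_2(v_1^{(\tn)})\in V_{>N,\tn}$ and $w(v_1^{(\tn)})\in W_\tn$, so the gradient of $\breve\Psi$ at $v_1^{(\tn)}$ (computed in Lemma \ref{lemma.ker.variational}) belongs to $V_{\leq N}\cap X_\tn=V_{\leq N,\tn}$. Since $v_1^{(\tn)}$ is critical for $\breve\Psi_\tn=\breve\Psi|_{V_{\leq N,\tn}}$, this gradient is orthogonal to $V_{\leq N,\tn}$; being itself in $V_{\leq N,\tn}$ it must vanish, so $v_1^{(\tn)}$ is a critical point of the full $\breve\Psi$. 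The main obstacle I foresee is the quantitative calibration in the minimal-period step: one must ensure that the implicit constants in \eqref{ecco.y} depend only on the homogeneity degree $q$, so that $C_1$ can genuinely be chosen as a function of $\beta$ alone.
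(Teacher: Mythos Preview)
Your proposal is correct and follows essentially the same route as the paper: apply Theorem \ref{mountain.bolla} to $\breve\Psi_\tn$ on $V_{\leq N,\tn}$, use the near-extremality of $\mathcal{G}(y_\tn)$ together with \eqref{solo.sup} to exclude $y_\tn\in V_{\leq N,\tn\tm}$ for $\tm>\tm_0$ once $\alpha_\tn(\mathcal{R})/m_\tn(\mathcal{G})$ is small compared to $1-\beta$, and then invoke Lemma \ref{lem.f.invariant} to show the Euler--Lagrange expression lies in $V_{\leq N,\tn}$ and hence vanishes. The only cosmetic difference is that the paper writes the last step in weak form (testing against $h\in V_{\leq N}\cap V_{\leq N,\tn}^\perp$) while you phrase it via the gradient; your concern about the constants in \eqref{ecco.y} is addressed by the statement of Theorem \ref{mountain.bolla}, where they depend only on the homogeneity degree.
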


\begin{proof}
	By Theorem \ref{mountain.bolla} and \eqref{check.me.n}, $\breve{\Psi}_\tn$ admits a critical point $v^{(\tn)}_{1} \in V_{\leq N, \tn}$ which is proportional to a point $y^{(\tn)}$ satisfying
	\begin{equation}\label{y.critico}
	\| y^{(\tn)}\|_{\cV^1_{t,z}} = 1\,, \quad \mathcal{G}(y^{(\tn)}) =m_\tn( \mathcal{G})+ r^{(\tn)}\,, \quad r^{(\tn)}= \mathcal{O}(\alpha_{\tn}(\mathcal{R}))\,.
	\end{equation}
	We note that, since $r^{(\tn)} = \mathcal{O}(\alpha_\tn(\mathcal{R})),$ provided $\frac{\alpha_\tn(\mathcal{R})}{m_\tn(\mathcal{G})} < C$ with $C = C(\beta)$ small enough, one has
	$(1 - \beta) m_\tn(\mathcal{G}) >  |r^{(\tn)}|$,
	which by \eqref{y.critico} gives
	$$
	\mathcal{G}(y^{(\tn)})= m_\tn( \mathcal{G})+ r^{(\tn)} \geq m_\tn(\mathcal{G}) - |r^{(\tn)}| > \beta m_\tn(\mathcal{G})\,.
	$$
	Combining the latter inequality with hypothesis \eqref{solo.sup}, one gets
	$$
	\frac{\mathcal{G}(y^{(\tn)})}{\|y^{(\tn)}\|^{q}_{\cV^1_{t,z}}} > \beta m_\tn(\mathcal{G}) \geq \sup_{ v_1 \in V_{\leq N, \tn \tm} \setminus\{0\} } 	\frac{\mathcal{G}(v_1)}{\|v_1\|^{q}_{\cV^1_{t,z}}}\,,
	$$
	thus for any $  \tm >\tm_0$ one has that $y^{(\tn)}$ belongs to $V_{\leq N, \tn} \subset V$ but $y^{(\tn)}$ does not belong to $V_{\leq N, \tm \tn}$, namely $y^{(\tn)}$ has minimal period $\geq \frac{2\pi}{\tm_0 \tn}$. Since $v^{(\tn)}_{1}$ and $y^{(\tn)}$ are proportional, the same holds for $v^{(\tn)}_{1}$. It remains to prove that $v^{(\tn)}_1$ is also a critical point for the functional $\breve{\Psi}$. To fix ideas, we prove the result for $p=5$. The cases $p=3, p=2$  follow analogously. By Lemma \ref{lemma.ker.variational}, a point $v_1 \in \mathcal{D}_{\rho_1}$ is critical for $\breve{\Psi}$ if and only if
	\begin{equation}\label{cosa.vogliamo}
	\int_{\mathbb{T}}\int_{0}^{\pi}  \left(A v_1 + \Pi_{V_{\leq N}} 
	{(v_1 +\mathtt{v}_2(v_1) + w(v_1))^5}\right) h \sin^2(x)\, \dbar x \dbar t= 0 \quad \forall h \in V_{\leq N}\,.
	\end{equation}
	Since $v^{(\tn)}_{1}$ is critical for $\breve{\Psi}_\tn$, one already has that \eqref{cosa.vogliamo} holds for $h \in V_{\leq N, \tn}$, thus it remains to prove it for $h \in V_{\leq N} \cap V_{\leq N, \tn}^\bot$. Then it is sufficient to observe that, by Lemma \ref{lem.f.invariant}, $\mathtt{v}_2(v_1^{(\tn)}) \in V_{>N, \tn}$ and $w(v^{(\tn)}_1) \in W_\tn$, since $v^{(\tn)}_1 \in V_{\leq N,\tn}$. Thus 
	$A v^{(\tn)}_1 + \Pi_{V_{\leq N}} {(v^{(\tn)}_1 +\mathtt{v}_2(v^{(\tn)}_1) + w(v^{(\tn)}_1))^5}$ belongs to $V_{\leq N, \tn}$, namely it is orthogonal to any $h \in V_{\leq N} \cap V_{\leq N,\tn}^{\bot},$ which gives the thesis.
\end{proof}
Theorem \ref{teo.molte.volte} follows from an iterative application of Lemma \ref{lem.f.invariant} and Proposition \ref{lem.ci.vuole.un.beta}. In the next sections we verify the assumptions \eqref{solo.sup} arguing separately for the cases $p=5$, $p=3$, $p=2$.

\subsection{Cases $p=5$ and $p=3$}

	We start with $p=5$ and we prove lower and upper bounds for $m_\tn(\mathcal{G}_6)$ defined  in \eqref{alpha.n.m.n}.
	\begin{lemma}[Estimate of $m_\tn(\mathcal{G}_6)$]\label{lem.kn}
		For any $\delta>0$ there exists $C_\delta>0$ and for any $\tn \in \N_*$  there exists $\kappa_\tn>0$ such that
		\begin{equation}\label{m.inn.big.1}
		\kappa_\tn \leq m_\tn(\mathcal{G}_6) \leq C_\delta \tn^{-1 + 6 \delta}\,.
		\end{equation}
	\end{lemma}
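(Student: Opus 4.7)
The plan is to establish the upper bound by a Strichartz-interpolation argument and the lower bound by exhibiting an explicit test function supported on the lowest admissible mode of $V_{\leq N,\tn}$.

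For the upper bound, I would apply Item 1 of Proposition \ref{lem.Strichartz.1} with all six entries equal to $v_1 \in V_{\leq N,\tn}$, obtaining
\begin{equation*}
\mathcal{G}_6(v_1) \;=\; \tfrac{1}{6}\!\int_{\mathbb{T}}\!\int_0^\pi v_1^6\,\sin^2(x)\,\dbar x\,\dbar t \;\leq\; C_\delta \|v_1\|^6_{\cV^{5/6+\delta}_{t,x}}\,.
\end{equation*}
Since every $v \in V_{\leq N,\tn}$ is supported, in view of Lemma \ref{lem.no.Ln}, on time-space modes of the form $\bigl(\omega_j,\omega_j\bigr)=\bigl(\tn\ell,\tn\ell\bigr)$, Lemma \ref{rmk.Ln.per.n} applied with $s=5/6+\delta$ and $s'=1$ (assuming $\delta<1/6$) gives
\begin{equation*}
\|v_1\|_{\cV^{5/6+\delta}_{t,x}} \;\leq\; \tn^{-1/6+\delta}\,\|v_1\|_{\cV^1_{t,x}}\,.
\end{equation*}
Plugging in yields $\mathcal{G}_6(v_1) \leq C_\delta\, \tn^{-1+6\delta}\|v_1\|^6_{\cV^1_{t,x}}$, which is the claimed upper bound.

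For the lower bound, I would choose the one-mode test function $\check{v}_\tn(t,x) := \cos(\tn t)\,e_{\tn-1}(x)$, which belongs to $V_{\leq N,\tn}$ as soon as $\tn \leq N$ (a condition guaranteed in the regime $N\gg 1$ of interest). A direct computation using \eqref{hs.ker} gives $\|\check{v}_\tn\|^2_{\cV^1_{t,x}}=\tn^2$, while
\begin{equation*}
\mathcal{G}_6(\check{v}_\tn) \;=\; \frac{1}{6}\Bigl(\int_{\mathbb{T}}\cos^6(\tn t)\,\dbar t\Bigr)\int_0^\pi \frac{\sin^6(\tn x)}{\sin^4(x)}\,\dbar x\,.
\end{equation*}
The time integral equals $5/8$ and the spatial integral is strictly positive (being the integral of a non-negative, not identically zero continuous function). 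Hence $\kappa_\tn := \mathcal{G}_6(\check{v}_\tn)/\|\check{v}_\tn\|^6_{\cV^1_{t,x}} > 0$, and $m_\tn(\mathcal{G}_6)\geq \kappa_\tn$.

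No serious obstacle is expected here: the upper bound reduces to a clean combination of the Strichartz-type estimate \eqref{item1st1} with the elementary smoothing property of Lemma \ref{rmk.Ln.per.n}, while the positivity of the lower bound is automatic since the integrand $v_1^6\sin^2(x)$ is non-negative. The one point to check is that the regime $\tn \leq N$ (equivalently, the non-triviality of $V_{\leq N,\tn}$) is available, which is built into the choice of $N=\varepsilon^{-1/\beta}$ and the smallness conditions of the surrounding propositions.
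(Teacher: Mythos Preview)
Your proposal is correct and follows essentially the same approach as the paper: for the upper bound you combine the Strichartz-type estimate \eqref{item1st1} with the smoothing property of Lemma \ref{rmk.Ln.per.n}, and for the lower bound you evaluate on the one-mode test function $\cos(\tn t)\,e_{\tn-1}(x)$, exactly as in the paper. You even supply a few details the paper leaves implicit, such as the value $5/8$ of the time integral, the explicit form $\int_0^\pi \sin^6(\tn x)/\sin^4(x)\,\dbar x$ of the spatial integral, and the requirement $\tn\leq N$ ensuring the test function lies in $V_{\leq N,\tn}$.
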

	\begin{proof}
	We take $v_\tn = \cos(\tn t) e_{\tn-1}(x),$ 
	then $\|v_\tn\|_{\cV^1_{t,x}} = \tn$.
	One has
	$$
	\mathcal{G}_6(v_\tn) = \frac 1 6 \int_{\mathbb{T}} \cos^6(\tn t)\,\dbar t \int_{0}^\pi e^6_{\tn-1}(x) \sin^2(x)\,\dbar x =: \alpha_\tn>0\,,
	$$
	and the lower bound in \eqref{m.inn.big.1} follows setting $\kappa_\tn := \tfrac{\alpha_\tn}{\tn^6}$. The upper bound follows because for any $v \in V_\tn$ by Proposition \ref{lem.Strichartz.1} and Lemma \ref{rmk.Ln.per.n} we have
	$\mathcal{G}_6(v) \lesssim_\delta \|v\|^6_{\cV^{\frac 5 6 + \delta}_{t,x}} \lesssim_\delta \tn^{-1 + 6 \delta} \|v\|^6_{\cV^1_{t,x}}$.
\end{proof}

\begin{proof}[Proof of Theorem \ref{teo.molte.volte} for $p=5$]
	By Lemma \ref{lem.kn} with $\delta = \frac{1}{12}$, there exist $C>0, \kappa_\tn >0$ such that 
	$$
	\sup_{v \in V_{\tn \tm} \setminus\{0\}} \frac{\mathcal{G}_6( v)}{\| v\|^6_{\cV^1_{t,\eta}}} \leq  \frac{C}{ (\tn \tm)^{\frac 1 2}} \leq \frac{1}{2}  \kappa_\tn \leq \frac 1 2 m_\tn(\mathcal{G}_6)\,,
	$$
	provided $\tm \geq \tm_0(\tn) := \frac{1}{\tn}\big(\frac{C}{2 \kappa_\tn}\big)^{2}$. 
	This proves that for any $\tn, \tm \in \N$ there exists $\tm_0 = \tm_0(\tn) \in \N$ such that, if $\tm > \tm_0(\tn)$ and $N \geq \tn$, one has
	$$
	\sup_{v \in V_{\tm \tn} \setminus\{0\}} \frac{{\cal G}_6( v)}{\| v\|_{\cV^1_{t,\eta}}^{6}} \leq \frac 1 2\sup_{ v_1 \in V_{\leq N,\tn} \setminus\{0\}} \frac{{\cal G}_6(v_1)}{\| v_1\|_{\cV^1_{t,\eta}}^{6}}\,,
$$
	namely \eqref{solo.sup} is satisfied.
	We then define $\tn_0 := 1$, $\tn_{k+1} := \tm_0(\tn_{k}) \tn_k + 1$ and we apply Proposition \ref{lem.ci.vuole.un.beta} with $\tn = \tn_k$ for any $k=1, \dots, k_*$. In particular, assumptions \eqref{check.me.n} hold for any $\tn_k$, observing that $\alpha_{\tn_k}(\mathcal{R}_6) \leq \alpha(\mathcal{R}_6) \lesssim_\delta N^{-\frac{7}{6} + 9\delta} R^4$ by Lemma \ref{lem.R.grad} and $m_{\tn_k}(\mathcal{G}_6) \geq \kappa_\tn$ by Lemma \ref{lem.kn}, and assuming $R \geq R_{k_*} = \max_{k} (\kappa_{\tn_k}\underline{C})^{-1} $ and $N\lesssim_{ \delta} \inf_{k}(R^4\kappa_{\tn_k})^{\frac{1}{\frac 7 6 - 9\delta}}$, which is ensured by \eqref{little.gamma}.
	Thus by Proposition \ref{lem.ci.vuole.un.beta} the functional $\breve{\Psi}$ admits a critical point $v^{(k)}_1 := v_1^{(\tn_k)}$ with minimal period $T_{\tn_k} \in \{\frac{2\pi}{\tm_0(\tn_k) \tn_k}, \dots, \frac{2\pi}{\tn_k}\}$. Finally, by Lemma \ref{lem.f.invariant} $u^{(k)} := v_1^{(k)} + \mathtt{v}_2\big(v_1^{(k)}\big) + w\big(v_1^{(k)}\big)$ has the same minimal period $T_{\tn_k}$.
\end{proof}

\begin{remark}\label{rmk.si.puo.dare.di.piu}
	With careful estimates  on $m(\mathcal{G}_6)$ one can obtain $\tn_{k + 1} = 3 \tn_k + 1$ for any $k$.
\end{remark}

For $p=3$, defining $m_\tn(\mathcal{G}_4)$ as in \eqref{alpha.n.m.n}, we prove the following:
\begin{lemma}[Estimate of $m_\tn(\mathcal{G}_4)$]\label{lem.kn.p3}
	There exists $C>0$ and for any $\tn \in \mathcal{Z}^{(\mu_1, \mu_2)}$ there exists $\kappa_\tn := \kappa_\tn(\mu_1, \mu_2)>0$ such that
	\begin{equation}\label{m.inn.big}
	\kappa_\tn \leq m_\tn(\mathcal{G}_4) \leq C \tn^{-1}\,.
	\end{equation}
\end{lemma}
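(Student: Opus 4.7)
The proof follows the same scheme as Lemma \ref{lem.kn} for $p=5$, replacing the Strichartz estimate for $L^6$ with the simpler $L^4$-type bound provided by Lemma \ref{lem.lp.hs.easy}. For the upper bound, fix $v \in V_\tn$. Applying Lemma \ref{lem.lp.hs.easy} (with $u^{(1)}=\cdots=u^{(4)}=v$) gives $\mathcal{G}_4(v) \lesssim \|v\|^4_{L^\infty(\mathbb{T}_t,\cH^{3/4}_\eta)}$, which by the identity $\|v\|_{L^\infty(\mathbb{T}_t,\cH^{s}_\eta)} = \|v\|_{\cV^{s}_{t,\eta}}$ from Lemma \ref{lemma.come.vuoi} equals $\|v\|^4_{\cV^{3/4}_{t,\eta}}$. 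Then, exactly as in the proof of Lemma \ref{lem.kn}, an application of Lemma \ref{rmk.Ln.per.n} (with $s=3/4$, $s'=1$) yields $\|v\|_{\cV^{3/4}_{t,\eta}} \leq \tn^{-1/4}\|v\|_{\cV^{1}_{t,\eta}}$, whence $\mathcal{G}_4(v) \lesssim \tn^{-1}\|v\|^4_{\cV^1_{t,\eta}}$, giving $m_\tn(\mathcal{G}_4) \leq C\tn^{-1}$.

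For the lower bound, I would exhibit an explicit element of $V_\tn$. Since $\tn \in \mathcal{Z}^{(\mu_1,\mu_2)}$, by the characterization in Lemma \ref{lem.no.Ln} the set of positive integers $\ell$ such that $\tn\ell\geq \underline{\mu}$ and $\tn\ell-\underline{\mu}$ is even is nonempty: if $\underline{\mu}$ is even any $\ell$ with $\tn\ell \geq \underline{\mu}$ works, while if $\underline{\mu}$ is odd then $\tn$ is odd by definition of $\mathcal{Z}^{(\mu_1,\mu_2)}$, and one may choose $\ell$ odd with $\tn\ell\geq\underline{\mu}$. Let $\ell_0=\ell_0(\tn,\mu_1,\mu_2)$ be the smallest such integer and set
\begin{equation*}
v_\tn(t,\eta) := \cos(\tn\ell_0\, t)\, e^{(\mu_1,\mu_2)}_{j_0}(\eta)\,, \qquad j_0:=\tfrac{\tn\ell_0-\underline{\mu}}{2}\,,
\end{equation*}
so that $v_\tn \in V_\tn$ and $\|v_\tn\|^2_{\cV^1_{t,\eta}}=(\omega^{(\mu_1,\mu_2)}_{j_0})^2=(\tn\ell_0)^2$.

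A direct computation using the orthonormality and the explicit form of the $e_j^{(\mu_1,\mu_2)}$ (Lemma \ref{lemma.Lmu.diag}) gives
\begin{equation*}
\mathcal{G}_4(v_\tn) \;=\; \frac{1}{4}\int_{\mathbb{T}} \cos^4(\tn\ell_0 t)\,\dbar t\;\int_0^{\pi/2} \bigl(e^{(\mu_1,\mu_2)}_{j_0}(\eta)\bigr)^4 \sin(2\eta)\,d\eta \;=\; \frac{3}{32}\, c^{(\mu_1,\mu_2)}_{j_0}\,,
\end{equation*}
with $c^{(\mu_1,\mu_2)}_{j_0}>0$ because $e^{(\mu_1,\mu_2)}_{j_0}$ does not vanish identically. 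Defining
\begin{equation*}
\kappa_\tn \;:=\; \frac{3\,c^{(\mu_1,\mu_2)}_{j_0}}{32\,(\tn\ell_0)^4}\;>\;0
\end{equation*}
gives $m_\tn(\mathcal{G}_4)\geq \mathcal{G}_4(v_\tn)/\|v_\tn\|^4_{\cV^1_{t,\eta}}=\kappa_\tn$, completing the proof.

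The only conceptual point to check is the existence of $\ell_0$, which reduces to the parity compatibility built into the definition of $\mathcal{Z}^{(\mu_1,\mu_2)}$; once this is observed the argument is essentially identical to the $p=5$ case of Lemma \ref{lem.kn}, with $L^6$ Strichartz replaced by the plain Sobolev embedding $\cH^{3/4}_\eta \hookrightarrow L^4$ encoded in Lemma \ref{lem.lp.hs.easy}.
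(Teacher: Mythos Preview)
Your proof is correct and follows essentially the same approach as the paper: the upper bound via Lemma \ref{lem.lp.hs.easy}, Lemma \ref{lemma.come.vuoi}, and Lemma \ref{rmk.Ln.per.n} is identical, and the lower bound via an explicit one-mode test function in $V_\tn$ is the same idea. The only cosmetic difference is that the paper takes $\ell_0=\underline\mu$ directly (noting that $(\tn-1)\underline\mu$ is always even for $\tn\in\mathcal{Z}^{(\mu_1,\mu_2)}$), whereas you select the minimal admissible $\ell_0$; either choice works, and your parity discussion is a bit more explicit than the paper's.
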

\begin{proof}
	We take $v_\tn(t, \eta) =\cos(\tn \underline{\mu}t) e_{\frac{(\tn-1)\underline{\mu}}{2}}(\eta)$.
	Then
	$\|v_\tn\|^4_{\cV^1_{t,\eta}} = \tn^4 \underline{\mu}^4$ and
	$$\mathcal{G}_4(v_\tn) = \frac{1}{4}\int_{\mathbb{T}} \cos^4
	\left(\tn \underline\mu t\right) \dbar t  \int_{0}^{\frac{\pi}{2}}\big(e^{(\mu_1, \mu_2)}_{\frac{\tn \ell - \underline{\mu}}{2}}(\eta)\big)^4 \sin(2\eta)\,d\eta := \alpha_\tn >0\, . $$
Then the lower bound in \eqref{m.inn.big} follows with
	$\kappa_{\tn}(\mu_1, \mu_2) := \frac{\alpha_\tn}{\tn^4 \underline{\mu}^4}$. For the upper bound we observe that, 	by Lemma \ref{lem.lp.hs.easy} and Lemma \ref{lemma.come.vuoi}, for any $ \tn \in \mathcal{Z}^{(\mu_1, \mu_2)}$ and $v \in V_{\tn}$ there exists $C>0$ such that
	$$
	\mathcal{G}_4( v) = \frac{1}{4}
	 \int_{\mathbb{T}}\int_{0}^{\frac{\pi}{2}} v^4(t,\eta) \sin(2\eta)\,d \eta \dbar t  \leq \frac{ C}{4} \| v\|^4_{\cV^{\frac {3}{4}}_{t, \eta}} \leq \frac{C}{4} \tn^{-1} \|v\|^4_{\cV^1_{t,\eta}}\,,
	$$
	by Lemma \ref{rmk.Ln.per.n}.
\end{proof}

\begin{proof}[Proof of Theorem \ref{teo.molte.volte} for $p=3$]
	By Lemma \ref{lem.kn.p3} there exist $C>0$ and $\kappa_\tn := \kappa_\tn(\mu_1, \mu_2)$ such that 
	$$
	\sup_{v \in V_{\tn \tm} \setminus\{0\}} \frac{\mathcal{G}_4( v)}{\| v\|^4_{\cV^1_{t,\eta}}} \leq  \frac{C}{4 \tn \tm} \leq \frac{1}{2}  \kappa_\tn \leq \frac 1 2 m_\tn(\mathcal{G}_4)\,,
	$$
	provided $\tm \geq \tm_0(\tn) := \lfloor\frac{C}{2 \tn \kappa_\tn}\rfloor + 1$. Then for any $\tn, \tm \in \N$ there exists $\tm_0 = \tm_0(\tn) \in \N$ such that, if $\tm > \tm_0$ and $N \geq \tn$, one has
	$$
	\sup_{v \in V_{\tm \tn} \setminus\{0\}} \frac{{\cal G}_4( v)}{\| v\|_{\cV^1_{t,\eta}}^{4}} \leq \frac 1 2\sup_{ v_1 \in V_{\leq N,\tn} \setminus\{0\}} \frac{{\cal G}_4(v_1)}{\| v_1\|_{\cV^1_{t,\eta}}^{4}}\,,
	$$
	namely for any $\tn$  there exists $\tm_0(\tn) \in \N_*$ such that \eqref{solo.sup} is satisfied for any $\tm > \tm_0(\tn)$.
	We then define $\tn_0 := 1$, $\tn_{k+1} := \tm_0(\tn_{k-1}) \tn_k + 1$ and Theorem \ref{teo.molte.volte} follows by Proposition \ref{lem.ci.vuole.un.beta}, with $\tn = \tn_k$ for any $k=1, \dots, k_*$.
\end{proof}

\subsection{Case $p=2$}

\begin{lemma}[Estimate of $m_\tn(\breve{\mathcal{G}}_4)$]\label{lem.Gn.below.p2}
	For any $\delta >0$ there exist 
	$C_\delta, \kappa>0$ and $\underline{\tn}>0$ such that for any 
	$\tn \geq \underline{\tn}$ and  $N \geq \tn$, 
	\begin{equation}\label{mn}
	\frac{\kappa}{\tn^4} \leq {m}_\tn(\breve{\mathcal{G}}_4) := - \inf_{ v_1 \in V_{\leq N,\tn} \setminus\{0\}} \frac{\breve{\mathcal{G}}_4(v_1)}{\| v_1\|_{\cV^1_{t,x}}^4} \leq \frac{C_\delta}{\tn^{2-4\delta}}\,.
	\end{equation}
\end{lemma}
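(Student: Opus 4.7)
The plan has two independent parts: the upper bound follows from a Strichartz estimate plus a smoothing inequality, while the lower bound requires exhibiting an explicit trial function in $V_\tn$ on which $\breve{\mathcal{G}}_4$ is negative of the right order.

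\textbf{Upper bound.} I would argue that the full absolute value $|\breve{\mathcal{G}}_4(v_1)|$ is small for $v_1\in V_{\leq N,\tn}$, which trivially controls $-\breve{\mathcal{G}}_4(v_1)$. For $\omega=1$, Item~1 of Proposition~\ref{lem.strich.Lom} applied with $v^{(1)}=v^{(2)}=v^{(3)}=v^{(4)}=v_1$ gives
\begin{equation*}
|\breve{\mathcal{G}}_4(v_1)|
=\tfrac12\Big|\int_{\mathbb{T}}\!\!\int_{0}^{\pi}v_1^{2}\,\mathcal{L}_1^{-1}(v_1^{2})\sin^2(x)\,\dbar x\dbar t\Big|
\leq C_\delta\|v_1\|_{\cV^{\frac12+\delta}_{t,x}}^{4}.
\end{equation*}
Since $v_1\in V_\tn$, Lemma~\ref{rmk.Ln.per.n} yields $\|v_1\|_{\cV^{\frac12+\delta}_{t,x}}\leq \tn^{-\frac12+\delta}\|v_1\|_{\cV^{1}_{t,x}}$, and raising to the fourth power produces the factor $\tn^{-2+4\delta}$. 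Dividing by $\|v_1\|_{\cV^1_{t,x}}^{4}$ and taking the supremum gives $m_\tn(\breve{\mathcal{G}}_4)\leq C_\delta\tn^{-2+4\delta}$.

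\textbf{Lower bound.} The natural trial function is $v_\tn(t,x):=\cos(\tn t)\,e_{\tn-1}(x)\in V_\tn$, for which $\|v_\tn\|_{\cV^1_{t,x}}^{4}=\tn^{4}$ (using $\omega_{\tn-1}=\tn$ and a single Fourier coefficient equal to $1$). The main computation is to expand $v_\tn^{2}$ explicitly: writing $\cos^2(\tn t)=\tfrac12+\tfrac12\cos(2\tn t)$ and using the product rule~\eqref{productrule} one gets $e_{\tn-1}^{2}(x)=\sum_{k=0}^{\tn-1}e_{2k}(x)$, so
\begin{equation*}
v_\tn^{2}(t,x)=\tfrac12\bigl(1+\cos(2\tn t)\bigr)\sum_{k=0}^{\tn-1}e_{2k}(x).
\end{equation*}
Applying $\mathcal{L}_1^{-1}$ componentwise via~\eqref{L.om.fourier} (each $e_{2k}$ has eigenvalue $(2k+1)^2$) and exploiting orthonormality of $\{e_j\}$ and $\int_{\mathbb{T}}\cos^{2}(2\tn t)\dbar t=\tfrac12$, $\int_{\mathbb{T}}\cos(2\tn t)\dbar t=0$, the cross terms reduce to
\begin{equation*}
\int_{\mathbb{T}}\!\!\int_{0}^{\pi}v_\tn^{2}\mathcal{L}_1^{-1}v_\tn^{2}\sin^2(x)\,\dbar x\dbar t
=\tfrac14\sum_{k=0}^{\tn-1}\left[-\frac{1}{(2k+1)^{2}}+\frac{1}{2\bigl(4\tn^{2}-(2k+1)^{2}\bigr)}\right].
\end{equation*}
The decisive point is that the first (negative) sum converges to $\pi^{2}/8$ as $\tn\to\infty$, whereas by a partial-fraction decomposition
\[
\sum_{k=0}^{\tn-1}\frac{1}{4\tn^{2}-(2k+1)^{2}}=\frac{1}{4\tn}\!\!\sum_{k=0}^{\tn-1}\!\left[\frac{1}{2\tn-(2k+1)}+\frac{1}{2\tn+(2k+1)}\right]=O\!\left(\frac{\log\tn}{\tn}\right),
\]
so the negative piece dominates. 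Therefore, for $\tn$ larger than some $\underline{\tn}$, the integral above is $\leq -\pi^{2}/64$, which yields $\breve{\mathcal{G}}_4(v_\tn)\leq -\pi^{2}/128<0$ and hence
\[
m_\tn(\breve{\mathcal{G}}_4)\geq -\frac{\breve{\mathcal{G}}_4(v_\tn)}{\|v_\tn\|_{\cV^1_{t,x}}^{4}}\geq \frac{\kappa}{\tn^{4}}
\]
for some absolute $\kappa>0$.

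\textbf{Expected obstacle.} The only nontrivial part is the lower bound: one must verify that the trial function $v_\tn$ really does make $\breve{\mathcal{G}}_4$ negative uniformly in $\tn$, i.e.\ that the \emph{static} (time-independent) contribution $-\sum(2k+1)^{-2}$, which arises from $\mathcal{L}_1^{-1}$ applied to the constant-in-time part of $v_\tn^{2}$, is never cancelled by the oscillating contribution. The partial-fraction bookkeeping above is precisely what controls this, and it is the step I would write out most carefully; everything else (invariance of $V_\tn$, Fourier diagonalization of $\mathcal{L}_1$, Strichartz+smoothing for the upper bound) is routine given the tools already established in the paper.
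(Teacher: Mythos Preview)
Your proposal is correct and follows essentially the same route as the paper: the upper bound via Item~1 of Proposition~\ref{lem.strich.Lom} (with $\omega=1$) combined with Lemma~\ref{rmk.Ln.per.n}, and the lower bound via the trial function $\cos(\tn t)e_{\tn-1}(x)$, showing that the static contribution $-\sum_k(2k+1)^{-2}$ dominates the $O(\log\tn/\tn)$ oscillating term. One harmless slip: with the paper's normalization $\dbar t=\tfrac{1}{\pi}dt$ one has $\int_{\mathbb{T}}\cos^2(2\tn t)\,\dbar t=1$ (not $\tfrac12$) and $\int_{\mathbb{T}}1\,\dbar t=2$, so your displayed sum is off by a global factor of $2$; this does not affect the argument.
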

\begin{proof}
	Let $\bar{v}_\tn := \cos(\tn t) e_{\tn-1}(x)$. We compute $\breve{\mathcal{G}_4}(\bar{v}_\tn)$. By \eqref{big.G2}, \eqref{productrule}, using \eqref{L.om.fourier} and Lemma \ref{teo.eigencouples}, one has
	\begin{align*}
	\breve{\mathcal{G}}_4(\bar{v}_\tn)
	&= \frac 1 8 \Big( \frac{1}{4 \tn}\sum_{k=0}^{\tn -1} \frac{1}{2 \tn + 2k+1} +  \frac{1}{8 \tn}\sum_{k=0}^{\tn -1} \frac{1}{2 \tn  - 2k-1} - \sum_{k=0}^{\tn -1} \frac{2}{(2k+1)^2} \Big)\\
	&
	=: \frac 1 8 \left(S_1(\tn) + S_2(\tn) + S_3(\tn) \right)\,.
	\end{align*}
	One has
	$
	S_1(\tn) + S_2(\tn) + S_3(\tn) \leq \frac{1}{8\tn} + \frac{\ln(\tn)}{\tn} - 2 \leq -1 $ if  
	$ \tn\geq \underline{\tn}$ large enough.
	Thus we conclude that
	$$
	{m}_\tn(\breve{\mathcal{G}}_4) = - \inf_{v_1 \in V_{\leq N,\tn} \setminus\{0\}} \frac{\breve{\mathcal{G}}_4(v_1)}{\| v_1\|_{\cV^1_{t,x}}^4} \geq - \frac{	\breve{\mathcal{G}}_4(\bar{v}_\tn)}{\|\bar{v}_\tn\|_{\cV^1_{t,x}}^4} \gtrsim \frac{1}{\tn^4}\,.
	$$
	This proves the lower bound in \eqref{mn}. To prove the upper bound we observe that by Proposition \ref{lem.strich.Lom} and Lemma \ref{rmk.Ln.per.n}, for any 
	$ v \in V_\tn \setminus \{0\} $, 
	$$
	-{\breve{\mathcal{G}}_4(v)} \leq {\left|\breve{\mathcal{G}}_4(v)\right|} ={\frac 1 2\left| \int_{\mathbb{T}} \int_{0}^\pi v^2 \mathcal{L}_1^{-1} v^2 \sin^2(x) \dbar x \dbar t \right|} \lesssim_\delta {\| v\|^4_{\cV^{\frac 1 2 +\delta}_{t,x}}} \lesssim_{\delta} {\tn^{-2-4\delta} \| v\|^4_{\cV^{1}_{t,x}}}\,.
	$$
	Then the second estimate in \eqref{mn} follows since
	$- \inf_{v \in V_{\tn} \setminus\{0\}} \frac{\breve{\mathcal{G}}_4(v)}{\| v\|_{\cV^1_{t,x}}^4} = \sup_{v \in V_\tn \setminus\{0\}} \Big(-\frac{\breve{\mathcal{G}}_4(v)}{\|v\|_{\cV^1_{t,x}}^4} \Big)\,.$
\end{proof}

\begin{proof}[Proof of Theorem \ref{teo.molte.volte} for $p=2$]
	
	By Lemma \ref{lem.Gn.below.p2} with $\delta = \frac{1}{8}$ there exists $\underline{\tn} \in \N$ such that for any $\tn\geq \underline{\tn}$ there exist $C>0$ and $\kappa_\tn>0$ such that 
	$$
	\sup_{v \in V_{\tn \tm} \setminus\{0\}} \frac{\breve{\mathcal{G}}_4( v)}{\| v\|^4_{\cV^1_{t,\eta}}} \leq  \frac{C}{(\tn \tm)^{\frac 3 2}} \leq  \frac 1 2 \kappa_\tn \leq \frac 1 2 m_\tn(\breve{\mathcal{G}}_4)\,,
	$$
	provided $\tm \geq \tm_0(\tn)$, with $\tm_0(\tn)$ such that
	$\frac{C}{ \tn^{\frac 3 2} \tm_0(\tn)^{\frac 3 2}} \leq \frac 1 2\kappa_\tn$. Then for any $\tn \geq \underline{\tn}$ and $\tm \in \N$ there exists $\tm_0 = \tm_0(\tn) \in \N$ such that, if $\tm > \tm_0$ and $N \geq \tn$, one has
	\begin{equation}\label{eq.beta.inn.p2}
	\sup_{v \in V_{\tm \tn} \setminus\{0\}} \frac{{\cal G}_4( v)}{\| v\|_{\cV^1_{t,\eta}}^{4}} \leq \frac 1 2\sup_{ v \in V_{\leq N,\tn} \setminus\{0\}} \frac{{\cal G}_4(v)}{\| v\|_{\cV^1_{t,\eta}}^{4}}\,,
	\end{equation}
	namely for any $\tn\geq \underline{\tn}$  there exists $\tm_0(\tn) \in \N_*$ such that \eqref{solo.sup} is satisfied with $\beta = \frac 1 2$ for any $\tm > \tm_0(\tn)$.
	We then define $\tn_0 := 1$, $\tn_1 := \tm_0(\underline{\tn}) + \underline{\tn} + 1$, $\tn_{k+1} := \tm_0(\tn_{k-1}) \tn_k + 1$ and Theorem \ref{teo.molte.volte} follows by Proposition \ref{lem.ci.vuole.un.beta} with $\tn = \tn_k$ for any $k=1, \dots, k_*$.
\end{proof}

\section{Strong solutions}\label{sec.classical}

In this section we prove higher regularity of the solutions found in Theorem \ref{teo.molte.volte}.

\begin{theorem}[Regularity]\label{teo.strong}
	 Let $R, \varepsilon, N$ as in the assumptions of Theorem \ref{teo.molte.volte} and for any $k_* \in \N_*$ let $\{u^{(k)}\}_{k=1}^{k_*}$ be the functions in \eqref{u.n.multi}.
	 Then for any $r > \frac 1 2$, $s > \frac 3 2$, there exist $\epsilon_{r, s, k_*, R}>0$, $\zeta_{r, s}>0$, $\mathtt{A}_{r, s}>0$ and $\mathtt{B}_{r, s}>0$ such that:
	 
	 \noindent
	 \underline{Cases $p=3, 5$:} If $ \gamma^{-1} \varepsilon N^{\zeta_{r, s}} \leq \epsilon_{r, s, k_*, R}$, each solution
	 $ u^{(k)} $ in \eqref{u.n.multi}  belongs to $  H^r_t \cH^s_z $ and 
	 \begin{equation}\label{norme.alte}
	 \begin{gathered}
	 \|v_1^{(k)}\|_{\cV^{r+s}_{t,z}} \leq C_{1, R, r, s, k_*} \varepsilon^{\frac{1}{p-1}} N^{r+s-1}\,, \quad \|\mathtt{v}_2(v_1^{(k)})\|_{\cV^{r+s}_{t,z}} \leq C_{2, R, r,s, k_*} \varepsilon^{\frac{1}{p-1}} N^{\mathtt{A}_{r, s}}\,, \\
	 \|w(v_1^{(k)})\|_{H^{r}_t \cH^{s}_z} \leq C_{3, R, r,s, k_*} \gamma^{-1} \varepsilon^{\frac{p}{p-1}} N^{\mathtt{B}_{r, s}}\,,
	 \end{gathered}
	 \end{equation}
	 for some positive constants $C_{l, R, r, s, k_*}$, $l=1, 2, 3$. 
	 
	 \noindent
	 \underline{Case $p=2$:} If $ \gamma^{-2} \varepsilon N^{\zeta_{r,s}} \leq \epsilon_{r, s, k_*, R}$, each solution
	 $ u^{(k)} $ in \eqref{u.n.multi}  belongs to  $ H^r_t \cH^s_x$ and 
	 \begin{equation}
	 \begin{gathered}
	 \|v_1^{(k)}\|_{\cV^{r+s}_{t,x}} \leq C_{1, R, r, s, k_*} \varepsilon^{\frac{1}{2}} N^{r+s-1}\,, \quad \|\mathtt{v}_2(v_1^{(k)})\|_{\cV^{r+s}_{t,x}}\leq C_{2, R, r, s, k_*} \varepsilon^{\frac{1}{2}} N^{\mathtt{A}_{r,s}}\,, \\
	 \|w(v_1^{(k)})\|_{H^{r}_t \cH^{s}_x} \leq C_{3, R, r, s, k_*} \varepsilon^{} N^{\mathtt{B}_{r,s}}
	 \end{gathered}
	 \end{equation}
	 for some positive constants $C_{l, R, r, s, k_*}$, $l=1, 2, 3$. 
\end{theorem}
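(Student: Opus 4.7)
The plan is to upgrade the regularity of $(\mathtt{v}_2(v_1^{(k)}), w(v_1^{(k)}))$ from the low-regularity balls of Propositions \ref{prop.v2}--\ref{prop.w.p2}, in which they were constructed, to the higher-regularity spaces $\cV^{r+s}_{t,z} \times H^r_t \cH^s_z$, by re-running the joint contraction scheme of Sections \ref{sec.v2}--\ref{sec.w} at the new regularity. The bound on $v_1^{(k)}$ is immediate: since $v_1^{(k)} \in V_{\leq N}$, the smoothing inequality \eqref{lem.s.sprime} gives $\|v_1^{(k)}\|_{\cV^{r+s}_{t,z}} \leq N^{r+s-1}\|v_1^{(k)}\|_{\cV^1_{t,z}}$, which combined with Theorem \ref{teo.molte.volte} yields the stated estimate.

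For $\mathtt{v}_2(v_1^{(k)})$ and $w(v_1^{(k)})$, I would \emph{not} attempt a naive iterated bootstrap on the equations \eqref{v2.eq}--\eqref{w.eq}: the operator $\mathcal{L}_\omega^{-1}$ is merely bounded, not smoothing, on $W\cap H^r_t \cH^s_z$ by Lemma \ref{lemma.sono.pochissimi}, so the regularity of $w$ would not improve. Instead, I fix $r>\tfrac12$, $s>\tfrac32$ and set up a joint fixed-point problem for $(v_2,w)\in (V_2\cap \cV^{r+s}_{t,z})\times(W\cap H^r_t \cH^s_z)$ in the ball of radii $\tilde\rho_2:=C\varepsilon^{1/(p-1)}N^{\mathtt{A}_{r,s}}$ and $\tilde\rho_3:=C\gamma^{-1}\varepsilon^{p/(p-1)}N^{\mathtt{B}_{r,s}}$. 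For $(v_2,w)$ in this ball, the identity $\|v\|_{\cV^{r+s}_{t,z}}=\|v\|_{H^r_t\cH^s_z}$ for $v\in V$ (Lemma \ref{lemma.come.vuoi}) together with the algebra property \eqref{algebra.prop} give
\[
\|u^p\|_{H^r_t\cH^s_z}\lesssim \bigl(N^{r+s-1}\|v_1^{(k)}\|_{\cV^1_{t,z}}+\tilde\rho_2+\tilde\rho_3\bigr)^p, \quad u:=v_1^{(k)}+v_2+w.
\]
The contraction map
\[
\mathcal{F}(v_2,w):=\bigl(\varepsilon^{-1}A^{-1}\Pi_{V_2}u^p,\ \mathcal{L}_\omega^{-1}\Pi_W u^p\bigr),
\]
(modified via the translation \eqref{proviamo.traslando} when $p=2$) then maps the ball into itself, thanks to Lemma \ref{pesce.rosso}, the two-derivative smoothing \eqref{Delta.smooth} of $A^{-1}$ (redistributed onto $H^r_t\cH^s_z$ via Lemma \ref{lemma.come.vuoi}), and Lemma \ref{lemma.sono.pochissimi}, provided $\mathtt{A}_{r,s},\mathtt{B}_{r,s}$ are chosen large enough to absorb the factor $N^{r+s-1}$ coming from $v_1^{(k)}$. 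Contractivity follows from exactly the same chain of estimates used in the proofs of Lemmas \ref{lem.t2.bolla}, \ref{prop.tw.contraction} (and their analogues for $p=3$ and $p=2$), giving a contraction constant of order $\gamma^{-1}\varepsilon N^{\zeta_{r,s}}$, which is made small by the smallness hypothesis in the statement.

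The fixed point $(\tilde v_2,\tilde w)$ of $\mathcal{F}$ in the higher-regularity ball must coincide with $(\mathtt{v}_2(v_1^{(k)}), w(v_1^{(k)}))$: for $\epsilon_{r,s,k_*,R}$ small enough, the higher-regularity ball embeds into the low-regularity balls of Propositions \ref{prop.v2}--\ref{prop.w.p2} (using Lemma \ref{lemma.come.vuoi} to compare $\cV^{r+s}_{t,z}$ with $\cV^{2+2\delta}_{t,z}$, and the trivial embedding $H^r_t \cH^s_z\hookrightarrow H^{1/2+\delta}_t \cH^{3/2+\delta}_z$ when $r\geq \tfrac12+\delta$, $s\geq \tfrac32+\delta$), and uniqueness of the low-regularity fixed point forces the agreement. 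This yields the claimed bounds on $\mathtt{v}_2(v_1^{(k)})$ and $w(v_1^{(k)})$.

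The main obstacle is the careful bookkeeping of the exponents $\mathtt{A}_{r,s},\mathtt{B}_{r,s},\zeta_{r,s}$ and the interplay between spatial and temporal regularity through the $V$-space redistribution (Lemma \ref{lemma.come.vuoi}), which combines with the two-derivative smoothing of $A^{-1}$ to close the $v_2$ estimate. For $p=2$ the degenerate translation \eqref{proviamo.traslando} and the use of the Strichartz-type estimates of Proposition \ref{lem.strich.Lom} for the cubic terms $v_1\,\mathcal{L}_\omega^{-1}(v_1+v_2)^2$ make the tracking of exponents more delicate, exactly paralleling the bookkeeping of $\mathcal{T}_1,\ldots,\mathcal{T}_5$ and $D_1,\ldots,D_4$ in the proof of Lemma \ref{lem.v2-2.contraz}.
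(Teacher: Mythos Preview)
Your approach has a genuine gap in the $v_2$ component of the joint contraction. The map $v_2\mapsto \varepsilon^{-1}A^{-1}\Pi_{V_2}(v_1^{(k)}+v_2+w)^p$ is \emph{not} a contraction on $V_2\cap\cV^{r+s}_{t,z}$ once $r+s$ is even moderately large. At low regularity the contraction constant in Lemma~\ref{lem.t2.bolla} is $R^4N^{-1+8\delta}$ (and analogously for $p=3,2$); this smallness comes from the Strichartz-type estimate of Proposition~\ref{lem.Strichartz.1}, which places $\Pi_{V_2}(\,\cdot\,)$ in the \emph{low} space $\cV^{2\delta}_{t,x}$ before $A^{-1}$ acts. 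At regularity $r+s$ no such estimate is available: using only algebra \eqref{algebra.prop} and the $N^{-2}$ gain of $A^{-1}\big|_{V_2}$, the Lipschitz constant of the $v_2$-component works out to be of order $\varepsilon^{-1}N^{-2}\bigl(N^{r+s-1}\rho_1\bigr)^{p-1}\asymp R^{p-1}N^{(p-1)(r+s-1)-2}$, which is \emph{large} as soon as $r+s>1+\tfrac{2}{p-1}$ and cannot be made small by a condition of the form $\gamma^{-1}\varepsilon N^{\zeta_{r,s}}\ll1$. The same obstruction prevents the map from sending the high-regularity ball into itself for any consistent choice of $\tilde\rho_2$.

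The paper avoids this by \emph{not} running a contraction for $\mathtt v_2$ at high regularity at all. Since $\mathtt v_2(v_1^{(k)})$ already solves the equation, Lemma~\ref{v2.piumeglio.1} simply reads its $\cV^{s+2}_{t,z}$ norm directly off the identity $\mathtt v_2=\varepsilon^{-1}A^{-1}\Pi_{V_2}(v_1+\mathtt v_2+w)^p$ and the algebra estimate at level $s$; the factor $\varepsilon^{-1}$ is absorbed into the growing exponent $\alpha_{l+1}$ rather than into a Lipschitz constant. Only the $w$ equation is re-solved by contraction at high regularity (Lemma~\ref{w.piumeglio.1}), where the Lipschitz constant genuinely is $\gamma^{-1}\max\{\|v_1\|,\|\mathtt v_2\|\}^{p-1}$ and hence small under \eqref{cose.piccole.1}. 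Because $A^{-1}$ gains only two derivatives, this pair of steps must be \emph{iterated}, climbing the scale $\sigma_{l+1}=\sigma_l+\tfrac32-\delta$ of \eqref{s.k} until $\sigma_l+2\ge r+s$; a single pass cannot reach arbitrary $(r,s)$.
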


Theorem \ref{teo.strong} immediately implies Theorems \ref{teo.spherical} and \ref{teo.hopf}. Let us prove Theorem \ref{teo.spherical}. 
Theorem \ref{teo.hopf} follows in analogous way. 

\begin{proof}[Proof of Theorem \ref{teo.spherical}]
We prove for $ p = 5 $,  the case $ p = 2 $ follows similarly.
	Let $n, r,s$ and $\mathtt{d}$ as in the assumptions of Theorem \ref{teo.spherical}. For any $k = 1, \dots, k_* = n$ define
	$\phi^{(k)}_\varepsilon(t,x) := u^{(k)}(\omega_\varepsilon t,x)\,,$ 
	$v^{(k)}_\varepsilon := v_1^{(k)} + 
	\mathtt{v}_2(v_1^{(k)}) $,  for any $  k = 1, \dots, n\,.$
	Then, recalling that $N := \varepsilon^{-\frac 1 \beta}$ (see \eqref{parametri.p5}), it is sufficient to choose $\beta := \beta (r, s, \mathtt{d})>1$ such that
	\begin{equation}\label{mal.di.pancia}
	\varepsilon^{\frac{1}{4}} N^{\max\{r + s -1, \mathtt{A}_{r, s}\}} := \varepsilon^{\frac 1 4 - \frac{\max\{r + s -1, \mathtt{A}_{r, s}\}}{\beta_{r,s, \mathtt{d}}}} \leq \varepsilon^{\frac 1 4 - \mathtt{d}}\,,
	\end{equation}
	and the upper bound in \eqref{nero}, \eqref{bianco} follows from \eqref{norme.alte}.  As a consequence of \eqref{mal.di.pancia}, \eqref{norme.alte} and \eqref{buccia.R}, one has
	$\varepsilon^{\frac{1}{4}} \lesssim \|v_\varepsilon^{(k)}\|_{H^r_t \cH^{s}_z} \lesssim \varepsilon^{\frac{1}{4}} N^{\max\{r + s -1, \mathtt{A}_{r, s}\}} \lesssim \varepsilon^{\frac 1 4 - \mathtt{d}}\,,$ proving \eqref{nero} and \eqref{bianco}.
\end{proof}

Theorem \ref{teo.strong}
 is a consequence of the iterative application of the two following lemmata.
	
		\begin{lemma}[Regularity bootstrap for $\mathtt{v}_2$]\label{v2.piumeglio.1}
		Let  $r>\frac{1}{2}$ and $s>\frac{3}{2}$.  
		Assume that 	
		$v_1\in \mathcal{D}_{\rho_1}$,  $\mathtt{v}_2:=\mathtt{v}_2(v_1, w(v_1))$ and $w:= w(v_1)$ satisfy
\begin{align} \label{v1.v2.itera}
& \|v_1\|_{\cV^{s}_{t,z}} \leq \widetilde{\rho}_1\,, \quad \|\mathtt{v}_2\|_{\cV^{s}_{t,z}} \leq \widetilde{\rho}_2\,, \\
& \label{w.small.1}
\|w\|_{H^r_t\mathcal{H}^{s}_z}\leq \max \lbrace \widetilde{\rho}_1, \widetilde{\rho}_2\rbrace.
\end{align}				
Then $ \mathtt{v}_2 $ belongs to $ \cV^{s+2}_{t,z} $ and 
		\begin{equation}\label{tesi.1.1}
		\|\mathtt{v}_2\|_{\cV^{s+2}_{t,z}} \lesssim_{r, s}
		\begin{cases}
		\varepsilon^{-1} \max \lbrace \widetilde{\rho}_1,\,  \widetilde{\rho}_2 \rbrace^p & \text{if} \quad p=3 \, , \, p=5\,,\\
		\varepsilon^{-1} \max \lbrace \widetilde{\rho}_1,\,  \widetilde{\rho}_2 \rbrace\|w\|_{H^r_t\cH^s_x} & \text{if} \quad p=2\, .
		\end{cases}
		\end{equation}
	\end{lemma}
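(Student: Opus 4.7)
The plan is to invert the equation satisfied by $\mathtt{v}_2$ and gain two derivatives via the smoothing property of $A^{-1}$, then bound the nonlinearity in $\cV^s_{t,z}$ by combining the algebra property of $\cV^s_{t,z}$ (for $s>3/2$) with Lemma \ref{algebraKer}.

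First I would rewrite the equation for $\mathtt{v}_2$ in the form $\varepsilon A \mathtt{v}_2 = \pm \Pi_{V_2}\bigl( (v_1+\mathtt{v}_2+w)^p\bigr)$. For $p=3,5$ this is exactly equation \eqref{v2.di.v1.e.w} (or its analogue \eqref{v2.di.v1.e.w.p3}). For $p=2$ I would use the cancellation $\Pi_{V}\bigl((v_1+\mathtt{v}_2)^2\bigr)=0$ provided by Lemma \ref{lem.vsquare}, which transforms \eqref{new.v2.2}, together with \eqref{therealw}, into
\begin{equation*}
-\varepsilon A \mathtt{v}_2 = \Pi_{V_2}\bigl( 2(v_1+\mathtt{v}_2) w + w^2\bigr)\,,
\end{equation*}
so that \emph{every} surviving monomial carries at least one factor $w$. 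Applying $A^{-1}$ and using its smoothing property on $V$ (a simple consequence of $A\egen_j=\omega_j^2\egen_j$ and \eqref{hs.ker}), we obtain
\begin{equation*}
\|\mathtt{v}_2\|_{\cV^{s+2}_{t,z}}\lesssim \varepsilon^{-1}\bigl\|\Pi_{V_2}(\text{nonlinearity})\bigr\|_{\cV^{s}_{t,z}}\,,
\end{equation*}
reducing everything to estimating $\Pi_{V_2}$ of a sum of monomials $v_1^{j_1}\mathtt{v}_2^{j_2}w^{j_3}$ in $\cV^s_{t,z}$.

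Next I would expand the $p$-th power and estimate each monomial separately, splitting cases on $j_3$. For monomials with $j_3=0$ (present only for $p=3,5$), I use Lemma \ref{pesce.rosso} to bound $\Pi_{V_2}$ and then the algebra property \eqref{algebra.cheap} of $\cV^s_{t,z}$ for $s>3/2$, together with $\|v\|_{L^\infty_t\cH^s_z}=\|v\|_{\cV^s_{t,z}}$ on $V$ (cfr.\ \eqref{hs.linf}), to obtain
\begin{equation*}
\bigl\|\Pi_{V_2}(v_1^{j_1}\mathtt{v}_2^{j_2})\bigr\|_{\cV^s_{t,z}}\lesssim_s \|v_1\|_{\cV^s_{t,z}}^{j_1}\|\mathtt{v}_2\|_{\cV^s_{t,z}}^{j_2}\leq \max\{\widetilde\rho_1,\widetilde\rho_2\}^{p}\,.
\end{equation*}
For monomials with $j_3\geq 1$, I directly invoke Lemma \ref{algebraKer} (which requires at least one $H^r_t\cH^s_z$-factor, i.e.\ $j_3\geq 1$), getting
\begin{equation*}
\bigl\|\Pi_{V_2}(v_1^{j_1}\mathtt{v}_2^{j_2}w^{j_3})\bigr\|_{\cV^s_{t,z}}\lesssim_{r,s} \|v_1\|_{\cV^s_{t,z}}^{j_1}\|\mathtt{v}_2\|_{\cV^s_{t,z}}^{j_2}\|w\|_{H^r_t\cH^s_z}^{j_3}\,.
\end{equation*}
Using \eqref{v1.v2.itera}--\eqref{w.small.1}, the right-hand side is bounded by $\max\{\widetilde\rho_1,\widetilde\rho_2\}^{p}$ when $p=3,5$, and by $\max\{\widetilde\rho_1,\widetilde\rho_2\}\,\|w\|_{H^r_t\cH^s_z}$ when $p=2$ (since in the latter case each term in the rewritten nonlinearity has $j_3\geq 1$, and moreover $j_1+j_2\leq 1$ in the linear-in-$w$ piece while the $w^2$ piece is dominated using \eqref{w.small.1}). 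Summing over the finitely many monomials and combining with the factor $\varepsilon^{-1}$ from the inversion of $A$ yields \eqref{tesi.1.1}.

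There is no serious obstacle: the only delicate point is the $p=2$ case, where the naive estimate would produce a term $\|v_1+\mathtt{v}_2\|^2$ with no $w$ factor, spoiling the desired bound; this is circumvented precisely by Lemma \ref{lem.vsquare}, which is the reason the claimed estimate has the asymmetric form $\max\cdot\|w\|$ rather than $\max^2$.
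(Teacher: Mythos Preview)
Your proof is correct and follows essentially the same approach as the paper: invert the equation for $\mathtt{v}_2$, use the smoothing \eqref{Delta.smooth} of $A^{-1}$, then estimate the projected nonlinearity via Lemma~\ref{algebraKer}, with the $p=2$ case handled by rewriting the equation as $\mathtt{v}_2=-\varepsilon^{-1}A^{-1}\Pi_{V_2}\bigl((2(v_1+\mathtt{v}_2)+w)w\bigr)$ thanks to Lemma~\ref{lem.vsquare}. Your separate treatment of the $j_3=0$ monomials via Lemma~\ref{pesce.rosso} and the algebra property \eqref{algebra.cheap} is a bit more careful than the paper (which applies Lemma~\ref{algebraKer} uniformly despite its stated restriction $j<q$), but the underlying argument is the same.
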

	
\begin{proof}
For cases $ p = 5, 3 $, since  $\mathtt{v}_2$ solves \eqref{v2.di.v1.e.w}, resp. \eqref{v2.di.v1.e.w.p3},
\eqref{Delta.smooth}, Lemma \ref{algebraKer}, 
\eqref{v1.v2.itera},  we get 
	$$
	\|\mathtt{v}_2\|_{\cV^{s+2}_{t,z}}\lesssim_{s,r} \varepsilon^{-1}\sum\limits_{j_1+j_2+j_3=p} \widetilde{\rho}_1^{j_1} \widetilde{\rho}_2^{j_2} \|w\|^{j_3}_{H^r_t\cH^s_x} \stackrel{\eqref{w.small.1}} 
	{\lesssim_{s,r}} \varepsilon^{-1} \max \lbrace \widetilde{\rho}_1,\, \widetilde{\rho}_2 \rbrace^p\,,
	$$
	which gives \eqref{tesi.1.1}. 
	If $p=2$ then 
	$
	\mathtt{v}_2=\varepsilon^{-1} A^{-1}\Pi_{V_2}\left( (2(v_1+\mathtt{v}_2)+w) w\right)
	$ 
	(cf. \eqref{new.v2.2}) and the estimate \eqref{tesi.1.1} follows similarly.	
\end{proof}

\begin{lemma}[Regularity bootstrap for $w$]\label{w.piumeglio.1}
Let  $\lambda>\frac{1}{2}$, $\mu>\frac{3}{2}$. 
	There exists $K_{\lambda, \mu, p}>0$, depending only on $p$ and on the algebra constant $C_{\lambda, \mu}$  in \eqref{algebra.prop}, such that, if $v_1\in \mathcal{D}_{\rho_1}$ and $\mathtt{v}_2(v_1)$ defined in \eqref{v1.eq.v2.w} satisfy
	\begin{equation}\label{cose.piccole.1}
	{\gamma}^{-1}\max \lbrace \|v_1\|_{\cV^{\lambda+\mu}_{t,z}},\,\|\mathtt{v}_2(v_1)\|_{\cV^{\lambda + \mu}_{t,z}} \rbrace^{p-1} < K_{\lambda,\mu,p}\,,
	\end{equation}
	then
	\begin{enumerate}
	\item If $p=5$, resp. $p=3$, then the solution $ w(v_1) $ 
	 of \eqref{w2.eq.v2} 
	found in Proposition \ref{prop.w} 
	(resp. Proposition \ref{prop.w.p3}) belongs to 
	$ H^\lambda_t\cH^\mu_z$ and 
	\begin{equation}\label{senso.1}
	\|w(v_1)\|_{H^\lambda_t\cH^\mu_z}\leq {K^{-1}_{\lambda,\mu,p}} {\gamma}^{-1} \max \lbrace \|v_1\|_{\cV^{\lambda+\mu}_{t,z}},\,\|\mathtt{v}_2(v_1)\|_{\cV^{\lambda + \mu}_{t,z}} \rbrace^{p} \, . 
	\end{equation}
	\item  If $p=2$ then the function 
	$w(v_1) = \tilde{w}(v_1) +
	\mathcal{L}_\omega^{-1}\left(v_1 + \mathtt{v}_2(v_1)\right)^2 $, where  $ \tilde{w}(v_1)$ is the solution of \eqref{trenitalia},
	found  in Proposition \ref{prop.w.p2} belongs to 
	$ H^\lambda_t\cH^\mu_x$ and 
	\begin{equation}\label{senso.p2.1}
	\|{w}(v_1)\|_{H^\lambda_t\cH^\mu_x} \leq \frac{1}{2 K_{\lambda, \mu, 2}} \gamma^{-1} \max \lbrace \|v_1\|_{\cV^{\lambda + \mu}_{t,x}},\,\|\mathtt{v}_2(v_1)\|_{\cV^{\lambda + \mu}_{t,x}}\rbrace^2\,.
	\end{equation}	
	\end{enumerate}
\end{lemma}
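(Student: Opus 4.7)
The plan is to realize the solutions of Propositions \ref{prop.w}, \ref{prop.w.p3}, \ref{prop.w.p2} as the unique fixed points of a contraction on a ball in the stronger space $H^\lambda_t\cH^\mu_z$, rather than on $H^{1/2+\delta}_t\cH^{3/2+\delta}_z$, and then invoke uniqueness (via the continuous embedding $H^\lambda_t\cH^\mu_z\hookrightarrow H^{1/2+\delta}_t\cH^{3/2+\delta}_z$ for $\delta$ small) to identify the new fixed point with $w(v_1)$ already constructed. Write $M:=\max\{\|v_1\|_{\cV^{\lambda+\mu}_{t,z}},\|\mathtt{v}_2(v_1)\|_{\cV^{\lambda+\mu}_{t,z}}\}$ and note that by Lemma \ref{lemma.come.vuoi} one has $\|v_1\|_{H^\lambda_t\cH^\mu_z}=\|v_1\|_{\cV^{\lambda+\mu}_{t,z}}$ and similarly for $\mathtt{v}_2(v_1)$, so $M$ bounds both $V$-components in the ambient norm $H^\lambda_t\cH^\mu_z$.

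\smallskip

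\noindent For $p=3,5$ the equation $\mathcal{L}_\omega w=\Pi_W(v_1+\mathtt{v}_2(v_1)+w)^p$ is inverted via Lemma \ref{lemma.sono.pochissimi} and rewritten as the fixed-point problem $w=\mathcal{S}(w):=\mathcal{L}_\omega^{-1}\Pi_W(v_1+\mathtt{v}_2(v_1)+w)^p$. Combining the $2/\gamma$ bound of Lemma \ref{lemma.sono.pochissimi} with the algebra property \eqref{algebra.prop} I expect the estimates
\begin{equation*}
\|\mathcal{S}(w)\|_{H^\lambda_t\cH^\mu_z}\leq 2\gamma^{-1}C_{\lambda,\mu}^{\,p-1}\bigl(M+\|w\|_{H^\lambda_t\cH^\mu_z}\bigr)^{p},\qquad
\|\mathcal{S}(w)-\mathcal{S}(w')\|_{H^\lambda_t\cH^\mu_z}\leq 2p\gamma^{-1}C_{\lambda,\mu}^{\,p-1}\bigl(M+r\bigr)^{p-1}\|w-w'\|_{H^\lambda_t\cH^\mu_z}
\end{equation*}
on a ball of radius $r$. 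Choosing $r:=K_{\lambda,\mu,p}^{-1}\gamma^{-1}M^{p}$, so that the target estimate \eqref{senso.1} is built in, the smallness assumption \eqref{cose.piccole.1} yields $r\leq M$; then picking $K_{\lambda,\mu,p}:=c_p\,C_{\lambda,\mu}^{-(p-1)}$ with $c_p>0$ small enough (depending only on $p$) makes $\mathcal{S}$ map $B_r$ into itself and a contraction, and Banach's theorem produces the fixed point. Uniqueness of $w(v_1)$ in the larger ball $\mathcal{D}^W_{\rho_3}$, proved in Propositions \ref{prop.w}/\ref{prop.w.p3}, then forces it to coincide with this fixed point, giving \eqref{senso.1}.

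\smallskip

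\noindent For $p=2$ the same scheme applies to $\tilde w(v_1)$, which solves \eqref{trenitalia}. The right-hand side of \eqref{trenitalia} is (apart from trivial factors) a polynomial of degree $2$ in $v_1+\mathtt{v}_2(v_1)$ and $\tilde w$, where each occurrence of $(v_1+\mathtt{v}_2)^2$ inside a $\mathcal{L}_\omega^{-1}$ costs a factor $2\gamma^{-1}$ by Lemma \ref{lemma.sono.pochissimi}; using \eqref{algebra.prop} and the same identification via Lemma \ref{lemma.come.vuoi}, the analogous contraction scheme on a ball of radius $\tilde r:=K_{\lambda,\mu,2}^{-1}\gamma^{-1}M^{2}$ (with $K_{\lambda,\mu,2}$ small depending only on $C_{\lambda,\mu}$) yields $\|\tilde w(v_1)\|_{H^\lambda_t\cH^\mu_x}\lesssim\gamma^{-1}M^{2}$. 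Finally, $\|\mathcal{L}_\omega^{-1}(v_1+\mathtt{v}_2(v_1))^2\|_{H^\lambda_t\cH^\mu_x}\leq 2\gamma^{-1}C_{\lambda,\mu}M^{2}$ by Lemma \ref{lemma.sono.pochissimi} and \eqref{algebra.prop}, and adding the two contributions gives \eqref{senso.p2.1} after possibly shrinking $K_{\lambda,\mu,2}$.

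\smallskip

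\noindent The main delicate point is choosing the constant $K_{\lambda,\mu,p}$: it must depend on $\lambda,\mu$ only through the algebra constant $C_{\lambda,\mu}$ of \eqref{algebra.prop}, so that the bootstrap can be iterated finitely many times (in the proof of Theorem \ref{teo.strong}) without degradation, and the radius $r$ must be chosen exactly as the target bound so that the smallness hypothesis \eqref{cose.piccole.1} feeds both self-mapping and contractivity. Everything else is a routine Banach fixed point argument, whose output coincides with the previously constructed $w(v_1)$ by uniqueness in the weaker space.
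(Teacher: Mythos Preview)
Your contraction argument in the strong norm is fine, and the choice $r=K_{\lambda,\mu,p}^{-1}\gamma^{-1}M^p$ together with \eqref{cose.piccole.1} does make $\mathcal{S}(w):=\mathcal{L}_\omega^{-1}\Pi_W(v_1+\mathtt{v}_2(v_1)+w)^p$ a contraction on $B_r\subset H^\lambda_t\cH^\mu_z$ with a unique fixed point $\hat w$ satisfying the target bound. The gap is in the identification step. The uniqueness statement you quote from Propositions~\ref{prop.w}/\ref{prop.w.p3} is for the map $\mathcal{T}_{v_1}(w)=\mathcal{L}_\omega^{-1}\Pi_W(v_1+v_2(v_1,w)+w)^p$, where $v_2$ depends on $w$; your map $\mathcal{S}$ has $\mathtt{v}_2=v_2(v_1,w(v_1))$ \emph{frozen}. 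Your $\hat w$ solves $\hat w=\mathcal{S}(\hat w)$, but there is no reason it solves \eqref{w2.eq.v2}, i.e.\ $\hat w=\mathcal{T}_{v_1}(\hat w)$, unless $v_2(v_1,\hat w)=\mathtt{v}_2$, which already presupposes $\hat w=w(v_1)$. So the invoked uniqueness does not apply to $\hat w$, and you also cannot conclude $\hat w\in\mathcal{D}^W_{\rho_3}$ just from $\|\hat w\|_{H^\lambda_t\cH^\mu_z}\le r$, since for large $\lambda+\mu$ one has $r\gg\rho_3$.

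The paper closes this gap differently: it iterates $\mathcal{S}$ (with $\mathtt{v}_2$ frozen) starting from $0$, first shows that $\mathcal{S}$ is a contraction on $\mathcal{D}^W_{\rho_3}$ in the \emph{weak} norm $H^{1/2+\delta}_t\cH^{3/2+\delta}_z$ (this is an easy variant of Lemma~\ref{prop.tw.contraction}, now simpler since $\mathtt{v}_2$ is fixed and lies in $\mathcal{D}^{V_2}_{\rho_2}$), so the iterates converge to the unique weak fixed point, which is $w(v_1)$ since $w(v_1)=\mathcal{S}(w(v_1))$ by self-consistency; then shows inductively that each iterate satisfies the strong bound \eqref{senso.1}; finally extracts a weak-$H^\lambda_t\cH^\mu_z$ limit and identifies it with $w(v_1)$ by compact embedding. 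Your scheme can be repaired the same way: once you also record that $\mathcal{S}$ contracts on $\mathcal{D}^W_{\rho_3}$ in the weak norm, the Banach iterates from $0$ stay simultaneously in $B_r$ and in $\mathcal{D}^W_{\rho_3}$, hence converge in the strong norm to $\hat w$ and in the weak norm to $w(v_1)$, forcing $\hat w=w(v_1)$. The case $p=2$ has the same issue and the same fix, applied to the frozen map \eqref{J.w} for $\tilde w$.
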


\begin{proof}
For brevity we denote $\mathtt{v}_2 := \mathtt{v}_2(v_1)$ and $w := w(v_1)$. Let $p=3,5$.
\\[1mm]
{\sc Step 1.} {\it The sequence 
$ (\bar{w}_{k})_{k \in \N} $ defined 	by 
 $\bar{w}_0 :=0$ and $\bar{w}_{k+1}:=\mathcal{L}^{-1}_{\omega}\Pi_W((v_1+\mathtt{v}_2+\bar{w}_k)^p)$ has limit $ \lim\limits_{k \rightarrow \infty}\bar{w}_k = w $ in $ H^{\frac{1}{2}+\delta}_t\cH^{\frac{3}{2}+\delta}_z $.}
 
 In fact,  arguing as in Lemma \ref{prop.tw.contraction},   the map
	$
	\bar{w}\mapsto \mathcal{L}^{-1}_{\omega}\Pi_W((v_1+\mathtt{v}_2+\bar{w})^p) $
	is a contraction on $\mathcal{D}_{\rho_3}^{W}$. Hence it admits a unique fixed point 
	$ \hat{w} =\lim\limits_{k\rightarrow \infty}\bar{w}_k $ in $ H^{\frac{1}{2}+\delta}_t\cH^{\frac{3}{2}+\delta}_z $ satisfying the equation 
	$\hat{w}=\mathcal{L}^{-1}_{\omega}\Pi_W(v_1+\mathtt{v}_2+\hat{w})^p)$, which is \eqref{w2.eq.v2}.
	It implies  that  $ w = \hat{w} = \lim\limits_{k\rightarrow \infty}\bar{w}_k $.
\\[1mm]
{\sc Step 2.}	
{\it For any  $ k \in \N $ each  $ \bar{w}_k $ satisfies \eqref{senso.1}.}  
 
We proceed by induction.  Clearly  $ \bar w_0 = 0 $ satisfies \eqref{senso.1}.
	Now assume that $\bar{w}_{k-1}$ satisfies \eqref{senso.1}.
	By Lemma \ref{lemma.sono.pochissimi}  and
	 \eqref{algebra.prop}, there exists $C_{\lambda,\mu,p}>0$ such that
\begin{equation}\label{defCin}
	\big\| \mathcal{L}^{-1}_{\omega}\Pi_W (v_1+\mathtt{v}_2+\bar{w})^p
	\big\|_{H^\lambda_t\cH^\mu_z} \leq 
	{\gamma}^{-1} C_{\lambda, \mu, p} \max\left\{ \|v_1\|_{\cV^{\lambda+ \mu}_{t,z}},\  \|\mathtt{v}_2\|_{\cV^{\lambda+ \mu}_{t,z}}, \|w\|_{H^\lambda_t\cH^\mu_z}  \right\}^p
\end{equation}
	Then take $K_{\lambda, \mu ,p} := {C_{\lambda, \mu, p}}^{-1}$ in \eqref{cose.piccole.1}. By \eqref{defCin}, the fact that  
	$\bar{w}_{k-1}$ satisfies \eqref{senso.1}, 
	$$
	\begin{aligned}
	\|\bar{w}_k\|_{H^\lambda_t\cH^\mu_z}&\leq {\gamma}^{-1} C_{\lambda, \mu, p} \max\left\{ \|v_1\|_{\cV^{\lambda + \mu}_{t,z}},\  \|\mathtt{v}_2\|_{\cV^{\lambda+ \mu}_{t,z}},\ \|\bar{w}_{k-1}\|_{H^\lambda_t\cH^\mu_z}\right\}^p\\
	&\leq {\gamma}^{-1} C_{\lambda, \mu, p} \max\left\{ \|v_1\|_{\cV^{\lambda+ \mu}_{t,z}},\  \|\mathtt{v}_2\|_{\cV^{\lambda+ \mu}_{t,z}},\  {K^{-1}_{\lambda, \mu, p}} \gamma^{-1} \max\{\|v_1\|_{\cV^{\lambda+ \mu}_{t,z}},\,\|\mathtt{v}_2\|_{\cV^{\lambda+ \mu}_{t,z}}\}^{p} \right\}^p \\
	& \stackrel{\eqref{cose.piccole.1}} 
	\leq {\gamma}^{-1}  {K^{-1}_{\lambda, \mu, p}}  \max\{\|v_1\|_{\cV^{\lambda+ \mu}_{t,z}},\,\|\mathtt{v}_2\|_{\cV^{\lambda+ \mu}_{t,z}}\}^p\,.
	\end{aligned}
	$$
{\sc Step 3. }{\it Proof of \eqref{senso.1}.}
By Step 2, the bounded sequence $\{\bar{w}_k\}_{k\in 	\mathbb{N}}$ converges up to subsequences to a weak limit  $\bar{w} \in H^\lambda_t\cH^\mu_z$ satisfying 
$\|\bar{w}\|_{H^\lambda_t\cH^\mu_z}\leq  K^{-1}_{\lambda, \mu, p} \gamma^{-1} \max \lbrace \|v_1\|_{\cV^{\lambda+ \mu}_{t,z}},\,\|\mathtt{v}_2\|_{\cV^{\lambda+ \mu}_{t,z}} \rbrace^p$.
	Since $H^{\lambda}_t \cH^{\mu}_z$ is compactly embedded into $H^{\frac{1}{2} + \delta}_t \cH^{\frac 3 2 + \delta}_z$ for $\delta>0$ small enough, and using Step 1, 
	we deduce that $\bar w = w $. This proves that 
$ w $ satisfies  \eqref{senso.1}.	

	We now consider the case $p=2$.
	
	\noindent
	{\sc Step 1.} {\it The sequence 
		$ (\check{w}_{k})_{k \in \N} $ defined 	by 
		$\check{w}_0 :=0$ and 
		\begin{equation}\label{contrae.di.nuovo}
		\begin{aligned}
		\check{w}_{k+1}  
		:= \mathcal{T}(v_1, \mathtt{v}_2, \check{w}_{k})
		& :=\mathcal{L}_\omega^{-1}\Pi_{W} \left( 2(v_1 + \mathtt{v}_2) \left(\mathcal{L}_\omega^{-1}(v_1 + \mathtt{v}_2)^2 + \check w_{k}\right) \right)\\
		& \ + \mathcal{L}_\omega^{-1} \Pi_{W} \left( \left(\mathcal{L}_\omega^{-1}(v_1 + \mathtt{v}_2)^2 + \check w_{k}\right)^2 \right)\,,
		\end{aligned}
		\end{equation}
		has limit $ \lim\limits_{k \rightarrow \infty}\check{w}_k = \tilde  w $ in $ H^{\frac{1}{2}+\delta}_t\cH^{\frac{3}{2}+\delta}_x $.}
	
	Arguing as in Lemma \ref{lem.w.contraz}, the map $\check{w} \mapsto \mathcal{T}(v_1, \mathtt{v}_2, \check{w})$ is a contraction on $\mathcal{D}^W_{\rho_3}$, thus it admits a unique fixed point in $ \mathcal{D}^W_{\rho_3}$ which solves \eqref{trenitalia} and therefore it coincides with $\tilde w$.
	
	\noindent
	{\sc Step 2.} \emph{For any $k \in \N$ the function $\check{w}_k$ satisfies}
	\begin{equation}\label{wtilde.k}
	\|\check{w}_k\|_{H^\lambda_t\cH^\mu_x}\leq \frac 1 4 K_{\lambda,\mu,2}^{-2} {\gamma}^{-2} \max \lbrace \|v_1\|_{\cV^{\lambda + \mu}_{t,x}},\,\|\mathtt{v}_2(v_1)\|_{\cV^{\lambda + \mu}_{t,x}}\rbrace^3\,.
	\end{equation}
	We proceed by induction. Clearly $\check{w}_0 $ satisfies  \eqref{wtilde.k}. Then suppose $\check{w}_k$ satisfies \eqref{wtilde.k}.
	Let $C_{\lambda, \mu, 2}$ the algebra constant in \eqref{algebra.prop} and take $K_{\lambda, \mu, 2} := \frac{1}{32 C_{\lambda, \mu, 2}}$. By Lemma \ref{lemma.sono.pochissimi}, 
	 \eqref{algebra.prop} and \eqref{v1.v2.itera}
	\begin{equation}\label{cacio.e.pepe}
	\|\mathcal{L}_{\omega}^{-1}(v_1+\mathtt{v}_2)^2 \|_{H^\lambda_t\cH^\mu_x}\leq 8 \gamma^{-1}C_{\lambda, \mu,2}\max\lbrace \|v_1\|_{\cV^{\lambda+ \mu}_{t,x}},\, \| {\mathtt v}_2\|_{\cV^{\lambda+ \mu}_{t,x}} \rbrace^2\,.
	\end{equation}
	For any $ k \in \N $ we define 
	$\underline{w}_k := \mathcal{L}_\omega^{-1}(v_1 + \mathtt{v}_2)^2 + \check{w}_k$.  
	By \eqref{contrae.di.nuovo}, Lemma \ref{lemma.sono.pochissimi}, \eqref{algebra.prop} and \eqref{lem.s.sprime} one has
	\begin{equation}\label{temporale}
	\|\check{w}_{k+1}\|_{H^\lambda_t\cH^\mu_x}\leq  2\gamma^{-1}C_{\lambda, \mu, 2} \|\underline{w}_k\|_{H^\lambda_t\cH^\mu_x}\left(4 \max\lbrace \|v_1\|_{\cV^{\lambda+ \mu}_{t,x}},\, \| {\mathtt v}_2\|_{\cV^{\lambda+ \mu}_{t,x}} \rbrace + \|\underline{w}_k\|_{H^\lambda_t\cH^\mu_x}\right)\, . 
	\end{equation}
	By \eqref{cacio.e.pepe}, the inductive assumption,   
	and assumption \eqref{cose.piccole.1}  and $K_{\lambda, \mu, 2} = \frac{1}{32 C_{\lambda, \mu, 2}}$ we have 
	\begin{align}
	\nonumber
	\| \underline{w}_{k}\|_{H^\lambda_t \cH^\mu_x} &\leq 8 \gamma^{-1} C_{\lambda, \mu, 2} \max\lbrace \|v_1\|_{\cV^{\lambda+ \mu}_{t,x}},\, \|{\mathtt v}_2\|_{\cV^{\lambda+ \mu}_{t,x}} \rbrace^2 + \frac 1 4 K_{\lambda, \mu, 2}^{-2} \gamma^{-2} \max\lbrace \|v_1\|_{\cV^{\lambda+ \mu}_{t,x}},\, \|{\mathtt v}_2\|_{\cV^{\lambda+ \mu}_{t,x}} \rbrace^3\\
	\label{arcobaleno}
	&\leq 16 \gamma^{-1} C_{\lambda, \mu, 2} \max\lbrace \|v_1\|_{\cV^{\lambda+ \mu}_{t,x}},\, \|{\mathtt v}_2\|_{\cV^{\lambda+ \mu}_{t,x}} \rbrace^2  \\
	\label{ombrello}
	&= \frac 1 2 \max\lbrace \|v_1\|_{\cV^{\lambda+ \mu}_{t,x}},\, \|{\mathtt v}_2\|_{\cV^{\lambda+ \mu}_{t,x}} \rbrace\,.
	\end{align}
	Then, by \eqref{temporale}, \eqref{arcobaleno}, \eqref{ombrello}, one gets $\| \check{w}_{k+1}\|_{H^\lambda_t \cH^\mu_x} \leq \frac{1}{4} K^{-2}_{\lambda, \mu, 2} \gamma^{-2} \max\lbrace \|v_1\|_{\cV^{\lambda+ \mu}_{t,x}},\, \|v_2\|_{\cV^{\lambda+ \mu}_{t,x}} \rbrace^3$,
	proving the claim.
\\[1mm]
	By Steps 1 and 2 we conclude, as 
	in the cases $ p = 3,5 $, that 
$\tilde{w}$ satisfies  \eqref{wtilde.k}. 
Finally  \eqref{cacio.e.pepe},  \eqref{wtilde.k} for $\tilde w$ and assumption \eqref{cose.piccole.1}  and $K_{\lambda, \mu, 2} = \frac{1}{32 C_{\lambda, \mu, 2}}$ implies \eqref{senso.p2.1}.

\end{proof}

We start proving Theorem \ref{teo.strong} in the cases $p=5$, $p=3$.
Given $\delta \in (0, \frac{1}{100})$, $r_0:= \frac 1 2 + \delta$, and $s_0 := \frac 3 2 + \delta$, define for all $l \geq 0$ the quantities
\begin{gather}
	\label{s.k}
	\sigma_0 := s_0:= \tfrac 3 2 + \delta\,, \quad
	\sigma_{l+1}:=\sigma_l +\tfrac{3}{2}-\delta \,,\\
	\label{a.0.b.0}
	\alpha_0 := \begin{cases}
	10 \delta & \text{ if} \quad p=5\\
	4 \delta & \text{ if} \quad p=3\,,
	\end{cases} \quad a_0 := \alpha_0\,, \\
	\label{al.k}
	\alpha_{l+1}:=p\ \max \lbrace \sigma_{l}-1,\ a_l\rbrace\,, \quad
	a_{l+1}:=\alpha_{l+1}-\tfrac{1}{2}-\delta \,,\\
	\label{b.k}
	b_0:= \begin{cases}
	5 + 11 \delta & \text{ if} \quad p=5\\
	3 + 7\delta & \text{ if}\quad p=3\,,
	\end{cases} \quad
	b_{l+1}:=p \max\lbrace \sigma_{l}+1,\ \alpha_{l+1} \rbrace \,,\\
	\label{c.k}
	\zeta_{l}:=\max \lbrace b_{l}-\max\lbrace \sigma_{l}-1,\, a_{l}\rbrace,\, (p-1)\max \lbrace \sigma_{l}+1,\,  \alpha_{l+1}\rbrace \rbrace\,.
\end{gather}
	\begin{lemma}[Iterative regularity bootstrap]
	Let $p=3, 5$ and $v_1^{(1)}  \in \mathcal{D}_{\rho_1} $ as in Theorem \ref{teo.v1}. For any $l \geq 0$ and $\delta \in (0, \frac{1}{100})$ there exists $\epsilon_{R, \delta, l}>0$ such that, if $\varepsilon, R, N$ are as in Theorem \ref{teo.v1} and
	$0<\gamma^{-1} \varepsilon  N^{\zeta_l} \leq \epsilon_{R, \delta, l}$, then for any $\lambda_l\geq r_0$ and $\mu_l \geq s_0$ such that $\lambda_l + \mu_l = \sigma_l + 2$,
	the function 	$u = v_1^{(1)} + \mathtt{v}_2(v_1^{(1)}) + w(v_1^{(1)})$ 
	 belongs to $ H^{\lambda_l}_t \cH^{\mu_l}_{z}$ and 
	\begin{equation}\label{un.strong}
	\begin{gathered}
	\|v_1^{(1)}\|_{\cV^{\sigma_l + 2}_{t,z}} \leq R \varepsilon^{\frac{1}{p-1}} N^{\sigma_{l} +1}\,, \quad \|\mathtt{v}_2(v_1^{(1)})\|_{\cV^{\sigma_l + 2}_{t,z}}\lesssim_{R, \delta, l} \varepsilon^{\frac{1}{p-1}} N^{\alpha_{l+1}}\,, \\ \|w(v_1^{(1)})\|_{H^{\lambda_l}_t \cH^{\mu_l}_z} \lesssim_{R, \delta, l} \gamma^{-1} \varepsilon^{\frac{p}{p-1}} N^{b_{l+1}}\,.
	\end{gathered}
	\end{equation}
	\end{lemma}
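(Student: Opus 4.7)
The plan is to proceed by induction on $l$, where at every step the bound for $v_1^{(1)}$ is essentially trivial: since $v_1^{(1)} \in V_{\leq N}$ satisfies $\|v_1^{(1)}\|_{\cV^1_{t,z}} \leq \rho_1 \leq R\varepsilon^{1/(p-1)}$, the first smoothing estimate in \eqref{lem.s.sprime} gives directly $\|v_1^{(1)}\|_{\cV^{\sigma_l+2}_{t,z}} \leq N^{\sigma_l+1}\|v_1^{(1)}\|_{\cV^1_{t,z}} \leq R\varepsilon^{1/(p-1)} N^{\sigma_l+1}$. The real content of the induction is to upgrade the regularity of $\mathtt{v}_2(v_1^{(1)})$ and $w(v_1^{(1)})$ by alternately applying the two bootstrap lemmas \ref{v2.piumeglio.1} and \ref{w.piumeglio.1}, whose interplay is precisely what the exponent sequences $\sigma_l, \alpha_l, a_l, b_l$ are designed to track.

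In the base case $l=0$, I would verify the hypotheses of Lemma \ref{v2.piumeglio.1} at $s=s_0 = 3/2+\delta$ using Propositions \ref{prop.v2} (or \ref{prop.v2.p3}) and \ref{prop.w} (or \ref{prop.w.p3}): the bounds $\|\mathtt{v}_2\|_{\cV^{s_0}_{t,z}} \leq \|\mathtt{v}_2\|_{\cV^{2+2\delta}_{t,z}}\leq\rho_2 \lesssim \varepsilon^{1/(p-1)} N^{a_0}$ and $\|w\|_{H^{r_0}_t\cH^{s_0}_z}\leq\rho_3$ are immediate, while the required comparison $\rho_3 \leq \max\{\tilde\rho_1,\tilde\rho_2\}$ reduces to the smallness of $\gamma^{-1}\varepsilon N^{\zeta_0}$. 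Lemma \ref{v2.piumeglio.1} then outputs $\|\mathtt{v}_2\|_{\cV^{\sigma_0+2}_{t,z}} \lesssim \varepsilon^{-1}\max\{\tilde\rho_1,\tilde\rho_2\}^p \lesssim \varepsilon^{1/(p-1)}N^{\alpha_1}$. Feeding this together with the $v_1^{(1)}$ bound into Lemma \ref{w.piumeglio.1} with any admissible split $\lambda_0+\mu_0=\sigma_0+2$---whose smallness hypothesis \eqref{cose.piccole.1} is again absorbed by $\gamma^{-1}\varepsilon N^{\zeta_0}$---produces the required $\|w\|_{H^{\lambda_0}_t\cH^{\mu_0}_z} \lesssim \gamma^{-1}\varepsilon^{p/(p-1)} N^{b_1}$.

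For the inductive step $l\to l+1$, I apply Lemma \ref{v2.piumeglio.1} at $s=\sigma_{l+1}$. The crucial observation is that choosing the split $\lambda_l=r_0$, $\mu_l=\sigma_l+2-r_0=\sigma_{l+1}$ at level $l$---admissible precisely because the recursion $\sigma_{l+1}=\sigma_l+3/2-\delta$ enforces $\mu_l=\sigma_{l+1}\geq s_0$---delivers the required $w$-input $\|w\|_{H^{r_0}_t\cH^{\sigma_{l+1}}_z} \lesssim \gamma^{-1}\varepsilon^{p/(p-1)}N^{b_{l+1}}$ directly from the inductive hypothesis. The $\mathtt{v}_2$-input is provided by the \emph{second} smoothing bound of \eqref{lem.s.sprime}, applied to $\mathtt{v}_2\in V_{>N}$, which drops the regularity from $\sigma_l+2$ to $\sigma_{l+1}$ at the cost of a factor $N^{-(1/2+\delta)}$, yielding $\tilde\rho_2\lesssim\varepsilon^{1/(p-1)}N^{a_{l+1}}$ where $a_{l+1}=\alpha_{l+1}-1/2-\delta$ is precisely the reduced exponent. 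Lemma \ref{v2.piumeglio.1} then produces $\|\mathtt{v}_2\|_{\cV^{\sigma_{l+1}+2}_{t,z}}\lesssim\varepsilon^{1/(p-1)}N^{\alpha_{l+2}}$ with $\alpha_{l+2}=p\max\{\sigma_{l+1}-1,a_{l+1}\}$, and a subsequent application of Lemma \ref{w.piumeglio.1} with any admissible split $(\lambda_{l+1},\mu_{l+1})$ of $\sigma_{l+1}+2$ delivers the $w$-bound at level $l+1$ with the exponent $b_{l+2}$.

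The main obstacle is combinatorial rather than analytic: one must verify that the single smallness hypothesis $\gamma^{-1}\varepsilon N^{\zeta_l}\leq\epsilon_{R,\delta,l}$ simultaneously handles the inequality $\|w\|_{H^{r_0}_t\cH^{\sigma_{l+1}}_z}\leq\max\{\tilde\rho_1,\tilde\rho_2\}$ required by Lemma \ref{v2.piumeglio.1}---which contributes the term $b_l-\max\{\sigma_l-1,a_l\}$ to $\zeta_l$---and the smallness hypothesis \eqref{cose.piccole.1} required by Lemma \ref{w.piumeglio.1}, which contributes $(p-1)\max\{\sigma_l+1,\alpha_{l+1}\}$. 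These two contributions are exactly the two arguments of the maximum in the definition \eqref{c.k}, so the consistency of the bookkeeping is built into the recursions \eqref{s.k}--\eqref{c.k}. Once the induction is complete, Theorem \ref{teo.strong} for $p=3,5$ follows by choosing $l$ large enough so that $\sigma_l+2\geq r+s$ and specializing the split $(\lambda_l,\mu_l)=(r,s)$; Theorems \ref{teo.spherical} and \ref{teo.hopf} then follow by choosing the free parameter $\beta$ (in $N=\varepsilon^{-1/\beta}$) so that $N^{\max\{\sigma_l+1,\alpha_{l+1},b_{l+1}\}}\varepsilon^{\mathtt{d}}\ll 1$.
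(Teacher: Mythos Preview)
Your proposal is correct and follows essentially the same approach as the paper: both proceed by induction on $l$, alternately applying Lemma~\ref{v2.piumeglio.1} (to gain two derivatives on $\mathtt{v}_2$) and Lemma~\ref{w.piumeglio.1} (to upgrade $w$), with the key device of choosing the split $(\lambda,\mu)=(r_0,\sigma_{l+1})$ at each level so that the $w$-bound feeds directly into the next $\mathtt{v}_2$-bootstrap, and using the high-frequency smoothing $\|\mathtt{v}_2\|_{\cV^{\sigma_{l+1}}}\le N^{-(1/2+\delta)}\|\mathtt{v}_2\|_{\cV^{\sigma_l+2}}$ to produce the exponent $a_{l+1}$. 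Your identification of the two contributions to $\zeta_l$ is exactly how the paper designs the recursion~\eqref{c.k}; the only cosmetic slip is that in your paragraph on the inductive step you frame the passage as $l\to l+1$ while the combinatorial bookkeeping you write down ($b_l-\max\{\sigma_l-1,a_l\}$, etc.) corresponds to the passage $l-1\to l$, which is also the paper's indexing convention.
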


	\begin{proof}
	By \eqref{lem.s.sprime}, $v_1 \in \mathcal{D}_{\rho_1}$ and \eqref{parametri.p5}, \eqref{parametri.p3}, we have the first estimate in \eqref{un.strong} for any $l$.  We  denote $v_1 : = v_1^{(1)}$, $\mathtt{v}_2 := \mathtt{v}_2(v_1^{(1)})$, $w:= w(v_1^{(1)})$. The proof of the second and third inequalities in \eqref{un.strong} proceeds by induction.
	\\[1mm]
	{\sc Initialization.}
	If $l=0$, by \eqref{s.k}, Theorem \ref{teo.v1}, \eqref{lem.s.sprime}  the definition of $\rho_1, \rho_2, \rho_3$ 	
	 in \eqref{parametri.p5}, \eqref{parametri.p3}, \eqref{a.0.b.0},  \eqref{b.k}, for $N$ large enough we have
	\begin{equation}\label{soglie.j0}
	\begin{gathered}
	\|v_1\|_{\cV^{\sigma_0}_{t,z}} \leq N^{\sigma_0 -1}\rho_1 = 
N^{\sigma_0 -1} 	R \varepsilon^{\frac{1}{p-1}} \,, \quad \|\mathtt{v}_2\|_{\cV^{\sigma_0}_{t,z}} \leq \rho_2 = \tc_{2}(\delta) R^{p} \varepsilon^{\frac{1}{p-1}} N^{a_0}\,,\\
	\|w\|_{H^{r_0}_t \cH^{\sigma_0}_z} \leq \rho_3 \leq \gamma^{-1} R^p \varepsilon^{\frac{p}{p-1}} N^{b_0}\,.
	\end{gathered}
	\end{equation}
	{\sc Claim 1 :  $\|\mathtt{v}_2\|_{\cV^{\sigma_0 + 2}_{t,z}} \lesssim_{R, \delta} \varepsilon^{\frac{1}{p-1}} N^{\alpha_1}$. } We apply Lemma \ref{v2.piumeglio.1} with $s \leadsto \sigma_0$, $r \leadsto r_0$, $\widetilde{\rho}_1 \leadsto N^{\sigma_0 - 1}\rho_1$, $\widetilde{\rho}_2 \leadsto \rho_2$. By \eqref{soglie.j0}, taking $\gamma^{-1}\varepsilon N^{\zeta_0} \lesssim_{R, \delta} 1$ with $\zeta_0$ defined in \eqref{c.k}, one deduces \eqref{v1.v2.itera}, \eqref{w.small.1}.
	Thus the claim follows by \eqref{tesi.1.1}, recalling that $\alpha_1 \geq p (\sigma_0 -1)$ by \eqref{al.k}. 
\\[1mm]	
{\sc Claim 2 :}  {\it 
For any  $\lambda_0 + \mu_0 = \sigma_0+ 2$, with $\lambda_0 \geq r_0$ and $\mu_0 \geq s_0$, the function $w$ satisfies \eqref{un.strong} for $l=0$.} 
	 We apply Lemma \ref{w.piumeglio.1}. 
	The assumption \eqref{cose.piccole.1} is satisfied since, by \eqref{lem.s.sprime}, \eqref{parametri.p5}, \eqref{parametri.p3} and Claim 1, one has
	$$
	\gamma^{-1} \max \lbrace \|v_{1}\|_{\cV^{\sigma_0+2}_{t,z}},\|\mathtt{v}_2\|_{\cV^{\sigma_0+2}_{t,z}} \rbrace^{p-1}\lesssim_{R} \gamma^{-1} \varepsilon N^{(p-1)\max\lbrace \sigma_0+1, \alpha_1\rbrace} \stackrel{\eqref{c.k}}{\lesssim_{R}} \gamma^{-1} \varepsilon N^{\zeta_0}
	\leq  \! \!\inf_{\lambda_0 \in [r_0, \sigma_0 + 2] \atop\mu \in [s_0, \sigma_0 +2]} K_{\lambda_0, \mu_0, p} 
	$$ 
provided $\gamma^{-1} \varepsilon N^{\zeta_0}$ is small enough.
	Thus by \eqref{senso.1}, \eqref{lem.s.sprime}, $v_1 \in \mathcal{D}_{\rho_1}$ and Claim 1 one gets
	$$
	\|w\|_{H^{\lambda_0}_t \cH^{\mu_0}_z} \lesssim_{R, \delta} \gamma^{-1} \max\{\varepsilon^{\frac{1}{p-1}} N^{\sigma_0 +1} , \varepsilon^{\frac{1}{p-1}} N^{\alpha_1}\}^{p} \stackrel{\eqref{b.k}}{\lesssim_{R, \delta}} \gamma^{-1} \varepsilon^{\frac{p}{p-1}} N^{b_1}\,,
	$$ which is the second estimate in
 \eqref{un.strong} for $l=0$.
\\[1mm]
{\sc Induction.}	
	 We now assume that \eqref{un.strong} holds for $l-1$ and we prove it for $l$. 
	 
	 \noindent
	{\sc Claim $l$1 : $ \|\mathtt{v}_2\|_{\cV^{\sigma_{l}+2}_{t,z}}\lesssim_{R, \delta, l} \varepsilon^{\frac{1}{p-1}}N^{\alpha_{l+1}}$.} 	Assuming $ \gamma^{-1} \varepsilon N^{\zeta_l} \lesssim_{R, \delta, l} 1$, 
	choosing $\lambda_{l-1}=r_0$ and $\mu_{l-1}=\sigma_{l-1} + 2 - \lambda_{l-1} =\sigma_{l}$, by induction hypothesis and using \eqref{lem.s.sprime}, \eqref{s.k}, \eqref{al.k} we have
	\begin{equation}\label{inductive}
	\begin{gathered}
	\|v_1\|_{\cV^{\sigma_{l}}_{t,z}} \leq R \varepsilon^{\frac{1}{p-1}}N^{\sigma_{l} - 1}\,, \quad
	\|\mathtt{v}_2\|_{\cV^{\sigma_{l}}_{t,z}} \lesssim_{R, \delta, l} \varepsilon^{\frac{1}{p-1}}N^{a_l}\,,\\
	\|w\|_{H^{r_0}_t\cH^{\sigma_l}_z}\lesssim_{R, \delta, l} \gamma^{-1}\varepsilon^{\frac{p}{p-1}}N^{b_l}\,.
	\end{gathered}
	\end{equation}
	We apply  Lemma \ref{v2.piumeglio.1} with $s = \sigma_{l}$, $\widetilde{\rho}_1 = R \varepsilon^{\frac{1}{p-1}}N^{\sigma_l-1}$ and $\widetilde{\rho}_2 \gtrsim_{R, \delta, l} \varepsilon^{\frac{1}{p-1}}N^{a_l}$. Then by \eqref{inductive}  the assumptions \eqref{v1.v2.itera} and \eqref{w.small.1} are satisfied, taking $\gamma^{-1}\varepsilon N^{\zeta_l} \lesssim_{R, \delta, l} 1$, with $\zeta_l$ given by \eqref{c.k}.
	Thus \eqref{tesi.1.1} and \eqref{al.k} imply the claim.
	\\[1mm]
	{\sc Claim $l$2 :}
	\emph{For any  $\lambda_l + \mu_l = \sigma_l+ 2$ with $\lambda_l \geq r_0$ and $\mu_l \geq s_0$, the function $w$ satisfies \eqref{un.strong}.}
	We  apply Lemma \ref{w.piumeglio.1} with $\lambda=\lambda_l$ and $\mu=\mu_l$. Indeed, since $\gamma^{-1}\varepsilon N^{\zeta_l} \lesssim_{R, \delta, l} 1$ by \eqref{c.k}, and using \eqref{parametri.p5}, \eqref{parametri.p3}, and Claim $l1$, assumptions \eqref{cose.piccole.1} of Lemma \ref{w.piumeglio.1} is satisfied, and thus by  \eqref{senso.1}   $\|w\|_{H^{\lambda_l}_t\cH^{\mu_l}_z}\lesssim_{R, \delta, l} \gamma^{-1} \varepsilon^{\frac{p}{p-1}}N^{b_{l+1}}$. This concludes the inductive step.
	\end{proof}
	
The proof of Theorem \ref{teo.strong} for $p=2$ follows with similar arguments:
given $\delta \in (0, \frac{1}{100})$, $r_0 \geq \frac 1 2 + \delta$, define for any $l \geq 0$ the quantities
\begin{gather}
\label{s.k.p2}
\sigma_0 := s_0 := \tfrac 3 2 + \delta \,, \quad \sigma_{l+1}:=\sigma_l +\tfrac{3}{2}-\delta\,,\\
\label{a.0.b.0.p2}
 \alpha_0 := 0\,, \quad a_0 := 0\,,  \quad  b_0:= 2+5\delta\,,\\
\label{al.k.p2}
\alpha_{l+1}:= b_l+\max \lbrace \sigma_{l}-1,\ a_l\rbrace + 1\,, \quad
a_{l+1}:=\alpha_{l+1}-\tfrac{1}{2}-\delta\,, \quad 
b_{l+1}:=2 \max\lbrace \sigma_{l}+1,\ \alpha_{l+1} \rbrace\,,\\
\label{c.k.p2}
\zeta_{l}:=2\max \lbrace b_l-\max\lbrace \sigma_{l}-1,\, a_{l}\rbrace,\, \max \lbrace \sigma_l+1,\,  \alpha_{l+1}\rbrace \rbrace\,.
\end{gather}
Then one proves the following:
	\begin{lemma}[Iterative regularity bootstrap]
	Let $p=2$ and  $v_1^{(1)} \in \mathcal{D}_{\rho_1}  $ as in Theorem \ref{teo.v1}. For any $l \geq 0$ and $\delta \in (0, \frac{1}{100})$ there exists $\epsilon_{R, \delta, l}>0$ such that, if $\varepsilon, R, N$ are as in Theorem \ref{teo.v1} and
	$0<\gamma^{-2} \varepsilon  N^{\zeta_l} \leq \epsilon_{R, \delta, l}$, then for any $\lambda_l\geq r_0$ and $\mu_l \geq s_0$ such that $\lambda_l + \mu_l = \sigma_l + 2$ the function $u = v_1^{(1)} + \mathtt{v}_2(v_1^{(1)}) + w(v_1^{(1)})$ belongs to $  H^{\lambda_l}_t \cH^{\mu_l}_{x}$ and 
	\begin{equation}\label{un.strong.p2}
	\begin{gathered}
	\|v_1^{(1)}\|_{\cV^{\sigma_l + 2}_{t,x}} \leq R \varepsilon^{\frac{1}{2}} N^{\sigma_{l} +1}\,, \quad \|\mathtt{v}_2(v_1^{(1)})\|_{\cV^{\sigma_l + 2}_{t,x}}\lesssim_{R, \delta, l} \varepsilon^{\frac{1}{2}} N^{\alpha_{l+1}}\,, \\ \|w(v_1^{(1)})\|_{H^{\lambda_l}_t \cH^{\mu_l}_x} \lesssim_{R, \delta, l} \gamma^{-1} \varepsilon N^{b_{l+1}}\,.
	\end{gathered}
	\end{equation}
\end{lemma}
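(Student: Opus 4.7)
The strategy is a direct inductive adaptation of the argument just given for $p=3, 5$, but using the $p=2$ branches of Lemmas \ref{v2.piumeglio.1} and \ref{w.piumeglio.1}: the second case of \eqref{tesi.1.1}, which for $p=2$ is \emph{linear} in $\|w\|_{H^r_t\cH^s_x}$, and item 2 of Lemma \ref{w.piumeglio.1} giving the quadratic bound \eqref{senso.p2.1}. The bound on $v_1^{(1)}$ in \eqref{un.strong.p2} is immediate from $v_1^{(1)}\in\mathcal{D}_{\rho_1}$ together with the smoothing estimate \eqref{lem.s.sprime}, since $v_1^{(1)}\in V_{\leq N}$. For the initial data at level $\sigma_0$, Proposition \ref{prop.v2.p2} and the $w$-bound in \eqref{un.rho.per.ognuno} of Theorem \ref{teo.molte.volte} yield $\|\mathtt{v}_2\|_{\cV^{\sigma_0}_{t,x}}\leq \rho_2\lesssim_R \varepsilon^{1/2}N^{a_0}$ and $\|w\|_{H^{r_0}_t\cH^{\sigma_0}_x}\lesssim_{R,\delta} \gamma^{-1}\varepsilon N^{b_0}$, absorbing the $\gamma^{-1}$ factor present in $\rho_2$ into a power of $N$ by means of the smallness \eqref{little.gamma.p2}.

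Assuming \eqref{un.strong.p2} at level $l-1$, I would first upgrade the regularity of $\mathtt{v}_2$ by applying Lemma \ref{v2.piumeglio.1} at $s=\sigma_l$, $r=r_0$, with $\widetilde\rho_1=R\varepsilon^{1/2}N^{\sigma_l-1}$ (from smoothing of $v_1$) and $\widetilde\rho_2 = C_{R,\delta,l}\varepsilon^{1/2}N^{a_l}$. The inductive hypothesis at level $l-1$ with $\lambda_{l-1}=r_0$, $\mu_{l-1}=\sigma_l$ provides $\|w\|_{H^{r_0}_t\cH^{\sigma_l}_x}\lesssim_{R,\delta,l}\gamma^{-1}\varepsilon N^{b_l}$. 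Hypothesis \eqref{w.small.1} reduces to $\gamma^{-1}\varepsilon N^{b_l}\lesssim \varepsilon^{1/2}N^{\max\{\sigma_l-1,a_l\}}$, i.e.\ $\gamma^{-2}\varepsilon N^{2(b_l-\max\{\sigma_l-1,a_l\})}\lesssim 1$, which is built into \eqref{c.k.p2} since $\zeta_l$ dominates $2(b_l-\max\{\sigma_l-1,a_l\})$. The $p=2$ version of \eqref{tesi.1.1} then gives
\begin{equation*}
\|\mathtt{v}_2\|_{\cV^{\sigma_l+2}_{t,x}}\lesssim_{R,\delta,l}\varepsilon^{-1}\cdot\varepsilon^{1/2}N^{\max\{\sigma_l-1,a_l\}}\cdot\gamma^{-1}\varepsilon N^{b_l}\lesssim \varepsilon^{1/2}N^{\alpha_{l+1}},
\end{equation*}
the last step using $\gamma^{-1}\leq N$ (ensured by \eqref{little.gamma.p2}) to absorb the $\gamma^{-1}$ into the extra $+1$ appearing in the recursion \eqref{al.k.p2}.

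Next, I would apply Lemma \ref{w.piumeglio.1}, item 2, for arbitrary $\lambda_l\geq r_0$, $\mu_l\geq s_0$ with $\lambda_l+\mu_l=\sigma_l+2$. Hypothesis \eqref{cose.piccole.1}, which for $p=2$ reads $\gamma^{-1}\max\{\|v_1\|_{\cV^{\sigma_l+2}_{t,x}},\|\mathtt{v}_2\|_{\cV^{\sigma_l+2}_{t,x}}\}<K_{\lambda_l,\mu_l,2}$, follows from the previously established $\cV^{\sigma_l+2}_{t,x}$-bounds and from $\zeta_l\geq 2\max\{\sigma_l+1,\alpha_{l+1}\}$ in \eqref{c.k.p2}. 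Estimate \eqref{senso.p2.1} then yields
\begin{equation*}
\|w\|_{H^{\lambda_l}_t\cH^{\mu_l}_x}\leq \tfrac{1}{2K_{\lambda_l,\mu_l,2}}\gamma^{-1}\max\bigl\{\|v_1\|_{\cV^{\sigma_l+2}_{t,x}},\|\mathtt{v}_2\|_{\cV^{\sigma_l+2}_{t,x}}\bigr\}^{2}\lesssim_{R,\delta,l}\gamma^{-1}\varepsilon N^{2\max\{\sigma_l+1,\alpha_{l+1}\}}=\gamma^{-1}\varepsilon N^{b_{l+1}}
\end{equation*}
by \eqref{al.k.p2}, closing the induction.

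\textbf{Main obstacle.} Unlike the case $p=3,5$, where the upgrade of $\mathtt{v}_2$ in \eqref{tesi.1.1} is polynomial purely in $v_1,\mathtt{v}_2$, here the upgrade of $\mathtt{v}_2$ is slaved to the regularity of $w$ at the \emph{same} level, which is why the recursion \eqref{al.k.p2} for $\alpha_{l+1}$ carries the extra $b_l$ term absent from \eqref{al.k}. Consequently the single exponent $\zeta_l$ in \eqref{c.k.p2} must simultaneously dominate $2(b_l-\max\{\sigma_l-1,a_l\})$ (controlling \eqref{w.small.1}) and $2\max\{\sigma_l+1,\alpha_{l+1}\}$ (controlling \eqref{cose.piccole.1}); verifying that the chain $\sigma_l,a_l,b_l,\alpha_l,\zeta_l$ satisfies these compatibilities at every step, together with the need to replace $\gamma^{-1}$ by $\gamma^{-2}$ in the smallness assumption (owing to one application of $\mathcal{L}_\omega^{-1}$ when bounding $\mathtt{v}_2$ and another when bounding $w$), is the main technical point of the argument.
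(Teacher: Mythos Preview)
Your proposal is correct and follows exactly the approach the paper has in mind: the paper does not give a separate proof for this lemma but simply remarks that the $p=2$ case ``follows with similar arguments'' to the $p=3,5$ bootstrap, using the $p=2$ branches of Lemmas~\ref{v2.piumeglio.1} and~\ref{w.piumeglio.1}. You have correctly identified the two structural differences---that the $\mathtt{v}_2$-upgrade \eqref{tesi.1.1} now carries a factor $\|w\|_{H^r_t\cH^s_x}$ (whence the $b_l$ in the recursion \eqref{al.k.p2} for $\alpha_{l+1}$, with the extra ``$+1$'' absorbing $\gamma^{-1}\lesssim N$) and that two applications of $\mathcal{L}_\omega^{-1}$ force the smallness parameter $\gamma^{-1}$ to appear squared---and your verification that $\zeta_l$ in \eqref{c.k.p2} simultaneously controls both smallness requirements is precisely the check the paper leaves implicit.
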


\footnotesize 

\end{document}